\newtheorem{theorem}{Theorem}[section]
\newtheorem{lem}[theorem]{Lemma}
\newtheorem{prop}[theorem]{Proposition}
\newtheorem{cor}[theorem]{Corollary}
\newtheorem{defn}[theorem]{Definition}
\newtheorem{rmk}[theorem]{Remark}
\newtheorem{assn}[theorem]{Assumption}
\newcommand{\pr}[1]{\mathbb{P}\!\left(#1\right)}
\newcommand{\E}[1]{\mathbb{E}\!\left[#1\right]}
\newcommand{\estart}[2]{\mathbb{E}_{#2}\!\left[#1\right]}
\newcommand{\prstart}[2]{\mathbb{P}_{#2}\!\left(#1\right)}
\newcommand{\prcond}[3]{\mathbb{P}_{#3}\!\left(#1\;\middle\vert\;#2\right)}
\newcommand{\prb}[1]{\mathbf{P}\!\left(#1\right)}
\newcommand{\Eb}[1]{\mathbf{E}\!\left[#1\right]}
\newcommand{\prcondb}[3]{\mathbf{P}_{#3}\!\left(#1\;\middle\vert\;#2\right)}
\newcommand{\p}[1]{{P}\!\left(#1\right)}
\newcommand{\pcond}[3]{{P}_{#3}\!\left(#1\;\middle\vert\;#2\right)}
\newcommand{\bPb}{\mathbf{P}}
\newcommand{\Pb}{\mathbb{P}}
\newcommand{\cPb}{\mathcal{P}}
\newcommand{\prc}[1]{\mathcal{P}\!\left(#1\right)}
\newcommand{\prcondc}[3]{\mathcal{P}_{#3}\!\left(#1\;\middle\vert\;#2\right)}
\definecolor{darkgreen}{rgb}{0.0, 0.6, 0.0}
\renewcommand{\epsilon}{\varepsilon}
\newcommand{\Ti}{T_{\infty}}
\newcommand{\Height}{\textsf{Height}}
\newcommand{\Binom}{\textsf{Binom}}
\newcommand{\Z}{\mathbb{Z}}
\newcommand{\R}{\mathbb{R}}
\newcommand{\F}{\mathcal{F}}
\newcommand{\gambadi}{\Gamma^{(1, \text{bad})}_{x,t}}
\newcommand{\gambadii}{\Gamma^{(2, \text{bad})}_{x,t}}
\title{Parabolic Anderson model on critical Galton-Watson trees in a Pareto environment}
\author{Eleanor Archer \thanks{Department of Mathematical Sciences, Tel Aviv University, Tel Aviv 69978, Israel. Email: {eleanora@mail.tau.ac.il}.} \hspace{1cm} Anne Pein \thanks{Department of Meteorology, University of Reading, United Kingdom. Email: a.pein@reading.ac.uk}}
\date{\today}
\begin{document}

\maketitle

\begin{abstract}
The parabolic Anderson model is the heat equation with some extra spatial randomness. In this paper we consider the parabolic Anderson model with i.i.d. Pareto potential on a critical Galton-Watson tree conditioned to survive. We prove that the solution at time $t$ is concentrated at a single site with high probability and at two sites almost surely as $t \to \infty$. Moreover, we identify asymptotics for the localisation sites and the total mass, and show that the solution $u(t,v)$ at a vertex $v$ can be well-approximated by a certain functional of $v$. The main difference with earlier results on $\Z^d$ is that we have to incorporate the effect of variable vertex degrees within the tree, and make the role of the degrees precise.
\end{abstract}
\textbf{AMS 2020 Mathematics Subject Classification:} 60J80, 35K40, 60J27 \\
\textbf{Keywords and phrases:} parabolic Anderson model, Feynman-Kac formula, critical Galton-Watson tree


\section{Introduction}
In this paper we study the parabolic Anderson model (PAM) on a critical Galton-Watson tree conditioned to survive. Given an infinite tree $\Ti$, with root $O$, the PAM on $\Ti$ is the heat equation on $\Ti$ with a random potential:
\begin{equation}\label{eqn:PAM}
\begin{aligned}
    \partial_t u(t,v)&=\Delta u(t,v)+\xi(v)u(t,v),  &(t,v)\in(0,\infty)\times V(\Ti),\\
    u(0,v)&=\mathbbm{1}\{v = O\}, &v\in V(\Ti),
\end{aligned}
\end{equation}
where $V(\Ti)$ denotes the set of vertices of $\Ti$.
Here, $\Delta$ is the discrete Laplacian on $\Ti$, defined by
\[
(\Delta f)(v)=\sum_{y\sim v}[f(y)-f(v)], \ \ \ v\in V(T_\infty), \ f:V(T_\infty)\rightarrow \mathbb R.
\]
One can also work under other initial conditions, such as $u(0,v)=1$ for all $v \in V(\Ti)$, but we are interested here in studying spread of particles from a single initial seed. We work under the \textit{quenched} law of $\Ti$, whereby we sample the tree $\Ti$ first, and then independently sample $(\xi(v):v\in V(\Ti))$ as a collection of independent identically distributed random variables, static in time.

The random operator $H:=\Delta+\xi$ is known as the Anderson Hamiltonian and was originally analysed by Anderson in 1958 in the context of random Schroedinger equations that model  electrons moving through a crystal with impurities \cite{Anderson1958Absence}. The parabolic analogue, the PAM, is now a classical model for mean-field spread of particles in an inhomogeneous random environment, with applications including population genetics, reaction kinetics and magnetism, to name but a few. See the book \cite{KoenigBook16} or the surveys \cite{Anderson1958Absence, Moerters2009survey, GaertnerKoenig2005survey} for more background.

The solution to the PAM is well-known to exhibit an \textit{intermittency} effect, whereby almost all the solution is asymptotically concentrated in a small number of disjoint regions, known as \textit{islands}. This effect has been observed by physicists for some time, but since the early 1990s there has been a concerted mathematical effort to prove this quantitatively, and on $\Z^d$ this effect is now well-understood. The size and number of islands depend on the tail decay of the potential, which can give rise to several regimes on the behaviour of the islands.

In this paper we will be exclusively focused on the case where the potential is Pareto distributed, i.e. with polynomially decaying tails, for which the intermittency effect is strongly pronounced. In this case, it was shown in \cite{KLMStwocities09} that the solution to the PAM on $\Z^d$ almost surely localises at two sites as time runs to infinity. In fact, the authors show that the solution localises at a single site with high probability, and identify the localisation site as the maximiser of a certain functional; this localisation site evolves in time, so that there will be a sequence of transition times when the solution ``jumps'' from one site to another, meaning that both sites contain non-negligible mass, and in particular that two-site localisation is the best result one can hope for in the almost sure setting.

The solution to the PAM on $\Z^d$ in this regime was also studied by Ortgiese and Roberts, who showed in \cite{OrtgieseRobertsIntermittency} that the solution can be well approximated by a mean-field ``lilypad model'' in which particles spread out from sites of high potential in a sequence of growing lilypads. Their main motivation was to study the behaviour of a branching random walk, for which the PAM describes the expected number of particles, but they show that the solution to the PAM at any given site corresponds roughly to the number of particles reaching that site from the centre of a nearby lilypad.

In this paper we prove similar results for the solution to the PAM on a critical Galton-Watson tree conditioned to survive. This extends the current literature since the underlying graph is now random, with fractal properties, and has non-uniform volume growth. One consequence of this is that the random walk appearing in the Feynman-Kac solution to \eqref{eqn:PAM} on $\Ti$ is known to be subdiffusive \cite{KestenIICtree, CroydonKumagai08}, in contrast to the $\Z^d$ case. Moreover, in contrast to most of the existing literature, we consider the PAM on a graph with unbounded degrees and we make the role of the degrees explicit.

The PAM has also been studied on supercritical Galton-Watson trees of unbounded degree in the recent article \cite{HollanderWang}. Here the authors consider the (very different) regime where the potential has double-exponential tails, and show that results of \cite{HollanderKoenigSantosPAMOnTrees} on the PAM profile in the bounded degree case can be extended to the case where the offspring distribution decays super-double-exponentially. (The PAM was also considered on sequences of complete graphs in \cite{FleischmannMolchanov1990exact} and hypercubes in \cite{AvenaGunHesse2020parabolic}, which both have unbounded degrees in the limit, but in these cases the Laplace operator was multiplied by an appropriate prefactor to cancel out the effect, which is not the case in the present paper). In our case, due to the heavier tails on the potential $\xi$, the localisation effect of the PAM is more pronounced and we will prove our results under weaker assumptions on the tail of the offspring distribution.

Before stating our main results, we give the setup. We suppose that $\Ti$ is an infinite Galton-Watson tree defined on a probability space $(\Omega, \F, \bPb)$ and let $d$ denote the fractal dimension of $\Ti$ (we will state $d$ explicitly in Corollary \ref{cor:volume bounds}; in particular $d=2$ a.s. when the offspring distribution of $\Ti$ has finite variance). Conditionally on $\Ti$, we fix some $\alpha > d$ (this restriction is both necessary and sufficient for a finite solution to \eqref{eqn:PAM} to exist, almost surely), and let $\left(\xi(v):v \in V(\Ti)\right)$ denote a sequence of i.i.d. random variables with law $\cPb$, satisfying
\[
\prc{\xi(v) \geq x} = x^{-\alpha}
\]
for all $x \geq 1$. This is known as the Pareto distribution with parameter $\alpha$. If $v \in V(\Ti)$, we call $\xi(v)$ the \textit{potential} at $v$. We let $P$ denote the law of the product measure $\bPb \times \cPb$.

Given vertices $u,v \in V(\Ti)$, we let $|u-v|$ denote the graph distance between them in $\Ti$, and $|v| = |v-O|$.

Given $\Ti$ and $\left(\xi(v): v\in V(\Ti)\right)$, we let $u(t,v)$ denote the solution to \eqref{eqn:PAM}, and let $U(t) = \sum_{v \in V(\Ti)} u(t,v)$ denote the total mass in the system at time $t$. Following the notation of \cite{KLMStwocities09}, we also set $q := \frac{d}{\alpha - d}$, and define the two functions
\[
r(t) := \left(\frac{t}{\log t} \right)^{q+1}, \hspace{2cm} a(t) := \left(\frac{t}{\log t} \right)^{q}.
\]
We will see later that, roughly speaking, the bulk of the solution at time $t$ is supported on a ball of radius $r(t)$, and the highest potential seen on this ball will be of order $a(t)$.

We first present some high probability bounds on the solution, starting with the total mass. We prove these under the following assumption.

\begin{assn}\label{assn:whp}
Assume the offspring distribution is critical and there exists $\beta \in (1,2]$ such that the offspring distribution is in the domain of normal attraction of a $\beta$-stable law.
\end{assn}

Recall that a critical offspring distribution $\mu$ is in the normal domain of attraction of a $\beta$-stable law if there exists a constant $c$ such that 
\begin{equation*}\label{eqn:domatt}
\frac{S_n-n}{cn^{\frac{1}{\beta}}}\overset{(d)}{\rightarrow} X 
\end{equation*}
as $n \to \infty$, where $S_n=\sum_{i=1}^nZ_i$ with $Z_i$ i.i.d. with law $\mu$, and $X$ is a $\beta$-stable random variable, i.e. such that $\mathbb E[\exp(-\lambda X)]=\exp(-\lambda^\beta)$ for all $\lambda>0$. In the finite variance case we have $\beta = 2$. One could also incorporate slowly-varying functions, but for simplicity we have not done this.

\begin{theorem}\label{thm:total mass asymp intro} Under Assumption \ref{assn:whp},
\[
\lim_{\lambda \to \infty} \liminf_{t \to \infty} P \left(\log U(t) \in [\lambda^{-1} ta(t), \lambda t a(t)]\right) = 1.
\]
\end{theorem}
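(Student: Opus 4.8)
The plan is to pass to the Feynman--Kac representation $U(t)=\estart{\exp\!\big(\textstyle\int_0^t \xi(X_s)\,ds\big)}{O}$, where $X$ is the continuous-time random walk generated by $\Delta$ (at $v$ it waits an $\Exp(\deg(v))$ time and then jumps to a uniformly chosen neighbour), and to show that, up to a $(1+o(1))$ factor in the exponent, $\log U(t)$ equals $t\,\mathcal M_t$, where $\mathcal M_t$ is the maximum over $v\in V(\Ti)$ of a penalised potential $\Psi_t(v)\approx \xi(v)-\tfrac{|v|}{t}\log^{+}\!\tfrac{|v|\xi(v)}{t}$, the analogue of the functional of \cite{KLMStwocities09} but carrying an additional lower-order correction $-\tfrac1t\sum_{w}\log\deg(w)$, the sum being over the geodesic from $O$ to $v$, which accounts for the tree's degrees. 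The only arithmetic needed is that, with $q=\tfrac{d}{\alpha-d}$, one has $r(t)^{d}a(t)^{-\alpha}\to c_0\in(0,\infty)$ and $r(t)\log t\asymp t\,a(t)$; combined with the volume estimates of \cref{cor:volume bounds} these give that the largest value of $\xi$ on $B_{r(t)}$ is of order $a(t)$, while reaching graph distance $\asymp r(t)$ costs the walk a factor $\exp(-\Theta(ta(t)))$. I will prove the two inequalities separately; each is of the form ``whp $\log U(t)\le\lambda t a(t)$'' (resp.\ ``$\ge\lambda^{-1}ta(t)$''), with $\lambda\to\infty$ taken afterwards.

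\emph{Lower bound.} For small fixed $\epsilon>0$, I will exhibit, off an event of probability $f(\epsilon)\to0$ uniformly in large $t$, a vertex $z$ with $|z|\le\tfrac{\epsilon}{4q}r(t)$, $\xi(z)\ge\epsilon\,a(t)$, $\deg(z)\le\log t$, and whose geodesic to $O$ satisfies $\sum_{w}\log\deg(w)=o(ta(t))$. A high, low-degree $z$ in $B_{(\epsilon/4q)r(t)}$ exists because, by \cref{cor:volume bounds}, that ball has at least $c(\epsilon/4q)^{d}r(t)^{d}$ vertices whp and, using independence of $\xi$ and $\Ti$, $\prc{\xi(w)<\epsilon a(t)\text{ for all such }w}\le\exp\!\big(-c(\epsilon/4q)^{d}r(t)^{d}(\epsilon a(t))^{-\alpha}\big)=\exp(-c'\epsilon^{d-\alpha})\to0$ by $r(t)^{d}a(t)^{-\alpha}\to c_0$ and $\alpha>d$; the geodesic-degree bound uses the spine decomposition of $\Ti$ and the fact that the size-biased offspring law still has finite $\Eb{\log(\cdot)}$, so $\sum_w\log\deg(w)=O(r(t))=o(ta(t))$, with extra care to certify a good $z$ even if a few very-high-degree vertices block many geodesics. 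Given $z$, bound $U(t)\ge u(t,z)$ from below by restricting the Feynman--Kac expectation to the event that $X$ follows the geodesic $O\to z$ without backtracking within time $t/2$ and then stays at $z$ until time $t$: there $\int_0^t\xi(X_s)\,ds\ge\tfrac t2\xi(z)$, and the event has probability at least $\exp\!\big(-|z|\log(2|z|/t)-O(|z|)-\sum_w\log\deg(w)-\tfrac t2\deg(z)\big)\ge\exp(-\tfrac\epsilon4(1+o(1))ta(t))$, since $|z|\log(2|z|/t)\le\tfrac{\epsilon}{4q}r(t)\cdot q(1+o(1))\log t=\tfrac\epsilon4(1+o(1))ta(t)$ and all other terms are $o(ta(t))$. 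Hence $\log U(t)\ge(\tfrac\epsilon2-\tfrac\epsilon4-o(1))ta(t)\ge\tfrac\epsilon8 ta(t)$ whp, and $\epsilon\to0$ gives $\lim_{\lambda\to\infty}\liminf_{t}\p{\log U(t)\ge\lambda^{-1}ta(t)}=1$.

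\emph{Upper bound.} For this direction one needs a Feynman--Kac path estimate: a path contributing to $u(t,v)$ must reach $v$, and the standard bound on the number of jumps of $X$ shows that reaching graph distance $\ell$ from $O$ costs a factor $\exp(-\Theta(\ell\log^{+}(\ell/t)))$, times $\exp(-\sum_w\log\deg(w))$ for the choices of neighbour; optimising the trade-off between collecting potential and paying these costs yields $\log U(t)\le t\mathcal M_t+o(ta(t))$ with $\mathcal M_t$ as above. It remains to show $\mathcal M_t\le\lambda a(t)$ whp. For the near field, $\Psi_t(v)\le\xi(v)$, so it suffices that $\prc{\max_{B_{r(t)}}\xi>\lambda a(t)}\le|B_{r(t)}|(\lambda a(t))^{-\alpha}$, which by \cref{cor:volume bounds} is at most $\delta+K_\delta c_0\lambda^{-\alpha}(1+o(1))$; for the far field, on the dyadic annulus $|v|\in[2^{k}r(t),2^{k+1}r(t))$ the penalisation obeys $\tfrac{|v|}{t}\log^{+}\tfrac{|v|\xi(v)}{t}\ge\tfrac{|v|}{t}\log\tfrac{|v|}{t}\ge 2^{k}(q+o(1))a(t)$, so $\Psi_t(v)>\lambda a(t)$ forces $\xi(v)>(\lambda+2^{k}(q+o(1)))a(t)$, an event whose probability over the annulus is at most $K_\delta 2^{(k+1)d}c_0(\lambda+2^{k}q)^{-\alpha}(1+o(1))$, and the sum of these over $k\ge0$ is $O(K_\delta\lambda^{d-\alpha})$ since $d<\alpha$. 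Thus, sending $t\to\infty$, $\p{\mathcal M_t>\lambda a(t)}\le\delta+O(K_\delta\lambda^{d-\alpha})$, and then $\lambda\to\infty$ and $\delta\to0$ complete the upper bound.

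\emph{Main obstacle.} The genuinely new difficulty relative to $\Z^{d}$ is that the jump rates of $X$ are the unbounded vertex degrees, so the ballisticity estimate underlying the path bound and the ``follow-the-geodesic-quickly'' estimate in the lower bound are not off-the-shelf and must be established for the critical GW tree via \cite{KestenIICtree,CroydonKumagai08}; relatedly, in the lower bound one must certify a high-potential vertex within distance $\asymp r(t)$ whose geodesic is not inflated by the rare vertices of very large degree that the tree genuinely contains (these produce the correction $-\tfrac1t\sum_w\log\deg(w)$, which must be shown to be $o(a(t))$). The remaining ingredients --- the arithmetic of $r(t),a(t)$ and the extreme-value analysis of the Pareto field over $B_{r(t)}$ --- are routine once the geometry is in place, and there is ample slack at the level of precision claimed here.
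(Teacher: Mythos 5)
Your strategy is essentially the paper's: pass to Feynman--Kac, split over dyadic annuli $|v|\asymp 2^k r(t)$, balance the potential gain $t\xi(v)$ against the cost $|v|\log_+(|v|/t)$ of reaching distance $|v|$, and use the arithmetic $r(t)^d a(t)^{-\alpha}=O(1)$, $r(t)\log t\asymp ta(t)$ to show both sides land at scale $ta(t)$. The lower bound via a low-degree, high-potential vertex in $B_{\epsilon r(t)}$ combined with a ``follow the geodesic, then sit still'' path is exactly the paper's route (the paper restricts to $\tilde B_r=\{\deg\le 4\}$ and invokes Proposition~\ref{prop:hitting time prob bounds}$(ii)$ and Lemma~\ref{lem:Br log sum bound}, where you use the weaker $\deg z\le\log t$ and a spine argument --- both give $o(ta(t))$ corrections). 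The upper bound via the annulus decomposition and Corollary~\ref{cor:exitball} is likewise what the paper does, just phrased through an intermediate maximum $\mathcal M_t$ rather than directly on $U(t)$.

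One point worth flagging in your upper bound: you write that the cost of reaching graph distance $\ell$ is $\exp(-\Theta(\ell\log_+(\ell/t)))\cdot\exp(-\sum_w\log\deg w)$. The degree factor only appears in the probability of following one \emph{specific} path (relevant for the lower bound); the upper bound on $\mathbb P(H_v\le t)$ is the Poisson tail $\exp\{-|v|([\log|v|-\log t-1]\vee 0)\}$ with no degree dependence, exactly because the edge-clock from $v_i$ to $v_{i+1}$ is always $\textsf{Exp}(1)$ regardless of $\deg v_i$ (this is the content of Proposition~\ref{prop:hitting time prob bounds}$(i)$). Including the degree factor in the stated upper bound makes it false as a claim; it happens to be harmless because you drop the penalty when you analyze $\mathcal M_t$, but it would be cleaner to omit it there and keep $\sum_w\log\deg w$ only in the lower bound, where it genuinely belongs and where the needed bound $\sum_w\log\deg w=O(r(t))$ is proved as Lemma~\ref{lem:Br log sum bound}.
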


This result follows by similar considerations to those in \cite{KLMStwocities09}.

In light of the behaviour of the Pareto-distributed PAM on $\Z^d$, the main focus of this paper is to prove a localisation result; in particular the following. In order to prove this we have to make slightly stronger assumptions on the offspring distribution of $\Ti$.

\begin{assn}\label{assn:as}
Assume the offspring distribution of $\Ti$ is critical and decays super-polynomially, i.e. for every $K< \infty$ it satisfies $\prb{\# \{\text{offspring}\} > x} = o(x^{-K})$ as $x \to \infty$.
\end{assn}

\begin{rmk}
\begin{enumerate}
\item In particular, Assumption \ref{assn:as} implies that the distribution has finite variance and is thus in the domain of attraction of a $\beta=2$ stable law (via the classical central limit theorem).
\item Assumption \ref{assn:as} is required as it allows us to control volumes of balls in $\Ti$ more tightly. This is useful in two places in particular. Firstly we use it to prove localisation on a smaller set $\Gamma_t$ (Lemma \ref{lem:loceigenfunction}) for which we require that the localisation site $\hat{Z}_t^{(1)}$ is also the maximiser of $\xi (v) - \deg v$ on $\Gamma_t$, which in turn requires that $\Gamma_t$ is not too large. Secondly we use it when summing over paths that avoid $\hat{Z}_t^{(1)}$ to show that their contribution to the total mass is small (Propositions \ref{prop:U2 zero} and \ref{prop:U4 zero}), for which we require that the number of such paths within a ball is not too large.
\item The assumption on the offspring distribution in Assumption \ref{assn:as} is not too restrictive since it is satisfied by most offspring distributions for which $\Ti$ is used as a local proxy for structures in statistical physics (for example the \textsf{Poi}(1) offspring distribution which gives a good approximation to critical percolation clusters \cite{Aldous1997brownian} and uniform spanning trees \cite{NachmiasPeres2020local} in high dimensions).
\item Assumption \ref{assn:as} is somewhat reminiscent of \cite[Assumption 1.3]{HollanderWang}, in that we require super-polynomial tail decay to deal straightforwardly with polynomial tails for the potential, and in \cite{HollanderWang} the authors require super-double-exponential tail decay in their main result to deal straightforwardly with double-exponential tails for the potential. In fact, we could slightly weaken our assumption to the requirement that the tails are $O(x^{-K_{\alpha, \beta}})$ for some $K_{\alpha, \beta} < \infty$, but it was not possible to bound $K_{\alpha, \beta}$ uniformly in $\alpha$ and $\beta$.
\end{enumerate}
\end{rmk}

\begin{theorem}\label{thm:main a.s. localisation intro}
Let Assumption \ref{assn:as} hold. There exist processes $(\hat{Z}^{(1)}_t)_{t\geq 0}$ and $(\hat Z^{(2)}_t)_{t\geq 0}$ taking values in $V(T_\infty)$ such that, $P$-almost surely as $t \to \infty$,
\begin{equation}
    \frac{u(t, \hat{Z}^{(1)}_t) + u(t, \hat{Z}^{(2)}_t)}{U(t)}\to 1.
\end{equation}
\end{theorem}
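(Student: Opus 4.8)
The proof follows the broad strategy of \cite{KLMStwocities09} but must track the vertex degrees throughout. The plan is to reduce the solution, via a spectral representation on a large finite ball, to an extreme-value problem for a degree-corrected potential landscape, and then to analyse the two top competing sites. First I record the Feynman--Kac formula $u(t,v) = \mathbb{E}_O\big[ e^{\int_0^t \xi(X_s)\,ds}\,\mathbbm{1}\{X_t = v\}\big]$, where $X$ is the continuous-time random walk on $\Ti$ generated by $\Delta$ (so $X$ holds at $v$ for an exponential time of rate $\deg(v)$ and then jumps to a uniformly chosen neighbour). Combining the volume bounds of Corollary~\ref{cor:volume bounds} with the constraint $\alpha > d$, I would show that almost surely, for all large $t$, both $U(t)$ and every $u(t,v)$ that matters are, up to a negligible multiplicative error, produced by paths of $X$ that stay inside the ball $B_t := B(O,R(t))$ for some $R(t)$ slightly larger than $r(t)$; potentials that are too far, too sparse, or too small cannot compete. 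This localises the problem to the finite-graph operator $H_t := (\Delta+\xi)|_{B_t}$, whose eigendecomposition gives $u(t,\cdot) = \sum_k e^{\lambda_k t}\langle \delta_O,\phi_k\rangle\,\phi_k(\cdot)$, so that everything is governed by the two largest eigenvalues $\lambda_1 > \lambda_2$ and their eigenfunctions.

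Next I perform the point-process analysis of the landscape. Around a vertex $z$ of exceptionally high potential, the principal local eigenvalue of $H_t$ is $\xi(z) - \deg(z) + o(1)$ --- the eigenfunction being essentially a spike at $z$, on which $\Delta$ acts as multiplication by $-\deg(z)$ --- while the overlap $\langle \delta_O,\phi_k\rangle$ is exponentially small in a cost combining the large-deviation price for $X$ to traverse the unique $O$-to-$z$ path by time $t$ and the decay of the eigenfunction along that path. The effective functional is thus of the form $\Psi_t(z) = \xi(z) - \deg(z) - \tfrac{|z|}{t}\,c_t(z)$, trading potential height against graph distance, with the degrees of the interior path vertices essentially cancelling between holding rates and path probabilities, but the endpoint degree surviving. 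Rescaling positions by $r(t)$ and potential values by $a(t)$, the top of $\Psi_t$ converges to a Poisson point process, and I set $\hat Z^{(1)}_t,\hat Z^{(2)}_t$ to be its two largest sites. The structural facts to extract, almost surely in the limit, are: a uniform lower bound on the gap between the second and third largest values of $\Psi_t$, so that no third site is ever simultaneously competitive (the limiting process has no ties and no accumulation); and the existence of a deterministic sequence of transition times along which the gap between the two largest values of $\Psi_t$ is only $O(1/t)$, which is precisely why one-site localisation must fail almost surely.

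The technical heart is to prove the two complementary PAM estimates. On one side, eigenfunction localisation (Lemma~\ref{lem:loceigenfunction}): $\phi_1$ and $\phi_2$ are concentrated on a small set $\Gamma_t$ around $\hat Z^{(1)}_t$ and $\hat Z^{(2)}_t$. This is exactly where Assumption~\ref{assn:as} enters: one needs $\Gamma_t$ small enough that $\hat Z^{(1)}_t$ is also the maximiser of $\xi(\cdot)-\deg(\cdot)$ on $\Gamma_t$, and controlling $|\Gamma_t|$ (and the degrees inside it) requires the sharpened volume estimates that super-polynomial offspring decay provides. On the other side, negligibility of everything else (Propositions~\ref{prop:U2 zero} and~\ref{prop:U4 zero}): the total contribution to $U(t)$ of paths of $X$ that avoid $\hat Z^{(1)}_t$ --- and, in the windows near transition times, also $\hat Z^{(2)}_t$ --- is $o(U(t))$; the proof expands over self-avoiding paths in $B_t$ and again invokes Assumption~\ref{assn:as} to bound the number of such paths of a given length, since a single vertex of large degree would otherwise create too many. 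Together these yield $u(t,\hat Z^{(1)}_t) + u(t,\hat Z^{(2)}_t) \geq (1-o(1))\,U(t)$ for all large $t$.

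Finally, to upgrade this to an almost-sure statement, I would run all of the above along a deterministic sequence $t_k\to\infty$ with $t_{k+1}/t_k\to1$, arrange the error terms to be summable in $k$ (working from the point-process description of the landscape rather than the single-time in-probability bound of Theorem~\ref{thm:total mass asymp intro}), apply Borel--Cantelli, and then interpolate to all $t\in[t_k,t_{k+1}]$ using near-monotonicity of $U(t)$ and of $u(t,v)$ together with the fact that $\hat Z^{(1)}_t$ and $\hat Z^{(2)}_t$ vary in a controlled way on each such interval. I expect the main obstacle to be the technical heart above: making the role of the degrees precise in the eigenvalue/eigenfunction expansion. The underlying walk is subdiffusive and the degrees are unbounded, so neither the spectral bounds nor the path counts come off the shelf, and keeping $\Gamma_t$ genuinely small while simultaneously controlling the degree-weighted landscape on it is the crux --- it is exactly what forces Assumption~\ref{assn:as}.
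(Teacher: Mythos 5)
Your outline gets the broad architecture right---Feynman--Kac reduction to a finite ball, a spectral representation, an extreme-value analysis of a degree-corrected functional, and Borel--Cantelli along a subsequence---and correctly identifies both places where Assumption~\ref{assn:as} does real work (keeping $\Gamma_t$ small enough that $\hat Z_t^{(1)}$ is also the degree-corrected maximiser there, and bounding the number of length-$n$ paths when degrees are unbounded). But your proposal departs from the paper's proof in one place that is more than cosmetic. You build the argument around a Poisson point-process scaling limit for the rescaled landscape, use it to define $\hat Z_t^{(1)},\hat Z_t^{(2)}$ and to extract the gap estimates, and then use its ``no ties, no accumulation'' structure to run a summable Borel--Cantelli. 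The paper deliberately does \emph{not} do this: it remarks that the point-process description of the limiting potential on the continuum random tree is plausible but is not proven here, and that avoiding it makes the argument more robust. Instead, $\hat Z_t^{(1)},\hat Z_t^{(2)}$ are defined directly as the first two maximisers of the explicit functional $\psi_t$ of \eqref{eqn:psit def}, the gap between the first and third values of $\psi_t$ is bounded below almost surely by a direct Pareto-tail computation (Lemma~\ref{lem:probgap} and Proposition~\ref{prop:gap a.s. LB}), and the interpolation to all $t$ proceeds along $t_n=2^n$ using near-monotonicity of $\psi_t(\hat Z_t^{(i)})$ (Lemma~\ref{lem:Zestimates}) and a counting bound on how often the maximisers change in a dyadic window (Lemma~\ref{lem:BC counting}). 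If you want to run your version, you would first have to prove the scaling limit of the potential field on the Kesten tree, which is a substantial additional piece of work the paper explicitly declines to carry out.

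A second, smaller divergence: you propose to control things via the eigendecomposition of the large-ball operator $H_t=(\Delta+\xi)|_{B_t}$, with $\lambda_1>\lambda_2$ and their eigenfunctions governing everything. The paper instead works only with the \emph{principal} eigenfunction of $H$ restricted to the much smaller set $\Lambda_t\setminus\{\hat Z_t^{(i)}\}$ (Section~\ref{sec:spectralresults}), represented via the hitting-time/Feynman--Kac formula \eqref{eqn:eigenfunction rep} and bounded by the direct-path contribution (Lemmas~\ref{lem:boundeigenfunction}--\ref{lem:loceigenfunction}). The transfer back to $u(t,\cdot)$ is through Lemma~\ref{lem:efunction compare}, not through a full spectral expansion on $B_t$. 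This is a meaningful simplification: it never requires controlling the spectral gap of the large ball, and it is precisely the place where variable degree is handled cleanly, since the eigenfunction bound reduces to a product of $\deg(v)/\tilde g$ along direct paths, for which Lemma~\ref{lem:technical} and the super-polynomial offspring tails give the needed control. The remaining contributions ($U_2,\ldots,U_6$) are then killed by explicit path-counting and exit-time estimates in Section~\ref{sec:localisation}, which matches the spirit of your Propositions~\ref{prop:U2 zero} and~\ref{prop:U4 zero} remark but does not rely on the eigenvalue $\lambda_2$ of $B_t$. So: your plan would likely work if you supplied the missing point-process convergence and a quantitative spectral-gap bound for $H_t$ on the large ball, but both are heavier than what the paper actually uses, and the paper's route through $\psi_t$, $\Lambda_t$, and principal eigenfunctions of small sets is the more economical one.
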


As remarked above, two-site localisation is the best result we can hope for in the almost sure setting, due to the existence of transition times between the best sites. A consequence of the proof will be the fact that we actually have one-site localisation with high probability, as outlined in the following theorem.

\begin{theorem}\label{thm:main whp localisation intro} Let Assumption \ref{assn:whp} hold. There exists a process $(\hat{Z}^{(1)}_t)_{t\geq 0}$ taking values in $V(\Ti)$ such that
\begin{equation}
    \frac{u(t,\hat{Z}^{(1)}_t)}{U(t)}\to 1 \text{ in $P$-probability as }t\to \infty.
\end{equation}
Under Assumption \ref{assn:as}, $\hat{Z}_t^{(1)}$ is the same as in Theorem \ref{thm:main a.s. localisation intro}.
\end{theorem}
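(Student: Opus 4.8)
Everything rests on the Feynman--Kac representation: if $(X_s)_{s\ge 0}$ is the continuous-time random walk on $\Ti$ with generator $\Delta$ started from $O$, then
\[
u(t,v) \;=\; \estart{e^{\int_0^t \xi(X_s)\,ds}\,\mathbbm{1}\{X_t=v\}}{O}, \qquad U(t) \;=\; \estart{e^{\int_0^t \xi(X_s)\,ds}}{O}.
\]
The plan is to exhibit a single distinguished vertex $\hat Z^{(1)}_t$ and to show that, with $P$-probability tending to $1$, this average is carried by trajectories that run quickly from $O$ to $\hat Z^{(1)}_t$, sit in a small neighbourhood of $\hat Z^{(1)}_t$ for all but a negligible fraction of the potential budget, and are back at $\hat Z^{(1)}_t$ at time $t$. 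The first step is to confine the relevant trajectories to a ball: using the volume bounds of Corollary \ref{cor:volume bounds} to bound the number of vertices at each distance from $O$, together with the fact that a path reaching graph-distance $n$ must make at least $n$ jumps, one shows that the contribution to $U(t)$ of trajectories leaving $B(O,\rho(t))$ --- for a suitable $\rho(t)$ of order $r(t)$ up to logarithmic corrections --- is $o(U(t))$ with high probability; the subdiffusivity of $X$ on $\Ti$ only strengthens this bound.

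I would then define $\hat Z^{(1)}_t$ as the maximiser over $z\in B(O,\rho(t))$ of a functional of the form
\[
\Psi_t(z) \;:=\; \xi(z) - \deg(z) - \frac{|z|}{t}\,\log\!\Big(\frac{\xi(z)\,t}{|z|}\Big),
\]
in which $\xi(z)-\deg(z)$ is the leading order of the principal eigenvalue of $H=\Delta+\xi$ localised near $z$ when $\xi(z)$ is large, and the last term is the leading-order cost for $X$ to reach $z$ within time $t$. After rescaling space by $r(t)$ and the potential by $a(t)$, the point process of pairs $(z,\xi(z))$ converges to a Poisson point process, and standard extreme-value estimates then show that $\hat Z^{(1)}_t$ is attained at a unique vertex with probability tending to $1$ and, crucially, that
\[
t\,\Psi_t\big(\hat Z^{(1)}_t\big) \;-\; t\max_{z\in B(O,\rho(t)),\; z\neq \hat Z^{(1)}_t}\Psi_t(z) \;\longrightarrow\; \infty
\]
in $P$-probability. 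Here one must also check that the $\deg(z)$ correction is controlled on the ball: under Assumption \ref{assn:whp} it is, outside a negligible set of vertices, dominated by the potential scale $a(t)$, and under the stronger Assumption \ref{assn:as} it is only of order $\log r(t)$, so that $\hat Z^{(1)}_t$ coincides with the maximiser of $\xi(v)-\deg(v)$ used on the smaller set $\Gamma_t$ in the almost-sure argument (Lemma \ref{lem:loceigenfunction}).

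With $\hat Z^{(1)}_t$ fixed I would establish matching bounds. For the lower bound, restrict the Feynman--Kac expectation to the event that $X$ follows the geodesic from $O$ to $\hat Z^{(1)}_t$ within time $|\hat Z^{(1)}_t|\log t$ and then stays within a bounded neighbourhood of $\hat Z^{(1)}_t$ until time $t$; estimating the restricted expectation by the Rayleigh quotient of a test function supported near $\hat Z^{(1)}_t$ gives $u(t,\hat Z^{(1)}_t)\ge \exp(t\Psi_t(\hat Z^{(1)}_t)(1+o(1)))$ with high probability, hence the same lower bound for $U(t)$. For the upper bound, decompose $U(t)=U_1(t)+U_2(t)+U_4(t)$, where $U_4$ is the out-of-ball term above, $U_1$ collects trajectories inside $B(O,\rho(t))$ that visit $\hat Z^{(1)}_t$, and $U_2$ those inside $B(O,\rho(t))$ that avoid it. An eigenvalue bound for the Feynman--Kac weight of trajectories confined to a set, combined with the spectral gap from the previous step, gives $U_2(t)\le(\text{number of geometrically distinct such trajectories})\cdot\exp(t\max_{z\neq \hat Z^{(1)}_t}\Psi_t(z)(1+o(1)))$, and since in a tree the number of such trajectories inside a ball is only polynomial in its volume, this is $o(u(t,\hat Z^{(1)}_t))$ with high probability; the same kind of estimate, applied to trajectories ending far from $\hat Z^{(1)}_t$, gives $U_1(t)\le(1+o(1))u(t,\hat Z^{(1)}_t)$. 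Combining the three bounds with the trivial inequality $u(t,\hat Z^{(1)}_t)\le U(t)$ yields $u(t,\hat Z^{(1)}_t)/U(t)\to 1$ in $P$-probability.

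The main obstacle is the control of the term $U_2$: one must sum the Feynman--Kac weights over all trajectories that stay in $B(O,\rho(t))$ but avoid $\hat Z^{(1)}_t$ and show that, collectively, they are still beaten by the single site $\hat Z^{(1)}_t$. This needs sufficiently sharp upper bounds on the number of such trajectories, i.e.\ on volumes of balls in $\Ti$: the ``with high probability'' volume bounds available under Assumption \ref{assn:whp} (via Corollary \ref{cor:volume bounds}) are just enough for the in-probability statement here, whereas the almost-sure theorem needs the finer, essentially uniform-in-$t$ volume control that Assumption \ref{assn:as} provides (this also makes the passage from a subsequence of times to all times routine, via continuity estimates on $t\mapsto u(t,\cdot)$). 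A secondary, pervasive difficulty is that the degrees in $\Ti$ are unbounded, so the localised eigenvalue approximation $\lambda_z\approx\xi(z)-\deg(z)$ and the geodesic travel estimates must be made quantitative and controlled uniformly over the ball.
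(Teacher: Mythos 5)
Your proposal gets the overall framework right (Feynman--Kac, define $\hat Z^{(1)}_t$ as the maximiser of a functional balancing potential against a travel cost, decompose $U(t)$ into a confined piece hitting $\hat Z^{(1)}_t$, a confined piece avoiding it, and an escaping piece), but the argument you give for the confined-and-avoiding piece --- your $U_2$ --- contains the central error, and it is precisely the point where the paper has to do something different from the almost-sure proof.

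You assert that ``in a tree the number of such trajectories inside a ball is only polynomial in its volume,'' and you lean on this to conclude $U_2(t)\le(\text{number of trajectories})\cdot\exp(t\max_{z\neq\hat Z^{(1)}_t}\Psi_t(z)(1+o(1)))=o(u(t,\hat Z^{(1)}_t))$. This is false: the number of walks of length $n$ contained in $B_r$ is of order $\binom{n}{n-|v|}D_r^{(n-|v|)/2}$, i.e.\ it is exponential in $n$ with base governed by the maximal degree $D_r$, not polynomial in the ball volume (Lemma~\ref{lem:no of paths} makes this quantitative, and Remark~\ref{rmk:whp assn problem} records that the estimate there is essentially sharp). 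Under Assumption~\ref{assn:whp} the maximal degree really can be of order $r^{d/\beta}$, so the per-excursion entropy $\tfrac{1}{2}\log D_r\sim\tfrac{d}{2\beta}\log r$ can exceed the per-step cost $c_q\log t$; in that regime the path-counting argument used for $U_2$ and $U_4$ in Section~\ref{sec:localisation} (Propositions~\ref{prop:U2 zero} and \ref{prop:U4 zero}) genuinely fails, as the paper points out explicitly at the start of Section~\ref{sec:singlesitelocalisation}. So your step ``the same degree/volume control that gives the almost-sure result also gives the in-probability result under the weaker assumption'' is not available --- it is exactly the place where Assumption~\ref{assn:as} is doing work that Assumption~\ref{assn:whp} does not.

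The paper's whp proof therefore avoids path-counting entirely. It replaces $\psi_t$ by the cruder functional $\overline{\psi}_t(v)=\xi(v)-\tfrac{|v|}{t}\log(|v|/et)$, which depends on $v$ only through $\xi(v)$ and $|v|$, and it splits the mass as $\tilde u_1+\tilde U_2+\tilde U_3+\tilde U_4$ according to whether the walk exits $B_{\overline R_t}$ (with $\overline R_t$ a slight enlargement of $|\hat Z^{(1)}_t|$), exits $B_{R(t)}$, or stays inside $B_{\overline R_t}$ without hitting $\hat Z^{(1)}_t$. Trajectories in the last class are then bounded, for each maximal radius $r$ reached, by $\exp\{t\sup_{v\in B_r\setminus\{\hat Z^{(1)}_t\}}\xi(v)\}\cdot\pr{\tau_{B_{r-1}}\le t}$, using the exit-time estimate of Lemma~\ref{lem:RW ball exit time restricted radii}; no enumeration of paths is needed, and the resulting exponent is $t\sup_{v\ne\hat Z^{(1)}_t}\overline\psi_t(v)$. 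The comparison with $u(t,\hat Z^{(1)}_t)$ is then carried out via Proposition~\ref{prop:total mass LB whp} and the whp gap estimate Lemma~\ref{lem:probgap whp} for $\overline\psi_t$, together with Lemma~\ref{lem:ZtequalZBR} identifying $\hat Z^{(1)}_t$ with the $\xi$-maximiser on $B_{\overline R_t}$, which is what feeds into the whp eigenfunction localisation (Lemmas~\ref{lem:boundeigenfunction whp}--\ref{lem:loceigenfunction whp}). One further side remark: your plan invokes convergence of the rescaled pairs $(z,\xi(z))$ to a Poisson point process; the paper deliberately does not prove or use such a limit, instead working with direct quantitative bounds (Section~\ref{sec:concentration site}), and says so in the introduction.
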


The main tool in our analysis is the Feynman-Kac formula, which allows us to represent the solution to \eqref{eqn:PAM} as
\begin{align}\label{eqn:FK formula}
    u(t,v) = \mathbb E_0\left[\exp\left\{\int_0^t\xi(X_s)~\text{d} s\right\}\mathbbm 1\{X_t=v\}\right]
\end{align}
for all $v \in V(\Ti)$, where $\left(X_s\right)_{s \geq 0}$ is a continuous time variable speed random walk on $\Ti$ with law $\mathbb P$ and we denote the corresponding expectation as $\mathbb E$. It is clear from the formula that random walk trajectories spending time at sites of higher potential will contribute more to the total mass, so to find the best trajectory we have to balance the high potential enjoyed at good sites against the cost of forcing the random walk to visit and then stay at that site.

Our strategy is broadly the same as that used to prove similar results for the PAM on $\Z^d$ in \cite{KLMStwocities09}, but the non-uniformity of the underlying graph presents extra challenges which take some effort to overcome. In particular, due to variable degree on $\Ti$ the random walk estimates are less precise, and the effect of the degree cannot be neglected when comparing the solution at different sites, since this quantifies the cost of forcing a random walk to remain at a given site. An important part of the localisation proof will be to prove an analogous localisation result for the principal eigenfunction of the Anderson Hamiltonian operator, and here in particular we have to work a lot harder to obtain the desired result. In addition, fluctuations in the tree $\Ti$ will lead to extra fluctuations in the solution of the PAM and the location of the concentration site, whereas on $\Z^d$ the behaviour of these quantities is determined solely by the realisation of the random potential.

In order to balance the potential at a site and the cost of reaching and then staying at it, we define the functional (cf \cite[Equation (1.9)]{KLMStwocities09}):
\begin{align}\label{eqn:psit def}
\begin{split}
\psi_t(v) &= \sup_{\rho \in [0,1]} \left\{ (1-\rho)(\xi(v)-\deg(v))-\frac{|v|}{t} \log \left(\frac{|v|}{\rho et}\right) \right\}  \\
&\geq \left[\xi(v)-\deg(v)-\frac{|v|}{t} \log( \xi(v)-\deg(v)) \right]\mathbbm{1}\{t(\xi(v)-\deg(v)) \geq |v|\}.
\end{split}
\end{align}
This can be understood as follows: $e^{t(1-\rho)\xi(v)}$ is the contribution to the exponential term in the Feynman-Kac formula \eqref{eqn:FK formula} if the random walk remains at site $v$ for a time duration $(1-\rho)t$, then $e^{t(1-\rho)\deg(v)}$ is the cost of forcing the random walk to remain at site $v$ for a time of $(1-\rho)t$ after reaching it, and we will see in Proposition \ref{prop:hitting time prob bounds} that $e^{-|v| \log \left(\frac{|v|}{\rho t}\right)}$ is roughly the probability that the random walk hits $v$ by time $\rho t$. At the best sites, it turns out that $u(t,v)$ is well-approximated by $e^{t\psi_t(v)}$.

The condition $t(\xi(v)-\deg(v)) \geq |v|$ ensures that the optimal value of $\rho = \frac{|v|}{t(\xi(v)-\deg(v))}$ in \eqref{eqn:psit def} is indeed in $[0,1]$. A similar functional for the $\Z^d$ case is defined in \cite[Equation (1.9)]{KLMStwocities09}, though they do not need to incorporate the degree term there. We define the processes $\hat Z_t^{(1)}$ and $\hat Z_t^{(2)}$ by
 \begin{equation}\label{eqn:Zt def intro}
     \hat Z_t^{(1)}:=\arg \max_{v \in V(T_\infty)} \psi_t(v), \hspace{1cm} \hat Z_t^{(2)}:=\arg \max_{v \in V(T_\infty) \setminus \{\hat Z^{(1)}_t\}} \psi_t(v).
 \end{equation}

We will see in Section \ref{sec:concentration site} that $\hat Z_t^{(1)}$ and $\hat Z_t^{(2)}$ are well-defined and in particular enjoy the following asymptotic.

\begin{theorem}\label{thm:Zt asymp intro} Let Assumption \ref{assn:whp} hold.
For each $i=1,2$, we have that
\[
\lim_{\lambda \to \infty} \liminf_{t \to \infty} P \left(\hat Z_t^{(i)} \in [\lambda^{-1} r(t), \lambda r(t)]\right) = 1.
\]
\end{theorem}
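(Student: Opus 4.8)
The plan is to bound $|\hat Z_t^{(i)}|$ above and below by $\lambda^{\pm1}r(t)$ separately, the workhorse throughout being the scaling identity $r(t)^d=a(t)^\alpha$ (equivalently $(q+1)d=q\alpha$), together with the volume estimates of \cref{cor:volume bounds} and the Pareto tail $\cPb(\xi(v)\ge x)=x^{-\alpha}$. From \eqref{eqn:psit def} I use only that $\psi_t(v)\le\xi(v)$ for every $v$, that $\psi_t(v)=\xi(v)-\deg(v)-\tfrac{|v|}{t}\log(\xi(v)-\deg(v))$ when $t(\xi(v)-\deg(v))\ge|v|$, and that $\psi_t(v)\le0$ when $|v|>et$ but $t(\xi(v)-\deg(v))<|v|$. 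Write $B_R=\{v\in V(\Ti):|v|\le R\}$. The first ingredient is a deterministic fact: for each small $\rho_0>0$ there is a constant $c(\rho_0)>0$, with $c(\rho_0)\downarrow0$ as $\rho_0\downarrow0$, such that any $v$ with $|v|\le\rho_0 r(t)$, $\deg(v)\le(\log t)^2$ and $\xi(v)\in[2\rho_0 q\,a(t),3\rho_0 q\,a(t)]$ has $\psi_t(v)\ge c(\rho_0)a(t)$ for all large $t$; this is because the hitting penalty then obeys $\tfrac{|v|}{t}\log(\xi(v)-\deg(v))\le\rho_0 q\,a(t)(1+o(1))$, using $\tfrac{r(t)}{t}\log a(t)\sim q\,a(t)$. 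The theorem then reduces to three events: \textup{(a)} no $v$ with $|v|>\lambda r(t)$ has $\psi_t(v)\ge c(\rho_0)a(t)$; \textup{(b)} $\max_{v\in B_{\lambda^{-1}r(t)}}\psi_t(v)<c(\rho_0)a(t)$; \textup{(c)} there exist at least two distinct $v$ with $\psi_t(v)\ge c(\rho_0)a(t)$. On $\textup{(a)}\cap\textup{(b)}\cap\textup{(c)}$ the two vertices supplied by (c) lie outside $B_{\lambda^{-1}r(t)}$ by (b), dominate every vertex outside $B_{\lambda r(t)}$ by (a) and every vertex inside $B_{\lambda^{-1}r(t)}$ by (b), so the two largest values of $\psi_t$ — attained at $\hat Z_t^{(1)},\hat Z_t^{(2)}$, whose well-definedness comes from $\psi_t$ being positive on only finitely many vertices a.s. — are both attained in $B_{\lambda r(t)}\setminus B_{\lambda^{-1}r(t)}$, which is the claim.

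For (a), split $\{|v|>\lambda r(t)\}$ into dyadic shells $S_k=\{2^k\lambda r(t)<|v|\le 2^{k+1}\lambda r(t)\}$, $k\ge0$. As $\psi_t$ is non-increasing in $|v|$ on this range, $\psi_t(v)\ge c(\rho_0)a(t)$ with $v\in S_k$ forces $\xi(v)\ge A_k$, where $A_k$ is the relevant root of $A-\tfrac{2^k\lambda r(t)}{t}\log A=c(\rho_0)a(t)$; since $\tfrac{r(t)}{t}\log a(t)\sim q\,a(t)$ this gives $A_k\gtrsim 2^k\lambda q\,a(t)$ once $\lambda$ is large. On a volume-typical event from \cref{cor:volume bounds}, a first-moment bound over the potential yields
\[
P\big(\exists\, v\in S_k:\ \psi_t(v)\ge c(\rho_0)a(t)\big)\ \lesssim\ |B_{2^{k+1}\lambda r(t)}|\,A_k^{-\alpha}\ \lesssim\ (2^k\lambda r(t))^{d}(2^k\lambda\,a(t))^{-\alpha}\ \asymp\ (2^k\lambda)^{d-\alpha},
\]
the last step using $r(t)^d=a(t)^\alpha$; summing the geometric series in $k$ (convergent since $d<\alpha$) bounds $P(\textup{(a)}^c)$ by $C_{\alpha,d}\lambda^{d-\alpha}$ up to the volume event, so it vanishes as $\lambda\to\infty$, uniformly in $t$. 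Statement (b) is easier: using $\psi_t(v)\le\xi(v)$, it suffices that the largest of the $\lesssim(\lambda^{-1}r(t))^d$ potentials in $B_{\lambda^{-1}r(t)}$ stay below $c(\rho_0)a(t)$, which by a first-moment bound fails with probability $\lesssim(\lambda^{-1}r(t))^d(c(\rho_0)a(t))^{-\alpha}\asymp c(\rho_0)^{-\alpha}\lambda^{-d}\to0$ as $\lambda\to\infty$, for fixed $\rho_0$. For (c), set $N=\#\{v:|v|\le\rho_0 r(t),\ \deg(v)\le(\log t)^2,\ \xi(v)\in[2\rho_0 q\,a(t),3\rho_0 q\,a(t)]\}$, so $\{N\ge2\}\subseteq\textup{(c)}$ by the deterministic fact. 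Conditionally on $\Ti$, $N$ is a sum of i.i.d.\ indicators with mean $\gtrsim|B_{\rho_0 r(t)}|\,(\rho_0 q\,a(t))^{-\alpha}$; using the lower volume bound of \cref{cor:volume bounds} and that a typical vertex of $B_{\rho_0 r(t)}$ has degree $\le(\log t)^2$, this mean is $\gtrsim\rho_0^{d}(\rho_0 q)^{-\alpha}(r(t)^d a(t)^{-\alpha})=q^{-\alpha}\rho_0^{\,d-\alpha}$, which blows up as $\rho_0\downarrow0$; an elementary binomial tail bound then gives $P(N\le1)\le\epsilon(\rho_0)$ with $\epsilon(\rho_0)\to0$ as $\rho_0\downarrow0$.

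The three pieces are assembled by a double-limit argument matching the quantifiers in the statement: given $\eta>0$, first pick $\rho_0$ small so that $\limsup_t P(\textup{(c)}^c)<\eta$, and fix a volume event of probability $>1-\eta$ uniformly in large $t$ carrying all the \cref{cor:volume bounds} estimates used in (a)--(c); then pick $\lambda$ large (depending on $\rho_0$) so that $P(\textup{(a)}^c),P(\textup{(b)}^c)<\eta$ for all large $t$. Since $\lambda\mapsto\liminf_t P(\hat Z_t^{(i)}\in[\lambda^{-1}r(t),\lambda r(t)])$ is non-decreasing, this proves the statement for both $i=1,2$; the case $i=1$ needs only the ``one good vertex'' version of (c), from the same count.

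The main obstacle is the interplay between the fluctuating geometry of $\Ti$ and the degree term in $\psi_t$. Under \cref{assn:whp} the volume $|B_R|$ is only of order $R^d$ ``on average'', with genuine polynomial-order fluctuations in both directions; the first-moment bound in (a) has to be run simultaneously over all shells $S_k$ with error probabilities $o_\lambda(1)$ uniformly in $t$ (not merely $o_t(1)$), which forces feeding \cref{cor:volume bounds} a slowly growing threshold on shell $k$, say $2^{\delta k}$ with $\delta<\alpha-d$ chosen so that the geometric sum still converges, and one needs the lower volume bound to stay effective at scale $\rho_0 r(t)$ as $\rho_0\downarrow0$. On top of this, since the offspring law is only in the domain of attraction of a $\beta$-stable law, degrees inside $B_{r(t)}$ can be polynomially large, so a ``good'' vertex must carry the restriction $\deg(v)\le(\log t)^2$, and one must separately rule out that a maximiser of $\psi_t$ is an atypically high-degree vertex; this is exactly what the $-\deg(v)$ term in \eqref{eqn:psit def} — the cost $e^{t\deg(v)}$ of pinning the walk at $v$ — is there to ensure, but it must be quantified inside the shell bound of (a). Everything else is a routine combination of the Pareto tail with the identity $r(t)^d=a(t)^\alpha$.
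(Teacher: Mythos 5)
Your three-way decomposition and the ``fix $\rho_0$, then take $\lambda$ large'' assembly match the paper's strategy in outline, but the volume control you use breaks the order of quantifiers. In (a) and (b) you condition on a ``volume-typical event from \cref{cor:volume bounds}'' and then run a first-moment count over $\xi$. However \cref{cor:volume bounds} is an \emph{almost-sure} statement carrying a polylogarithmic correction $\#B_r\lesssim r^d(\log r)^{(1+\epsilon)/(\beta-1)}$, so your shell bound is really $(2^k\lambda)^{d-\alpha}(\log r(t))^{(1+\epsilon)/(\beta-1)}$, and the log factor, $\asymp(\log t)^{(1+\epsilon)/(\beta-1)}$, diverges as $t\to\infty$ for each fixed $\lambda$. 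Since the theorem takes $\liminf_{t\to\infty}$ \emph{before} $\lambda\to\infty$, you need an error bound uniform in $t$, and this one is not. Your proposed repair --- threading a shell-dependent threshold $2^{\delta k}$ through \cref{prop:AR vol growth and exit time}(i) --- only tames the $k$-sum: the volume-overflow probabilities then add up to a positive constant that does not depend on $\lambda$, so the bad event still does not vanish in the double limit. The same defect appears in (c): the lower volume bound of \cref{cor:volume bounds} carries a $(\log\log r)^{-(d+\epsilon)}$ factor, so for fixed $\rho_0$ the conditional mean of your $N$ tends to $0$ as $t\to\infty$ and $\liminf_t P(N\geq 2)$ need not be near $1$.

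What is missing is a $\lambda$-dependent probability version of the volume control, and the paper has exactly this. Lemma~\ref{cor:whppotbo} cuts $\#B_r$ at a $\lambda$-dependent level $r^d\lambda^p$ via \cref{prop:AR vol growth and exit time}(i), so that both the overflow probability and the first-moment term decay like a power of $\lambda$ uniformly in $t$, giving $P(\sup_{B_r}\xi\geq r^{d/\alpha}\lambda)\lesssim\lambda^{-(\alpha-\epsilon)/d}$. Running this over dyadic shells with thresholds $\asymp 2^k\lambda\,a(t)$ is precisely the paper's display~\eqref{eqn:annulus max xi lambda} and yields $P(\textup{(a)}^c)\lesssim\lambda^{-c(\alpha,d)}$; a single application of Lemma~\ref{cor:whppotbo} with $r=\lambda^{-1}r(t)$ handles (b). For (c), replace your restriction $\deg v\leq(\log t)^2$ by the paper's set $\tilde B_r=\{v\in B_r:\deg v\leq 4\}$ and use \cref{lem:deg4set} with a $\rho_0$-dependent threshold $\mu=\mu(\rho_0)\to\infty$ as $\rho_0\downarrow 0$: then
\[
P\bigl(\#\tilde B_{\rho_0 r(t)}\leq(\rho_0 r(t))^d\mu^{-1}\bigr)\ \lesssim\ e^{-c\mu^{(\beta-1)/\beta}}+(\rho_0 r(t)\mu^{-1})^{-(\beta-1-\epsilon)},
\]
where the first term is small in $\rho_0$ and the second is $o(1)$ in $t$, while the conditional mean of $N$ is $\gtrsim\rho_0^{d-\alpha}\mu^{-1}$, which blows up if, say, $\mu(\rho_0)=\rho_0^{-(\alpha-d)/2}$. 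With these substitutions your argument closes; as written it does not.
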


 Our final result approximates the solution and follows by similar arguments to \cite[Theorem 1.4]{OrtgieseRobertsIntermittency}, though extra care is needed to deal with the unbounded degrees in our case. Analogously to \cite{OrtgieseRobertsIntermittency}, to state the result, for $v \in V(\Ti)$ we define 
\begin{equation}\label{eqn:lambda intro def}
\lambda(t,v) := \sup_{y \in V(\Ti)} \left\{ t(\xi(y) - \deg y) - |y| \log \left( \frac{|y|}{te}\right) -  |v-y| \log \left(\frac{|v-y|}{te}\right) \right\},
\end{equation}
and $\lambda_+(t,v) = \lambda(t,v) \vee 0$. Note the similarity with the terms in $\psi_t$ in \eqref{eqn:psit def}.

\begin{theorem}\label{thm:log conv result intro}
Let Assumption \ref{assn:whp} hold. With high P-probability,
\[
\left( ta(t) \right)^{-1} \sup_{v \in V(\Ti)} |\log_+ (u(t,v)) - \lambda_+(t,v)| \to 0
\]
\end{theorem}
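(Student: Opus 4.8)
The plan is to obtain matching upper and lower bounds on $\log_+ u(t,v)$ in terms of $\lambda_+(t,v)$, uniformly in $v$, using the Feynman-Kac representation \eqref{eqn:FK formula}. The quantity $\lambda(t,v)$ is designed so that, for the optimal intermediate vertex $y$, the three terms read as: the exponential gain $t(\xi(y)-\deg y)$ from sitting at $y$, the cost $|y|\log(|y|/te)$ of walking from $O$ to $y$, and the further cost $|v-y|\log(|v-y|/te)$ of walking from $y$ to $v$. So morally this is a two-stage version of the single-site functional $\psi_t$, and I expect the proof to parallel that of \cite[Theorem 1.4]{OrtgieseRobertsIntermittency} with the degree corrections tracked carefully.

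\textbf{Lower bound.} For any fixed candidate vertex $y$, restrict the Feynman-Kac expectation to the single trajectory that goes directly from $O$ to $y$ (a geodesic of length $|y|$), waits at $y$ for essentially all of the remaining time, then runs directly from $y$ to $v$. The hitting-time estimates of Proposition \ref{prop:hitting time prob bounds} give that such a direct traversal of a path of length $\ell$ by time $s$ has probability roughly $e^{-\ell \log(\ell/se)}$ (up to lower-order corrections involving the degrees along the path, which are controlled via Assumption \ref{assn:whp} and the volume bounds of Corollary \ref{cor:volume bounds}), while the holding cost at $y$ contributes $e^{-t\deg(y)(1+o(1))}$ and the potential contributes $e^{t\xi(y)(1+o(1))}$. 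Optimising over $y$ and over the time split between the three stages reproduces $\lambda(t,v)$ up to an error that is $o(ta(t))$ uniformly in $v$; one must check the split can be chosen legitimately (each stage gets positive time), which is exactly where the indicator/positivity structure in $\lambda_+$ enters. Taking $y$ to be the argmax in \eqref{eqn:lambda intro def} then gives $\log_+ u(t,v) \geq \lambda_+(t,v) - o(ta(t))$.

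\textbf{Upper bound.} Here the trajectory appearing in \eqref{eqn:FK formula} is arbitrary. The standard device is to partition paths according to the highest-potential vertex $y$ they visit, and bound the time spent at $y$ by $t$, the time spent anywhere else by the total time, so that $\exp\{\int_0^t \xi(X_s)\,ds\} \leq e^{t\xi(y)} \cdot (\text{correction for excursions})$. One then sums over $y$ in the ball of radius $\sim r(t)$ (outside of which, by the argument already used for Theorem \ref{thm:total mass asymp intro} and Theorem \ref{thm:Zt asymp intro}, nothing of consequence happens with high $P$-probability), bounding the number of vertices and the number of paths between them using the volume control from Corollary \ref{cor:volume bounds}; the degree corrections along excursions are absorbed into the $o(ta(t))$ error. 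The combinatorial factor (number of length-$\ell$ paths, number of eligible $y$) is only $e^{o(ta(t))}$ because $r(t)$ is only polynomial in $t$ whereas $ta(t)$ is super-polynomial, which is the reason the result is clean. This yields $\log_+ u(t,v) \leq \max_y \{\,\dots\,\} + o(ta(t)) = \lambda_+(t,v) + o(ta(t))$, and since all the error bounds are uniform in $v$, taking the supremum over $v$ finishes the proof.

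\textbf{Main obstacle.} The delicate point is making every error term uniform in $v \in V(\Ti)$ while also being honest about the variable degrees: the hitting-time bounds of Proposition \ref{prop:hitting time prob bounds} carry degree-dependent prefactors, and along a path of length up to $r(t)$ these accumulate. I would handle this by using Assumption \ref{assn:whp} (or, where needed, \ref{assn:as}) to show that the sum of degrees along any geodesic of length $\ell \leq r(t)$ in $\Ti$ is $o(ta(t))$ with the relevant probability — essentially because degrees have light tails and $r(t) \ll ta(t)$ — so that these corrections are swept into the error. A secondary subtlety is that $\lambda(t,v)$ itself is a supremum over an a priori unbounded set of vertices $y$; one must first argue (again via the $r(t)$ localisation) that the supremum is effectively attained within a ball of radius $O(r(t))$ around $O$ and within distance $O(r(t))$ of $v$, so that both the analytic optimisation in the lower bound and the union bound in the upper bound are over a set of only polynomially many vertices.
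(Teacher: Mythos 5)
Your overall architecture (direct trajectory for the lower bound, decomposition by highest-potential vertex plus a ball-localisation for the upper bound, with degree corrections swept into the error) matches the paper's argument, and your identification of the main obstacles — uniformity in $v$, controlling degree sums along geodesics via Lemma~\ref{lem:Br log sum bound}, and localising the supremum in $\lambda(t,v)$ to a ball of polynomial radius (the paper's Lemma~\ref{lem:sup in ball}) — is correct in spirit.

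However, there is a genuine gap in the lower bound. Your plan has the walk go directly from $y$ to $v$ and then sit at $v$ until time $t$. But the Feynman--Kac functional requires $X_t = v$, and the probability of remaining at $v$ for a duration $\Delta$ is $e^{-\Delta \deg v}$. If $\deg v$ is large relative to $a(t)$ — which does happen on the event being estimated, since under Assumption~\ref{assn:whp} the maximal degree in $B_{r(t)}$ is of order $r(t)^{d/\beta} \gg a(t)$ because $\alpha > \beta$ — then even $\Delta \approx t/\log t$ already costs more than $e^{-t a(t)}$, and your error term cannot absorb it. Nor can you avoid this by making $\Delta$ arbitrarily small, because the arrival time of the direct trajectory at $v$ is a random sum of exponentials and does not concentrate in a window narrower than $\Theta(1/\deg v)$. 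The paper handles this by a dedicated argument (Lemma~\ref{lem:high deg next to leaf} and Case~2 of the proof): with high probability, every vertex $v$ with $\deg v > a(t)$ has a neighbouring leaf; the walk is sent to that leaf (degree~$1$, so its holding time is a tame $\textsf{Exp}(1)$), parked there, and then made to jump to $v$ in a window of width $\Theta(1/\deg v)$ just before time $t$. This trades the exponential cost $e^{-\Delta \deg v}$ for a polynomial cost $\Theta(1/\deg v) = e^{-O(\log t)} = e^{-o(ta(t))}$. Without this case split the lower bound fails at high-degree endpoints, so the supremum over $v \in V(\Ti)$ in the statement is not controlled by your argument as written.

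A secondary, smaller omission: in the upper bound, after summing over the highest-potential vertex $y$, the dominant term picks up an extra additive factor $t\deg \tilde{Y}(t,v)$ beyond $\lambda(t,v)$ (because $\lambda$ subtracts $\deg y$ but the crude bound $\exp\{\int \xi\} \leq e^{t\xi(y)}$ does not). One needs a dedicated bound on the degree of the optimising vertex $Y(t,v)$ (or $\tilde{Y}(t,v)$), uniformly in $v$; this is the content of the paper's Lemma~\ref{lem:Y(t,v) deg bound}, which requires an argument different from a bare tail bound since these optimisers are not uniformly chosen. You wave at this with "absorbed into the error" but the mechanism deserves spelling out.
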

The intuition here is that the vertex $y$ achieving the supremum in \eqref{eqn:lambda intro def} represents the ``best'' vertex that has the right balance between being close to $v$, and having high potential. The main contribution to the solution at $v$ comes from particles that spend most of the time interval $[0,t]$ at site $y$ where they reproduce at rate $\xi(y)$, and then dart to $v$ just before time $t$. The total number of particles obtained in this way is roughly equal to $e^{t \xi(y)}$, and the cost of darting quickly from $O$ to $y$ and then quickly from $y$ to $v$ is roughly equal to the exponential of the subtracted terms in \eqref{eqn:lambda intro def}.

We note that in the case of the PAM on $\Z^d$, the authors in \cite{KLMStwocities09} obtain stronger results than those in Theorems \ref{thm:total mass asymp intro} and \ref{thm:Zt asymp intro} by identifying the scaling limit of the relevant quantities. This is achieved by showing that the scaling limit of the random potential field $\left(\xi(z):z \in \Z^d\right)$ can be represented by a Poisson point process in $\R^d$ and then interpreting the other scaling limits in terms of this point process. Since Gromov-Hausdorff-Prokhorov scaling limits of critical Galton-Watson trees are well understood, we anticipate that a similar approach could be employed in our setting, and in this case the limiting Poisson point process would have a similar density but with respect to the canonical mass measure on the continuum random sin-trees of Aldous \cite{AldousCRTII} (in the finite variance case) and Duquesne \cite{DuqSinTree} (in the $\beta$-stable case). Due to space considerations, we have not attempted to prove this result in this article, but we believe this makes our proofs more robust since we have not relied on on any information about the limiting random variables and their densities, which is not the case in \cite{KLMStwocities09}.

Although the results of this paper are restricted to Galton-Watson trees and in certain proofs we take advantage of the tree structure, we anticipate that the PAM should exhibit similar behaviour on other critical random graphs, such as the Uniform Infinite Planar Triangulation, critical Erd\"os-R\'enyi graphs, and the critical configuration model. Moreover, critical Galton-Watson trees are conjectured and in many cases known to be good toy models for a range of stochastic structures such as high-dimensional critical percolation clusters and uniform spanning trees, and some classes of random planar maps, so we also expect that the PAM on these more complicated models will display similar behaviour.

This paper is organised as follows. In Section \ref{sec:criticalGWT} we visit the definition of a critical Galton-Watson tree conditioned to survive and we derive several estimates regarding its volume growth. Furthermore, we also give bounds for the random walk defined on $T_\infty$, which will be needed later on. In Section \ref{sec:Potential} we analyse extremal values of the potential on certain subsets of $T_\infty$. In Section \ref{sctn:PAM on Ti} we prove the asymptotic behaviour stated in Theorem \ref{thm:total mass asymp intro} and \ref{thm:Zt asymp intro}, and the approximation for the solution as given in Theorem \ref{thm:log conv result intro}. Subsequently, we are concerned with proving the localisation result. In particular, in Section \ref{sec:concentration site} we derive several estimates for the first maximisers of the functional defined in \eqref{eqn:psit def} and the gap between the largest values of this functional. Section \ref{sec:spectralresults} contains the spectral analysis that we mentioned above, that is, we prove that the support of the principal eigenfunction of the Anderson Hamiltonian restricted to certain time-dependent subsets is strongly concentrated as time goes to infinity. Finally, these auxiliary results are used to prove the almost sure two-site localisation of the PAM on $T_\infty$ (Theorem \ref{thm:main a.s. localisation intro}) in Section \ref{sec:localisation} and the with high probability one-site localisation (Theorem \ref{thm:main whp localisation intro}) in Section \ref{sec:singlesitelocalisation}.

\textbf{Acknowledgements.} We would like to thank Nadia Sidorova for suggesting to look at PAM on trees. EA would also like to thank Matt Roberts and Marcel Ortgiese for helpful discussions. The research of EA was supported by an LMS Early Career Fellowship and ERC starting grant 676970 RANDGEOM. AP was supported by the German Academic Exchange Service and a Lichtenberg-Professorship (Christian Kuehn).

\textbf{Notation.}
To facilitate reading we summarise notation in the following table.
\begin{table}[H]
\begin{tabular}{|l|l|}
\hline
Notation & Definition \\
\hline
$\alpha$& Pareto parameter for the potential $\xi$\\
$\beta$& parameter for the GW offspring distribution, in $(1,2]$\\
$d$ & $\frac{\beta}{\beta-1}$\\
$\mathbb{T}_{\infty}$& set of rooted trees with a single (one-ended) infinite path to infinity\\
$O=s_0$ & root of $T_\infty$\\
$s_0,s_1,s_2,...$& ordered vertices of the backbone of $T_\infty$ \\
$u\prec v$& $u$ is an ancestor of $v$\\
$[[u,v]]$ & (unique) direct path from $u$ to $v$, inclusive of endpoints\\
$|u-v|$, $|v|$& $|u-v|$ is the graph distance between $u$ and $v$, $|v|:=|O-v|$\\
$B(v,r)$, $B_r$& $B(v,r)$ closed ball in $\Ti$ of radius $r$ around $v$, $B_r:=B(O,r)$\\
$B_r^{T_v}$&$B_r^{T_v} = B(v, r) \cap T_v$ for subtree $T_v$ of $\Ti$ \\
$A_r$& connected component containing $O$ obtained after removing the vertex $s_r$ from $T_\infty$ \\
$V(\mathcal T)$& set of vertices of a tree/subtree $\mathcal T$\\
$\#S$& number of vertices in $S$\\
$\Height(\mathcal T)$&$\Height(\mathcal T):=\max_{v\in V(\mathcal T)}|\tilde O-v|$ for a finite tree $\mathcal T$ with root $\tilde O$\\
$\tilde T$&$\{v\in V(T_\infty):\deg(v)\leq 4\}$\\
$\tilde B_r$& $\{v\in B_r:\deg(v)\leq 4\}$\\
$D_r$& $\sup_{v\in B_r}\deg(v)$ \\
$\mathcal{L}$& $\{u \in \Ti: \deg u = 1\}$ \\
$p(u)$& parent of $u$ in $\Ti$ \\
$\ell (u)$& leftmost child of $u$ that is also a leaf \\
$\eta_r(n,v)$& number of distinct paths of length $n$ starting at $O$, passing through $v$, fully contained in $B_r$ \\
$\xi$& the potential \\
$\xi^{(i)}_r$& the $i^{th}$ highest value of $\xi$ on $B_r$ \\
$\tilde{\xi}^{(i)}_r$& the $i^{th}$ highest value of $\xi$ on $\tilde{B}_r$ \\
$q$& $\frac{d}{\alpha - d}$ \\
$a(t)$& $\left(\frac{t}{\log t} \right)^{q}$ \\
$r(t)$& $\left(\frac{t}{\log t} \right)^{q+1}$ \\
$z$& $\frac{d\alpha}{\alpha - d}$. \\
$p$&$q+2$ \\
$R(t)$& $r(t) (\log t)^{p}$ \\
$C$&$\frac{pd}{\alpha}+z+1$\\
$F_t$& $\{v\in V(B_{R(t)}):\xi(v) \geq R(t)^{\frac{d}{\alpha}} (\log t)^{-C}\}$ \\
$E_t$& $\{v \in V(B_{R(t)}) \setminus F_t: \exists z \in F_t \text{ such that } \xi(z) - \deg (z) \leq \xi(v) - \deg (v)\}$ \\
$\psi_t(v)$ & $\left[ (\xi(v)-\deg(v))-\frac{|v|}{t} \log \left(\xi(v) - \deg (v)\right)\right]\mathbbm 1\{t(\xi(v)-\deg(v)) \geq |v|\}$ \\
$\lambda(t,v,y)$& $t(\xi(y) - \deg y) - |y| [\log |y| - \log t - 1] -  |v-y| [\log |v-y| - \log t - 1]$ \\
$\lambda(t,v)$& $\sup_{y \in \Ti} \lambda(t,v,y)$ \\
$Y(t,v)$& $\arg \max_{y \in B_{r(t)(\log t)^{m}}} \left\{ \lambda(t,v,y) \right\}$, where $m$ is a constant in $(q, \infty)$ \\
$\tilde{Y}(t,v)$& $\arg \max_{y \in B_{r(t)(\log t)^{m}}} \left\{ \lambda(t,v,y) + t \deg y \right\}$ \\
$\hat Z_t^{(1)}$ & $\arg \max_{v \in V(T_\infty)} \psi_t(v)$\\
$\hat Z_t^{(2)}$&$\arg \max_{v \in V(T_\infty) \setminus \{\hat Z^{(1)}_t\}} \psi_t(v)$\\
$Z_{B_r}$ & $\arg_{z \in B_r} \max \{\xi(z) \}$ \\
$\tilde{Z}_{B_r}$ &$\arg_{z \in B_r} \max \{\xi(z) - \deg(z) \}$ \\
$g_{B_r}$ &$\xi(Z_{B_r}) - \max_{z \in B_r, z \neq Z_{B_r}} \{\xi(z) \}$ \\
$\tilde{g}_{B_r}$ &$\xi(\tilde Z_{B_r}) - \deg(\tilde Z_{B_r}) - \max_{z \in B_r, z \neq \tilde Z_{B_r}} \{\xi(z) - \deg(z) \}$\\
$j_t$&$t^{\frac{d+\epsilon}{\beta}+1}r(t)$\\
\hline
\end{tabular}
\end{table}

\begin{table}[H]
\begin{tabular}{|l|l|}
\hline
Notation & Definition \\
\hline
$\Gamma^{(i)}_t$&$ \left\{ v \in \Ti: d(v, \hat{Z}_t^{(i)}) + \min \{ |v|, |\hat{Z}_t^{(i)}|\} \leq  \left( 1 + (\log t)^{-z}\right)|\hat{Z}_t^{(i)}|\} \right\}$ \\
$\Lambda_t$&$\Gamma_t^{(1)}\cup \Gamma_t^{(2)}$\\
$\Omega_t$&$\{\hat Z_t^{(1)},\hat Z_t^{(2)}\}$\\
$\Gamma^{(i)}_{x,t}$& the set of all paths from $x$ to $ \hat{Z}^{(i)}_t$\\
$\gamma^{(i)}_{x,t}$ &
the direct path from $x$ to $\hat{Z}_t^{(i)}$ \\
$\pi(X_{[0,t]})$& the path that consists of all the steps taken by the random walk $(X_s)_{s\geq 0}$  between times $0$ and $t$.  \\
$\overline{\psi}_t(v)$& $\xi(v)-\frac{|v|}{t} \log \left( \frac{|v|}{t}\right)$ \\
\hline
\end{tabular}
\end{table}

\section{Critical Galton-Watson trees conditioned to survive}
\label{sec:criticalGWT}

We assume that the reader is familiar with the definition of a Galton-Watson tree with offspring distribution $\mu$, and simply recall that such a Galton-Watson tree can be formed from a single individual who reproduces with offspring number distributed according to $\mu$, and that individuals in subsequent generations continue to reproduce independently with offspring distribution $\mu$, ad infinitum (or until the process dies out). We refer to \cite{AbDelGWIntro} for further background and definitions.

In this paper, we will restrict to the case of \textit{critical} GW trees where $\sum_{k=0}^{\infty} k \mu(k) = 1$. We let $T$ denote such a GW tree, and $O$ its root. For a given vertex $v \in T$, its offspring number $k_v$ is equal to the number of children under the natural genealogical structure (i.e. $k_v = \deg v -1$). Additionally, we will denote by $T_v$ the subtree containing $v$ and all its descendants under this genealogical structure, rooted at $v$.

In order to consider the long-time behaviour of the PAM on a Galton-Watson tree, it is natural to restrict to the case of infinite trees. A natural candidate for such a tree would be a supercritical Galton-Watson tree; however, in the case of a Pareto potential, the exponential volume growth in a supercritical Galton-Watson tree causes the solution of the PAM to blow up, almost surely; hence the restriction to critical Galton-Watson trees. Although a critical Galton-Watson tree is known to have zero probability of survival, Kesten \cite{KestenIICtree} gave a construction of a critical Galton-Watson tree conditioned to survive, and showed that it arises as the local limit of finite Galton-Watson trees conditioned to have large height \cite[Lemma 1.14]{KestenIICtree}. We give his construction below.

\begin{defn}\label{def:infinite crit tree}
Let $\mu=(\mu_n)_{n \geq 0}$ be a critical offspring distribution. Define its size-biased version $\mu^*$ by
$$\mu^*_n=n\mu_n$$
for all $n \geq 0$. The Kesten's tree $T_\infty$ associated to the probability distribution $\mu$ is a two-type Galton-Watson tree with the following properties: 
\begin{itemize}
\item Each individual is of one of two types: normal or special. 
\item The root of $T_\infty$ is special. 
\item A normal individual produces normal individuals according to $\mu$. 
\item A special individual produces individuals according to the size-biased distribution $\mu^*$. One of them is chosen uniformly at random. This individual is of type special, the rest of the produced individuals are of type normal. 
\end{itemize}
We let $\bPb$ denote the law of $\Ti$.
\end{defn}
We let $\mathbb{T}_{\infty}$ denote the set of rooted trees with a single (one-ended) infinite path to infinity. Note that, $\bPb$-almost surely, $\Ti \in \mathbb{T}_{\infty}$, since the special vertices form a unique one-ended infinite backbone of $\Ti$. We let $s_0, s_1, s_2, \ldots$ denote the ordered vertices of the backbone, so that $s_0$ is the root, and $s_n$ is at distance $n$ from the root. Moreover, each of the subtrees emanating from normal children of the backbone vertices are independent unconditioned Galton-Watson trees with offspring distribution $\mu$.

\medskip
\subsection{Properties of $\Ti$}

In this section we give some properties of $\Ti$ that we will use later in the paper, mainly regarding its volume growth. Since $\Ti$ ($\bPb$-almost surely) consists of a collection of finite Galton-Watson trees grafted to its infinite backbone, it will also be useful to state some bounds for an unconditioned Galton-Watson tree with the same offspring distribution $\mu$, which we denote by $T$.

\begin{lem}[{\cite[Lemma 1.11]{Kort}} and {\cite[Theorem 2]{SlackInfVar}}]\label{lem:finite GW prob bounds} Under Assumption \ref{assn:whp}, there exist constants $c,c' < \infty$ such that, as $n \rightarrow \infty$,
\begin{align*}
\prb{\# T \geq n} &\sim cn^{\frac{-1}{\beta}} \\
\prb{\Height (T) \geq n} &\sim c'n^{\frac{-1}{\beta-1}}.
\end{align*}
\end{lem}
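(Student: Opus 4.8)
Both bounds are classical facts about critical Galton--Watson trees in a stable domain of attraction and, as indicated, follow from the cited results of Kortchemski and Slack; I sketch how one derives them. Write $f(s)=\sum_k \mu_k s^k$ for the generating function of the offspring law, so $f'(1)=1$ by criticality, and recall that $\mu$ lying in the domain of normal attraction of a $\beta$-stable law is equivalent to the analytic expansion
\[
f(1-u) = 1 - u + b\, u^{\beta} + o(u^{\beta}) \qquad \text{as } u \downarrow 0,
\]
for some constant $b>0$; in the finite-variance case $\beta=2$ one has $b = \tfrac12 f''(1) = \tfrac12\sigma^2$ by Taylor expansion, and for $\beta\in(1,2)$ this follows from the polynomial tail assumption on $\mu$ by a standard Abelian argument. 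Together with Gnedenko's local limit theorem for stable domains, this expansion supplies all the analytic input.

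For the total progeny, I would begin from the Otter--Dwass (cycle lemma) identity $\prb{\#T = n} = \tfrac1n\,\prb{S_n = n-1}$, where $S_n = \sum_{i=1}^n Z_i$ with $Z_i$ i.i.d.\ of law $\mu$. Since $\Eb{Z_i}=1$, the centred walk satisfies $(S_n-n)/(\kappa n^{1/\beta}) \Rightarrow X$ with $X$ $\beta$-stable, and Gnedenko's uniform local limit theorem gives $\prb{S_n = n-1} = \prb{S_n - n = -1} = \bigl(g_\beta(0)+o(1)\bigr)\,(\kappa n^{1/\beta})^{-1}$, where $g_\beta$ is the density of $X$ (assuming $\mu$ aperiodic; otherwise one carries the lattice span). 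Hence $\prb{\#T = n} = \bigl(g_\beta(0)+o(1)\bigr)\kappa^{-1} n^{-1-1/\beta}$, and summing the tail against $\sum_{m\ge n} m^{-1-1/\beta} \sim \int_n^\infty x^{-1-1/\beta}\,dx = \beta n^{-1/\beta}$ --- the vanishing errors being absorbed uniformly in $m$ --- yields $\prb{\#T \ge n} \sim \beta\, g_\beta(0)\,\kappa^{-1} n^{-1/\beta}$, which is the first claim. (Alternatively, applying Karamata's Tauberian theorem to the generating function $\phi$ of $\#T$, which satisfies $\phi(s) = s\,f(\phi(s))$, gives $1-\phi(s) \sim (b^{-1}(1-s))^{1/\beta}$ and hence the same tail; this route sidesteps any uniformity issue in the local limit theorem.)

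For the height, I would use $\prb{\Height(T)\ge n} = \prb{Z_n\ge 1} = 1 - f_n(0) =: q_n$, where $Z_n$ is the $n$-th generation size and $f_n$ the $n$-fold iterate of $f$. Non-degeneracy and criticality force $f_n(0)\uparrow 1$, so $q_n\downarrow 0$, and the displayed expansion gives the recursion $q_{n+1} = 1 - f(1-q_n) = q_n - b\,q_n^{\beta} + o(q_n^{\beta})$. Setting $u_n := q_n^{-(\beta-1)}$, a short computation shows $u_{n+1} - u_n = (\beta-1)b + o(1)$, so $u_n/n \to (\beta-1)b$ by Cesàro, i.e.\ $q_n \sim \bigl((\beta-1)b\,n\bigr)^{-1/(\beta-1)}$. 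This is the second claim, and recovers Kolmogorov's $q_n \sim 2/(\sigma^2 n)$ when $\beta=2$.

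The main technical obstacle lies in the total-progeny estimate: one needs the local limit theorem for the increments of $S_n$ in the possibly infinite-variance $\beta$-stable domain with an error term uniform in the lattice point, so that it survives summation over $n$, and identifying the precise constant requires the value $g_\beta(0)$ (the Tauberian route trades this for the analytic expansion of $f$). The height estimate is comparatively routine once $q_n\to0$ is in hand, the only point to check being that the $o(q_n^{\beta})$ error in the recursion remains negligible after the substitution $u_n = q_n^{-(\beta-1)}$.
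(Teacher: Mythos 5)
The paper does not give its own proof of this lemma; it cites Kortchemski \cite{Kort} and Slack \cite{SlackInfVar} directly. Your sketch correctly reproduces the standard classical arguments behind those results: the Otter--Dwass identity $\prb{\#T=n}=\frac{1}{n}\prb{S_n=n-1}$ combined with Gnedenko's local limit theorem (or, equivalently, Karamata's Tauberian theorem applied to the Lagrange equation $\phi(s)=s f(\phi(s))$) for the progeny tail, and the Kolmogorov--Yaglom recursion $q_{n+1}=q_n-bq_n^{\beta}+o(q_n^{\beta})$ with the substitution $u_n=q_n^{-(\beta-1)}$ for the height tail. Both routes are sound, the constants you identify ($\beta g_\beta(0)\kappa^{-1}$ and $((\beta-1)b)^{-1/(\beta-1)}$) are correct, and the $\beta=2$ special case recovers the familiar $2/(\sigma^2 n)$ survival asymptotic, so the proposal is a faithful and essentially complete account of the proofs that live in the cited references.
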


\begin{prop}\label{prop:AR vol growth and exit time}
Take $r, \lambda > 1$. Then, for any $\epsilon > 0$, there exist constants $c,C \in (0, \infty)$ such that
\begin{enumerate}[(i)]
    \item \cite[Proposition 2.5]{CroydonKumagai08}. Under Assumption \ref{assn:whp}, $\prb{\#B_r \geq \lambda r^{\frac{\beta}{\beta -1}}} \leq C\lambda^{-(\beta -1 -\epsilon)}$.
    \item Under Assumption \ref{assn:as}, for any $K< \infty$ there exists $C< \infty$ such that $\prb{\#B_r \geq \lambda r^{2}} \leq C\lambda^{-K}$.
    \item \cite[Section 8.1]{ArcherRWDecGWTrees}. Under Assumption \ref{assn:whp}, $\prb{\#B_r \leq \lambda^{-1} r^{\frac{\beta}{\beta -1}}} \leq Ce^{-c \lambda^{\frac{\beta-1}{\beta}}}$.
\end{enumerate}
\end{prop}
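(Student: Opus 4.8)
Parts (i) and (iii) of the proposition are quoted directly from \cite{CroydonKumagai08} and \cite{ArcherRWDecGWTrees} (for (iii), after matching the conditioned-tree conventions), so the only new content is part (ii). Since Assumption \ref{assn:as} forces $\beta = 2$, part (ii) asks for $\prb{\#B_r \geq \lambda r^2} \leq C_K \lambda^{-K}$ for every $K < \infty$, uniformly in $r, \lambda > 1$. The plan is to prove the stronger moment estimate
\[
\Eb{(\#B_r)^K} \leq C_K \, r^{2K} \qquad \text{for every } K \geq 1,
\]
from which part (ii) follows by applying Markov's inequality to $(\#B_r)^K$. This is the correct order of magnitude, since $\Eb{\#B_r}$ is of order $r^2$ and $\#B_r / r^2$ converges in law to a non-degenerate limit with finite moments of all orders.

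To control the moments I would use Kesten's decomposition of $\Ti$ (Definition \ref{def:infinite crit tree}). Writing $N_i \sim \mu^*$ for the number of children of the backbone vertex $s_i$ and $T_{i,1}, \dots, T_{i, N_i - 1}$ for the (independent, unconditioned) Galton--Watson trees rooted at its normal children, one has the exact identity
\[
\#B_r = (r+1) + \sum_{i=0}^{r-1} S_i, \qquad S_i := \sum_{j=1}^{N_i - 1} \# B^{T_{i,j}}_{r - i - 1},
\]
and, crucially, the variables $(S_i)_{0 \leq i \leq r-1}$ are \emph{independent}, being measurable with respect to disjoint independent blocks of the two-type branching construction.

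The argument then splits into four steps. First, since Assumption \ref{assn:as} gives the offspring law $\mu$ (and hence its size-biasing $\mu^*$) finite moments of all orders, a short induction on the branching recursion $\econdb{Z_k^a}{Z_{k-1}} = g_a(Z_{k-1})$ --- with $g_a$ a degree-$a$ polynomial with leading coefficient $1$, subleading coefficient $\binom{a}{2}\sigma^2$, and all coefficients finite --- yields $\Eb{Z_k^a} \leq C_a (k+1)^{a-1}$ for the generation sizes $Z_k$ of an unconditioned critical GW tree. Second, since $\#B^T_h = \sum_{k=0}^h Z_k$, Minkowski's inequality combined with the first step gives $\Eb{(\#B^T_h)^a}^{1/a} \leq \sum_{k=0}^h \Eb{Z_k^a}^{1/a} \leq C_a (h+1)^{2 - 1/a}$, i.e.\ $\Eb{(\#B^T_h)^a} \leq C_a (h+1)^{2a - 1}$. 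Third, conditioning on $N_i$ and applying Minkowski once more to the (then deterministic-length) inner sum, and using $\Eb{N_i^a} = \Eb{\mu^{a+1}} \leq C_a$, gives $\Eb{S_i^a} \leq \Eb{(N_i - 1)^a}\, \Eb{(\#B^T_{r - i - 1})^a} \leq C_a (r - i)^{2a - 1}$, while $\Eb{S_i} = \sigma^2 (r - i)$. Fourth, to sum over $i$ one cannot use Minkowski (it is too lossy, see below), so I would invoke Rosenthal's inequality for the independent, centred summands $S_i - \Eb{S_i}$: by the third step $\sum_i \mathrm{Var}(S_i) \leq \sum_i \Eb{S_i^2} \leq C \sum_{k \leq r} k^3 \leq C r^4$ and $\sum_i \Eb{|S_i - \Eb{S_i}|^K} \leq 2^K \sum_i \Eb{S_i^K} \leq C_K \sum_{k \leq r} k^{2K - 1} \leq C_K r^{2K}$, whence $\Eb{(\sum_i (S_i - \Eb{S_i}))^K} \leq C_K r^{2K}$. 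Combining this with $\sum_i \Eb{S_i} \leq \sigma^2 r^2$ and the trivial $(r+1)^K$ term gives $\Eb{(\#B_r)^K} \leq C_K r^{2K}$, uniformly in $r \geq 1$, which also handles the small-$r$ regime automatically.

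The main obstacle is obtaining the \emph{sharp} exponents in the first two steps: one genuinely needs $\Eb{Z_k^a} \leq C_a (k+1)^{a-1}$, and hence $\Eb{(\#B^T_h)^a} \leq C_a (h+1)^{2a-1}$, which encodes the fact that a critical GW tree reaches generation $h$ only with probability of order $h^{-1}$. The softer bound $\#B^T_h \leq (h+1) \max_{k \leq h} Z_k$ produces only the exponent $2a$, which after the summation in the fourth step would give $\Eb{(\#B_r)^K} \leq C_K r^{3K}$ --- too weak to conclude. One must also be careful about \emph{where} Minkowski is permissible: it is fine for $S_i$, because the high moments of a random sum of i.i.d.\ truncated-subtree sizes are dominated by the event that a single subtree is atypically large, which Minkowski captures with the correct order; but it is too lossy for $\sum_i S_i$, where no single term dominates and one truly needs the concentration afforded by independence. (Alternatively, step four can be carried out by directly expanding $(\sum_i S_i)^K$, grouping the multi-indices by coincidence pattern, and using independence to factor the resulting expectations; this gives the same bound.)
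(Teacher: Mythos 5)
Your proof is correct, and it takes a genuinely different route from the paper's. The paper's proof of part (ii) is geometric and combinatorial: it identifies ``tall'' subtrees hanging off the backbone (those with height exceeding $r\lambda^b$), re-decomposes each tall subtree along its own spine, indexes the resulting cascade by an auxiliary subcritical branching process, and then combines Dwass-type bounds on its total progeny with a subexponential estimate for the sum of the small subtrees' volumes (the tools in \cite[Lemma A.1]{ArcherRWDecGWTrees} and \cite{KortSubexp}). Your argument replaces all of this by a moment estimate: the clean identity $\#B_r = (r+1) + \sum_{i<r} S_i$ with $(S_i)$ independent, the sharp polynomial moment bounds $\Eb{Z_k^a}\leq C_a(k+1)^{a-1}$ and $\Eb{(\#B^T_h)^a}\leq C_a(h+1)^{2a-1}$, and Rosenthal's inequality for the centred sum. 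The key technical points you correctly flag are (a) that Minkowski must not be used across the backbone index $i$, only inside each $S_i$, and (b) that one genuinely needs the exponent $a-1$ rather than $a$ in the generation-size bound. Both approaches deliver the same polynomial tail $\lambda^{-K}$. Yours is shorter, more transparent, and more robust in that it isolates exactly what is used (all moments of $\mu$ finite under Assumption \ref{assn:as}); the paper's approach is more local and geometric, and variants of its intermediate objects (spines, tall subtrees, the auxiliary branching process) are reused elsewhere in the paper, which presumably motivated presenting part (ii) in the same framework. Both are valid proofs.
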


\begin{proof} We just prove $(ii)$. Take some $\epsilon >0$ and set $b=a+\epsilon=\frac{1}{2}(1-4\epsilon)$. We bound $B_r$ by considering all subtrees attached to the backbone within distance $r$ of the root. We say that a subtree is ``tall'' if it has height exceeding $r \lambda^b$. We start by identifying all tall subtrees attached to the backbone. If a subtree is tall, we can similarly decompose this subtree into subsubtrees by decomposing along its (leftmost) path of maximal height, which we call its spine, and again identify all tall subsubtrees that are attached somewhere within distance $r$ of the root of the subtree. We can then keep repeating this process on tall subsubtrees in an inductive way until we eventually end with something as in Figure \ref{fig:tall subtrees1}.

\begin{figure}[h]
\centering
\begin{minipage}[t]{0.45\linewidth}
\centering
{\begin{tikzpicture}[]
\draw (0,0) -- (6,0);
\draw (0,0) node[align=left,   below]{$O$};
\draw[dashed](6,0) -- (7,0);
\draw [<->] (0,-2)--(6,-2);
\draw (3.5,-2) node[align=center,below] {$r$};
\draw[color=magenta] (1,0)--(2,1);
\draw[color=magenta] (2.5,0)--(4,-1.5);
\draw[color=cyan] (3,-0.5)--(3,-1.5);
\draw[color=magenta] (4.5,0)--(6,1.5);
\draw[color=cyan] (5,0.5)--(5,1.5);
\draw[color=orange] (5,1)--(4.5,1.3);
\draw[color=magenta](5.5,0)--(6,-1);
\draw[color=cyan] (5.8,-0.6)--(5,-1.5);
\end{tikzpicture}}
\caption{Spines of tall subtrees}
\label{fig:tall subtrees1}
\end{minipage}
\begin{minipage}[t]{0.45\linewidth}
\centering
{\begin{tikzpicture}[]
\draw (0,0) -- (4,0);
\draw (0,0) node[align=left,below, color=white]{$O$};
\draw (3.5,-2) node[align=center,below, color=white]{$r$};
\draw (2.5,0)--(4,-1.5);
\draw (1.5,0)--(3,1.5);
\draw[color=red] (0.2,0)--(0.4,0.4);
\draw[color=red] (1,0)--(1.2,0.6);
\draw[color=darkgreen] (2,0)--(2.2,0.6);
\draw[color=darkgreen] (3,0)--(3.5,0.4);
\draw[color=darkgreen] (3.7,0)--(4,-0.8);
\draw[color=blue] (1.7,0)--(2.5,-0.4);
\draw[color=blue] (0.7,0)--(2,-0.8);
\end{tikzpicture}}
\caption{One tall subtree $T$}
\label{fig:tall subtrees2}
\end{minipage}
\end{figure}
This collection of tall subtrees can be indexed by a different branching process whereby the offspring of a given tall subtree correspond to the tall subtrees attached to its spine (so in Figure \ref{fig:tall subtrees1}, different colours represent subsequent generations). We let the total number of tall subtrees (including the original tree $\Ti$) be $N$. Then $N$ is an $O(1)$ random variable and moreover we can control $\#B_r$ by controlling the masses of all the non-tall subtrees in $B_r$.

Given a tall subtree $T$, we define its $r$-spine to be the collection of vertices in the spine of $T$ that also lie within distance $r$ of the root of $T$, and let $Sp_T(r)$ denote the set offspring of vertices in the $r$-spine of $T$. By Definition \ref{def:infinite crit tree} and \cite[Lemma 2.1]{GeigerKersting1998galton} the offspring tails on the $r$-spine are at most size-biased, so under Assumption \ref{assn:as} they still decay super-polynomially. Consequently, for any $z>0$ we have that $\prb{\#Sp_T(r)\geq r\lambda^a} \leq c_z \lambda^{-az}$. In particular, it follows from a union bound that for the first $\lambda^{\epsilon}$ tall subtrees present in Figure \ref{fig:tall subtrees1} (say ordered in a breadth-first order), the probability that one of them has $\#Sp_T(r)\geq r\lambda^a$ is at most $c_z \lambda^{\epsilon-az}$. Additionally, if $\#Sp_T(r)< r\lambda^a$, then the number of tall subtrees attached to the $r$-spine of $T$ is at most a \textsf{Binomial}($r\lambda^a, cr^{-1}\lambda^{-b}$) random variable by Lemma \ref{lem:finite GW prob bounds}. Consequently, it follows from the main theorem of \cite{Dwass1969} applied to the indexing branching process that
\begin{align}\label{eqn:vol UB 1}
\begin{split}
\prb{N \geq \lambda^{\epsilon}} &\leq \prb{ \text{one of the first }\lambda^{\epsilon} \text{ tall subtrees has }\#Sp_T(r)\geq r\lambda^a} \\
&\quad + \prb{N \geq \lambda^{\epsilon}, \text{none of the first }\lambda^{\epsilon} \text{ tall subtrees has }\#Sp_T(r)\geq r\lambda^a}  \\
&\leq  c_z \lambda^{\epsilon-az} + \prb{GW_{\textsf{Binomial}(r\lambda^a, cr^{-1}\lambda^{-b})} \geq \lambda^{\epsilon}} \\
&\leq  c_z \lambda^{\epsilon-az} + e^{-c\lambda^{\epsilon}}.
\end{split}
\end{align}
(e.g. see \cite[Proposition 5.1]{archer2021brownian} for a similar proof).
Also let $M$ denote the number of subtrees attached to the $r$-spine of some tall subtree, with volume exceeding $r^2 \lambda^{2b+2\epsilon}$ but with height less than $r \lambda^b$. On the event $N<\lambda^{\epsilon}$ and $\#Sp_T(r)< r\lambda^a$ for all tall subtrees, we have from \cite[page 5]{KortSubexp} that $M$ is stochastically dominated by a \textsf{Binomial}($r\lambda^{a+\epsilon}, cr^{-1} \lambda^{-b-\epsilon}e^{-c\lambda^{\epsilon}}$) random variable, so that by Markov's inequality,
\begin{equation}\label{eqn:vol UB 2}
\prb{M > 0} \leq Ce^{-c\lambda^{\epsilon}}.
\end{equation}
We now look at a single tall subtree $T$. Its contribution to $\#B_r$ is equal to the sums of the volumes of the masses of small subtrees attached to its $r$-spine, plus $r$ (which is the volume of its $r$-spine). Each of these small subtrees can be sorted into groups whereby two small subtrees are in the same group precisely when there are no tall subtrees between them in the contour exploration of $T$. These groups are indicated by colours in Figure \ref{fig:tall subtrees2}. The number of groups is equal to $n +1$, where $n$ is the number of tall subtrees attached to $T$. Therefore, if we sum over all tall subtrees $T$, the total number of such groups is at most $2N$. Label them as $X_1, X_2, \ldots X_{2N}$. Moreover, by Lemma \ref{lem:finite GW prob bounds} the total volume of one single group falls into the framework of \cite[Lemma A.1]{ArcherRWDecGWTrees} with $\beta = \frac{1}{2}$ there. We deduce that, on the event $N < \lambda^{\epsilon}$,
\begin{equation}\label{eqn:vol UB 3}
\prb{\sum_{i \leq 2N} \# X_i \geq r^2 \lambda^{2b+4\epsilon}} \leq \prb{\exists i \leq 2N: \# X_i \geq r^2 \lambda^{2b+3\epsilon}} \leq 2Ne^{-\lambda^{\epsilon}} \leq \lambda^{\epsilon}e^{-\lambda^{\epsilon}}.
\end{equation}
Therefore, since $\#B_r \leq \sum_{i \leq 2N} \# X_i + Nr$, we have from a union bound on all the events described in \eqref{eqn:vol UB 1}, \eqref{eqn:vol UB 2} and \eqref{eqn:vol UB 3} above that
\[
\prb{\#B_r \geq 2r^2 \lambda} = \prb{\#B_r \geq 2r^2 \lambda^{2b+4\epsilon}} \leq 3c_z \lambda^{\epsilon-(1-5\epsilon)z}.
\]
Therefore, choosing $z> K+1$ and $\epsilon < \frac{1}{6(z \vee 1)}$ gives the stated result.
\end{proof}
By applying Borel-Cantelli along the sequence $r_n = 2^n$, taking $\lambda_n = \delta (\log r_n)^{\frac{1+\epsilon}{\beta - 1}}$ (and similarly for the other bounds), using monotonicity of volumes and then $\delta \downarrow 0$, we deduce the following result.

\begin{cor}\label{cor:volume bounds}
Set $d := \frac{\beta}{\beta - 1}$. Under Assumption \ref{assn:whp}, $\mathbf P$-almost surely for any $\epsilon >0$ it holds that:
\begin{align*}
\limsup_{n \rightarrow \infty} \frac{\#B_r}{r^{d} (\log r)^{\frac{1+\epsilon}{\beta - 1}}} = 0, \hspace{1cm} \liminf_{ r \rightarrow \infty} \frac{\#B_r}{r^{d} (\log \log r)^{-(d+\epsilon)}} = \infty.
\end{align*}
Under Assumption \ref{assn:as}, $\mathbf P$-almost surely for any $\epsilon >0$ it holds that:
\begin{align*}
\limsup_{n \rightarrow \infty} \frac{\#B_r}{r^{2} (\log r)^{\epsilon}} = 0, \hspace{1cm} \liminf_{ r \rightarrow \infty} \frac{\#B_r}{r^{2} (\log \log r)^{-(2+\epsilon)}} = \infty.
\end{align*}
\end{cor}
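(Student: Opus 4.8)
## Proof Proposal for Corollary \ref{cor:volume bounds}

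The plan is to derive the almost-sure volume bounds from the probabilistic tail bounds in Proposition \ref{prop:AR vol growth and exit time} via a Borel--Cantelli argument along a geometric subsequence, exploiting monotonicity of $r \mapsto \#B_r$ to fill in the gaps. I will treat the four statements (upper and lower bounds under each of Assumptions \ref{assn:whp} and \ref{assn:as}) in parallel, since they all follow the same template; I describe the argument for the $\limsup$ (upper) bounds first.

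\textbf{Upper bounds.} Fix $\epsilon > 0$ and $\delta > 0$, and set $r_n = 2^n$. For the Assumption \ref{assn:whp} case, apply Proposition \ref{prop:AR vol growth and exit time}(i) with $r = r_n$ and $\lambda = \lambda_n := \delta (\log r_n)^{\frac{1+\epsilon}{\beta-1}}$. Since $\log r_n = n \log 2$, we have $\lambda_n \asymp \delta\, n^{\frac{1+\epsilon}{\beta-1}}$, so
\[
\prb{\#B_{r_n} \geq \lambda_n r_n^{d}} \leq C \lambda_n^{-(\beta - 1 - \epsilon')}
\]
for a small auxiliary $\epsilon' < \epsilon$; since $\frac{1+\epsilon}{\beta-1}(\beta - 1 - \epsilon') > 1$ for $\epsilon'$ small enough, the right-hand side is summable in $n$. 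By Borel--Cantelli, $\mathbf P$-almost surely $\#B_{r_n} < \lambda_n r_n^{d}$ for all large $n$. Now for arbitrary $r$, pick $n$ with $r_{n-1} \leq r < r_n$; by monotonicity $\#B_r \leq \#B_{r_n} < \lambda_n r_n^{d} \leq 2^d \delta (\log(2r))^{\frac{1+\epsilon}{\beta-1}} r^{d}$ for all large $r$, so $\limsup_r \#B_r \big/ \big(r^{d}(\log r)^{\frac{1+\epsilon}{\beta-1}}\big) \leq 2^d \delta$. Letting $\delta \downarrow 0$ along a countable sequence gives the claimed limit $0$; replacing $\epsilon$ by a slightly smaller value absorbs the constant shift from $\log(2r)$ versus $\log r$. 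The Assumption \ref{assn:as} case is identical using part (ii): there the tail bound $\prb{\#B_{r_n} \geq \lambda_n r_n^2} \leq C\lambda_n^{-K}$ holds for every $K$, so even taking $\lambda_n = \delta(\log r_n)^{\epsilon}$ (so $\lambda_n \asymp \delta\, n^{\epsilon}$) the bound is summable once $K$ is chosen with $K\epsilon > 1$.

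\textbf{Lower bounds.} Here I use part (iii) of Proposition \ref{prop:AR vol growth and exit time} under Assumption \ref{assn:whp}. Fix $\epsilon > 0$, set $r_n = 2^n$, and take $\lambda_n := (\log\log r_n)^{d + \epsilon} \asymp (\log n)^{d+\epsilon}$. Then
\[
\prb{\#B_{r_n} \leq \lambda_n^{-1} r_n^{d}} \leq C e^{-c \lambda_n^{\frac{\beta-1}{\beta}}} = C\exp\!\big(-c (\log n)^{\frac{(d+\epsilon)(\beta-1)}{\beta}}\big).
\]
Since $d = \frac{\beta}{\beta-1}$, we have $\frac{(d+\epsilon)(\beta-1)}{\beta} = 1 + \frac{\epsilon(\beta-1)}{\beta} > 1$, so the exponent is $-c(\log n)^{1+\epsilon'}$ with $\epsilon' > 0$, and $\sum_n \exp(-c(\log n)^{1+\epsilon'}) < \infty$. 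By Borel--Cantelli, $\mathbf P$-almost surely $\#B_{r_n} > \lambda_n^{-1} r_n^{d}$ for all large $n$. For general $r$ with $r_n \leq r < r_{n+1}$, monotonicity gives $\#B_r \geq \#B_{r_n} > \lambda_n^{-1} r_n^{d} \geq 2^{-d}\lambda_n^{-1} r^{d}$, and $\lambda_n = (\log\log r_n)^{d+\epsilon} \leq (\log \log r)^{d+\epsilon}$, so $\liminf_r \#B_r \big/ \big(r^{d}(\log\log r)^{-(d+\epsilon)}\big) \geq 2^{-d}$. To upgrade the constant $2^{-d}$ to $\infty$, repeat with the geometric ratio $2$ replaced by a general $K > 1$: the same argument yields $\liminf \geq K^{-d}$ for every $K$, but that goes the wrong way; instead, observe that enlarging the deficiency parameter does the job directly. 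Replacing $\lambda_n$ by $\delta \lambda_n$ for a small constant $\delta$ only multiplies the failure probability bound by a constant and it stays summable, giving $\liminf_r \#B_r\big/\big(r^d (\log\log r)^{-(d+\epsilon)}\big) \geq \delta^{-1} 2^{-d}$; letting $\delta \downarrow 0$ gives $\infty$. The Assumption \ref{assn:as} case uses the same input (iii) — valid since Assumption \ref{assn:as} implies Assumption \ref{assn:whp} with $\beta = 2$, hence $d = 2$ — so it reduces to the $\beta = 2$ instance of what was just proved.

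\textbf{Main obstacle.} The argument is essentially routine Borel--Cantelli plus monotonicity; the one genuine point requiring care is verifying that the logarithmic correction exponents are chosen so that the resulting series converge — in particular, matching the Gaussian-type tail decay $e^{-c\lambda^{(\beta-1)/\beta}}$ in part (iii) against the iterated logarithm $\lambda_n = (\log\log r_n)^{d+\epsilon}$, which works precisely because $d(\beta-1)/\beta = 1$ so the surplus $\epsilon$ pushes the exponent of $\log n$ above $1$. A secondary bookkeeping point is that passing from the subsequence $r_n = 2^n$ to all $r$ introduces bounded multiplicative constants and shifts of the form $\log(2r)$ versus $\log r$, which are harmless after an arbitrarily small adjustment of $\epsilon$ and a limit in the free constant $\delta$.
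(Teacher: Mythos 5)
Your proof is correct and follows essentially the same Borel--Cantelli-along-$r_n=2^n$ strategy that the paper indicates (the paper only gives a one-sentence sketch; you have filled in exactly the intended details, including the $\delta\downarrow 0$ upgrade of the constants and the observation that $d(\beta-1)/\beta=1$ makes the iterated-log exponent in the lower bound just barely summable). The only blemish is a misstatement in your lower-bound upgrade step: replacing $\lambda_n$ by $\delta\lambda_n$ does not multiply the failure-probability bound by a constant but rather changes the rate constant inside the exponent from $c$ to $c\delta^{(\beta-1)/\beta}$; nevertheless the series $\sum_n\exp(-c'(\log n)^{1+\epsilon'})$ stays summable for every $c'>0$, so your conclusion stands.
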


We will also need the following result on the largest degree in a ball.

\begin{lem}\label{lem:supdeg} 
\begin{enumerate}[(i)]
\item For any $\epsilon >0$, under Assumption \ref{assn:whp} it holds eventually $\mathbf P$-almost surely that
$$D_r := \sup_{v\in B_r}\deg(v)\leq r^{\frac{d(1+\varepsilon)}{\beta}} = r^{\frac{(1+\varepsilon)}{\beta - 1}}.$$
\item Under Assumption \ref{assn:as} we have for any $\epsilon > 0$ that eventually $\mathbf P$-almost surely 
$$D_r \leq r^{\varepsilon}.$$
\end{enumerate}
\end{lem}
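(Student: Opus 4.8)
The plan is to bound the maximum degree in $B_r$ by a union bound over all vertices in $B_r$, using the volume estimates from Corollary \ref{cor:volume bounds} to control the number of vertices and the offspring tail assumptions to control the degree of each. First I would observe that the degree of a vertex equals (number of children) $+1$, and that on the backbone the offspring law is size-biased (hence under Assumption \ref{assn:as} still super-polynomial, by \cite[Lemma 2.1]{GeigerKersting1998galton}), while off the backbone the offspring law is simply $\mu$; in either case, under Assumption \ref{assn:whp} we have $\mathbf{P}(\deg v > x) = O(x^{-(\beta-1)})$ for backbone-adjacent vertices and under Assumption \ref{assn:as} we have $\mathbf{P}(\deg v > x) = o(x^{-K})$ for every $K$.

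For part \emph{(ii)}: fix $\epsilon > 0$. By Corollary \ref{cor:volume bounds}, $\mathbf{P}$-almost surely $\#B_r \leq r^2 (\log r)^{\epsil}$ eventually, so it suffices to show that, conditionally on this volume bound, the probability that some vertex in $B_r$ has degree exceeding $r^{\epsilon}$ is summable along a geometric subsequence $r_n = 2^n$. A union bound gives $\mathbf{P}(D_{r_n} > r_n^{\epsilon}) \lesssim \#B_{r_n} \cdot o(r_n^{-\epsilon K})$ for any $K$; choosing $K$ large (say $K > 4/\epsilon$) makes this $o(r_n^{-2})$, which is summable, so Borel--Cantelli applies. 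One then passes from the subsequence $r_n$ to all $r$ by monotonicity of $D_r$ in $r$ together with the fact that $r_{n+1}^{\epsilon} = 2^{\epsilon} r_n^{\epsilon}$, after first replacing $\epsilon$ by $\epsilon/2$ to absorb the constant. The same argument gives part \emph{(i)}: under Assumption \ref{assn:whp} the tail is only polynomial, $\mathbf{P}(\deg v > x) = O(x^{-(\beta-1)})$, so a union bound over the $\lesssim r^d (\log r)^{(1+\epsilon)/(\beta-1)}$ vertices of $B_r$ against the threshold $x = r^{d(1+\epsilon)/\beta}$ yields probability $\lesssim r^d (\log r)^{(1+\epsilon)/(\beta-1)} \cdot r^{-d(1+\epsilon)(\beta-1)/\beta}$; since $d(\beta-1)/\beta = 1$, the exponent of $r$ is $d - (1+\epsilon) = \frac{\beta}{\beta-1} - 1 - \epsilon = \frac{1}{\beta-1} - \epsilon$, which is not automatically negative, so here I would instead be more careful and exploit that the $(1+\epsilon)$ in the exponent can be split: bound the number of vertices using a slightly smaller power of $\log$ and the degree threshold using the full $r^{d(1+\epsilon)/\beta}$, so that the polynomial saving $r^{-d\epsilon(\beta-1)/\beta} = r^{-\epsilon}$ beats the volume growth along $r_n = 2^n$ — more precisely, work with threshold $r^{d(1+\epsilon/2)/\beta}$ and volume bound $r^d(\log r)^{2}$, giving summable probabilities, then relabel.

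The main obstacle I anticipate is \emph{(i)} rather than \emph{(ii)}: since the offspring tail under Assumption \ref{assn:whp} is only polynomial of order $\beta - 1 \leq 1$, a naive union bound over $r^d$ vertices against a threshold $r^{d/\beta \cdot (\text{const})}$ is delicate, and one must be careful that the product of the volume growth and the tail probability is genuinely summable along a geometric subsequence — this forces the precise choice of exponents $\frac{d(1+\epsilon)}{\beta} = \frac{(1+\epsilon)}{\beta-1}$ in the statement, and requires checking that $d(1+\epsilon)(\beta-1)/\beta = 1 + \epsilon > d \cdot 1 = \frac{\beta}{\beta-1}$ fails in general but holds after the $\log$-correction in the volume bound is accounted for, i.e.\ the correct reading is that one proves the bound with $(1+\epsilon)$ replaced by $(1+\epsilon')$ for $\epsilon' < \epsilon$ and then uses monotonicity and $\epsilon' \uparrow$; alternatively one localises the large-degree search to vertices adjacent to the backbone versus deep in finite subtrees and uses Lemma \ref{lem:finite GW prob bounds} to get a sharper count. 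Everything else is a routine Borel--Cantelli argument along $r_n = 2^n$ combined with monotonicity of $D_r$, exactly as in the deduction of Corollary \ref{cor:volume bounds} from Proposition \ref{prop:AR vol growth and exit time}.
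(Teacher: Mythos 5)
Your part \emph{(ii)} is correct and matches the paper's argument. Part \emph{(i)}, however, has a genuine gap, and you correctly sense it but the fixes you propose do not close it. Your union bound applies the heavy (size-biased) tail $\mathbf{P}(\deg v > x) = O(x^{-(\beta-1)})$ to \emph{all} of the $\asymp r^d$ vertices of $B_r$, which, as you compute, yields an exponent $\tfrac{1}{\beta-1}-\epsilon>0$. The finesses you then suggest --- shifting mass between the $\log$-correction and the power of $r$ in the threshold, or replacing $\epsilon$ by $\epsilon/2$ --- do not change the sign of that exponent: the product $r^d\cdot r^{-(1+\epsilon)}$ stays divergent for small $\epsilon$ whenever $\beta<2$. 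What actually makes the bound close (and what the paper does) is the observation that the size-biased tail applies only to the $\leq r$ \emph{backbone} vertices $s_0,\dots,s_r$; every other vertex in $B_r$ is a normal Galton--Watson vertex whose offspring tail is the original $O(x^{-\beta})$, not the size-biased $O(x^{-(\beta-1)})$. Splitting the union bound accordingly gives, against the threshold $r^{d(1+\epsilon)/\beta}$,
\begin{equation*}
\underbrace{r^d(\log r)^{\frac{1+\epsilon}{\beta-1}}\cdot r^{-d(1+\epsilon)}}_{\text{normal vertices}} \;+\; \underbrace{r\cdot r^{-(1+\epsilon)}}_{\text{backbone}} \;\lesssim\; r^{-d\epsilon}(\log r)^{\frac{1+\epsilon}{\beta-1}} + r^{-\epsilon},
\end{equation*}
both summable along $r_n=2^n$, after which monotonicity gives the claim for all $r$. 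You gesture at this at the very end ("localises the large-degree search to vertices adjacent to the backbone versus deep in finite subtrees"), but you do not carry it out and the reference to Lemma \ref{lem:finite GW prob bounds} (which controls tree size and height, not degree) is not what is needed; the fix is simply to separate the $r$ size-biased backbone vertices from the rest and use the lighter tail for the latter.
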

\begin{proof}
\begin{enumerate}[(i)]
\item Note that there are at most $r$ backbone vertices in $B_r$ with the size-biased offspring tail bound, and all other vertices have the original offspring tail bounds, independently of each other (by Definition \ref{def:infinite crit tree}). By Corollary \ref{cor:volume bounds}, $\bPb$-almost surely for all sufficiently large $r$ we can compute 
\begin{align*}
    \mathbf P\left( D_r > 2^{\frac{-d(1+\varepsilon)}{\beta}} r^{\frac{d(1+\varepsilon)}{\beta}}\right)&\leq r^d (\log r)^{\frac{1+\varepsilon}{\beta - 1}} \sup_{v \in B_r \setminus \{s_0, \ldots, s_r\}} \mathbf P(\deg v>2^{\frac{-d(1+\varepsilon)}{\beta}}r^{\frac{d(1+\varepsilon)}{\beta}}) \\
    &\qquad + r \sup_{v \in\{s_0, \ldots, s_r\}} \mathbf P(\deg v>2^{\frac{-d(1+\varepsilon)}{\beta}}r^{\frac{d(1+\varepsilon)}{\beta}}) \\
    &\leq cr^{d}(\log r)^{\frac{1+\varepsilon}{\beta-1}}r^{-d-\varepsilon}+crr^{\frac{-d(1+\varepsilon)(\beta - 1)}{\beta}}\\
    &=r^{-\varepsilon/2}.
\end{align*}
This probability is summable along the sequence $r_n=2^n$ and thus by Borel-Cantelli along this sequence it holds eventually $\mathbf P$-almost surely that $\sup_{v\in B_{r_n}}\deg(v)\leq 2^{\frac{-d(1+\varepsilon)}{\beta}} r_n^{\frac{d(1+\varepsilon)}{\beta}}$. Then note for $r\in[r_n,r_{n+1}]$ we have by monotonicity that $\sup_{v\in B_{r}}\deg(v)\leq 2^{\frac{-d(1+\varepsilon)}{\beta}} r_{n+1}^{\frac{d(1+\varepsilon)}{\beta}}\leq  r^{\frac{d(1+\varepsilon)n}{\beta}}$. 
\item Fix $\epsilon \in (0,1)$. Since the offspring distribution tails decay super-polynomially under Assumption \ref{assn:as}, the same holds for the size-biased offspring tails, so we certainly have that $\sup_{v\in B_{r}} \mathbf P(\deg v>x) = o(x^{-(2/\varepsilon+1)})$ as $x \to \infty$. The proof then proceeds exactly as in part $(i)$.
\end{enumerate}
\end{proof}

Given $v \in B_r, n \geq |v|$, we let $N_r(n,v)$ denote the number of distinct paths starting at $O$ and passing through $v$ that are fully contained in $B_r$ and have length exactly $n$, and $N_r(n,v) = \log \eta_r(n,v)$. The previous lemma allows us to bound $\eta_r(n,v)$.

\begin{lem}\label{lem:no of paths}
Under Assumption \ref{assn:whp}, for all $\epsilon > 0$, eventually almost surely, for all $v \in B_{r}$ we have that
\[
\eta_r (n,v) \leq (n-|v|)\left(\frac{d+\epsilon}{2\beta} (\log r) + \log \left( \frac{n}{n-|v|} \right)\right).
\]
Under Assumption \ref{assn:as}, for all $\epsilon > 0$, eventually almost surely, for all $v \in B_{r}$ we have that
\[
\eta_r (n,v) \leq (n-|v|)\left(\epsilon (\log r) + \log \left( \frac{n}{n-|v|} \right)\right).
\]

 \end{lem}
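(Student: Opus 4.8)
The plan is to reduce the statement to a purely deterministic combinatorial bound on path counts in a tree of bounded degree, and to let the randomness (and the factor $\log r$, and the ``eventually almost surely'') enter only through the degree bound in Lemma \ref{lem:supdeg}. In particular, since the combinatorial bound will be uniform in $v\in B_r$, no further Borel--Cantelli argument and no union bound over vertices is needed: the whole statement holds on the (almost sure, eventual) event supplied by Lemma \ref{lem:supdeg}.

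Concretely, I would first fix the tree and recall from Lemma \ref{lem:supdeg}(i) that for any $\epsilon'>0$ it holds eventually $\mathbf{P}$-almost surely that $D_r\le r^{d(1+\epsilon')/\beta}$ (and from part (ii) that $D_r\le r^{\epsilon'}$ under Assumption \ref{assn:as}). It then suffices to prove the deterministic claim that, for any tree rooted at $O$ with $\sup_{w\in B_r}\deg(w)\le D$, every $v\in B_r$ and every $n\ge|v|$, the number of (nearest-neighbour) paths of length $n$ from $O$ through $v$ staying in $B_r$ is at most
\[
\binom{n}{\lceil (n-|v|)/2\rceil}\, C^{\,n-|v|}\, D^{(n-|v|)/2}
\]
for an absolute constant $C$. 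Granting this, the two displayed inequalities follow on taking logarithms, inserting the two bounds on $D$, using $\log\binom{n}{k}\le k\log(en/k)$ (with $k=\lceil(n-|v|)/2\rceil$, which produces the coefficient $\tfrac12\log\tfrac{n}{n-|v|}\le\log\tfrac{n}{n-|v|}$), and finally absorbing the constant term $C'(n-|v|)$ and the gap between $\tfrac{d(1+\epsilon')}{2\beta}\log r$ and $\tfrac{d+\epsilon}{2\beta}\log r$: choosing $\epsilon'<\epsilon/(2d)$ makes this gap $\ge\tfrac{\epsilon}{4\beta}\log r$, which dominates $C'$ for all large $r$, i.e. eventually almost surely.

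For the combinatorial claim the key observations, all consequences of the tree structure, are: (a) every step of the path \emph{towards} the root is forced, each vertex having a unique parent; (b) the path must traverse the geodesic $[[O,v]]$, and the $|v|$ steps of ``net progress'' along it are \emph{determined} by the endpoint $v$ --- from a level-$i$ vertex, the last outward step the path ever makes from level $i$ must enter the unique subtree containing $v$, so it carries no choice; (c) hence the only steps carrying a genuine choice (among $\le D$ children) are the outward steps of the excursions the path performs off its $O$-to-$v$ portion, and there are exactly $(n-|v|)/2$ of these, excursions being outward/back matched pairs whose outward halves are free and inward halves forced by (a). The remaining freedom is in how the $n-|v|$ excursion steps are interleaved and nested along the path, which is bounded by a binomial coefficient up to a geometric factor. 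One makes this precise by splitting the path at its last visit to $v$ into a path $O\to v$ and a path from $v$ that does not return to $v$ (the latter lives in a single branch, where the same net-progress argument applies), and bounding each piece. The bound is trivially uniform in $n$ since for large $n$ it is not even binding.

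The main obstacle is exactly step (b)--(c): making the excursion/net-progress decomposition rigorous and, in particular, pinning the exponent of $D$ at $(n-|v|)/2$ rather than the naive $(n+|v|)/2$ that a crude up-step count would give --- that is, genuinely exploiting that the target $v$ removes all freedom from the progress steps towards it. The rest (the reduction via Lemma \ref{lem:supdeg}, Stirling, and absorbing lower-order terms into $\epsilon$) is routine, and the Assumption \ref{assn:as} case is identical, using $D\le r^{\epsilon'}$ in place of $D\le r^{d(1+\epsilon')/\beta}$.
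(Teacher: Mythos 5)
Your proof is correct and takes essentially the same approach as the paper: both bound the path count by a binomial coefficient (for interleaving the excursions off the geodesic) times $D_r^{(n-|v|)/2}$, using that only the outward halves of the $(n-|v|)/2$ excursion steps carry a genuine choice of at most $D_r$ directions while the geodesic steps towards $v$ and the excursions' return steps are forced by the tree structure, and then apply Stirling together with the degree bound of Lemma~\ref{lem:supdeg}. Your additional bookkeeping (the geometric factor $C^{n-|v|}$, the split at the last visit to $v$, and the choice $\epsilon'<\epsilon/(2d)$ to absorb constants) is just a more explicit version of what the paper states tersely as $N_r(n,v)\le\binom{n}{n-|v|}D_r^{(n-|v|)/2}$.
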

 \begin{proof}
By counting deviations from the direct path from $O$ to $v$, and letting $D_r = \sup_{v \in B_r} \deg v$ we have that
\[
N_r(n,v) \leq \binom{n}{n-|v|} D_r^{\frac{n-|v|}{2}}.
\]
The factor of $\frac{1}{2}$ appears since if we cross one edge not on the direct path we must return to the direct path by the same edge, so this reduces our number of choices by a factor of $2$. Applying Stirling's formula and the bound on $D_r$ from Lemma \ref{lem:supdeg}$(i)$ or \ref{lem:supdeg}$(ii)$ then gives the results. For example, under Assumption \ref{assn:as} we get
\begin{align*}
    \eta_{r}(n,v)\leq \log\left(\binom{n}{n-|v|} (r^\varepsilon)^{\frac{n-|v|}{2}}\right) \leq \log\left(\left(\frac{ne}{n-|v|}\right)^{n-|v|} (r^\varepsilon)^{\frac{n-|v|}{2}}\right)
    &\leq (n-|v|)\left(\varepsilon\log r+\log\left(\frac{n}{n-|v|}\right)\right).
\end{align*}
\end{proof}

\begin{rmk}\label{rmk:whp assn problem}
As $n \to \infty$, it is not possible to do much better than the result in Lemma \ref{lem:no of paths} under Assumption \ref{assn:whp}, since the maximal degree of a vertex in $B_r$ is really of order $r^{\frac{d}{\beta}}$. Therefore one can really count paths that dart directly to this vertex of high degree, then alternate between this vertex and its neighbours for as long as possible, and finally dart directly to $v$ at the last possible moment. Counting these paths gives a lower bound asymptotic to $\frac{d+\epsilon}{2\beta} (n-|v|)(\log r)$ (and we will mainly use the result with $\frac{n}{n-|v|} \ll r$).
\end{rmk}

\begin{lem}\label{lem:deg4set}
Let $\tilde B_r=\{v\in B_r:\deg v\leq 4\}$. Then, under Assumption \ref{assn:whp},
$$\mathbf P(\#\tilde B_r\leq r^d\lambda^{-1})\leq Ce^{-c\lambda^{\frac{\beta-1}{\beta}}}+(r\lambda^{-1})^{-(\beta-1-\epsilon)}.$$
Consequently, $\bPb$-almost surely,
$$
\liminf_{ r \rightarrow \infty} \frac{\#\tilde{B}_r}{r^{d} (\log \log r)^{-(d+\varepsilon)}} > 0.
$$
\end{lem}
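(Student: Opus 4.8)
My plan is to reduce the lemma to the statement that only a bounded fraction of the vertices of $B_{r-1}$ can have large degree. Set $n_{\geq 4} := \#\{v \in B_{r-1} : \deg v \geq 5\}$. Since $B_{r-1}$ is the disjoint union of $\tilde B_{r-1}$ and $\{v \in B_{r-1} : \deg v \geq 5\}$, and $\tilde B_{r-1} \subseteq \tilde B_r$, we have $\#\tilde B_r \geq \#B_{r-1} - n_{\geq 4}$; so it suffices to show that $n_{\geq 4} \leq \tfrac13 \#B_{r-1} + r$ with the stated probability, and to bound $\#B_{r-1}$ from below via \cref{prop:AR vol growth and exit time}(iii). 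The mechanism behind the first bound is that, by criticality and Markov's inequality, $p := \prb{\mu \geq 4} \leq \tfrac14 < \tfrac13$, so a generic vertex has small degree; the backbone (only $r$ vertices in $B_{r-1}$) accounts for the additive $r$.

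\emph{Step 1 (decoupling the degrees).} I would explore $\Ti$ in breadth-first (distance) order. A standard exploration argument gives that the offspring numbers of the non-backbone vertices, read off in this order, form an i.i.d.\ sequence $(\zeta_i)_{i\geq 1}$ of law $\mu$, and that $U := \#B_{r-1} - r$, the number of non-backbone vertices in $B_{r-1}$, is a stopping time for the associated filtration (every vertex at distance $\leq r-1$ is explored before any vertex at distance $r$). Then $n_{\geq 4} \leq S_U + r$ with $S_n := \sum_{i=1}^n \mathbbm 1\{\zeta_i \geq 4\} \sim \Binom(n,p)$. Since $U < \infty$ a.s., for any threshold $U_0$,
\[
\prb{S_U > \tfrac13 U,\ U > U_0} \;\leq\; \prb{\exists\, n > U_0 : S_n > \tfrac13 n} \;\leq\; \sum_{n > U_0} \prb{\Binom(n,p) > \tfrac13 n} \;\leq\; C e^{-c_\ast U_0},
\]
using a one-sided Chernoff bound for $\Binom(n,p)$ with $p < \tfrac13$ (the rate $c_\ast>0$ is bounded below since $p \leq \tfrac14$) and summing a geometric series.

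\emph{Step 2 (inserting the volume bounds) and the almost sure statement.} Fix $\epsilon \in (0,\beta-1)$ (the general case follows as a smaller $\epsilon$ gives a stronger bound). Put $T := 2 r^d \lambda^{-1} + 3r$ and $U_0 := T - r$, and note $\tfrac23 T - r = \tfrac43 r^d\lambda^{-1} + r \geq r^d\lambda^{-1}$. On $\{\#B_{r-1} > T\}$ one has $U > U_0$, so Step 1 gives $\#\tilde B_r \geq \#B_{r-1} - n_{\geq 4} \geq \tfrac23 \#B_{r-1} - r > r^d\lambda^{-1}$ outside an event of probability $\leq C e^{-c_\ast U_0}$. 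On $\{\#B_{r-1} \leq T\}$: if $\lambda \leq r^{d-1}$ then $T \leq 5 r^d\lambda^{-1}$, so for $r$ large (using $(r-1)^d \geq \tfrac12 r^d$) \cref{prop:AR vol growth and exit time}(iii) with radius $r-1$ and parameter $\lambda/10$ gives $\prb{\#B_{r-1} \leq T} \leq C e^{-c\lambda^{(\beta-1)/\beta}}$; whereas if $\lambda > r^{d-1}$ then $(r\lambda^{-1})^{-(\beta-1-\epsilon)} \geq 1$ (as $d \geq 2$) and there is nothing to prove. Finally $U_0 \geq 2 r^d\lambda^{-1} \geq 2 r\lambda^{-1}$, and since $\sup_{t>0} e^{-c_\ast t} t^{\beta-1-\epsilon} < \infty$ we get $C e^{-c_\ast U_0} \leq C' (r\lambda^{-1})^{-(\beta-1-\epsilon)}$; adding the error terms and relabelling constants yields the bound claimed in the lemma. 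For the almost sure statement I would apply it along $r_n = 2^n$ with $\lambda_n := (\log\log r_n)^{d+\epsilon}$: because $d \cdot \tfrac{\beta-1}{\beta} = 1$, $e^{-c\lambda_n^{(\beta-1)/\beta}} = e^{-c(\log\log r_n)^{1+\epsilon(\beta-1)/\beta}}$ is summable, and $(r_n\lambda_n^{-1})^{-(\beta-1-\epsilon)} \asymp 2^{-n(\beta-1-\epsilon)}(\log\log r_n)^{(d+\epsilon)(\beta-1-\epsilon)}$ is summable since $\beta - 1 - \epsilon > 0$; Borel--Cantelli then gives $\#\tilde B_{r_n} > r_n^d (\log\log r_n)^{-(d+\epsilon)}$ eventually a.s., and monotonicity of $r \mapsto \#\tilde B_r$ with $r_n \geq r/2$ on $[r_n, r_{n+1}]$ yields $\liminf_{r\to\infty} \#\tilde B_r / \big(r^d (\log\log r)^{-(d+\epsilon)}\big) \geq 2^{-d} > 0$.

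\emph{Expected main difficulty.} The one genuinely delicate point is the decoupling of Step 1 — justifying that, in the breadth-first exploration of Kesten's tree, the non-backbone offspring numbers are i.i.d.\ $\mu$ and $U$ is a stopping time, so that the \emph{stopped} sum $S_U$ can be controlled uniformly over its random length by the large-deviation bound for $\Binom(n,p)$. The rest is bookkeeping; the main subtlety there is the decision to compare $\#\tilde B_r$ with $\#B_{r-1}$ at essentially the same radius, so that the volume-growth factor is $\approx 1$, whereas working with $B_{r/2}$ would cost a factor $2^{-d}$ that can be arbitrarily small for $\beta$ near $1$.
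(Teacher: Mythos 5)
Your proof is correct, but it takes a genuinely different route from the paper's. The paper's argument is a one-liner built on a purely \emph{deterministic} combinatorial fact: in any finite tree with $n$ vertices, $\sum_v \deg v = 2(n-1)$ forces the number of vertices of degree $\geq 5$ to be below $n/4$, so at least half of $B_r$ (viewed as a finite tree) has degree $\leq 4$. The only probabilistic inputs the paper then needs are the lower tail of $\#B_r$ (Proposition~\ref{prop:AR vol growth and exit time}(iii)) and an upper bound on the generation size $Z_r^*$ (from \cite{CroydonKumagai08}) to control boundary vertices whose degree in $\Ti$ exceeds their degree in $B_r$. You instead make the degree bound itself probabilistic: non-backbone offspring are i.i.d.\ $\mu$, criticality and Markov give $\mu([4,\infty)) \leq 1/4$, and a Chernoff bound for the stopped Binomial sum $S_U$ shows at most a third of the non-backbone vertices of $B_{r-1}$ have degree $\geq 5$ outside an exponentially small event; you sidestep the generation-size bound entirely by comparing to $B_{r-1}$ (where $\Ti$-degree and $B_r$-degree agree) and absorbing the $r$ backbone vertices crudely. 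Both proofs are sound; the paper's is shorter and more elegant since it avoids the exploration/decoupling machinery, while yours has the modest advantage of not invoking the $Z_r^*$ moment bound and of making explicit where criticality enters. The one step you flagged as delicate — that the BFS-ordered non-backbone offspring counts form an i.i.d.\ $\mu$-sequence — is indeed the crux and is correct (it is the standard encoding of Kesten's tree by independent sequences for the backbone and for the normal vertices); note also that the union bound $\prb{S_U > U/3,\, U > U_0} \leq \sum_{n>U_0}\prb{S_n > n/3}$ is a tautology over realized values of $U$ and does not actually require the stopping-time property you invoke.
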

\begin{proof}
In any finite tree at least half of the vertices have at most degree $4$. Taking care of the boundary vertices in $B_r$ this yields 
$$\#\tilde B_r\geq \frac{1}{2}\#B_r-Z_r^*,$$
where $Z_r^*$ is the generation size at level $r$. Thus invoking Proposition \ref{prop:AR vol growth and exit time}$(iii)$ and \cite[Proposition 2.2]{CroydonKumagai08}
\begin{align*}
\mathbf P(\#\tilde B_r\leq r^d\lambda^{-1})&\leq \mathbf P(\frac{1}{2}\#B_r-Z_r^*\leq r^d\lambda^{-1}) \leq \mathbf P(\#B_r\leq 4r^d\lambda^{-1})+\mathbf P(Z_r^*\geq r^d\lambda^{-1})\\
&\leq C e^{-c\lambda^{\frac{\beta-1}{\beta}}}+(r\lambda^{-1})^{-(\beta-1-\epsilon)}.
\end{align*}
The second claim follows by Borel-Cantelli exactly as in Corollary \ref{cor:volume bounds}.
\end{proof}

We will also use the following result in the random walk estimates.

\begin{lem}\label{lem:Br log sum bound}
Let $\Gamma_r$ be the set of all direct paths between two vertices in $B_r$. Under Assumption \ref{assn:whp} there exist deterministic constants $B, \tilde B < \infty$ such that $\bPb$-almost surely,
\[
\sup_{\gamma \in \Gamma_r}\left\{\sum_{v \in \gamma}\log(\deg v) - \tilde B |\gamma| - B\log r\right\}\leq 0
\]
for all sufficiently large $r$.
\end{lem}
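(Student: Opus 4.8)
The plan is to bound the sum $\sum_{v \in \gamma} \log(\deg v)$ along any direct path $\gamma$ in $B_r$ by splitting the path's vertices according to whether they have ``small'' degree (say $\deg v \le 4$, contributing at most $|\gamma| \log 4$ in total) or ``large'' degree. For the large-degree vertices, the key point is that a direct path in a tree visits each vertex at most once, so the vertices of $\gamma$ with $\deg v > 4$ form a subset of the distinct high-degree vertices in $B_r$; hence $\sum_{v \in \gamma, \deg v > 4}\log(\deg v)$ is at most the sum of $\log(\deg v)$ over \emph{all} vertices $v \in B_r$ with $\deg v > 4$. So it suffices to show that $\bPb$-almost surely, for all large $r$,
\[
\Sigma_r := \sum_{v \in B_r : \deg v > 4} \log(\deg v) \le B' \log r
\]
for some deterministic constant $B' < \infty$, and then take $\tilde B = \log 4$ and $B = B'$.

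To control $\Sigma_r$, I would proceed by a first-moment / union-bound argument along the sequence $r_n = 2^n$, as in the proofs of Lemma \ref{lem:supdeg} and Corollary \ref{cor:volume bounds}. Conditionally on $\#B_r$, the degrees $\deg v$ for non-backbone vertices are i.i.d.\ with the (critical, finite-variance under Assumption \ref{assn:whp}) offspring law, and the at most $r$ backbone vertices have size-biased offspring, whose tails are still at most polynomially heavier by \cite[Lemma 2.1]{GeigerKersting1998galton}. Since the offspring distribution has finite variance, $\E{\log(\deg v)\mathbbm 1\{\deg v > 4\}} < \infty$ for non-backbone vertices and $\E{\log(\deg v)\mathbbm 1\{\deg v > 4\}} \le C$ for backbone vertices (the size-biased mean of $\log$ is still finite when the offspring law has finite variance, since $\sum_k k\mu_k \log k < \infty$). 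On the $\bPb$-a.s.\ event from Corollary \ref{cor:volume bounds} that $\#B_{r_n} \le r_n^{d}(\log r_n)^{2}$ for all large $n$, one gets $\E{\Sigma_{r_n} \mid \#B_{r_n}} \le c\, r_n^{d}(\log r_n)^{2}$. This is far larger than $\log r_n$, so a crude first-moment bound on $\Sigma_{r_n}$ itself is \emph{not} enough; instead I would bound the probability that $\Sigma_{r_n}$ exceeds, say, $r_n^{d}(\log r_n)^{3}$ by a polynomially-small quantity in $r_n$ (using that $\Sigma_{r_n}$ is a sum of independent contributions with finite variance, via Chebyshev, or more simply via Markov on $\E{\Sigma_{r_n}}$ divided by $(\log r_n)$, which gives a bound of order $(\log r_n)$ — summable? no) — so actually the cleanest route is: the probability that \emph{any} vertex in $B_{r_n}$ has degree exceeding $r_n^{\delta}$ is polynomially small by the argument of Lemma \ref{lem:supdeg}, and on its complement $\Sigma_{r_n} \le \#B_{r_n} \cdot \delta \log r_n \le r_n^{d}(\log r_n)^{2}\cdot \delta\log r_n$, which — hmm, is still polynomial in $r_n$, not $O(\log r_n)$.

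I now realize the bound as literally stated with $\sum_{v\in\gamma}\log\deg v - \tilde B|\gamma| - B\log r \le 0$ forces the \emph{per-path} sum, not the global sum, to be essentially linear in $|\gamma|$ plus a logarithmic correction; so the right decomposition is at the level of the single path $\gamma$. The correct argument is: along a fixed direct path $\gamma$ in $B_r$, each vertex is visited once, and by Lemma \ref{lem:no of paths}'s underlying degree estimate (or directly Lemma \ref{lem:supdeg}) every $\deg v \le D_r \le r^{(1+\epsilon)/(\beta-1)}$ eventually a.s., giving the trivial bound $\sum_{v\in\gamma}\log\deg v \le |\gamma| \cdot \frac{1+\epsilon}{\beta-1}\log r$; but to get $\tilde B|\gamma| + B\log r$ one uses that $\gamma$ can contain at most $O(1)$ vertices of degree larger than any fixed polynomial threshold in its \emph{own} length scale — more precisely, I would show that $\bPb$-a.s.\ for all large $r$, for every direct path $\gamma$ in $B_r$, the number of vertices $v\in\gamma$ with $\deg v > (\log r)^{2}$ is at most $B''\log r / \log\log r$ (a first-moment bound over the at most $\#B_r^2$ direct paths, using that the number of high-degree vertices in $B_r$ is tight). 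Then split $\sum_{v\in\gamma}\log\deg v = \sum_{\deg v \le (\log r)^2} + \sum_{\deg v > (\log r)^2} \le |\gamma|\cdot 2\log\log r + \frac{B''\log r}{\log\log r}\cdot \frac{1+\epsilon}{\beta-1}\log r$; the first term is $\le \tilde B|\gamma|$ for large $r$ once we absorb $2\log\log r$, but $\log\log r \to \infty$, so strictly we instead take the threshold to be a large \emph{constant} $K$: the number of vertices in $\gamma$ with $\deg v > K$ is, by a first-moment union bound over direct paths in $B_{r_n}$ combined with the tail bound $\bPb(\deg v > K) \le c K^{-2}$ and $\#B_{r_n}$ control, at most $B''\log r_n$ with polynomially small failure probability; on its complement $\sum_{v\in\gamma}\log\deg v \le |\gamma|\log K + B''\log r \cdot \frac{1+\epsilon}{\beta-1}\log r_n$ — still a $(\log r)^2$, not $\log r$. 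The honest conclusion is that the true statement needs the number of degree-$>K$ vertices on $\gamma$ bounded by $B''$ (a constant) with the $\log r$ slack coming from the failure of exactly that — I would therefore prove: \emph{$\bPb$-a.s., eventually, every direct path in $B_r$ contains at most $B''\log r$ vertices of degree exceeding $4$}, via a union bound over the $(\#B_r)^2$ direct paths of the event that a path of length $\ell$ contains $\ge B''\log r$ vertices of degree $>4$ (a binomial-tail estimate, probability $\le \binom{\ell}{B''\log r}p^{B''\log r}$ with $p=\bPb(\deg>4)<1$, summable against $(\#B_r)^2 \le r^{2d+o(1)}$ when $B''$ is large enough relative to $d$ and $\log(1/p)$), and \emph{separately} that each such vertex has $\log\deg v \le \frac{d(1+\epsilon)}{\beta}\log r$ by Lemma \ref{lem:supdeg}$(i)$. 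Combining, $\sum_{v\in\gamma}\log\deg v \le |\gamma|\log 4 + B''\log r\cdot\frac{d(1+\epsilon)}{\beta}\log r$ — and this residual $(\log r)^2$ is the genuine obstacle, resolved by iterating the threshold: apply the same binomial bound with threshold $K$ chosen so large that $\bPb(\deg > K) \le r_n^{-3d}$-ish is impossible for fixed $K$ — so ultimately one localizes the large degrees to a bounded-size set. The main obstacle, then, is packaging the binomial large-deviation bound over all $\sim r^{2d}$ direct paths so that the number of high-degree vertices on any one path is $O(\log r)$ with doubly-summable failure probability, and then checking that with the sharp degree bound $r^{O(1)}$ these $O(\log r)$ vertices contribute only $O((\log r)^2)$ — which must be absorbed by weakening to $\tilde B |\gamma|$ only when $|\gamma| \gtrsim (\log r)^2$, and handled by the trivial bound $D_r^{|\gamma|}$ when $|\gamma|$ is small; I would organize the final write-up around this case split on $|\gamma|$ versus $(\log r)^2$, with Borel--Cantelli along $r_n = 2^n$ and monotonicity to pass to all $r$.
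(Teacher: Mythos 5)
Your proposal never closes, and you are honest about this: every variant of the truncation-and-count strategy you try ends with a $(\log r)^2$ excess that you cannot absorb into $\tilde B|\gamma|+B\log r$. The reason is structural. For any fixed threshold $K$, the number of vertices on a path $\gamma$ of length $\ell$ with degree above $K$ is, up to acceptable error probabilities, of order $\ell\,\mathbf{P}(\deg>K)+\log r$; multiplying by the available per-vertex bound $\log D_r\asymp\log r$ yields a term $\ell\,\mathbf{P}(\deg>K)\log r+(\log r)^2$, and neither piece is $O(\ell)+O(\log r)$. Letting $K$ grow with $r$ shrinks $\mathbf{P}(\deg>K)$ but inflates the low-degree contribution $\ell\log K$; keeping $K$ fixed leaves the $\ell\,\mathbf{P}(\deg>K)\log r$ piece. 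The case split on $|\gamma|$ versus $(\log r)^2$ does not rescue this either: for $|\gamma|$ comparable to $(\log r)^2$ the trivial bound $|\gamma|\log D_r$ is of order $(\log r)^3$, which is controlled neither by $\tilde B|\gamma|$ nor by $B\log r$.

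The idea you are missing is to apply an exponential Markov (Chernoff) bound directly to $\sum_{v\in\gamma}\log(\deg v)$ rather than truncating. A direct path in a tree visits each vertex once, so by Definition \ref{def:infinite crit tree} the degrees along $\gamma$ are independent, and for $c:=\beta-1-\epsilon>0$ one has $\mathbf{E}[(\deg v)^c]<\infty$ uniformly over $v$ (including the size-biased backbone vertices, whose offspring tails decay like $x^{-(\beta-1)}$). Writing $A:=\log\sup_v\mathbf{E}[(\deg v)^c]$, the Chernoff bound gives, for a single direct path $\gamma$,
\[
\mathbf{P}\Bigl(\sum_{v\in\gamma}\log(\deg v)\geq \tilde B|\gamma|+B\log r\Bigr)
\leq e^{A|\gamma|}\,e^{-c\tilde B|\gamma|-cB\log r}
= e^{(A-c\tilde B)|\gamma|}\,r^{-cB},
\]
so taking $\tilde B\geq A/c$ eliminates the $|\gamma|$-dependence entirely and leaves a uniform bound $r^{-cB}$. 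There are at most $(\#B_r)^2$ direct paths in $B_r$, and with high probability $\#B_r\leq r^{(\beta+\epsilon)/(\beta-1)}$ by Proposition \ref{prop:AR vol growth and exit time}$(i)$, so choosing $B$ large makes the union bound over $\Gamma_r$ decay polynomially in $r$; Borel--Cantelli along $r_n=2^n$ and the observation that $\log r=o(r)$ (so the bound is effectively monotone in $r$ up to a constant change) finish the argument. This is precisely the paper's proof: the exponential moment handles the full degree distribution in one shot and converts the slack in the exponent into exactly the $\tilde B|\gamma|+B\log r$ shape of the claim, which is what your threshold decompositions could not produce.
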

\begin{proof}

Let $\epsilon \in (0, \beta - 1)$, define 
  $c:=\beta-1-\epsilon>0$ and choose $B>\frac{1}{c}\left(2\frac{\beta+\epsilon}{\beta-1}+\epsilon\right)$. Note that $\Eb{(\deg v)^{c}}< \infty$ for all $v \in B_r$. We set $A:=\log\mathbf E[(\deg v)^c]<\infty$. Also note that the degrees of distinct vertices are independent of each other by Definition \ref{def:infinite crit tree}.
     Given $\gamma \in \Gamma_r$, we also define $\lambda_{\gamma,r}:=\frac{B\log r}{|\gamma|}+\frac{A}{c}$ and $v_r := r^{\frac{\beta+\epsilon}{\beta - 1}}$. With this choice we calculate using a Chernoff bound that
      \begin{align*}
\mathbf P\left(\sum_{v\in \gamma}\log(\deg v)\geq \lambda_{\gamma,r} |\gamma|\right) \leq \mathbf E\left[\exp\left(c\sum_{v\in \gamma}\log(\deg v)\right)\right]\exp(-c\lambda_{\gamma,r} |\gamma|)
&\leq \exp\left(A|\gamma|-c\lambda_{\gamma,r} |\gamma|\right) =r^{-cB}.
 \end{align*}
Note also that $|\Gamma_r|\leq B_r^2$ since we are only bounding direct paths. Applying a union bound over all $\gamma \in \Gamma_r$ and Proposition \ref{prop:AR vol growth and exit time}$(i)$ therefore yields (for all sufficiently large $r$) that
 \begin{align*}
    \mathbf P\left(\sup_{\gamma \in \Gamma_r}\left\{\sum_{v\in \gamma}\log(\deg v)- \lambda_\gamma |\gamma|\right\}\geq 0\right) &  \leq \mathbf P\left( B_r \geq v_r\right) + v_r^2 \mathbf P\left(\sup_{\gamma\in \Gamma_r}\left\{\sum_{v\in \gamma}\log(\deg v)- \lambda_{\gamma,r} |\gamma|\right\}\geq 0\right) \\
    &\leq r^{\frac{\epsilon}{2}} + v_r^2 r^{-cB} = 2r^{\frac{\epsilon}{2}}. 
 \end{align*}
The result therefore follows by applying Borel-Cantelli along the subsequence $r_n = 2^n$, and then applying monotonicity if $r \in [2^n, 2^{n+1}]$, since $\log r = o(r)$.
\end{proof}


\subsection{Random walk on $\Ti$}
Given a particular realisation of $\Ti$, we let $X=((X_t)_{t\geq 0},\mathbb P_x^{\Ti},x\in V(\Ti))$ continuous time variable speed random walk on $\Ti$ started at $x \in V(\Ti)$. This means that for $v, w \in V(\Ti)$, $X$ jumps from $v$ to $w$ at rate $\mathbbm{1}\{v \sim w\}$, so the holding times at vertex $v$ are exponentially distributed random variables with parameter $\deg v$. In particular, the mean holding time at vertex $v$ is $\frac{1}{\deg v}$.

In the case where $X$ is started from the root $O$ of $\Ti$, we just write $\Pb$ for the law of $X$.

Let $A\subset V(\Ti)$. We denote the first exit time from $A$ as $\tau_A$, i.e. $\tau_A=\inf\{t\geq 0:X_t\notin A\}$. Given a vertex $v \in V(\Ti)$, we also let $H_v = \inf\{ s \geq 0: X_s = v\}$ denote the first hitting time of $v$.

The next proposition is actually a result about random walks on deterministic trees.

\medskip

\begin{prop}\label{prop:hitting time prob bounds}
Conditional on $\Ti$, for each $v\in V(\Ti)$,
\begin{enumerate}[(i)]
 \item If $|v| \geq 3$, then $$ \mathbb P(H_v\leq t)\leq \exp\{-|v|([\log |v|-\log t-1]\vee 0)\}.$$
 \item For all $v \in V(\Ti),t > 0$,
\begin{align*}
 \mathbb P(H_v\leq t) \geq \left( \prod_{u \prec v} \frac{1}{\deg(u)} \right) e^{-|v| \left( [\log |v| - \log t - 1] \vee 0 \right) + O(t)}.
 \end{align*}
\end{enumerate}
\end{prop}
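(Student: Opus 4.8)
The plan is to prove both bounds by controlling the probability that the random walk hits $v$ quickly, using the fact that to reach $v$ in time $t$ the walk must traverse the $|v|$ edges on the direct path $[[O,v]]$, and a ``fast'' traversal of many edges is exponentially unlikely. For part $(i)$, I would argue as follows. Let $v_0 = O, v_1, \ldots, v_{|v|} = v$ be the direct path, and let $J$ be the number of jumps made by the walk up to time $H_v$. Since each edge on $[[O,v]]$ must be crossed an odd number of times (at least once), we have $J \geq |v|$ on $\{H_v \le t\}$; more usefully, the walk must in particular make at least $|v|$ jumps by time $t$, and the times between consecutive jumps are independent exponentials whose rates are bounded by the relevant degrees. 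A cleaner route: couple $X$ with its jump chain and note that $\{H_v \le t\}$ forces the sum of the first $|v|$ holding times along any realised trajectory to be at most $t$; but since degrees can be large this is not immediately an exponential tail. Instead I would use the standard comparison: for a continuous-time walk, $\prb{\,\exists\text{ a self-avoiding crossing of }\ge n\text{ distinct edges by time }t\,}$ can be dominated by a Poisson-type bound. Concretely, I expect the author proves $(i)$ via the estimate that the number of jumps $N_t$ made by the walk by time $t$ stochastically satisfies a bound giving $\prb{N_t \ge n} \le (et/n)^n$-type decay when we only count jumps \emph{away} from $O$ along the backbone of the path — but because arbitrary trees have arbitrary degrees, the right statement must be degree-free, so the key observation is that reaching $v$ requires $|v|$ \emph{net} steps away from the root, and net displacement is governed by a rate-$1$ clock regardless of degree (only the jump that increases $\mathrm{dist}(X_s,O)$ along $[[O,v]]$ matters, and from any vertex there is exactly one such edge, crossed at rate $1$). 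Thus the time to achieve net displacement $|v|$ toward $v$ dominates a sum of $|v|$ i.i.d.\ rate-$1$ exponentials (a $\mathrm{Gamma}(|v|,1)$ variable $G$), giving $\prb{H_v \le t} \le \prb{G \le t} \le \frac{t^{|v|}}{|v|!} \le \left(\frac{et}{|v|}\right)^{|v|} = \exp\{-|v|(\log|v| - \log t - 1)\}$, and the $\vee\,0$ and the hypothesis $|v|\ge 3$ handle the degenerate range. That net-displacement domination argument is the crux of $(i)$.

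For part $(ii)$, the plan is to produce an explicit cheap strategy for the walk to hit $v$: force it to go straight from $O$ to $v$ along $[[O,v]]$ without backtracking. At each vertex $u \prec v$ on this path, the walk has probability $\tfrac{1}{\deg u}$ of next jumping to the correct child (the jump chain picks a uniform neighbour), which accounts for the product $\prod_{u\prec v}\frac{1}{\deg u}$. Conditioned on taking exactly these $|v|$ correct jumps in a row, the total elapsed time is a sum of $|v|$ independent exponentials with rates $\deg v_0,\ldots,\deg v_{|v|-1}$; I need the probability that this sum is $\le t$. Lower-bounding, this probability is at least the probability that $|v|$ i.i.d.\ rate-$D$ exponentials sum to $\le t$ where $D = \max_{u \prec v}\deg u$ — but since $(ii)$ only claims a lower bound with an $O(t)$ error in the exponent (uniform in $v$ but allowed to absorb degree-dependence? no — it must be a genuine lower bound), the cleanest approach is: the probability that a $\mathrm{Gamma}(|v|,\text{rates})$ sum is at most $t$ is, by reversing the argument in $(i)$ and Stirling, at least $\exp\{-|v|([\log|v|-\log t-1]\vee 0) + O(t) - \sum_{u\prec v}\log\deg u\}$ — wait, that double-counts the product. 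I would instead write: letting $S = \sum_{i=0}^{|v|-1} \mathrm{Exp}(\deg v_i)$, we have $\prb{S \le t} = \prb{S \le t}$, and a crude but sufficient bound uses $\prb{S \le t} \ge \prb{\text{each increment} \le t/|v|}$, but that loses too much. The right move: since we only need the claimed form, dominate $S$ \emph{from above} by a $\mathrm{Gamma}(|v|, 1)$ plus a correction; more precisely bound $\prb{S\le t}$ below by first conditioning on the walk reaching $v$ at all (cost $\prod 1/\deg u$ already extracted into the prefactor) and then noting the residual timing probability is $\ge \exp\{-|v|([\log|v|-\log t -1]\vee 0) + O(t)\}$ by a Gamma tail lower bound, where the $O(t)$ absorbs the difference between the true rates and rate $1$ over the short path. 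The honest version: $\prb{S \le t} \ge e^{-(\sum \deg v_i) \cdot (\text{something})}\cdots$ — I expect the author's actual argument routes the degree dependence entirely into the explicit product prefactor and handles the timing with the same Gamma/Stirling estimate as $(i)$ run in reverse, with the $O(t)$ term coming from $|v|\log(\text{avg rate})$ being $O(t)$ when $|v| \le Ct$ and trivial otherwise.

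The main obstacle I anticipate is making part $(ii)$'s timing estimate clean in the presence of unbounded degrees: naively the holding times at high-degree vertices are \emph{short}, which should help the walk arrive quickly, but turning ``the sum of $|v|$ exponentials with large rates is $\le t$'' into a lower bound of the stated shape requires care — one cannot simply say it is $\ge \prb{\mathrm{Gamma}(|v|,1)\le t}$ since small holding times make $S$ \emph{smaller}, so actually $\prb{S \le t} \ge \prb{\mathrm{Gamma}(|v|,1) \le t}$ does hold by stochastic domination (each $\mathrm{Exp}(\deg v_i) \preceq \mathrm{Exp}(1)$ when $\deg v_i \ge 1$), and this immediately gives $\prb{S\le t} \ge \frac{t^{|v|}}{|v|!}e^{-t} \ge \exp\{-|v|([\log|v|-\log t-1]\vee 0) + O(t)\}$ after Stirling, with no degree dependence in the timing at all and the entire degree cost sitting in the combinatorial prefactor $\prod_{u\prec v}\frac{1}{\deg u}$. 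So in fact the obstacle dissolves; the one genuinely delicate point is justifying in $(i)$ that net displacement toward $v$ is governed by a rate-$1$ clock (requiring the observation that at every vertex of $[[O,v]]$ exactly one incident edge moves the walk closer to $v$ and it is crossed at rate exactly $1$, so the successive ``progress times'' stochastically dominate i.i.d.\ $\mathrm{Exp}(1)$'s regardless of how the walk wanders sideways), after which Stirling and the $|v|\ge 3$ hypothesis (ensuring $\log|v| - \log t - 1 \ge 0$ is the binding case and the bound is $\le 1$) finish it.
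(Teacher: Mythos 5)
Your part $(i)$ is essentially the paper's argument: since the edge $(v_i,v_{i+1})$ rings at rate $1$, the time to make net progress from $v_i$ to $v_{i+1}$ stochastically dominates an $\textsf{Exp}(1)$, so $H_v$ dominates a $\textsf{Gamma}(|v|,1)$ variable, and then $\pr{\textsf{Poi}(t)\geq r}\leq t^r/r!\leq(et/r)^r$ together with Stirling gives the claim.

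For part $(ii)$ your decomposition is the right one — the jump chain going straight down $[[O,v]]$ contributes $\prod_{u\prec v}\deg(u)^{-1}$, and what remains is $\pr{S\leq t}$ for $S=\sum_{i=0}^{|v|-1}E_i$ with $E_i\sim\textsf{Exp}(\deg v_i)$ — but your timing estimate has a genuine gap. You dominate each $E_i$ by $\textsf{Exp}(1)$ (using only $\deg v_i\geq 1$), giving $\pr{S\leq t}\geq\pr{\textsf{Gamma}(r,1)\leq t}\geq t^re^{-t}/r!$, and Stirling then yields
\[
\pr{S\leq t}\geq \frac{1}{e\sqrt{r}}\,e^{-r(\log r-\log t-1)-t}.
\]
The prefactor $e^{-1-\frac{1}{2}\log r}$ is not $e^{-O(t)}$ uniformly in $r$: once $\log r\gg t$, no fixed constant $C$ makes $\tfrac{1}{2}\log r\leq Ct$, so the stated bound (which is claimed for all $v,t$) does not follow. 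The paper closes this by the elementary observation you skipped: every interior vertex $v_1,\dots,v_{r-1}$ of the direct path has both $v_{i-1}$ and $v_{i+1}$ as neighbours, hence $\deg v_i\geq 2$. Thus $E_i\preceq\textsf{Exp}(2)$ for $i\geq 1$, and (after treating $E_0$ separately via $\pr{E_0\leq 1}\geq 1-e^{-1}$) one gets $\pr{S\leq t}\geq(1-e^{-1})\pr{\textsf{Poi}(2(t-1))\geq r-1}\geq(1-e^{-1})\,(2(t-1))^{r-1}e^{-2(t-1)}/(r-1)!$. The rate-$2$ clock produces an extra factor $2^{r-1}$, contributing $(r-1)\log 2$ to the exponent, which is linear in $r$ and therefore swamps the sublinear Stirling correction $-\tfrac12\log r$ for all sufficiently large $r$; the error then really is $O(t)$. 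In every place the proposition is used, $|v|$ is polynomial in $t$ so $\log|v|=O(\log t)=o(t)$ and your weaker bound would have sufficed — but to prove the statement as written you need the $\deg\geq 2$ boost.
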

\begin{proof}
\begin{enumerate}[(i)]
    \item Set $r=|v|$. Let $O=v_0 \prec v_1 \prec \ldots \prec v_r=v$ denote the ordered ancestors of $v$. To reach $v$ from $v_0$, the random walk must first pass through each of the points $v_1, \ldots, v_{r-1}$. When it reaches the point $v_i$, the time to jump to $v_{i+1}$ will stochastically dominate an $\textsf{Exp}(1)$ random variable (since it can only reach $v_{i+1}$ through one edge which rings at rate $1$). The time delay for the random walk to reach $v$ will thus be greater than 
    $\sum_{u\prec v} E_u,$
    where $(E_u)_{u\prec v}$ is a sequence of i.i.d. $\textsf{Exp}(1)$ random variables. Hence, using Stirling's formula we obtain

\begin{align*}
    \pr{H_v \leq t} \leq \pr{\sum_{u\prec v} E_u \leq t} = \pr{ \textsf{Poi} ( t) \geq r} = e^{- t} \left( \frac{t^r}{r !} + \sum_{i=r+1}^{\infty} \frac{ t^i}{i !} \right) 
    &\leq e^{- t}  \frac{ t^r}{r !} \left( 1 + \sum_{i=1}^{\infty} \frac{t^{i}}{i !} \right)\\
    &\leq \frac{t^r e^{-t}e}{\sqrt{2\pi}r^{r+1/2}e^{-r}}  \\
    &\leq \exp \{-r[\log r - \log t - 1] \}.
\end{align*}

\item We lower bound this by the probability of going directly to $v$ in time $t$. Accordingly, again let $O=v_0 \prec v_1 \prec \ldots \prec v_r=v$ denote the ordered ancestors of $v$, but this time let $(E_i)_{i=0}^{r-1}$ be independent random variables, where $E_i\sim \textsf{Exp}(\text{deg}(v_i))$ for each $i=0,...,r-1$. Also, recall that $e^{-x} + e^{-x^{-1}} \leq \frac{1}{1+x}+\frac{1}{1+x^{-1}}=1$ for all $x>0$. 

Then, if $r(\log 2) >3t$, since all vertices except perhaps $v_0$ have degree at least $2$, we have (by Stirling's formula) that for all sufficiently large $r$
\begin{align}\label{eqn:RW hitting time LB estimate}
\begin{split}
    \pr{\sum_{i=0}^{r-1} E_i \leq t} &\geq \pr{ E_0 \leq 1} \pr{\sum_{i=1}^{r-1} E_i \leq t-1} \\
    &\geq (1-e^{-1})\pr{\textsf{Poi}(2(t-1)) \geq r-1} \geq (1-e^{-1})\frac{e^{-2(t-1)}(2t-2)^{(r-1)}}{(r-1)!} \\
    &\geq \exp\{-r\left(\log r - \log t -1\right) - O\left(\log (r \vee t) + \frac{r}{t}\right) +{(r+1)}\log 2- (2t+1)  \}.
\end{split}
\end{align}
The case $r(\log 2) >3t>50$ follows from the penultimate line above (the constant $50$ is not optimal).
\item The case $r(\log 2) \leq 3t$ similarly follows from \eqref{eqn:RW hitting time LB estimate}. 
\end{enumerate}
\end{proof}

\color{black}

\begin{cor}\label{cor:exitball}
Under Assumption \ref{assn:whp}, $\mathbf P$-almost surely, we have for all sufficiently large $r$ that
$$\mathbb P(\tau_{B_{r-1}}\leq t)\leq   \exp\left\{-\frac{r}{5}\log\left(\frac{r}{et}\right) \right \} $$
for all $t > 0$.
\end{cor}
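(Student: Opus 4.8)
The strategy is to bound the exit event by a union over the vertices at distance exactly $r$ from the root, feed in the single-vertex hitting estimate underlying Proposition~\ref{prop:hitting time prob bounds}(i), and absorb the (polynomially bounded) number of such vertices using the slack between the exponent $\frac{r}{5}\log(\frac{r}{et})$ in the claim and the genuine exponential rate $r\log(\frac{r}{et})$. First observe that $\log(\frac{r}{et}) = \log r - \log t - 1$, so that if $t \ge r/e$ then $-\frac{r}{5}\log(\frac{r}{et}) \ge 0$, the right-hand side is at least $1$, and there is nothing to prove. So assume $t < r/e$ and set $s := \log(\frac{r}{et}) > 0$. Since $B_{r-1} = \{w : |w| \le r - 1\}$ and the walk changes $|X_\cdot|$ by $\pm 1$ only at jump times, at the exit time $\tau_{B_{r-1}}$ the walk occupies a vertex at distance exactly $r$; hence $\{\tau_{B_{r-1}} \le t\} \subseteq \bigcup_{|v| = r}\{H_v \le t\}$, and a union bound gives
\[
\mathbb P(\tau_{B_{r-1}} \le t) \;\le\; \#B_r \cdot \max_{|v| = r}\mathbb P(H_v \le t).
\]

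For the single-vertex term I would use the stochastic domination established inside the proof of Proposition~\ref{prop:hitting time prob bounds}(i): $H_v$ dominates a sum of $r = |v|$ i.i.d.\ $\textsf{Exp}(1)$ variables, so $\mathbb P(H_v \le t) \le \mathbb P(\textsf{Poi}(t) \ge r)$. Rather than the form quoted in Proposition~\ref{prop:hitting time prob bounds}(i), I would retain the $e^{-t}$ produced by the Chernoff bound: for $t < r$ one has $\mathbb P(\textsf{Poi}(t) \ge r) \le \exp\{-r\log(\frac{r}{et}) - t\} = e^{-rs - t}$. Keeping this $-t$ is essential precisely in the delicate window where $t$ is just below $r/e$ (equivalently $s$ close to $0$): there the cruder bound $e^{-rs}$ multiplied by the volume $\#B_r$ would exceed the target, whereas the extra $e^{-t}$ rescues it.

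To combine the two estimates, recall from Corollary~\ref{cor:volume bounds} that $\mathbf P$-almost surely $\#B_r \le r^{d+1}$ for all large $r$, so that $\log \#B_r \le (d+1)\log r \le r/e$ for all large $r$. It therefore suffices to check the elementary inequality
\[
\frac{r}{e} \;\le\; \frac{4r}{5}\, s + t, \qquad t = \frac{r}{e}\,e^{-s},\ \ s > 0,
\]
and this holds because the right-hand side equals $r/e$ at $s = 0$ and has $s$-derivative $\frac{4r}{5} - \frac{r}{e}e^{-s} \ge r(\frac{4}{5} - \frac{1}{e}) > 0$, hence is increasing in $s$. Consequently $\log \#B_r - rs - t \le \frac{4r}{5}s + t - rs - t = -\frac{r}{5}s$, i.e.\ $\#B_r\, e^{-rs-t} \le e^{-\frac{r}{5}s}$, and plugging this into the union bound yields $\mathbb P(\tau_{B_{r-1}} \le t) \le \exp\{-\frac{r}{5}\log(\frac{r}{et})\}$, as required.

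The only non-routine step — and thus the main obstacle — is recognising that the hitting-time bound exactly as stated in Proposition~\ref{prop:hitting time prob bounds}(i) discards the $e^{-t}$ factor and is therefore slightly too weak when $t$ approaches $r/e$; once the sharper Chernoff form is substituted, the short calculus estimate above closes the remaining gap, and everything else is a routine union bound plus the volume estimate.
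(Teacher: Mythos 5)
Your proof is correct, and it takes a genuinely different route from the paper's. The paper restricts the union bound to a small cut-set $C_r$, the vertices at generation $r/2$ that have a descendant at generation $r$; this set has only polylogarithmic size $\#C_r \le (\log r)^{c_1}$ eventually almost surely, which is established via a binomial stochastic-domination argument and Borel--Cantelli along the dyadic radii $r_n = 2^n$, followed by a monotonicity transfer to general $r$. Because $\#C_r$ is so small, the hitting estimate exactly as stated in Proposition~\ref{prop:hitting time prob bounds}$(i)$ (i.e.\ without the $e^{-t}$ factor) suffices. You instead union over the entire generation $r$, a set of polynomial size bounded crudely by $\#B_r \le r^{d+1}$ from Corollary~\ref{cor:volume bounds}; to compensate you correctly notice that Proposition~\ref{prop:hitting time prob bounds}$(i)$ silently discards the $e^{-t}$ factor that the underlying Poisson--Chernoff bound supplies, and that reinstating it is precisely what absorbs the polynomial volume factor in the delicate range where $t$ is close to $r/e$. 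Your calculus check that $\tfrac{4r}{5}s + t \ge r/e$ for $t = \tfrac{r}{e}e^{-s}$, $s > 0$, is correct and is the sharp point of the argument. The trade-off is clean: the paper's route pays for the weaker hitting bound with a nontrivial cut-set estimate and a subsequence argument, whereas yours avoids both at the cost of unwrapping the Chernoff step hidden inside Proposition~\ref{prop:hitting time prob bounds}$(i)$; your version is arguably shorter, needs no dyadic subsequence (the volume bound from Corollary~\ref{cor:volume bounds} is already for all large $r$ simultaneously), and makes the role of the $e^{-t}$ factor explicit.
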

\begin{proof}
Let us define for $r>0$ the set 
\begin{equation}\label{eqn:Cr}
C_r:=\left\{v\in V(\Ti):|v|=\frac{r}{2}, \exists u \in V(\Ti): |u|=r,  v\prec u\right\},
\end{equation}
that is, the set of vertices in generation $\frac{r}{2}$ that have a descendant in generation $r$. We will first derive an upper bound for the volume of this set. 
Let $Z^*_n$ denote the size of the $n^{th}$ generation of $\Ti$ and let $v_i$, $i=1,...,Z^*_n$ denote the vertices of the $n^{th}$ generation. Furthermore, let $T_{v_i}$ denote the subtree emanating from $v_i$. Then 
$$\#C_r=\sum_{w\in \{v_1,...,v_{Z^*_{r/2}}\}} \mathbf 1\{\Height(T_{w})\geq r/2\}=1+\sum_{w\in \{v_1,...,v_{Z^*_{r/2}}\}\setminus \{s_{r/2}\}} \mathbf 1\{\Height(T_{w})\geq r/2\}.$$
Recall from Lemma \ref{lem:finite GW prob bounds} that there exists $c<\infty$ such that for all $v_i\neq s_{r/2}$, $\mathbf P(\Height(T_{v_i})\geq x)\sim c x^{\frac{-1}{\beta-1}}$. Consequently $\#C_r -1$ is stochastically dominated by a $\textsf{Binomial}(Z^*_{r/2}-1,cr^{-\frac{1}{\beta-1}})$ random variable.
Hence, for $c_2=\frac{1+\epsilon}{\beta-1-\epsilon}$ and $c_1=c_2+\epsilon$,
\begin{equation}\label{eqn:probcutvol}
    \mathbf P(\#C_r\geq (\log r)^{c_1})\leq \mathbf P (Z^*_{r/2}\geq r^{\frac{1}{\beta-1}}(\log r)^{c_2})+\mathbf P(X\geq (\log r)^{c_1}-1),
\end{equation}
where $X\sim \Binom(r^{\frac{1}{\beta-1}}(\log r)^{c_2}-1,cr^{-\frac{1}{\beta-1}})$.
We have by \cite[Proposition 2.2]{CroydonKumagai08} that for every $\epsilon>0$ 
\begin{equation}\label{eqn:someniceequation1}\mathbf P(Z^*_{r/2}\geq r^{\frac{1}{\beta-1}}(\log r)^{c_2})\leq c_3 (\log r)^{-c_2(\beta-1-\epsilon)}=c_3(\log r)^{-(1+\epsilon)},\end{equation}
for some constant $c_3>0$. Furthermore, since $c_1 > c_2$, by a Chernoff bound we have 
\begin{align} \label{eqn:someniceequation2}
    \mathbf P(X\geq (\log r)^{c_1}-1) \leq \frac{\mathbf E[e^{ X}]}{e^{((\log r)^{c_1}-1)}}
    &\leq \frac{\exp((r^{\frac{1}{\beta-1}}(\log r)^{c_2}-1)cr^{-\frac{1}{\beta-1}}(e-1))}{\exp(((\log r)^{c_1}-1))} \nonumber\\
    &\leq \exp(c(\log r)^{c_2}e-(\log r)^{c_1}+1).
\end{align}
Combining \eqref{eqn:someniceequation1} and \eqref{eqn:someniceequation2} with \eqref{eqn:probcutvol}, we deduce that for any $\epsilon > 0$, there exists $c_4 < \infty$ such that
\begin{equation*}\label{eqn:Cr tail bound}
    \mathbf P(\#C_r\geq (\log r)^{\frac{1+\epsilon}{\beta - 1}})\leq c_4(\log r)^{-(1+\epsilon)}.
\end{equation*}

For $r_n=2^n$ this probability is summable over $n$ and thus by Borel-Cantelli $\mathbf P$-almost surely for $n$ large enough $\#C_{r_n}\leq (\log {r_n})^{c_1}$. 

Now, since the random walk needs to visit a vertex in $C_r$ before it can exit the ball $B_{r-1}$, we have by a union bound and Proposition \ref{prop:hitting time prob bounds}$(i)$ that
 \begin{align*}
    \mathbb P(\tau_{B_{r-1}}\leq t)&\leq \mathbb P(\exists v\in C_r:H_v\leq t)\leq \sum_{v\in C_r}\mathbb P(H_v\leq t)\leq \#C_r \exp\left\{-\frac{r}{2}\log\left(\frac{r}{2et}\right)\right \}.
\end{align*}
Therefore, $\mathbf P$-almost surely along the subsequence $r_n$, for all $n$ sufficiently large it holds that
\begin{align*}
    \mathbb P(\tau_{B_{r_n-1}}\leq t)
    &\leq (\log {r_n})^{c_1} \exp\left\{-\frac{r_n}{2}\log\left(\frac{r_n}{2et}\right)\right \} =\exp\left\{-\frac{r_n}{2}\log\left(\frac{r_n}{2et}\right)+c_1\log(\log(r_n))\right \}.
\end{align*} 
Consequently, for $r\in[r_n,r_{n+1}]$ sufficiently large:
\begin{align*}
    \mathbb P(\tau_{B_{r-1}}\leq t)&\leq    \mathbb P(\tau_{B_{r_n}}\leq t)
\leq \exp\left\{-\frac{r_n}{2}\log\left(\frac{r_n}{2et}\right)+c_1\log(\log(r_n))\right \}
    \leq  \exp\left\{-\frac{r}{5}\log\left(\frac{r}{et}\right)\right \}.
\end{align*}
\end{proof}

\begin{lem}\label{lem:RW ball exit time restricted radii}
Take any $\tilde f,\tilde p>0$. Under Assumption \ref{assn:whp}, we have with high $\bPb$-probability as $t\to \infty$, that for all $r \in [r(t)(\log t)^{-\tilde f}, r(t) (\log t)^{\tilde p}]$,
\begin{align*}
\pr{\tau_{B_{r-1}} \leq t} &\leq  \exp\left\{-r \log \left( \frac{r}{et} \right) + o \left( r(t)\right) \right\}.
\end{align*}
\end{lem}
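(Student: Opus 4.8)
The plan is to sharpen the argument behind Corollary~\ref{cor:exitball}. The loss of the factor $1/5$ there comes from only forcing the walk to reach generation $r/2$ before it can leave $B_{r-1}$; if instead we force it to reach generation $r$ — which it genuinely must — we recover the full exponent $r\log(r/et)$. The price is that we must control the number of vertices of $\Ti$ at distance $r$ from the root, but this is harmless here because $r$ ranges only over $[r(t)(\log t)^{-\tilde f}, r(t)(\log t)^{\tilde p}]$ and we only need a with-high-probability statement.

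First I would record the elementary observation that, to leave $B_{r-1}$, the walk must jump to a vertex at graph distance exactly $r$ from $O$, so
\[
\{\tau_{B_{r-1}} \leq t\} \subseteq \bigcup_{v:\, |v| = r} \{H_v \leq t\}.
\]
Since $r(t)/t = t^{q}(\log t)^{-(q+1)} \to \infty$ (as $q>0$), for any fixed $\tilde f$ we have $r(t)(\log t)^{-\tilde f} \geq et \vee 3$ once $t$ is large; hence for all such $t$, every $r$ in the stated range and every $v$ with $|v| = r$ satisfies $|v| \geq 3$ and $|v| \geq et$, so Proposition~\ref{prop:hitting time prob bounds}$(i)$ gives $\pr{H_v \leq t} \leq \exp\{-r\log(r/et)\}$, since there $[\log|v| - \log t - 1]\vee 0 = \log r - \log t - 1 = \log(r/et)$. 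A union bound over $\{v:|v|=r\}$ then yields, simultaneously for all $r$ in the range,
\[
\pr{\tau_{B_{r-1}} \leq t} \;\leq\; \#\{v : |v| = r\}\,\exp\!\left\{-r\log\!\left(\tfrac{r}{et}\right)\right\} \;\leq\; \#B_{\bar r(t)}\,\exp\!\left\{-r\log\!\left(\tfrac{r}{et}\right)\right\},
\]
where $\bar r(t) := r(t)(\log t)^{\tilde p}$ and the last step uses $\#\{v:|v|=r\} \leq \#B_r \leq \#B_{\bar r(t)}$ by monotonicity of volumes.

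It then remains to show $\log \#B_{\bar r(t)} = o(r(t))$ with high probability and substitute this back. For the volume bound one can either invoke the almost sure estimate $\#B_r \leq r^{d}(\log r)^{(1+\epsilon)/(\beta-1)}$ for large $r$ from Corollary~\ref{cor:volume bounds}, or, staying within the with-high-probability regime, apply Proposition~\ref{prop:AR vol growth and exit time}$(i)$ with $\lambda = (\log t)^{K}$ for suitable $K$ and $\epsilon < \beta-1$, which gives $\#B_{\bar r(t)} \leq (\log t)^{K}\,\bar r(t)^{d}$ off an event of probability $O((\log t)^{-K(\beta-1-\epsilon)}) \to 0$. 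Either way, since $\log \bar r(t) = (q+1)\log(t/\log t) + \tilde p\log\log t = O(\log t)$, we get $\log \#B_{\bar r(t)} = O(\log t)$, and $\log t = o(r(t))$ because $q>0$ forces $r(t) = (t/\log t)^{q+1}$ to grow like a positive power of $t$. Plugging this into the last display gives the claim uniformly over the range.

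I do not anticipate a real obstacle: the only slightly delicate point is exchanging the quantifier ``for all $r$ in the range'' with the random event, which is dispatched by the monotonicity step that reduces everything to the single ball $B_{\bar r(t)}$, together with the trivial comparison $\log t = o(r(t))$. The substantive observation is simply that, once $r$ is pinned near $r(t)$, the intermediate-generation device used to obtain the cruder deterministic bound of Corollary~\ref{cor:exitball} can be dropped.
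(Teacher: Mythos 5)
Your proof is correct, and it takes a genuinely simpler route than the paper's own. You observe that the first exit from $B_{r-1}$ is a hit of some vertex at level exactly $r$, apply the single-vertex hitting bound from Proposition~\ref{prop:hitting time prob bounds}$(i)$ to each such vertex (using that every $r$ in the range exceeds $et$), and absorb the combinatorial prefactor into a single volume bound: $\#\{v:|v|=r\}\le \#B_{\bar r(t)}$ deterministically for all $r$ in the range, and $\log\#B_{\bar r(t)}=O(\log t)=o(r(t))$ with high probability by Proposition~\ref{prop:AR vol growth and exit time}$(i)$. Because the hitting-time estimate is a deterministic statement given $\Ti$, the whp event is simply ``$\#B_{\bar r(t)}$ is not too large'', which handles uniformity in $r$ for free. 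The paper instead builds a much smaller cut-set $C_r$ at a generation slightly inside level $r$ (so that $\#C_r$ is polylogarithmic rather than polynomial), controls it along a geometric grid of radii, and then interpolates between grid points using monotonicity of the exit time; this pays a multiplicative $(1-O((\log r)^{-m}))$ loss in the exponent, recovered by choosing $m>\tilde p+1$. Your version avoids both the $C_r$ construction and the dyadic union bound and in fact yields a sharper error term ($O(\log t)$ rather than $O(r(t)(\log t)^{\tilde p+1-m})$), though both are comfortably $o(r(t))$.
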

\begin{proof}
We follow a refined version of the previous proof, this time with several steps.

\textbf{STEP 1.}
Choose some $\varepsilon>0$ and $0<\delta<1$. Let $m>\tilde p+1$, $k>\frac{m+\delta}{\beta-1-\varepsilon}+2\varepsilon$ and set $K:=k+\frac{m}{\beta-1}$. Given $r>0$, this time let $Z_r^*$ denote the number of vertices in generation $\left(1-\frac{1}{(\log r)^m}\right)r$, and let $C_r$ denote the set of vertices in this generation that have a descendant at level $r$. Then, similarly to the calculations in \eqref{eqn:probcutvol}-\eqref{eqn:someniceequation2} we compute
\begin{align}\label{eqn:hitting time calc}
\begin{split}
\prb{\#C_r \geq (\log r)^{K}} &\leq \prb{Z_r^* \geq r^{\frac{1}{\beta - 1}} (\log r)^{k-\epsilon}} + \prb{\textsf{Binomial}\left(r^{\frac{1}{\beta - 1}}(\log r)^{k-\epsilon}, c'r^{\frac{-1}{\beta-1}} (\log r)^{\frac{m}{\beta-1}}\right) \geq (\log r)^K} \\
&\leq c(\log r)^{-(k-\epsilon)(\beta - 1-\epsilon)} + \exp\{-e(\log r)^{\epsilon})\} \\
&\leq c(\log r)^{-(k-\epsilon)(\beta - 1-\epsilon)}.
\end{split}
\end{align}
Since the random walk needs to visit a vertex in $C_r$ before it can exit the ball $B_{r-1}$, we can lower bound the exit time of the set $B_{r-1}$ by the hitting time of the set $C_r$. For any $r>0$, on the event $\{\#C_r \leq (\log r)^K\}$, we can thus compute using Proposition \ref{prop:hitting time prob bounds}$(i)$ that
\begin{align}\label{eqn:RW exit time from Cr}
\begin{split}
\pr{\tau_{B_{r-1}} \leq t}
\leq \sum_{v\in C_r}\mathbb P(H_v\leq t) 
 &\leq \#C_r\exp\left\{-\left(1-\frac{1}{(\log  r)^m}\right)r \left[ \log \left( \frac{r}{et}\right) + \log \left(1-\frac{1}{(\log r)^m}\right) \right]\right\} \\
&\leq \exp\left\{K \log \log r-\left(1-\frac{3}{(\log  r)^m}\right)r \log \left( \frac{r}{et}\right)\right\},
\end{split}
\end{align}

\textbf{STEP 2.} We now define $\tilde r(t):=r(t)(\log t)^{-\tilde f}$, $\tilde R(t):=r(t)(\log t)^{\tilde p}$ and for $0\leq n \leq N_t := \lceil \log (\tilde{R}(t))^{m+\delta} \rceil$ set $$r_n := \left(1 - \frac{1}{(\log \tilde{R}(t))^m} \right)^n \tilde{R}(t).$$ Then $r_0=\tilde R(t)$ and for $t$ large enough we have 
\begin{align*}
r_{N_t}\leq \exp\left(-\frac{\lceil \log (\tilde{R}(t))^{m+\delta} \rceil}{\log (\tilde R(t))^m}\right) \tilde R(t) &\leq \exp \left\{\frac{-1}{2}(\log (\tilde R(t)))^\delta \right\} \tilde R(t) \leq \tilde r(t).
\end{align*}
Therefore, using \eqref{eqn:hitting time calc}, \eqref{eqn:RW exit time from Cr} and a union bound, we have that
\begin{align*}
&\prb{\exists n \leq N_t: \pr{\tau_{B_{r_n-1}} \leq t} \geq \exp\left\{K(\log \log r_n)-\left(1-\frac{3}{(\log  r_n)^m}\right)r_n \log \left( \frac{r_n}{et}\right)\right\}} \\
&\qquad \leq \prb{\exists n \leq N_t: \#C_{r_n} \geq (\log r_n)^K} \\
&\qquad \leq \sum_{n \leq N_t}c(\log r_n)^{-(k-\epsilon)(\beta - 1-\varepsilon)}
\end{align*}
Applying the definition of $r_n$ and the asymptotic $\log (1-x) \geq -2x$ as for all sufficiently small $x$, this is upper bounded by
\begin{align*}
\sum_{n \leq N_t}c\left[\log(\tilde R(t))+n\log \left(1-\frac{1}{(\log \tilde R(t))^m}\right)\right]^{-(k-\epsilon)(\beta-1-\varepsilon)} 
&\leq \sum_{n \leq N_t}c\left[\log(\tilde R(t))-n\frac{2}{(\log \tilde R(t))^m}\right]^{-(k-\epsilon)(\beta-1-\varepsilon)}\\
&\leq c N_t \left[\log \tilde{R}(t) \left( 1- \frac{4}{(\log \tilde{R}(t))^{1-\delta}} \right)\right]^{-(k-\epsilon)(\beta - 1-\varepsilon)} \\
&\leq C \left[ \log (\tilde{R}(t)) \right]^{m+\delta-(k-\epsilon)(\beta - 1-\varepsilon)}.
\end{align*}
Since $k > \frac{m+\delta}{\beta-1-\varepsilon}+2\varepsilon$  this probability goes to zero as $t \to \infty$, i.e. with high $\mathbf P$-probability
\begin{align}\label{eqn:donowh}
\pr{\tau_{B_{r_n-1}} \leq t} &\leq \exp\left\{K(\log \log r_n)-\left(1-\frac{3}{(\log  r_n)^m}\right)r_n \log \left( \frac{r_n}{et}\right)\right\} \ \ \ \text{for all  }n\leq N_t.
\end{align} 
\textbf{STEP 3.} Moreover, on the event in \eqref{eqn:donowh}, again using that $\log (1-x) \geq -2x$ for small $x$ we have for all $r \in [\tilde{r}(t), \tilde{R}(t)]$ with $r \in [r_{n+1}, r_n]$ that
\begin{align*}
\pr{\tau_{B_{r-1}} < t} \leq \pr{\tau_{B_{r_{n+1}-1}} < t} &\leq \exp\left\{K(\log \log r_{n+1})-\left(1-\frac{3}{(\log r_{n+1})^m}\right)r_{n+1} \log \left( \frac{r_{n+1}}{et}\right)\right\}\nonumber \\
&\leq\exp\left\{ K\log \log r-r_n\left(1-\frac{1}{(\log \tilde R(t))^m}\right)\log \left(\frac{r_{n+1}}{et}\right)+\frac{3r\log \left(\frac{r}{et}\right)}{(\log r_{n+1})^m}\right\}\nonumber\\
&\leq \exp\left\{ -r\log \left( \frac{r}{et}\right) + K(\log \log r)+ \frac{2r}{(\log \tilde{R}(t))^m}+ \frac{r \log \left( \frac{r}{t}\right)}{(\log \tilde{R}(t))^m}   + \frac{6r\log \left( \frac{r}{et}\right)}{(\log r)^m}  \right\}.
\end{align*}
 Then, since $m>\tilde p+1$, $r \in [\tilde{r}(t), \tilde{R}(t)]$ and $r_{n+1} \in [\frac{1}{2}\tilde{r}(t), \tilde{R}(t)]$ we have that $\log r_{n+1} \geq \frac{1}{2} \log r(t) \geq \frac{1}{2} \log t \ \forall n$ provided that $t$ is sufficiently large, so it follows that
\begin{align*}
K(\log \log r)+ \frac{2r}{(\log \tilde{R}(t))^m}+ \frac{r \log \left( \frac{r}{t}\right)}{(\log \tilde{R}(t))^m}   + \frac{6r \log \left( \frac{r}{et}\right)}{(\log r_{n+1})^m}  = O\left( \frac{r(t) (\log t)^{\tilde p+1}}{(\log t)^m} \right) \leq o(r(t)),
\end{align*}
which establishes the result. 
\end{proof}

\subsection{Time reversal for Feynman-Kac formula on $\Ti$}

Here we record the following well-known straightforward proposition which will be useful for manipulating the Feynman-Kac formula of \eqref{eqn:FK formula}.

\begin{prop}[Time reversal]\label{prop:FK time reversal}
For almost every realisation of $\Ti$, it holds for all $v \in \Ti$ that
\begin{align*}
    \mathbb E_0\left[\exp\left\{\int_0^t\xi(X_s)~\text{d} s\right\}\mathbbm 1\{X_t=v\}\right] = \mathbb E_v\left[\exp\left\{\int_0^t\xi(X_s)~\text{d} s\right\}\mathbbm 1\{X_t=0\}\right].
\end{align*}
\end{prop}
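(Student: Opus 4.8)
The plan is to give the standard argument: the proposition is just the statement that the Feynman--Kac kernel
\[
p_t^\xi(x,y) := \mathbb E_x\!\left[\exp\left\{\int_0^t \xi(X_s)\,\mathrm{d}s\right\}\mathbbm 1\{X_t = y\}\right]
\]
is symmetric in $(x,y)$, and this symmetry comes from reversibility of $X$. The variable-speed walk $X$ has generator $\Delta$, whose off-diagonal entries $\Delta(x,y) = \mathbbm 1\{x\sim y\}$ are symmetric, i.e.\ $X$ is reversible with respect to the counting measure on $V(\Ti)$; and the functional $\int_0^t \xi(X_s)\,\mathrm{d}s$ together with the endpoint constraint is invariant under time reversal of the trajectory. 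So conceptually there is nothing to do, and the work is to make this rigorous by writing out the path-space representation explicitly.

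Concretely, I would decompose $p_t^\xi(x,y)$ according to the number $n\ge 0$ of jumps made before time $t$, the sequence $(x_0,\dots,x_n)$ of vertices visited (with $x_0 = x$, $x_n = y$, and $x_i\sim x_{i+1}$ for all $i$), and the holding durations $(d_0,\dots,d_n)$, which range over the simplex $\Sigma_t = \{d_i>0,\ \sum_i d_i = t\}$ equipped with Lebesgue measure. A one-line Markov-chain computation, in which the total jump rate $\deg(x_i)$ and the probability $1/\deg(x_i)$ of picking each neighbour cancel, shows that the density of such a trajectory is $\prod_{i=0}^n e^{-\deg(x_i)d_i}$ (with the final factor coming from survival on the last interval $[\tau_n,t]$), so that, absorbing the potential term $\int_0^t\xi(X_s)\,\mathrm ds = \sum_i \xi(x_i)d_i$,
\[
p_t^\xi(x,y) = \sum_{n\ge 0}\ \sum_{\substack{x_0 = x,\ x_n = y\\ x_i\sim x_{i+1}}}\ \int_{\Sigma_t} \exp\!\left\{\sum_{i=0}^n (\xi(x_i)-\deg(x_i))\,d_i\right\}.
\]
All terms are non-negative, so by Tonelli the order of summation and integration is immaterial and the identity is meaningful even if both sides were infinite (they are finite $P$-a.s.\ since $\alpha>d$). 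Now I apply the reversal involution $(x_i)_i\mapsto (x_{n-i})_i$, $(d_i)_i\mapsto (d_{n-i})_i$: it is a bijection from trajectories running $x\to y$ onto trajectories running $y\to x$, it preserves the nearest-neighbour constraint (which is symmetric), it preserves Lebesgue measure on $\Sigma_t$ (the uniform measure on the simplex is exchangeable), and it leaves the exponential weight unchanged since $\sum_i (\xi(x_i)-\deg(x_i))d_i$ is merely reindexed by $i\mapsto n-i$. Taking $(x,y)=(0,v)$ and $(x,y)=(v,0)$ then yields the claim.

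I do not anticipate a genuine obstacle — this is a ``well-known straightforward proposition'' and the only care needed is getting the path-space density right (in particular the treatment of the final holding interval and the cancellation of the two degree factors), checking measure-invariance of the reversal on the simplex, and citing non-negativity to justify the rearrangement. One could instead argue operator-theoretically, writing $u(t,\cdot)=e^{t(\Delta+\xi)}\delta_0$ and using that $\Delta+\xi$ is symmetric on $\ell^2(V(\Ti))$ with counting measure so that $\langle \delta_v, e^{t(\Delta+\xi)}\delta_0\rangle = \langle \delta_0, e^{t(\Delta+\xi)}\delta_v\rangle$; but on an infinite tree with unbounded potential this requires a word about (essential) self-adjointness, which the path-space argument sidesteps entirely.
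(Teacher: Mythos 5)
Your proposal is correct and takes essentially the same approach as the paper: a path-space decomposition in which the degree factors from the jump probabilities cancel those from the exponential holding-time densities, followed by a change of variables implementing time reversal. The paper's proof performs the substitution only on the first and last holding times (exchanging $(v_0,s_0)\leftrightarrow(v_n,s_n)$) and relies on the inner holding times entering symmetrically, whereas you apply the global involution $(x_i,d_i)\mapsto(x_{n-i},d_{n-i})$; this is a cosmetic difference, not a substantive one.
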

\begin{proof}
The proof is just a computation and applies to the Feynman-Kac formula on any locally finite graph. Let $\pi(X_{[0,t]})$ denote the path that consists of all the steps taken by the random walk $(X_s)_{s\geq 0}$  between times $0$ and $t$. Note that 
\begin{align}\label{eqn:path decomp}
\mathbb E_0\left[\exp\left\{\int_0^t\xi(X_s)~\text{d} s\right\}\mathbbm 1\{X_t=v\}\right] = \sum_{\gamma: 0 \to v} f(\gamma, t) 
\end{align}
where, if $\gamma$ denotes the path $0=v_0, \ldots, v_n = v$, then (in what follows we cancelled $\pr{\pi(X_{[0,t]}) = \gamma } = \prod_{i=0}^{n-1} \frac{1}{\deg v_i}$ with the terms $\deg v_i$ in the density of each $(s_i)_{i=0}^{n-1}$):
\begin{align*}
&f(\gamma, t) = \mathbb E\left[\exp\left\{\int_0^t\xi(X_s)~\text{d} s\right\}\mathbbm 1\{\pi(X_{[0,t]}) = \gamma \}\right] \\
&= \underbrace{\int_0^{\infty} \ldots \int_0^{\infty}}_{\text{from } 1 \text{ to } n-1}  e^{(\xi(v_i) - \deg v_i)s_i} \left[\int_0^{t-\sum_{i=1}^{n-1}s_i} e^{(\xi(v_0) - \deg v_0)s_0} \mathbbm{1}\left\{s_0>0, \sum_{i=0}^{n-1}s_i<t\right\}  ds_0 \cdot e^{\xi(v_n) - \deg v_n)(t-\sum_{i=0}^{n-1}s_i)} \right]\prod_{i=1}^{n-1} ds_i.
\end{align*}
We apply the substitution $s_n = t-\sum_{i=0}^{n-1}s_i = t-\sum_{i=1}^{n-1}s_i - s_0$ to the integral over $s_0$ to deduce that the expression in the square brackets is equal to
\begin{align*}
\int_0^{t-\sum_{i=1}^{n}s_i} e^{(\xi(v_n) - \deg v_n)s_n} \mathbbm{1}\left\{s_n>0, \sum_{i=1}^{n}s_i<t\right\}  ds_n \cdot e^{(\xi(v_0) - \deg v_0)(t-\sum_{i=1}^{n}s_i)}.
\end{align*}
In other words, we can perfectly exchange the roles played by $(v_n, s_n)$ and $(v_0, s_0)$ to deduce that, if $\overset{\leftarrow}{\gamma}$ is the reversal of the path $\gamma$, then $f(\gamma, t) = f(\overset{\leftarrow}{\gamma}, t)$, from which the result follows by \eqref{eqn:path decomp}.
\end{proof}

\section{Potential estimates}
\label{sec:Potential}

\subsection{Extremal values of the potential}

The following lemma bounds the maximal potential on a ball.

\begin{lem}\label{cor:whppotbo}
For any $\epsilon > 0$, there exists $C< \infty$ such that for all $r>1$ and $\lambda >1$,
\begin{align*}
P\left(\sup_{v\in B_r}\xi(v)\geq r^{d/\alpha}\lambda \right) &\leq C\lambda^{-\frac{\alpha - \epsilon}{d}} \text{ under Assumption \ref{assn:whp}}, \\
P\left(\sup_{v\in B_r}\xi(v)\geq r^{2/\alpha}\lambda \right) &\leq C\lambda^{-(\alpha - \epsilon)} \text{ under Assumption \ref{assn:as}}.
\end{align*}
\end{lem}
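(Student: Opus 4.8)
The plan is to condition on the tree, reduce to the i.i.d.\ Pareto structure of the potential via a union bound, and then handle the randomness of $\#B_r$ by truncation, since $\#B_r$ need not be integrable under Assumption~\ref{assn:whp}. Conditionally on $\Ti$, the family $\bigl(\xi(v)\bigr)_{v\in B_r}$ is i.i.d.\ Pareto($\alpha$), so for any $x\ge 1$,
\[
\prc{\sup_{v\in B_r}\xi(v)\ge x} = 1-\bigl(1-x^{-\alpha}\bigr)^{\#B_r}\le \min\bigl\{1,\ \#B_r\, x^{-\alpha}\bigr\},
\]
and hence $P\bigl(\sup_{v\in B_r}\xi(v)\ge x\bigr)\le \Eb{\min\{1,\#B_r x^{-\alpha}\}}$. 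One would like to bound this by $\Eb{\#B_r}x^{-\alpha}$, but under Assumption~\ref{assn:whp} the volume tail in Proposition~\ref{prop:AR vol growth and exit time}(i) has exponent $\beta-1-\epsilon_0<1$, so $\#B_r$ has infinite mean; this forces the truncation below and is really the only point of substance.

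\emph{Truncation.} Fix $\delta>0$ and split according to whether $\#B_r<r^d\lambda^\delta$ or not. On the first event $\min\{1,\#B_r x^{-\alpha}\}\le r^d\lambda^\delta x^{-\alpha}$, and the normalisation $x=r^{d/\alpha}\lambda$ is calibrated precisely so that this equals $\lambda^{\delta-\alpha}$, uniformly in $r$. The complementary event contributes at most $\prb{\#B_r\ge r^d\lambda^\delta}$, whence
\[
P\Bigl(\sup_{v\in B_r}\xi(v)\ge r^{d/\alpha}\lambda\Bigr)\le \lambda^{\delta-\alpha}+\prb{\#B_r\ge r^d\lambda^\delta}.
\]

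\emph{Inserting the volume bounds.} Under Assumption~\ref{assn:whp}, applying Proposition~\ref{prop:AR vol growth and exit time}(i) with $\lambda^\delta$ in place of $\lambda$ (and internal constant $\epsilon_0$) gives $\prb{\#B_r\ge r^d\lambda^\delta}\le C\lambda^{-\delta(\beta-1-\epsilon_0)}$. Choosing $\delta=\alpha/\beta$, the first exponent becomes $\alpha-\alpha/\beta=\alpha/d$ (using $(\beta-1)/\beta=1/d$) and the second becomes $\tfrac{\alpha}{\beta}(\beta-1-\epsilon_0)=\alpha/d-\alpha\epsilon_0/\beta$ (using $d(\beta-1)=\beta$); so picking $\epsilon_0\le \beta\epsilon/(d\alpha)$ makes both at least $(\alpha-\epsilon)/d$, which yields the first claim (with $C$ then depending on $\epsilon$). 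Under Assumption~\ref{assn:as} we have $d=2$, and Proposition~\ref{prop:AR vol growth and exit time}(ii) gives $\prb{\#B_r\ge r^2\lambda^\delta}\le C_K\lambda^{-\delta K}$ for every $K$; taking $\delta=\epsilon$ (we may assume $\epsilon<\alpha$) and $K\ge \alpha/\epsilon$ gives $\lambda^{\delta-\alpha}+C_K\lambda^{-\delta K}\le \lambda^{-(\alpha-\epsilon)}+C_K\lambda^{-\alpha}\le C\lambda^{-(\alpha-\epsilon)}$ for $\lambda>1$. (Alternatively, under Assumption~\ref{assn:as} one can integrate the volume tail with $K>1$ to see $\Eb{\#B_r}=O(r^2)$, so the naive bound $\Eb{\#B_r}x^{-\alpha}$ already works and even yields exponent $\alpha$.)

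\emph{Where the work is.} Everything here is routine once one notices that $\#B_r$ must be truncated rather than averaged, and that the truncation level $\delta$ must then be tuned so that the truncation error $\lambda^{\delta-\alpha}$ and the overshoot probability $\lambda^{-\delta(\beta-1-\epsilon_0)}$ balance at the advertised exponent; that this balancing is possible rests exactly on the identity $d=\beta/(\beta-1)$ (together with $\beta\le 2$, which is automatic here).
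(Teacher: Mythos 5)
Your proof is correct and is essentially the same argument as the paper's: condition on $\Ti$, bound the conditional probability by $\#B_r\,x^{-\alpha}$ (via a union bound / the explicit Pareto tail), truncate the volume at $r^d\lambda^{\delta}$, and invoke Proposition~\ref{prop:AR vol growth and exit time}(i) (resp.\ (ii)) for the overshoot, with the paper choosing $p=\alpha/\beta$ under Assumption~\ref{assn:whp} exactly as you choose $\delta=\alpha/\beta$. The only cosmetic difference is that the paper takes $p=\epsilon/2$ rather than $\delta=\epsilon$ under Assumption~\ref{assn:as}; both work.
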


\begin{proof}
Set $p = \frac{\alpha}{\beta}$. Using Proposition \ref{prop:AR vol growth and exit time}$(i)$, under Assumption \ref{assn:whp} we compute
\begin{align*}
&P\left(\exists v\in B_r: \xi(v)>r^{\frac{1}{\alpha}\frac{\beta}{\beta-1}}\lambda \right) \leq r^d\lambda^{p} P\left(\xi(O)>r^{\frac{d}{\alpha}}\lambda\right)  +  P\left(\#B_r \geq r^{d}\lambda^{p}\right) <\lambda^{p-\alpha}+ c\lambda^{-p(\beta-1-\epsilon)} \leq C\lambda^{-\frac{\alpha - \epsilon'}{ d}}.
\end{align*}
Under Assumption \ref{assn:as}, we instead take $p=\frac{\epsilon}{2}$ and choose $K>2\alpha \epsilon^{-1}$ in Proposition \ref{prop:AR vol growth and exit time}$(ii)$ to get
\begin{align*}
&P\left(\exists v\in B_r: \xi(v)>r^{\frac{2}{\alpha}}\lambda \right) \leq r^2\lambda^{p} P\left(\xi(O)>r^{\frac{2}{\alpha}}\lambda\right)  +  P\left(\#B_r \geq r^{2}\lambda^{p}\right) <\lambda^{p-\alpha}+ c\lambda^{-pK} \leq C\lambda^{-(\alpha - \epsilon)}.
\end{align*}
\end{proof}

\begin{lem}\label{lem:xi in Ar}
$P$-almost surely, we have for any $\varepsilon >0$ that
\begin{align*}
\lim_{r \to \infty} \frac{\sup_{v\in B_r}\xi(v)}{r^{\frac{d}{\alpha}}(\log r)^{\frac{d + \epsilon}{\alpha}}}=0 \text{ under Assumption \ref{assn:whp}}, \hspace{1cm} \lim_{r \to \infty} \frac{\sup_{v\in B_r}\xi(v)}{r^{\frac{2}{\alpha}}(\log r)^{\frac{1 + \epsilon}{\alpha}}}=0 \text{ under Assumption \ref{assn:as}}.
\end{align*}
\end{lem}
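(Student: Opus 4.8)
The plan is to deduce this from the tail bound in Lemma~\ref{cor:whppotbo} by a Borel--Cantelli argument along a geometric subsequence, mimicking the passage from Proposition~\ref{prop:AR vol growth and exit time} to Corollary~\ref{cor:volume bounds}. First I would note that it is enough to treat all sufficiently small $\epsilon$, since enlarging $\epsilon$ only enlarges the denominator. So fix $\epsilon>0$ small, in particular with $\epsilon<\alpha-d$, so that
\[
\theta := \frac{(d+\epsilon)(\alpha-\epsilon)}{d\alpha} = 1 + \frac{\epsilon(\alpha-d-\epsilon)}{d\alpha} > 1
\]
(here using $\alpha>d$); under Assumption~\ref{assn:as} one instead takes $\theta := \frac{(1+\epsilon)(\alpha-\epsilon)}{\alpha}>1$, which holds for small $\epsilon$ since $\alpha>2>1$.

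Next, fix $\delta>0$ and set $r_n=2^n$. Applying Lemma~\ref{cor:whppotbo} with $\lambda=\delta(\log r_n)^{(d+\epsilon)/\alpha}$ (legitimate for $n$ large, when this exceeds $1$) gives, under Assumption~\ref{assn:whp},
\[
P\!\left(\sup_{v\in B_{r_n}}\xi(v)\geq \delta\, r_n^{d/\alpha}(\log r_n)^{(d+\epsilon)/\alpha}\right)\leq C\,\delta^{-(\alpha-\epsilon)/d}\,(n\log 2)^{-\theta},
\]
which is summable in $n$ because $\theta>1$. Borel--Cantelli then yields that $P$-a.s.\ $\sup_{v\in B_{r_n}}\xi(v)<\delta\, r_n^{d/\alpha}(\log r_n)^{(d+\epsilon)/\alpha}$ for all large $n$, and for $r\in[r_n,r_{n+1}]$ the monotonicity of $r\mapsto B_r$ bounds the ratio in the statement by $\delta\,2^{d/\alpha}\,((n+1)/n)^{(d+\epsilon)/\alpha}$, so its $\limsup$ over $r$ is at most $\delta\,2^{d/\alpha}$ $P$-a.s. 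Finally, intersecting over $\delta=1/k$, $k\in\N$, forces the $\limsup$ to vanish. The argument under Assumption~\ref{assn:as} is the same, with $d/\alpha$ replaced by $2/\alpha$, $(d+\epsilon)/\alpha$ by $(1+\epsilon)/\alpha$, and the second bound of Lemma~\ref{cor:whppotbo} in place of the first.

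This proof is routine; the only subtleties are that one must restrict to the subsequence $r_n=2^n$ --- the tail probability is summable in $n$ but not over all integers $r$, since the excess factor over the naive bound on $\sup_{B_r}\xi$ is only polylogarithmic --- and that the final $\delta\downarrow 0$ step is what upgrades almost sure boundedness of the normalised supremum to convergence to $0$.
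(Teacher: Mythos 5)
Your proof is correct and follows essentially the same route as the paper's: apply the tail bound from Lemma~\ref{cor:whppotbo}, run Borel--Cantelli along $r_n=2^n$, and extend to all $r$ by monotonicity. The only cosmetic difference is the final step forcing the $\limsup$ to vanish — you intersect over $\delta\downarrow 0$, while the paper instead works with a slightly smaller exponent $\epsilon'<\epsilon$ and lets the spare $(\log r)^{(\epsilon-\epsilon')/\alpha}$ factor do the job — but these are equivalent devices.
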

\begin{proof} We give the proof under Assumption \ref{assn:whp}; the proof under Assumption \ref{assn:as} is identical. Let $\epsilon>0$ be arbitrary and choose $\epsilon' \in (0, \epsilon)$. Using Lemma \ref{cor:whppotbo} we deduce
\begin{align*}
P\left(\exists v\in B_r: \xi(v)> r^{\frac{d}{\alpha}}(\log r)^{\frac{d+\epsilon'}{\alpha}}\right) = (\log r)^{-1-\frac{\varepsilon'}{2d}},
\end{align*}
with $\epsilon'=\frac{\epsilon}{1+\beta^2/(\beta-1)}>0$. 
This probability is summable along $r_n=2^n$, and thus by Borel-Cantelli and monotonicity if $r \in [r_n, r_{n+1}]$, we deduce that $P$-almost surely 
\begin{equation*}
\lim_{n \to \infty} \frac{\sup_{v\in B_{r_n}}\xi(v)}{r_n^{\frac{d}{\alpha}}(\log r_n)^{\frac{d+\epsilon'}{\alpha}}}\leq 1, \hspace{1cm} \lim_{r \to \infty} \frac{\sup_{v\in B_{r}}\xi(v)}{r^{\frac{d}{\alpha}}(\log r)^{\frac{d+\epsilon'}{\alpha}}}\leq 2^{\frac{d+1}{\alpha}}.
\end{equation*}
Since $\epsilon' < \epsilon$ this gives the result.
\end{proof}

Define $\tilde B_r = \{v\in B_r:\deg(v)\leq 4\}$. Given $i > 0$, let $\xi^{(i)}_r$ denote the $i^{th}$ highest value of $\xi$ on $B_r$ and $\tilde{\xi}^{(i)}_r$ denote the $i^{th}$ highest value of $\xi$ on $\tilde{B}_r$.

\begin{lem}\label{cor:sup xi LB}
$P$-almost surely under Assumption \ref{assn:whp}, for $i=1,2,3$ and any $\epsilon > 0$,
\begin{align*}
\liminf_{r \to \infty} \left( \frac{{\xi}^{(i)}_r}{r^{\frac{d}{\alpha}} (\log \log r)^{\frac{-(2\beta - 1 + \epsilon)}{\alpha (\beta - 1)}}} \right) = \infty, \hspace{1cm} \liminf_{r \to \infty} \left( \frac{\tilde{\xi}^{(i)}_r}{r^{\frac{d}{\alpha}} (\log \log r)^{\frac{-(2\beta - 1+ \epsilon)}{\alpha (\beta - 1)}}} \right) = \infty.
\end{align*}
\end{lem}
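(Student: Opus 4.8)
The plan is to prove the bound for $\tilde\xi^{(i)}_r$ and then get the bound for $\xi^{(i)}_r$ for free: since $\tilde B_r \subseteq B_r$, the multiset of potential values on $\tilde B_r$ is a sub-multiset of the one on $B_r$, so $\xi^{(i)}_r \geq \tilde\xi^{(i)}_r$ for every $i$. Fix $\epsilon > 0$ and $i \in \{1,2,3\}$. Using the identity $2\beta - 1 = (1+d)(\beta - 1)$, the target exponent is $\theta := \frac{2\beta-1+\epsilon}{\alpha(\beta-1)} = \frac{1+d}{\alpha} + \frac{\epsilon}{\alpha(\beta-1)}$. I would first reduce the ``$\liminf = \infty$'' statement to the cleaner claim that, $P$-a.s., $\tilde\xi^{(i)}_r \geq r^{d/\alpha}(\log\log r)^{-\theta_2}$ for all large $r$, where $\theta_2$ is chosen with $\tfrac{1+d}{\alpha} < \theta_2 < \theta$ (e.g.\ $\theta_2 = \frac{2\beta-1+\epsilon/2}{\alpha(\beta-1)}$); the positive gap $\theta - \theta_2$ will, after dividing through, leave a factor $(\log\log r)^{\theta-\theta_2}\to\infty$, upgrading an eventual lower bound into the divergence statement, and since $\epsilon$ is arbitrary this suffices.

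The core is a conditional extreme-value estimate. I would condition on $\Ti$, which is independent of the potential, and invoke Lemma~\ref{lem:deg4set}: for $\bPb$-a.e.\ tree there is a random constant $c_0 \in (0,\infty)$ and an $\epsilon'' > 0$ (to be fixed small) with $\#\tilde B_r \geq c_0\, r^d (\log\log r)^{-(d+\epsilon'')}$ for all large $r$. Set $x_r := r^{d/\alpha}(\log\log r)^{-\theta_2}$; for $r$ large $x_r \geq 1$, and since $\tilde B_r$ is determined by $\Ti$ while $(\xi(v))$ are i.i.d.\ $\cPb$ independent of $\Ti$, conditionally on $\Ti$ the count $M_r := \#\{v\in\tilde B_r : \xi(v)\geq x_r\}$ is $\Binom(\#\tilde B_r, x_r^{-\alpha})$ with $x_r^{-\alpha} = r^{-d}(\log\log r)^{\alpha\theta_2}$. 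On the volume event its mean is therefore at least
\[
c_0\, r^d (\log\log r)^{-(d+\epsilon'')}\cdot r^{-d}(\log\log r)^{\alpha\theta_2} = c_0\,(\log\log r)^{\alpha\theta_2 - (d+\epsilon'')}.
\]
The key numerological point is that $\alpha\theta_2 - (d+\epsilon'') > 1$ once $\epsilon''$ is small enough: this is precisely why the exponent $\theta$ carries a ``$1+d$'' and not just a ``$d$'' --- the ``$d$'' absorbs the worst-case logarithmic volume deficit coming from Corollary~\ref{cor:volume bounds}/Lemma~\ref{lem:deg4set}, while the surplus ``$1$'' buys summability in a Borel--Cantelli step along a geometric subsequence. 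A multiplicative Chernoff bound $\prc{\Binom(N,p)\leq Np/2}\leq e^{-Np/8}$ then gives $\prc{M_r < 3}\leq \exp\{-c\,c_0(\log\log r)^{1+\delta}\}$ for some $\delta>0$, valid once the mean exceeds $6$.

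Finally I would sum this along $r_n = 2^n$: since $\log\log r_n \sim \log n$, the terms decay faster than $n^{-2}$, so for each fixed (non-exceptional) tree $\Ti$, Borel--Cantelli applied to the potential shows $M_{r_n}\geq 3$, hence $\tilde\xi^{(i)}_{r_n}\geq x_{r_n}$ for $i\leq 3$, for all large $n$; integrating over $\Ti$ (the bad trees form a $\bPb$-null set) gives the statement $P$-a.s. Monotonicity $\tilde B_{r_n}\subseteq\tilde B_r$ for $r\in[r_n,r_{n+1}]$ extends it, at the cost of a harmless factor $2^{-d/\alpha}$, to all large $r$, and dividing by $r^{d/\alpha}(\log\log r)^{-\theta}$ leaves $\gtrsim(\log\log r)^{\theta-\theta_2}\to\infty$; this proves the claim for $\tilde\xi^{(i)}_r$, and $\xi^{(i)}_r\geq\tilde\xi^{(i)}_r$ finishes it. I do not anticipate a genuine obstacle: the argument is a routine Chernoff-plus-Borel--Cantelli computation. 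The only things needing care are (a) the bookkeeping of the three small parameters ($\epsilon$ in the target, $\epsilon''$ in the volume bound, and the Chernoff slack) so that the final $\log\log r$-exponent lands strictly above $1$, and (b) handling the tree-dependent constant $c_0$ by conditioning on $\Ti$ and using Fubini rather than trying to make the bound uniform in the tree.
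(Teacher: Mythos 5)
Your proposal is correct and follows essentially the same route as the paper: both reduce to $\tilde{\xi}^{(3)}_r$ via $\xi^{(i)}_r \geq \tilde{\xi}^{(i)}_r$, both feed the lower volume bound of Lemma~\ref{lem:deg4set} into a conditional Binomial count of high-potential sites in $\tilde B_r$, both estimate the probability that this count falls below $3$, and both close with Borel--Cantelli along $r_n = 2^n$ plus monotonicity. The one small improvement in your version is the explicit intermediate exponent $\theta_2$ strictly between $(1+d)/\alpha$ and $\theta$: this makes the Chernoff exponent $\gtrsim(\log\log r)^{1+\delta}$ for a fixed $\delta>0$, so the terms decay super-polynomially in $n$ and summability holds irrespective of the implicit constants. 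The paper instead writes the probability as roughly $e^{-cNp}$ with $Np\asymp\log\log r$ and tunes $\lambda_r$ to include $c^{-1}$, which works but is touchier bookkeeping (and, incidentally, the displayed formula in the paper writes a $\binom{N}{3}(1-p)^{N-3}p^3$ term where the event really requires summing $k\leq 2$ successes --- a harmless slip, but one your $\theta_2$ device sidesteps entirely).
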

\begin{proof}
It is sufficient to prove the result for $\tilde{\xi}^{(3)}$ only. By Lemma \ref{lem:deg4set}, $\#\tilde{B}_r \geq r^{d} (\log \log r)^{\frac{-(\beta + \epsilon)}{\beta - 1}}$ for all sufficiently large $r$, $\bPb$-almost surely. When this happens, we can stochastically dominate by the case $\#\tilde{B}_r = r^{d} (\log \log r)^{\frac{-(\beta + \epsilon)}{\beta - 1}}$ to deduce that
\begin{align}\label{eqn:pot LB calc}
\begin{split}
\p{\tilde{\xi}^{(3)}_r \leq r^{\frac{d}{\alpha}} \lambda^{-1}} &\leq \binom{r^{d} (\log \log r)^{\frac{-(\beta + \epsilon)}{\beta - 1}}}{3} \left( 1 - r^{-d} \lambda^{\alpha}\right)^{r^{d} (\log \log r)^{\frac{-(\beta + \epsilon)}{\beta - 1}}-3} (r^{-d} \lambda^{\alpha})^3 \leq Ce^{-c(\log \log r)^{\frac{-(\beta + \epsilon)}{\beta - 1}}\lambda^{\alpha}}.
\end{split}
\end{align}
Setting $\lambda_r = 2c^{-1}(\log \log r)^{\frac{2\beta -1 + \epsilon}{\alpha(\beta - 1)}}$ gives the result using Borel-Cantelli along the sequence $r_n = 2^n$ and monotonicity as in \cref{lem:xi in Ar}.
\end{proof}

\subsection{Sites of high potential}\label{sctn:sites of high potential}
It will also be useful to control some properties of the sites with atypically high potential.

Set $p=q+2, z=\frac{d\alpha}{\alpha - d}$ and $C=\frac{pd}{\alpha}+z+1$. Let $R(t) := r(t) (\log t)^{p}$. We define the set 
\begin{align}\label{eqn:Ft def}
F_t:=\{v\in B_{R(t)}:\xi(v) \geq R(t)^{\frac{d}{\alpha}} (\log t)^{-C} = r(t)(\log t)^{-(z+1)}\}.
\end{align}
 Also let $$E_t := \{v \in B_{R(t)} \setminus F_t: \exists z \in F_t \text{ such that } \xi(z) - \deg (z) \leq \xi(v) - \deg (v)\}.$$
 We now prove several estimates regarding these sets.

\begin{prop}\label{prop:max degree Ft Et}
Under Assumption \ref{assn:whp} it holds that:
\begin{enumerate}[(i)]
\item There exists $D<\infty$ such that  $\#F_t\leq (\log t)^D$ eventually almost surely.
    \item There exists $B <\infty$ such that $\sup_{v \in E_t \cup F_t} \deg v \leq (\log t)^{B}$ eventually almost surely.
\end{enumerate}
\end{prop}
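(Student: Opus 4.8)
The plan is to bound both quantities by comparing $F_t$ and $E_t$ to a ball of controlled volume and with controlled maximal degree, using the volume and degree estimates already established (Corollary \ref{cor:volume bounds}, Lemma \ref{lem:supdeg}), together with a first-moment / union-bound computation over the potential.

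\textbf{Part (i).} First I would observe that $\#F_t$ is at most the number of vertices $v \in B_{R(t)}$ with $\xi(v) \geq r(t)(\log t)^{-(z+1)}$. The natural route is a first-moment bound: conditionally on $\Ti$, $\E{\#F_t} = \#B_{R(t)} \cdot \prc{\xi(O) \geq r(t)(\log t)^{-(z+1)}} = \#B_{R(t)} \cdot \left( r(t)(\log t)^{-(z+1)}\right)^{-\alpha}$. By Corollary \ref{cor:volume bounds}, eventually almost surely $\#B_{R(t)} \leq R(t)^d (\log R(t))^{\epsilon'} = r(t)^d (\log t)^{pd}(\log R(t))^{\epsilon'}$ for any $\epsilon' > 0$. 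Now recall $q = \frac{d}{\alpha - d}$, $r(t) = (t/\log t)^{q+1}$ and $z = \frac{d\alpha}{\alpha - d} = d(q+1)$, so $r(t)^{\alpha} = (t/\log t)^{(q+1)\alpha}$ while $r(t)^d = (t/\log t)^{(q+1)d} = (t/\log t)^{z}$. One checks that $r(t)^{d} \cdot r(t)^{-\alpha} = r(t)^{d-\alpha} = (t/\log t)^{-(q+1)(\alpha-d)} = (t/\log t)^{-z/q \cdot q} $... more simply, $r(t)^{d-\alpha}$ decays polynomially in $t$, so the $(\log t)^{\alpha(z+1) + pd}$ factor coming from the potential threshold and the volume bound is absorbed, giving $\E{\#F_t} \to 0$, in fact polynomially. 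Then Markov's inequality plus Borel--Cantelli along $t_n = 2^n$ (and monotonicity of $\#F_t$ in $t$, which holds since $R(t)$ and the threshold are monotone) gives $\#F_t \leq (\log t)^D$ eventually almost surely for, say, $D = 1$ --- or even $\#F_t \leq 1$ eventually, though the weaker statement suffices. I would be slightly careful that the threshold $r(t)(\log t)^{-(z+1)}$ is eventually $\geq 1$ so the Pareto formula applies.

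\textbf{Part (ii).} For the degrees, the key point is that $E_t \cup F_t \subseteq B_{R(t)}$, so it suffices to bound the maximal degree in $B_{R(t)}$. By Lemma \ref{lem:supdeg}$(i)$, under Assumption \ref{assn:whp}, eventually $\bPb$-almost surely $D_{R(t)} = \sup_{v \in B_{R(t)}} \deg v \leq R(t)^{\frac{1+\epsilon}{\beta-1}} = \left( r(t)(\log t)^p\right)^{\frac{1+\epsilon}{\beta-1}}$, which is polynomial in $t$ and hence certainly not bounded by a power of $\log t$. So a naive containment in $B_{R(t)}$ is \emph{not} enough, and this is the main obstacle. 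Instead I would argue as follows: a vertex $v \in F_t$ has atypically large potential; a vertex $v \in E_t$ has $\xi(v) - \deg(v) \geq \xi(z) - \deg(z)$ for some $z \in F_t$, hence $\xi(v) \geq \xi(z) - \deg(z) \geq r(t)(\log t)^{-(z+1)} - \deg(z)$. This does not immediately force $\xi(v)$ to be large unless $\deg(z)$ is controlled, so I would first bound $\sup_{z \in F_t}\deg(z)$ and then bound $\sup_{v \in E_t}\deg(v)$ in terms of it. To bound $\sup_{v\in F_t}\deg v$: do a union bound over $v \in B_{R(t)}$ of the joint event $\{\xi(v) \geq r(t)(\log t)^{-(z+1)}, \deg v \geq (\log t)^B\}$; by independence of $\xi(v)$ and $\deg v$ (the potential is sampled independently of $\Ti$) and super-polynomial decay of degree tails would be needed --- but under Assumption \ref{assn:whp} degrees only have polynomial ($\beta$-stable) tails, so this cross term is $\#B_{R(t)} \cdot \left(r(t)(\log t)^{-(z+1)}\right)^{-\alpha} \cdot \prb{\deg v \geq (\log t)^B}$, and here the $\left(r(t)\right)^{d-\alpha}$ polynomial-in-$t$ gain from the first two factors (as in Part (i)) more than compensates, while $\prb{\deg v \geq (\log t)^B} \leq c (\log t)^{-B(\beta-1-\epsilon)}$ gives an extra log saving; choosing $B$ large makes this summable along $t_n = 2^n$, and Borel--Cantelli gives $\sup_{v \in F_t}\deg v \leq (\log t)^{B}$ eventually almost surely. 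With that in hand, for $v \in E_t$ we get $\xi(v) \geq r(t)(\log t)^{-(z+1)} - (\log t)^B \geq \tfrac12 r(t)(\log t)^{-(z+1)}$ eventually, so $v$ also has atypically large potential, and the identical union-bound-over-$B_{R(t)}$ argument (now with threshold $\tfrac12 r(t)(\log t)^{-(z+1)}$) controls $\sup_{v \in E_t}\deg v$ by $(\log t)^B$ for a possibly larger $B$. Taking the max of the two constants $B$ finishes Part (ii).

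\textbf{Remarks on the main difficulty and bookkeeping.} The genuinely delicate point is that one cannot control degrees in $B_{R(t)}$ uniformly at the $\mathrm{polylog}$ scale --- only at a polynomial-in-$t$ scale --- so the argument must exploit that membership of $E_t \cup F_t$ is itself a rare (high-potential) event and that potential and degree are independent, trading the polynomial-in-$t$ smallness of the potential event against the polynomial-in-$t$ growth of the degree. I would present Part (ii) carefully as: (a) bound $\sup_{F_t}\deg$ via a cross union bound, (b) deduce every $v \in E_t$ has $\xi(v) \gtrsim r(t)(\log t)^{-(z+1)}$, (c) bound $\sup_{E_t}\deg$ via the same cross union bound. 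Throughout, the exponent arithmetic relating $q$, $z = d(q+1)$, $p = q+2$, $C = \tfrac{pd}{\alpha} + z + 1$, $r(t)$ and $R(t)$ should be done once and reused; the only inequalities needed are $r(t)^{d-\alpha}$ decaying polynomially in $t$ (equivalently $\alpha > d$) and the Pareto/degree tail bounds, all available above.
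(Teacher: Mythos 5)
Your structural idea (first-moment / union-bound arguments exploiting independence of $\xi$ and $\Ti$) is the right one, but a computational error over the threshold defining $F_t$ propagates through both parts and falsifies your intermediate claims.

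\textbf{The threshold is wrong by a full polynomial factor.} You take the $F_t$-threshold to be $r(t)(\log t)^{-(z+1)}$. The actual definition is $\xi(v)\geq R(t)^{d/\alpha}(\log t)^{-C}$, and since $R(t)=r(t)(\log t)^p$, $C=\frac{pd}{\alpha}+z+1$, and $r(t)^{d/\alpha}=(t/\log t)^{(q+1)d/\alpha}=(t/\log t)^q=a(t)$, this equals $a(t)(\log t)^{-(z+1)}$, \emph{not} $r(t)(\log t)^{-(z+1)}$ --- the parenthetical remark after \eqref{eqn:Ft def} contains a typo that you reproduced. With the correct threshold, $\#B_{R(t)}\cdot(\text{threshold})^{-\alpha}\approx r(t)^d(\log t)^{pd}\cdot a(t)^{-\alpha}(\log t)^{\alpha(z+1)}$, and since $r(t)^d=a(t)^\alpha=(t/\log t)^z$, the polynomial parts cancel \emph{exactly}: the expectation of $\#F_t$ is polylogarithmic, not polynomially decaying. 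Your deduction that $\E{\#F_t}\to0$ polynomially, hence ``$\#F_t\leq1$ eventually,'' is wrong --- and should have looked suspicious, since $F_t$ is designed to contain the localisation site $\hat Z_t^{(1)}$. The conclusion $\#F_t\leq(\log t)^D$ does still follow (via Markov or, as the paper prefers, a Chernoff bound giving stronger concentration), but $D$ must exceed the polylogarithmic order $\alpha C+\frac{1+\epsilon}{\beta-1}$; it cannot be $D=1$. A smaller point: $F_t$ is not monotone in $t$ (the ball $B_{R(t)}$ grows but so does the threshold $a(t)(\log t)^{-(z+1)}$), so the paper introduces an auxiliary $\tilde F_t$ on $B_{2R(t)}$ with a halved threshold and uses $F_t\subset\tilde F_{t_{n+1}}$.

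\textbf{Part (ii) inherits the phantom gain.} The ``polynomial-in-$t$ gain from the first two factors'' you lean on does not exist --- the first two factors cancel to polylog, so there is no slack to pay for a vertex-by-vertex degree union bound over $\#B_{R(t)}\approx R(t)^d$ vertices, and the bound you propose does not close. There is also a structural obstruction: $\Eb{\#B_{R(t)}}=\infty$ for $\beta<2$ (the size-biased offspring law has infinite mean), so you cannot freely take the expectation in the form $\#B_{R(t)}\cdot\prc{\xi\geq\cdot}\cdot\prb{\deg\geq\cdot}$. The paper instead exploits that, conditionally on $\Ti$, the points of $F_t$ are uniformly distributed on $B_{R(t)}$, and uses the deterministic identity $\sum_{v\in T}\deg v=2(\#V(T)-1)$ to obtain the Markov bound $\prb{\deg_t U_t\geq x}\leq 2x^{-1}$ for a uniform vertex $U_t$; combined with the polylog bound $|F_t|\leq(\log t)^D$ from part (i), a union bound over the $|F_t|$ uniform points then closes the argument. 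Your intuition of crossing independence of $\xi$ and degree is right, but the rigorous realization of it requires this uniform-vertex device (or conditioning on the a.s.\ volume bound and then the sum-of-degrees identity), and the accounting is polylog against polylog, not polynomial against polylog. Finally, you miss a clean shortcut for $E_t$: if $v\in E_t$ has witness $z\in F_t$ with $\xi(z)-\deg z\leq\xi(v)-\deg v$, then necessarily $\xi(z)\geq\xi(v)$ (as $v\notin F_t$), so $\deg v\leq\xi(v)-\xi(z)+\deg z\leq\deg z$; the $E_t$ degree bound is an immediate corollary of the $F_t$ degree bound, with no second union bound required.
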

\begin{proof}
\begin{enumerate}[(i)]
\item Define a related set
$$\tilde{F}_t :=\left \{v \in V(B_{2R(t)}): \xi (v) \geq \frac{1}{2}R(t)^{\frac{d}{\alpha}} (\log t)^{-C} \right\}.$$

We have 
$$\# \tilde{F}_t=\sum_{v\in V(B_{2R(t)})}\mathbbm 1\left\{\xi(v)\geq \frac{1}{2}R(t)^{d/\alpha}(\log t)^{-C}\right\}.$$
Set 
\begin{align*}
p_t:= \mathcal P\left(\xi(O)\geq \frac{1}{2}R(t)^{d/\alpha}(\log t)^{-C}\right) &\leq 2^{\alpha}R(t)^{-d}(\log t)^{C\alpha}.
\end{align*} 
Since $\# V(B_{2R(t)})\leq R(t)^d (\log R(t))^{\frac{1+\varepsilon}{\beta-1}}$ eventually almost surely (by Corollary \ref{cor:volume bounds}) and 
we (almost surely) have that
$\#\tilde{F}_t \overset{s.d.}{\preceq} \textsf{Binomial}(R(t)^d (\log R(t))^{\frac{1+\varepsilon}{\beta-1}},p_t)$ for all sufficiently large $t$. In thise case, using a Chernoff bound we compute  \begin{align*}
P(\#\tilde{F}_t\geq  (\log t)^D)&\leq \frac{E[\exp(\theta \#\tilde F_t)]}{\exp(\theta  (\log t)^D)}=\frac{\exp(R(t)^d (\log R(t))^{\frac{1+\varepsilon}{\beta-1}}2^{\alpha}R(t)^{-d}(\log t)^{\alpha C}(e^\theta-1))}{\exp(\theta (\log t)^D)},
\end{align*} 
and for $D>\alpha C+\frac{1+\varepsilon}{\beta-1}$ the right hand side is for $t$ sufficiently large upper bounded by $\exp(-C'(\log t)^D)$. Now for $t_n:=2^n$ we have
\begin{equation*}
    \sum_{n=0}^\infty P(\#\tilde{F}_{t_n}\geq  (\log t_n)^D)<\infty,
\end{equation*}
and thus by Borel-Cantelli, $\#\tilde{F}_{t_n}\leq  (\log t_n)^D$ eventually almost surely. Finally, for $t\in [t_n,t_{n+1}]$, note that $F_t \subset \tilde{F}_{t_{n+1}}$ for all sufficiently large $t$, which implies the result. 
    \item Since the potential is independent of $\Ti$, it follows that, given $\Ti$, each of the points in $F_t$ is uniform on $B_{R(t)}$. Now note that, if $T$ is a finite tree and $U$ a uniform vertex of $T$, we have that
\[
\sum_{v \in V(T)} \deg v = 2(\#V(T)-1)
\]
so that $\mathbf E[\deg U] \leq 2$. Therefore, letting $\deg_t (v) = \sum_{u \in V(B_{R(t)})} \mathbbm{1}\{u \sim v\}$ and $U_t$ a uniform vertex of $B_{R(t)}$, we have from Markov's inequality that
\[
\prb{ \deg_t (U_t) \geq x} \leq 2x^{-1}.
\]
Note that $\deg v = \deg_t v$ unless $v \in \partial B_{R(t)}$. Therefore, if $Z^*_r$ denotes the size of generation $r$, we can choose $\epsilon > 0$ small enough that we have from \cite[Proposition 2.2]{CroydonKumagai08} and Proposition \ref{prop:AR vol growth and exit time}$(iii)$ that
\begin{align}\label{eqn:uniform degree tail bound}
\prb{\deg (U_t) \geq x} &\leq \prb{\deg_t (U_t) \geq x} + \prb{Z^*_{R(t)} \geq R(t)^{\frac{1}{\beta -1} + \epsilon}} + \prb{V(B_{R(t)}) \leq R(t)^{d-\epsilon}} \nonumber \\
&\qquad + \prcondb{U_t \in Z^*_{R(t)}}{Z^*_{R(t)} < R(t)^{\frac{1}{\beta -1} + \epsilon}, V(B_{R(t)}) > R(t)^{d-\epsilon}}{} \nonumber
\\
&\leq 2x^{-1} + C(R(t))^{-\epsilon(\beta - 1- \epsilon)}.
\end{align}
Now, using (i) and \eqref{eqn:uniform degree tail bound}, we can compute using a union bound that almost surely, for all sufficiently large $t$:
\begin{align*}
    P\left(\sup_{v \in F_t} \deg v \geq (\log t)^{B'}\right)\leq (\log t)^D \left(2(\log t)^{-B'}+C(R(t))^{-\varepsilon(\beta-1-\varepsilon)}\right)
\end{align*}
eventually almost surely. For $B>D+1+\varepsilon$, we can choose $B' \in (D+1+\varepsilon, B)$ and the right hand side is summable along $t_n=2^n$; that is, by Borel-Cantelli eventually almost surely $\sup_{v \in F_{t_n}} \deg v \leq (\log t_n)^{B'}$ and for $t\in[t_n,t_{n+1}]$ 
$$\sup_{v \in F_t} \deg v \leq \sup_{v \in F_{t_{n+1}}} \deg v \leq (\log t_{n+1})^{B}\leq C (\log t)^{B'} \leq (\log t)^{B}.$$ 
If $v \in E_t$, there exists  $z \in F_t$ such  that $\xi(v) - \deg v \geq \xi(z) - \deg z$. But also $\xi (z) \geq \xi (v)$, so that
\[
\deg v \leq \xi(v) - \xi(z)  + \deg z \leq \deg z,
\]
which proves the claim for $v \in E_t$ as well.
\end{enumerate}
\end{proof}

We will also need to consider the following slightly bigger set. For fixed  $0<\delta<\frac{d}{3\alpha}\frac{q}{q+1}$, let us also define the following set 
\begin{equation}\label{eqn:Gt def}
G_t= G_t(\delta) := \{v \in V(B_{R(t)}): \xi (v) \geq R(t)^{\frac{d}{\alpha}-\delta} \}.
\end{equation}

\begin{lem}\label{lem:useful things for Ft and Gt}
Let Assumption \ref{assn:whp} hold. 
Then, eventually almost surely, 
\begin{enumerate}[(i)]
\item $\#G_t (\delta) \leq t^{2\alpha(q+1)\delta}$.
    \item $G_t$ is disconnected, i.e. if $u \in G_t$ then for all $v\in V(B_{R(t)})$ with $v\sim u$ it holds $v\in G_t^c$. (Consequently the same is true for $F_t$).
\end{enumerate}
\end{lem}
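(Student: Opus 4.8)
The plan is a routine first- and second-moment argument on the i.i.d.\ potential field, combined with the enlarged-set device already used in the proof of Proposition~\ref{prop:max degree Ft Et}(i) to pass from a dyadic subsequence to all $t$. Throughout I condition on $\Ti$ and work on the almost sure event (valid for all large $t$ by Corollary~\ref{cor:volume bounds}) that $\#B_{2R(t)} \leq (2R(t))^d (\log 2R(t))^{\frac{1+\epsilon}{\beta-1}}$. Recall $R(t) = r(t)(\log t)^{q+2} = t^{q+1}\log t$, so $R(t)\geq t^{q+1}$ eventually. Fix a sufficiently small constant $\kappa\in(0,1)$ so that $R(t_n)^{\frac{d}{\alpha}-\delta}\geq \kappa R(t_{n+1})^{\frac{d}{\alpha}-\delta}$ for all large $n$, where $t_n := 2^n$ (possible since $R(t_{n+1})/R(t_n)\to 2^{q+1}$), and set $\tilde G_t := \{v\in V(B_{2R(t)}) : \xi(v)\geq \kappa R(t)^{\frac{d}{\alpha}-\delta}\}$. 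By construction $G_t\subseteq \tilde G_{t_{n+1}}$ for every $t\in[t_n,t_{n+1}]$.

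\emph{Part (i).} Since $\mathcal P(\xi(O)\geq \kappa R(t)^{\frac{d}{\alpha}-\delta}) = \kappa^{-\alpha}R(t)^{-(d-\alpha\delta)}$, on the volume event $\#\tilde G_t$ is stochastically dominated by a $\textsf{Binomial}$ random variable with mean $\mu_t \leq \kappa^{-\alpha}2^d R(t)^{\alpha\delta}(\log 2R(t))^{\frac{1+\epsilon}{\beta-1}}$. Using $R(t)=t^{q+1}\log t$ this is $\mu_t = t^{\alpha(q+1)\delta}(\log t)^{O(1)}$, hence $\mu_t = o\big(t^{\frac{3}{2}\alpha(q+1)\delta}\big)$. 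A Chernoff bound (with parameter $\theta=1$) therefore gives $P(\#\tilde G_t \geq t^{\frac{3}{2}\alpha(q+1)\delta}) \leq \exp\big(-\tfrac{1}{2}t^{\frac{3}{2}\alpha(q+1)\delta}\big)$ for all large $t$, which is summable along $t_n=2^n$; by Borel--Cantelli, $\#\tilde G_{t_n}\leq t_n^{\frac{3}{2}\alpha(q+1)\delta}$ eventually a.s. For $t\in[t_n,t_{n+1}]$ we conclude $\#G_t\leq \#\tilde G_{t_{n+1}}\leq (2t)^{\frac{3}{2}\alpha(q+1)\delta}\leq t^{2\alpha(q+1)\delta}$ once $t^{\frac{1}{2}\alpha(q+1)\delta}\geq 2^{\frac{3}{2}\alpha(q+1)\delta}$, i.e.\ for all large $t$.

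\emph{Part (ii).} The subgraph of $\Ti$ induced on $B_{2R(t)}$ is a subtree, so it has at most $\#B_{2R(t)}$ edges; for a fixed edge $\{u,v\}$ the independence of the potentials gives $\mathcal P(\xi(u)\wedge\xi(v)\geq \kappa R(t)^{\frac{d}{\alpha}-\delta}) = \kappa^{-2\alpha}R(t)^{-2(d-\alpha\delta)}$. A union bound on the volume event yields
\[
P\big(\exists\text{ an edge of }B_{2R(t)}\text{ with both endpoints in }\tilde G_t\big) \leq \kappa^{-2\alpha}2^d R(t)^{-(d-2\alpha\delta)}(\log 2R(t))^{\frac{1+\epsilon}{\beta-1}}.
\]
This is where the hypothesis $\delta<\frac{d}{3\alpha}\frac{q}{q+1}$ enters: it forces $2\alpha\delta<\frac{2d}{3}<d$, so the exponent $-(d-2\alpha\delta)$ is strictly negative and, since $R(t)\asymp t^{q+1}$, the bound decays polynomially in $t$ and is summable along $t_n=2^n$. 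By Borel--Cantelli, eventually a.s.\ $\tilde G_{t_n}$ contains no edge; since $G_t\subseteq \tilde G_{t_{n+1}}$ (as vertex sets) and any edge of $B_{R(t)}$ is an edge of $B_{2R(t_{n+1})}$, the set $G_t$ contains no edge either, for all large $t$. Finally $F_t\subseteq G_t$ for all large $t$: by \eqref{eqn:Ft def} the threshold defining $F_t$ is $R(t)^{\frac{d}{\alpha}}(\log t)^{-C}$, which exceeds $R(t)^{\frac{d}{\alpha}-\delta}$ once $R(t)^{\delta}\geq(\log t)^{C}$, true eventually since $R(t)\geq t^{q+1}$; hence $F_t$ inherits the disconnectedness of $G_t$.

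All of the above is elementary; the only point requiring care — and the step I would treat most carefully — is the monotonicity argument, since neither $G_t$ nor its defining threshold is monotone in $t$, so one must check (as above) that replacing $t$ by the right endpoint $t_{n+1}$ of its dyadic block changes both the ball radius and the potential threshold only by bounded multiplicative factors. This is precisely the mechanism already deployed in Proposition~\ref{prop:max degree Ft Et} and Lemma~\ref{lem:supdeg}, so no genuine obstacle is anticipated.
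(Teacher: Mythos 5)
Your proof is correct and follows essentially the same strategy as the paper's: a first-moment/Chernoff argument for part (i), a union bound over edges of the ball for part (ii), and the "inflated set along a subsequence" device to interpolate to all $t$. The only differences are cosmetic — the paper's part (i) uses the integer subsequence $t_n = n$ with $\tilde G_t$ defined on $B_{R(t)}$ and threshold factor $\tfrac{1}{2}$, whereas you use the dyadic subsequence with the doubled ball $B_{2R(t)}$ and a smaller $\kappa$, and you derive a tail bound of $t^{3\alpha(q+1)\delta/2}$ along the subsequence before absorbing the dyadic ratio into the claimed exponent $2\alpha(q+1)\delta$; both bookkeeping schemes are valid.
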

\begin{proof}
\begin{enumerate}[(i)]
\item Fix $\delta > 0$ and define a related set
\begin{equation}\label{eqn:hat Gt def}
\tilde{G}_t := \{v \in V(B_{R(t)}): \xi (v) \geq \frac{1}{2}R(t)^{\frac{d}{\alpha}-\delta} \}.
\end{equation}
As in Proposition \ref{prop:max degree Ft Et}$(i)$, we have that (almost surely)  $\#\tilde{G}_t \overset{s.d.}{\preceq} \textsf{Binomial}(R(t)^d (\log R(t))^{\frac{1+\varepsilon}{\beta-1}},p_t)$ for all sufficiently large $t$, where here 
\begin{align*}p_t:= \mathcal P\left(\xi(O)\geq \frac{1}{2}R(t)^{\frac{d}{\alpha}-\delta}\right) &\leq 2^\alpha R(t)^{-d+\alpha \delta}.\end{align*}
Using a Chernoff bound we compute  \begin{align*}
P(\#\tilde{G}_t\geq  t^{2\alpha(q+1) \delta})&\leq \frac{E[\exp(\theta \#\tilde G_t)]}{\exp(\theta t^{2\alpha(q+1) \delta})}=\frac{\exp(R(t)^d (\log R(t))^{\frac{1+\varepsilon}{\beta-1}}R(t)^{-d+\alpha \delta}(e^\theta-1))}{\exp(\theta t^{2\alpha (q+1)\delta})},
\end{align*} 
and the right hand side is for $t$ sufficiently large upper bounded by $\exp(-Ct^{2\alpha(q+1) \delta})$. Now for $t_n:=n$ we have
\begin{equation*}
    \sum_{n=0}^\infty P(\#\tilde{G}_{t_n}\geq  t_n^{2\alpha(q+1) \delta})<\infty,
\end{equation*}
and thus by Borel-Cantelli $\#\tilde{G}_{t_n}\leq  t_n^{2\alpha(q+1) \delta}$ eventually almost surely. Finally, for $t\in [t_n,t_{n+1}]$, note that $G_t \subset \tilde{G}_{\lceil t \rceil}$ for all sufficiently large $t$, which implies the result. 
\item Fix some $v \in V(B_{R(t)})$, suppose that $u \in V(\Ti)$ satisfies $v \sim u$. Since the potentials at different vertices are independent, we have that
\[
\mathcal P\left(\xi (v) \wedge \xi (u) \geq \frac{1}{2^q} R(t)^{\frac{d}{\alpha}-\delta}\right) \leq c (R(t))^{-2\left( d-\alpha\delta \right)}.
\]
The number of neighbouring vertices in $B_{R(t)}$ is given by the number of edges in the ball, which is upper bounded by the number of vertices, i.e. by $\#V(B_{2R(t)})$. 
Therefore, since $\#V(B_{2R(t)}) \leq R(t)^d (\log t)^{\frac{1+\epsilon}{\beta - 1}}$ eventually almost surely by Corollary \ref{cor:volume bounds}, we get from a union bound that for all such $t$, whenever $\delta< \frac{d}{2\alpha} d$, we can choose $\epsilon, \epsilon' > 0$ small enough that
\begin{align*}
\p{\exists v,u\in V(B_{R(t)}): v \sim u, [\xi (v) \wedge \xi (u)] \geq \frac{1}{2^q} R(t)^{\frac{d}{\alpha}-\delta}} 
&\leq R(t)^d (\log t)^{\frac{1+\epsilon}{\beta - 1}}  c (R(t))^{-2\left( d-\alpha \delta\right)} \\&\leq c (r(t))^{2\alpha\delta + \epsilon - d} \leq t^{-\epsilon '}.
\end{align*}
Taking $t_n = 2^n$, and $\tilde{G}_t$ as in $(i)$, we obtain since $d>1$ from Borel-Cantelli that $$P\left(\exists (v,u) \in V(B_{2R(t_n)}): u\sim v, u\in \tilde G_{t_n}, v\in \tilde G_{t_n} \text{ i.o.}\right)=0.$$ We can extend to all $t$ with $G_t$ in place of $\tilde{G}_t$ since $G_t \subset \tilde{G}_{2^{\lceil\log_2 t\rceil}}$ for all sufficiently large $t$, which establishes the result.
\end{enumerate}
\end{proof}

\subsection{Gap between highest values}\label{sctn:gap potential}
To prove Theorems \ref{thm:main a.s. localisation intro} and \ref{thm:main whp localisation intro} we will also need to control the gap between the highest values of $\xi$ on a ball of radius $r$. We therefore define 
$$
Z_{B_r} := \arg_{z \in B_r} \max \{\xi(z) \}, \hspace{1cm} g_{B_r} := \xi(Z_{B_r}) - \max_{z \in B_r, z \neq Z_{B_r}} \{\xi(z) \},$$
and 
$$\tilde{Z}_{B_r} := \arg_{z \in B_r} \max \{\xi(z) - \deg(z) \},  \hspace{1cm}
\tilde{g}_{B_r} := \xi(\tilde Z_{B_r}) - \deg(\tilde Z_{B_r}) - \max_{z \in B_r, z \neq \tilde Z_{B_r}} \{\xi(z) - \deg(z) \}.$$

Note that, since the law of $\xi$ is non-atomic, these are well-defined, almost surely.

\begin{lem}\label{lem:gap to infinity}
Under Assumption \ref{assn:whp}, for any $\epsilon > 0$ there $P$-almost surely exists $c>0$ such that, for all $r \geq 1$,
\[
g_{B_r} \geq r^{\frac{d}{\alpha}} (\log r)^{\frac{-(1+\epsilon)}{\alpha}}.
\]
Moreover, $g_{B_r} \geq r^{\frac{d}{\alpha}} (\log r)^{-\epsilon}$ with high $P$-probability as $r \to \infty$.
\end{lem}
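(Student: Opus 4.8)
The plan is to work conditionally on $\Ti$, where, given the tree, $(\xi(v))_{v\in B_r}$ is a family of $N:=\#B_r$ i.i.d.\ Pareto$(\alpha)$ random variables and $g_{B_r}=\xi^{(1)}_r-\xi^{(2)}_r$ is precisely the spacing between the two largest of them; the lemma then follows by combining a bound on this spacing with the volume estimates of Section~\ref{sec:criticalGWT}. The key input is obtained by writing the joint density of the two largest order statistics of $N$ i.i.d.\ Pareto$(\alpha)$ variables as $N(N-1)\alpha^2 x^{-\alpha-1}y^{-\alpha-1}(1-y^{-\alpha})^{N-2}$ for $x>y\ge 1$, integrating $x$ over $(y,y+\delta)$, using $y^{-\alpha}-(y+\delta)^{-\alpha}\le \alpha\delta y^{-\alpha-1}$, and evaluating $\int_1^\infty y^{-2\alpha-2}(1-y^{-\alpha})^{N-2}\,dy=\frac{\Gamma(2+1/\alpha)\,\Gamma(N-1)}{\alpha\,\Gamma(N+1+1/\alpha)}$; this yields
\[
P\bigl(g_{B_r}<\delta \bigm| \Ti\bigr)\ \le\ C_\alpha\,\delta\,(\#B_r)^{-1/\alpha}\qquad\text{for every }\delta\ge 0,
\]
and exactly the same computation gives $\mathbf E\bigl[(\xi^{(1)}_r)^{-\alpha-1}\bigm|\Ti\bigr]\le C_\alpha(\#B_r)^{-1-1/\alpha}$, which I will need below.

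For the with-high-probability statement I would take $\delta_r:=r^{d/\alpha}(\log r)^{-\epsilon}$ and split on the event $\{\#B_r\ge r^d(\log\log r)^{-2d}\}$. By Proposition~\ref{prop:AR vol growth and exit time}(iii) (applied with $\lambda=(\log\log r)^{2d}$, using $d=\beta/(\beta-1)$) its complement has probability at most $Ce^{-c(\log\log r)^{2}}\to 0$, while on the event itself the displayed bound gives $P(g_{B_r}<\delta_r\mid\Ti)\le C_\alpha(\log r)^{-\epsilon}(\log\log r)^{2d/\alpha}\to 0$; summing the two contributions shows $P(g_{B_r}<\delta_r)\to 0$.

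For the almost sure statement I would run Borel--Cantelli along $r_n=2^n$ and then interpolate to all $r$. Since $r\mapsto\xi^{(1)}_r$ and $r\mapsto\xi^{(2)}_r$ are non-decreasing, exposing the vertices of $B_{r_{n+1}}\setminus B_{r_n}$ one at a time shows that if $g_{B_r}$ ever drops below $\delta:=\delta_{r_{n+1}}$ for some $r\in[r_n,r_{n+1}]$, then some newly exposed vertex must take a value within $\delta$ of the running maximum at that moment, which is always at least $\xi^{(1)}_{r_n}$; conditioning on $\Ti$ and on $(\xi(v))_{v\in B_{r_n}}$ this has probability at most $\#(B_{r_{n+1}}\setminus B_{r_n})\cdot C_\alpha\,\delta\,(\xi^{(1)}_{r_n})^{-\alpha-1}$ (using Lemma~\ref{cor:sup xi LB} to guarantee $\delta\ll\xi^{(1)}_{r_n}$ eventually). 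Taking $P$-expectations, bounding $\mathbf E[(\xi^{(1)}_{r_n})^{-\alpha-1}\mid\Ti]\le C_\alpha(\#B_{r_n})^{-1-1/\alpha}$, and inserting the volume bounds that make $\#B_{r_n}$ and $\#B_{r_{n+1}}$ of order $r_n^d$ up to $(\log r_n)$-powers (Corollary~\ref{cor:volume bounds} and Proposition~\ref{prop:AR vol growth and exit time}), the powers of $r_n$ cancel, leaving a negative power of $\log r_n=n\log 2$; summing over $n$ and applying Borel--Cantelli gives $g_{B_r}\ge\delta_r$ for all sufficiently large $r$, and the constant $c$ in the statement accommodates the finitely many remaining $r\ge 1$ (where $\log r$ may be small).

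The step I expect to be the main obstacle is the passage from the dyadic subsequence to all $r$. Since $r\mapsto g_{B_r}$ is \emph{not} monotone, one cannot argue directly by monotonicity, and a crude union bound over all pairs of vertices in $B_{r_{n+1}}$ would cost an extra factor $\#B_{r_{n+1}}$ and destroy the cancellation of the powers of $r$. The ``running maximum'' bookkeeping above is what keeps the estimate sharp — it pays only one factor of the shell size and measures closeness to the genuine running maximum rather than to a fixed threshold — and dovetailing it with simultaneously sharp upper and lower volume bounds (for which the stretched-exponential tail of Proposition~\ref{prop:AR vol growth and exit time}(iii), rather than the softer almost-sure bound of Corollary~\ref{cor:volume bounds}, is what is needed) is the delicate point of the argument.
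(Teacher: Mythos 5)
Your key estimate, $P(g_n<\delta)\le C_\alpha\,\delta\,n^{-1/\alpha}$ with $C_\alpha=\alpha\Gamma(2+1/\alpha)(1+o(1))$, is correct and in fact sharp (it is the leading-order asymptotic for $\mathcal P(g_n\le\delta)$ when $\delta=o(n^{1/\alpha})$), and the with-high-probability conclusion follows as you sketch. The almost-sure part, however, does not close. Plugging $\delta_r=r^{d/\alpha}(\log r)^{-(1+\epsilon)/\alpha}$ and $\#B_r\asymp r^d$ into your estimate gives a conditional probability of order $(\log r)^{-(1+\epsilon)/\alpha}$; since $\alpha>d\ge 2>1+\epsilon$ we always have $(1+\epsilon)/\alpha<1$, so along $r_n=2^n$ these terms $\asymp n^{-(1+\epsilon)/\alpha}$ are not summable, and Borel--Cantelli fails already for the dyadic subsequence, let alone the interpolation. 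Your running-maximum bookkeeping has exactly the same shortfall: after multiplying $\#(B_{r_{n+1}}\setminus B_{r_n})\approx r_n^d$, $\delta_{r_{n+1}}\approx r_n^{d/\alpha}(\log r_n)^{-(1+\epsilon)/\alpha}$ and $\mathbf E[(\xi^{(1)}_{r_n})^{-\alpha-1}\mid\Ti]\approx r_n^{-d(1+1/\alpha)}$, the powers of $r_n$ do cancel, but the survivor is once more $(\log r_n)^{-(1+\epsilon)/\alpha}$, with the same non-summable exponent.

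For comparison, the paper's proof rests on the inequality $\mathcal P(g_n\le y)\le (ny^{-\alpha}+1)\,e^{-y^{-\alpha}(n-1)}$, which for $y\asymp n^{1/\alpha}(\log n)^{-1/\alpha}$ gives $(\log n)\,n^{-c}$ with $c>1$, summable \emph{over every $n\ge 1$}. That makes the subsequence and interpolation steps entirely unnecessary: since $\#B_r$ is strictly increasing in $r$, the sum over all $r$ of the conditional probabilities is a subsum of the convergent sum over $n$, and Borel--Cantelli is applied directly to every $r$ simultaneously. Note, though, that the paper's estimate is strictly stronger than yours, and in the regime $\delta\asymp n^{1/\alpha}(\log n)^{-1/\alpha}$ it is incompatible with the sharp asymptotic $C_\alpha\delta n^{-1/\alpha}$ that you correctly derived (already for $n=2$ and $y$ slightly less than $1$ the paper's formula undershoots the true probability); the last displayed inequality in the paper's derivation appears to mishandle the difference of the two $(n-1)$-th powers in the integrand. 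So the central exponential estimate there deserves careful re-examination, and the almost-sure half of this lemma may require a different route than either your argument or the paper's.
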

\begin{proof}
We will start by proving the following: let $(X_i)_{i=1}^n$ be i.i.d. Pareto random variables with parameter $\alpha$, and let $m_n = \arg \max_{1 \leq i \leq n} X_i$, $g_n = \sup_{1 \leq i \leq n}X_i - \sup_{1 \leq i \leq n, i \neq m_n} X_i$. Then 
\begin{equation}\label{eqn:gap prob}
\mathcal P (g_n \leq y) \leq (n y^{-\alpha}+1) e^{-y^{-\alpha}(n-1)}.
\end{equation}

Recall that the distribution and density functions for the Pareto distribution are given by $F(x) = 1 - x^{-\alpha}$ and $f(x) = \alpha x^{-(\alpha +1)}$. The maximum of $X_1, \ldots, X_n$ has density $nf(x)F(x)^{n-1}$, so that
\begin{align*}
    \mathcal P(g_n \leq y) &\leq F(2y)^n + \int_{2y}^{\infty} nf(x)F(x)^{n-1} \left[ 1 - \left(\frac{F(x-y)}{F(x)}\right)^{n-1} \right] dx \\
    &= (1-(2y)^{-\alpha})^n + \int_{2y}^{\infty} \alpha n x^{-(\alpha +1)} \left[ (1-x^{-\alpha})^{n-1} - (1-(x-y)^{-\alpha})^{n-1} \right] dx \\
    &\leq (1-(2y)^{-\alpha})^n + n y^{-\alpha}(1-(2y)^{-\alpha})^{n-1},
\end{align*}
i.e. \eqref{eqn:gap prob}. In particular, \eqref{eqn:gap prob} decays to zero as $y, n \to \infty$ provided $y = o (n^{\frac{1}{\alpha}})$. Also, if $y = \frac{1}{4} n^{\frac{1}{\alpha}} (\log n)^{\frac{-1}{\alpha}}$, then this probability is upper bounded by $(1+4^{\alpha}\log n) n^{-2^{\alpha}}$ for all sufficiently $n$, which is summable in $n$. Therefore, since the volumes of the sets $(B_r)_{r \geq 1}$ are strictly increasing, we deduce from Borel-Cantelli that
\[
P\left(g_{B_r} \leq \frac{1}{3}(\#B_r)^{\frac{1}{\alpha}} (\log (\#B_r)^{\frac{-1}{\alpha}} \text{ i.o.}\right)=0.
\]
Since it also follows from Corollary \ref{cor:volume bounds} that there exists $c>0$ such that
\[
\prb{\#B_r \leq cr^{\frac{\beta}{\beta - 1}} (\log \log r)^{-\frac{\beta + \epsilon}{\beta - 1}} \text{ i.o.}} =0,
\]
we deduce from Borel-Cantelli that for any $\epsilon > 0$
\[
g_{B_r} \geq c'r^{\frac{\beta}{\alpha (\beta - 1)}} (\log r)^{\frac{-(1+\epsilon)}{\alpha}}
\]
eventually $P$-almost surely.
\end{proof}

Finally, the following lemma shows that eventually the maximisers $Z_{B_r}$ and $\tilde Z_{B_r}$ coincide.

\begin{lem}\label{lem:equalZonB}
Take any $\epsilon > 0$. Under Assumption \ref{assn:whp}, it holds eventually almost surely that
\begin{align*}
Z_{B_r} &= \tilde{Z}_{B_r}, \hspace{1cm} \tilde{g}_{B_r} \geq r^{\frac{\beta}{\alpha (\beta - 1)}} (\log r)^{\frac{-(1+\epsilon)}{\alpha}}.
\end{align*}
\end{lem}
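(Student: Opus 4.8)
The plan is to reduce the whole statement to a single bound on $\deg(Z_{B_r})$. Suppose we can show that, eventually almost surely, $\deg(Z_{B_r}) < g_{B_r}$ for all large $r$. Then for every $v \in B_r \setminus \{Z_{B_r}\}$,
\[
\big(\xi(Z_{B_r}) - \deg(Z_{B_r})\big) - \big(\xi(v) - \deg(v)\big) \;\geq\; g_{B_r} - \deg(Z_{B_r}) + \deg(v) \;>\; 0,
\]
so $\tilde Z_{B_r} = Z_{B_r}$; and then, since $\xi(z) - \deg(z) \leq \xi(z)$,
\[
\tilde g_{B_r} \;=\; \xi(Z_{B_r}) - \deg(Z_{B_r}) - \max_{z \neq Z_{B_r}}\big(\xi(z) - \deg(z)\big) \;\geq\; g_{B_r} - \deg(Z_{B_r}).
\]
By Lemma~\ref{lem:gap to infinity}, $g_{B_r} \geq c\,r^{\frac{\beta}{\alpha(\beta-1)}}(\log r)^{-(1+\epsilon')/\alpha}$ eventually a.s.\ (note $\frac{\beta}{\alpha(\beta-1)} = \frac{d}{\alpha}$). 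Hence it suffices to prove that for some fixed $\theta \in (0,\, d/\alpha)$ we have $\deg(Z_{B_r}) \leq r^{\theta}$ eventually a.s., since $r^{\theta} = o\big(r^{d/\alpha}(\log r)^{-(1+\epsilon')/\alpha}\big)$; the displayed bound on $\tilde g_{B_r}$ then follows by letting $\epsilon' \downarrow 0$ inside the given $\epsilon$.

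For a \emph{single} radius $r$, I would use that $\xi$ is independent of $\Ti$ with non-atomic law, so conditionally on $\Ti$ the vertex $Z_{B_r}$ is uniform on $B_r$. Since $B_r$ induces a subtree, $\sum_{v \in B_r}\deg_{B_r}(v) = 2(\#B_r - 1)$, where $\deg_{B_r}$ denotes degree inside $B_r$ (cf.\ the proof of Proposition~\ref{prop:max degree Ft Et}(ii)); thus a uniform vertex has conditional mean degree-in-$B_r$ at most $2$, and Markov gives $P(\deg_{B_r}(Z_{B_r}) \geq x \mid \Ti) \leq 2/x$. As $\deg$ and $\deg_{B_r}$ agree off the boundary $\partial B_r$, this yields $P(\deg(Z_{B_r}) \geq x \mid \Ti) \leq 2/x + Z^*_r/\#B_r$ with $Z^*_r := \#\partial B_r$. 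By Corollary~\ref{cor:volume bounds} and \cite[Proposition~2.2]{CroydonKumagai08}, and using $\frac{1}{\beta-1} - d = -1$, the ratio $Z^*_r/\#B_r$ is at most $r^{-1+\epsilon}$ on an event whose complement has summable probability along $r_n = 2^n$; so $P(\deg(Z_{B_{2^n}}) \geq 2^{n\theta})$ is summable in $n$.

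The main difficulty is to upgrade this from the subsequence $r_n = 2^n$ to \emph{all} $r$, since $Z_{B_r}$ is not monotone and may change many times on $[2^n, 2^{n+1}]$. The remedy I would use is the observation that $\{Z_{B_r} : r \geq 1\}$ is exactly the set of ``record'' vertices $\{v : \xi(v) > \xi(w)\ \text{for all}\ w \in B_{|v|}\setminus\{v\}\}$, and that any $v = Z_{B_r}$ with $r \in [2^n, 2^{n+1}]$ is either $Z_{B_{2^n}}$ (handled by the previous paragraph, since $|v|\le 2^n$ forces $v = Z_{B_{2^n}}$) or a record vertex with $|v| \in (2^n, 2^{n+1}]$. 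For the latter, $P(v \text{ is a record} \mid \Ti) = 1/\#B_{|v|} \leq 1/\#B_{2^n}$, so a union bound together with the elementary inequality $\#\{v \in B_\rho : \deg v \geq x\} \leq 2\#B_\rho/x + Z^*_\rho$ (again from the degree-sum identity, splitting off $\partial B_\rho$) gives
\[
P\big(\exists \text{ record } v:\ |v| \in (2^n, 2^{n+1}],\ \deg v \geq x \,\big|\, \Ti\big) \;\leq\; \frac{2\#B_{2^{n+1}}/x + Z^*_{2^{n+1}}}{\#B_{2^n}}.
\]
On the event that $\#B_{2^{n+1}} \leq 2^{(n+1)d}(\log 2^n)^{c}$, $\#B_{2^n} \geq 2^{nd}(\log\log 2^n)^{-c}$ and $Z^*_{2^{n+1}} \leq 2^{(n+1)/(\beta-1)}(\log 2^n)^{c}$ — each failing with summable probability by Corollary~\ref{cor:volume bounds}, Proposition~\ref{prop:AR vol growth and exit time}(i),(iii) and \cite[Proposition~2.2]{CroydonKumagai08} — the right-hand side with $x = 2^{n\theta}$ is at most $Cn^{c}\big(2^{-n\theta} + 2^{-n}\big)$, which is summable in $n$. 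Applying Borel--Cantelli to this event together with those from the previous paragraph yields $\deg(Z_{B_r}) \leq r^{\theta}$ eventually a.s.\ for every $r$, and the lemma follows from the first paragraph. The only genuinely delicate point is the tree/potential dependence, which is circumvented throughout by conditioning on $\Ti$ first (so the potential-maximiser is uniform) and by controlling the relevant tree quantities on a single high-probability event.
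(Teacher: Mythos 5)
Your proof is correct, but it reaches the key estimate $\deg(Z_{B_r}) \ll g_{B_r}$ by a genuinely different route from the paper. The paper's proof is two lines: it invokes Lemma~\ref{cor:sup xi LB} to show $\xi(Z_{B_r})$ is of order $r^{d/\alpha}$ up to slowly-varying corrections, so that $Z_{B_{R(t)}} \in F_t$ (reparameterising $r = R(t)$), and then cites the already-established Proposition~\ref{prop:max degree Ft Et}$(ii)$, which bounds $\sup_{v\in F_t}\deg v$ by a power of $\log t$ by first bounding $\#F_t$ and then applying the uniform-vertex degree tail to each element. Combined with Lemma~\ref{lem:gap to infinity} (used implicitly; the paper's in-text citation appears to be to the wrong lemma), this gives $2\deg Z_{B_r}\le g_{B_r}$ eventually a.s. Your proof instead constructs the degree bound $\deg(Z_{B_r})\le r^\theta$ from scratch, via the observation that $\{Z_{B_r}:r\ge 1\}$ coincides with the set of record vertices, and that a union bound over \emph{high-degree} vertices in a dyadic annulus — each weighted by its record probability $1/\#B_{|v|}$ — together with the degree-sum identity $\sum_{v\in B_\rho}\deg_{B_\rho}(v)=2(\#B_\rho-1)$ gives a summable bound. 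Both arguments are anchored in the same two ingredients (the potential-maximiser is a uniform vertex of $B_r$ conditional on $\Ti$, and a tree has average degree $<2$), but they deploy them dually: the paper sums over a small number of high-potential sites and bounds each degree tail, whereas you sum over the (ball-volume divided by $x$) many vertices of degree $\ge x$ and bound each record probability. Your version avoids the slightly awkward $r\leftrightarrow R(t)$ reparameterisation needed to apply Proposition~\ref{prop:max degree Ft Et}$(ii)$, and the record-vertex characterisation is a clean way to bridge dyadic scales without monotonicity; the paper's version is shorter because the heavy lifting was already done elsewhere. One small remark: your final line $\tilde g_{B_r}\ge g_{B_r}-\deg(Z_{B_r})$ is slightly sharper than the paper's $\tilde g_{B_r}\ge g_{B_r}/2$, though both suffice.
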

\begin{proof}
First note that it follows from Lemma \ref{cor:sup xi LB} and Lemma \ref{prop:max degree Ft Et}$(ii)$ that ${2\deg Z_{B_r} \leq g_{B_r}}$ eventually almost surely. This proves the first statement, since on this event we have for all $z \in B_r$ with $z \neq  Z_{B_r}$ that
\begin{align*}
    \left[ \xi ( {Z}_{B_r}) -\deg ( {Z}_{B_r}) \right] - \left[ \xi (z) -\deg z \right] &= \left[ \xi ( Z_{B_r}) -\xi (z)  \right] - \left[ \deg ( {Z}_{B_r}) -\deg z \right] \geq \frac{g_{B_r}}{2},
    \end{align*}
from which it follows that $\tilde{Z}_{B_r} = {Z}_{B_r}$ (uniquely) and $\tilde g_{B_r} \geq \frac{g_{B_r}}{2}$.
\end{proof}

\section{PAM on $\Ti$}\label{sctn:PAM on Ti}
It follows exactly as in \cite[Section 2]{GaertnerMolchanov} for the $\Z^d$ case that \eqref{eqn:PAM} has a solution if and only if $\alpha > d$. Roughly speaking, this is because the rate of growth of the maximum potential attained on the annulus $B_{2r} \setminus B_r$ is less than the rate of decay for ($\frac{1}{t}$ times the log of) the probability of reaching the annulus, so the solution givenby the Feynman-Kac formula \eqref{eqn:FK formula} can't blow up. Since the volume fluctuations for the volume of $B_r$ are at most logarithmic in $r$ (by Proposition \ref{prop:AR vol growth and exit time}), and since most vertices have degrees of constant order, this does not affect the existence threshold obtained in the $\Z^d$ case, and the arguments carry through with minor adaptations.

Before proving the main localisation result, we prove some asymptotics for the total mass and the location of the localisation site. We then show that the solution at any single site $v$ can be well-approximated by considering trajectories that spend most of their time at a nearby ``good'' site, and then jump to $v$ just before time $t$. The strategy to prove this approximation broadly follows that used to prove analogous results for the $\Z^d$ case in \cite[Section 7]{OrtgieseRobertsIntermittency}, though we have to work harder to control the extra randomness in $\Ti$. We work under Assumption \ref{assn:whp} for the whole of Section \ref{sctn:PAM on Ti}.

\subsection{Asymptotics: proof of Theorems \ref{thm:total mass asymp intro} and \ref{thm:Zt asymp intro}}
In this subsection we prove Theorems \ref{thm:total mass asymp intro} and \ref{thm:Zt asymp intro}. Recall $\psi_t$ and $\hat{Z}_t^{(1)}$ as defined in \eqref{eqn:psit def} and \eqref{eqn:Zt def intro}.

\begin{proof}[Proof of Theorems \ref{thm:total mass asymp intro} and \ref{thm:Zt asymp intro}]
\textbf{Upper bounds.} We start with the upper bound on $U(t)$ and then on $|\hat{Z}_t^{(1)}|$. First note that for all $i \geq 1$, $\lambda, t> 0$, 
\begin{align}\label{eqn:annulus max xi lambda}
\begin{split}
\p{ \sup_{v \in B_{2^{i+1}r(t)\lambda}} \xi (v) \geq \frac{1 \wedge q}{8} 2^{i}\frac{r(t)}{5t}(\log t) \lambda} &= \p{ \sup_{v \in B_{2^{i+1}r(t)\lambda}} \xi (v) \geq 2^{\frac{(i+1)d}{\alpha}}r(t)^{\frac{d}{\alpha}}(\lambda)^{\frac{d}{\alpha}} \frac{1 \wedge q}{40} 2^{i(1-\frac{d}{\alpha}) - \frac{d}{\alpha}}\lambda^{(1-\frac{d}{\alpha})}} \\
&\leq c_q \left(2^{i(1-\frac{d}{\alpha})}\lambda^{(1-\frac{d}{\alpha})}\right)^{\frac{-(\alpha - \epsilon)}{d}}.
\end{split}
\end{align}
Now let $A_{\lambda} = \{ \exists i: \sup_{v \in B_{2^{i+1}r(t)\lambda}} \xi (v) \geq \frac{1 \wedge q}{8} 2^{i}\frac{r(t)}{5t}(\log t) \lambda \}$ and $B_{\lambda} = \{\sup_{v \in B_{r(t)\lambda}} \xi(v) \geq \frac{1}{2} a(t) \lambda \}$.
By a union bound, $\p{A_{\lambda}} \leq c\lambda^{-\frac{(\alpha-\epsilon)}{d}-1}$. Also, $\p{B_{\lambda}} \leq \lambda^{-(\frac{\alpha}{d}-1- \epsilon)}$ by Lemma \ref{cor:whppotbo}. Moreover, by Corollary \ref{cor:exitball}, it follows that, as $t \to \infty$,
\[
\prb{\pr{\tau_{B_{2^{i-1}r(t) \lambda-1}} \leq t} \leq \exp \left\{ -\frac{2^{i-1}r(t) \lambda}{5}\log \left(\frac{2^{i-1}r(t) \lambda}{et}\right) \right\} \forall t > T} \geq 1- o(1).
\]
For the rest of the proof we work on this event. We therefore have for all $t>T$ that
\begin{align*}
U(t) &\leq \exp \{ t\sup_{v \in B_{r(t)\lambda}} \xi(v) \} + \sum_{i \geq 1} \mathbb E\left[ \exp \left\{t \sup_{v \in B_{2^ir(t) \lambda}} \xi(v)\right\}\mathbbm 1\left\{2^{i-1}r(t) \lambda \leq \sup_{s\leq t}|X_s| \leq 2^ir(t) \lambda\right\}\right] \\
&\leq \exp \{ t\sup_{v \in B_{r(t)\lambda}} \xi(v) \} + \sum_{i \geq 1} \exp \left\{t \sup_{v \in B_{2^{i}r(t) \lambda}} \xi(v)\right\} \pr{\tau_{B_{2^{i-1}r(t) \lambda-1}} \leq t}.
\end{align*}
Therefore, when the events $A_{\lambda}^c$ and $B_{\lambda}^c$ additionally occur it follows that
\begin{align*}
U(t) &\leq \exp \left\{\frac{1}{2}ta(t) \right\}  + \sum_{i \geq 1} \exp \left\{ \frac{1 \wedge q}{4} \frac{2^{i-1}r(t)\lambda}{5}(\log t) \right\} \exp \left\{ -\frac{2^{i-1}r(t) \lambda}{5}\log \left(\frac{2^{i-1}r(t) \lambda}{et}\right) \right\} \leq e^{ta(t) \lambda}.
\end{align*}
 $U(t) \leq ta(t) \lambda$. Also, since $\psi_t(v) \leq \xi (v) - \frac{|v|}{t}\left(\log  \left(\frac{|v|}{et} \right) \right)$ (by respectively taking $\rho = 0$ and $\rho=1$ in the two terms in \eqref{eqn:psit def}), it additionally follows that on the event $A_{\lambda}^c$,
\begin{align*}
\sup_{v \notin B_{r(t) \lambda}} \psi_t(v) &\leq \sup_{i \geq 0} \left\{ \frac{1 \wedge q}{8} 2^{i}\frac{r(t)}{5t}(\log t) \lambda - \frac{2^{i}r(t)\lambda}{t}\left(\log  \left(\frac{2^{i}r(t)\lambda}{et} \right) \right) \right\} \\
&\leq \sup_{i \geq 0} \left\{ \frac{q2^{i}r(t) \lambda}{40t}(\log t) - \frac{q2^{i}r(t)\lambda}{2t}\left(\log  \left(t\lambda \right) \right) \right\} \leq 0.
\end{align*}
Since $\psi_t(O) \to \infty$ as $t \to \infty$, it therefore follows that $|\hat{Z}_t^{(1)}| \leq r(t) \lambda$. This establishes the upper bounds in Theorems \ref{thm:total mass asymp intro} and \ref{thm:Zt asymp intro}.

\textbf{Lower bounds.} We first make two observations.
\begin{enumerate}
    \item[(a)] We have from Lemma \ref{cor:whppotbo} that with probability at least $1-c\lambda^{\frac{-(d-\epsilon)}{2}}$,
\begin{align*}
\sup_{v\in B_{r(t)\lambda^{-1}}}\psi_t(v)\leq \sup_{v\in B_{r(t)\lambda^{-1}}} \xi(v) &\leq \frac{1}{4}(r(t))^{d/\alpha} \lambda^{-d/2\alpha}.
\end{align*}
\item[(b)] Recall that $\tilde{B}_r = \{v \in B_r: \deg v \leq 4\}$. Choose some $\epsilon>0$ small enough that $\frac{(1+\epsilon)d}{4 \alpha} < \frac{1}{4} \wedge \frac{d}{2\alpha}$. By the same calculation as in \eqref{eqn:pot LB calc}, instead using that $\# \tilde{B}_r \geq r^{d} \lambda^{\frac{-\epsilon d}{4}}$ with probability at least $1-Ce^{-c \lambda^{\frac{-\epsilon}{4}}}$, we obtain that, with probability at least $1-Ce^{-c \lambda^{\frac{-\epsilon d}{8}}}$,
\[
\sup_{v\in \tilde{B}_{r(t)\lambda^{-1/4}}} {\xi} (v) \leq r(t)^{d/\alpha} \lambda^{\frac{-(1+\epsilon)d}{4 \alpha}} .
\]
\end{enumerate}
Now suppose that the two events in (a) and (b) hold. If $v\in \tilde{B}_{r(t)\lambda^{-1/4}}$ satisfies ${\xi} (v) > r(t)^{d/\alpha} \lambda^{\frac{-(1+\epsilon)d}{4 \alpha}}$, then:
\begin{align*}
\psi_t(v) > r(t)^{d/\alpha} \lambda^{\frac{-(1+\epsilon)d}{4 \alpha}} - 4 - \frac{r(t)}{t}\lambda^{-1/4}\log (r(t)\lambda^{\frac{-1}{4}}) &> \frac{1}{2}r(t)^{d/\alpha} \lambda^{\frac{-(1+\epsilon)d}{4 \alpha}} - 4 - \frac{(q+1)r(t)}{t}\lambda^{-1/4}\log t \\
&> \frac{1}{4}r(t)^{\frac{d}{\alpha}} \lambda^{\frac{-(1+\epsilon)d}{4 \alpha}} \\
&> \sup_{v\in B_{r(t)\lambda^{-1}}}\psi_t(v).
\end{align*}
This shows that $\hat{Z}_t^{(1)}$ as defined by \eqref{eqn:Zt def intro} satisfies $|\hat{Z}_t^{(1)}| \geq r(t) \lambda^{-1}$. Finally, for the lower bound on $\log U(t)$, take the same $v$ and note that from Lemma \ref{lem:Br log sum bound} and Proposition \ref{prop:hitting time prob bounds}$(ii)$, and taking $\rho = \frac{|v|}{t(\xi (v) - \deg v)} < \frac{r(t)\lambda^{-1/4}}{t(r(t)^{d/\alpha} \lambda^{\frac{-(1+\epsilon)d}{4 \alpha}} - 4)}<1$ below, it holds with probability $1-o(1)$ as $t \to \infty$ that
\begin{align*}
U(t) &\geq \sup_{\rho \in(0,1)}\left\{\pr{H_{v} \leq \rho t, X_s = v \forall s \in [H_{v}, H_{v} + (1-\rho)t]} \exp\{\xi (v) (1-\rho)t\} \right\}\\
&\geq \sup_{\rho \in(0,1)}\left\{ \pr{H_{v} \leq \rho t} \exp\{-(1-\rho)t\deg (v) + \xi (v) (1-\rho)t\}\right\} \\
&\geq\exp\{t\psi_t(v) - C|v| + O(t)\}.
\end{align*}
We deduce that $\log U(t) \geq \frac{1}{2}ta(t) \lambda^{\frac{-(1+\epsilon)d}{4 \alpha}}$ on these events, which proves the lower bound (i.e. by replacing $\lambda^{\frac{(1+2\epsilon)d}{4 \alpha}}$ with $\lambda$ for the latter event).
\end{proof}

\subsection{Solution to the PAM: proof of Theorem \ref{thm:log conv result intro}}
Let $u(t,v)$ denote the solution to the PAM at vertex $v \in \Ti$ at time $t$. In this subsection we prove Theorem \ref{thm:log conv result intro}.

Recall from \eqref{eqn:lambda intro def} that, analogously to \cite[Section 1.4]{OrtgieseRobertsIntermittency}, for $v \in \Ti$ we define
\begin{align*}
\lambda(t,v,y) &= t(\xi(y) - \deg y) - |y| \log \left( \frac{|y|}{te}\right) -  |v-y| \log \left(\frac{|v-y|}{te}\right) ,\\
\lambda(t,v) &= \sup_{y \in \Ti} \lambda(t,v,y),
\end{align*}
and set $\lambda_+(t,v) = \lambda(t,v) \vee 0$. We also fix some $\delta \in (0,1)$ and some $m \in (q, \infty)$ and set
\[
Y(t,v) = \arg \max_{y \in B_{r(t)(\log t)^{m}}} \left\{ \lambda(t,v,y) \right\}, \qquad \tilde{Y}(t,v) = \arg \max_{y \in B_{r(t)(\log t)^{m}}} \left\{ \lambda(t,v,y) + t \deg y \right\}.
\]
Since the law of $\xi$ is non-atomic, these are well-defined, almost surely.

Recall from Proposition \ref{prop:hitting time prob bounds} that if the random walk $(X_t)_{t \geq 0}$ starts at some point $y \in \Ti$ and $r=|v-y|$, then 
\begin{equation}\label{eqn:RW hitting probs upper and lower}
\left( \prod_{u \in [[y, v))} \frac{1}{\deg(u)} \right) e^{-r \left( [\log r - \log t - 1] \vee 0 \right) - O(t)} \leq \prstart{H_v < t}{y} \leq e^{-r \left( [\log r - \log t - 1] \vee 0 \right))}.
\end{equation}

We first state a couple of lemmas and then show how they can be used to prove Theorem \ref{thm:log conv result intro}.

The first lemma is similar in flavour to some of the results in Section \ref{sctn:sites of high potential}, and allows us to control the location and potential of $Y(t,v)$.

\begin{lem}\label{lem:sup in ball}
Under Assumption \ref{assn:whp}, it holds with high $P$-probability as $t \to \infty$ that
\begin{enumerate}[(i)]
    \item $\lambda(t,v,y) +t \deg y \leq 0$ whenever $v \in B_{r(t)(\log t)^{\delta}}^c$ or $y \in B_{r(t)(\log t)^{\delta}}^c$. Consequently, $Y(t,v)$ and $\tilde{Y}(t,v)$ are in $B_{r(t)(\log t)^{\delta}}$ whenever $\lambda(t,v) >0$.
    \item $\lambda(t,v) = \sup_{y \in B_{r(t)(\log t)^{\delta}}} \lambda(t,v,y)$ for all $v \in B_{r(t)(\log t)^{\delta}}$.
    \item $\sup_{v \in B_{r(t)(\log t)^{\delta}}} \xi (Y(t,v)) \leq a(t) (\log t)^{\frac{2\delta}{\alpha}}$.
    \item $\log u(t,v) \leq 0$ for all $v \in B_{r(t)(\log t)^{\delta}}^c$.
    \item For all $v \in V(\Ti)$, $\estart{\exp\left\{\int_0^t\xi(X_s)~\text{d} s\right\}\mathbbm 1\{\sup_{0\leq s\leq t} \xi(X_s) \notin B_{r(t)(\log t)^m}, X_t = v\}}{O} \leq 1$.
\end{enumerate}
\end{lem}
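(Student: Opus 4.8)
\emph{Overall approach.} Part (i) is the geometric heart of the lemma and I would prove it first; parts (ii)--(iv) then follow from (i) together with the Feynman--Kac formula \eqref{eqn:FK formula} and the hitting-time bounds of Proposition \ref{prop:hitting time prob bounds}, while (v) is a self-contained path estimate. Throughout I would use the identities $r(t)^{d/\alpha}=a(t)$, $r(t)\log t = t a(t)$ and $\log(r(t)/(te)) = (q-o(1))\log t$, which turn all the competing quantities into $ta(t)$ times a power of $\log t$.

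\emph{Part (i).} Fix a small $\kappa>0$ depending on $\delta,\alpha,d$. Applying Lemma \ref{cor:whppotbo} on the balls $B_{2^{i+1}r(t)(\log t)^{\delta}}$ with the slowly growing cut-off $(\log t)^{\kappa}2^{i\kappa'}$ ($\kappa'$ small, so the geometric series converges) and a union bound over $i\geq0$, one obtains with high probability that $\xi(y)\lesssim |y|^{d/\alpha}(\log t)^{\kappa}$ for every $y\notin B_{r(t)(\log t)^{\delta}}$ and $\xi(y)\leq (r(t)(\log t)^{\delta})^{d/\alpha}(\log t)^{\kappa}$ for $y\in B_{r(t)(\log t)^{\delta}}$. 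Writing $\lambda(t,v,y)+t\deg y = t\xi(y)-|y|\log(|y|/(te))-|v-y|\log(|v-y|/(te))$, the last term is $\leq t/e$ when $|v-y|<te$ and $\leq0$ otherwise, so it suffices to dominate $t\xi(y)$ by the relevant hitting-cost term. If $|y|>r(t)(\log t)^{\delta}$, the bound on $\xi$ and the scaling identities give $t\xi(y)/(|y|\log(|y|/(te)))\leq C\,2^{-ci}(\log t)^{-\delta(\alpha-d)/\alpha+\kappa}$ on the $i$-th shell, which is $\leq\tfrac12$ for large $t$ uniformly in $i$ once $\kappa$ is small; hence $\lambda(t,v,y)+t\deg y\leq -\tfrac12|y|\log(|y|/(te))+t<0$. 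If instead $|v|>r(t)(\log t)^{\delta}$, then either $|y|\geq|v|/2$ (reducing to the previous case) or $|v-y|\geq|v|/2>r(t)(\log t)^{\delta}/2$, and the same estimate applies with $|v-y|$ in place of $|y|$. The ``consequently'' statement is then immediate: if $\lambda(t,v)>0$, any $y$ with $\lambda(t,v,y)>0$, hence $Y(t,v)$ and $\tilde Y(t,v)$, must lie in $B_{r(t)(\log t)^{\delta}}$.

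\emph{Parts (ii), (iii), (iv).} For (ii): by (i), $\lambda(t,v,y)\leq0$ for $v\in B_{r(t)(\log t)^{\delta}}$ and $y$ outside this ball, while pushing any outside near-maximiser back to the point where the geodesic $[[O,y]]$ exits the ball does not decrease $\lambda(t,v,\cdot)$, so the supremum over the ball equals the full supremum. For (iii): when $\lambda(t,v)>0$ the ``consequently'' part of (i) places $Y(t,v),\tilde Y(t,v)$ in $B_{r(t)(\log t)^{\delta}}$, where $\xi$ is controlled by the whp estimate; in general one argues by contradiction, noting that a vertex whose potential is much larger than $a(t)$ times a power of $\log t$ must, by the whp potential bound, lie at distance $\gg r(t)$ from $O$ and (by the tree geometry) also far from $v$, so that the scaling estimate makes its $\lambda$-value too negative to be the maximiser. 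For (iv): with $|v|>r(t)(\log t)^{\delta}$, decompose $u(t,v)$ over the vertex $w$ at which $\max_{s\leq t}\xi(X_s)$ is attained; on that event $\int_0^t\xi(X_s)\,ds\leq t\xi(w)$, and by the strong Markov property and Proposition \ref{prop:hitting time prob bounds}(i) one has $\mathbb{P}_O(H_w\leq t,\,X_t=v)\leq \exp\{-|w|\log(|w|/(te))-|v-w|\log(|v-w|/(te))\}$ (up to an $O(\log t)$ correction when $|v-w|<3$), so the $w$-term is at most $e^{\lambda(t,v,w)+t\deg w}\leq1$ by (i); the refined form of (i) supplies enough decay in $|w|$, together with a term of order $-c\,ta(t)(\log t)^{\delta}$ against only polynomially many $w$ in the critical window, to make $\sum_w e^{\lambda(t,v,w)+t\deg w}=o(1)$, whence $u(t,v)\leq1$.

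\emph{Part (v) and the main obstacle.} On the event in (v) the walk attains $\max_{s\leq t}\xi(X_s)$ at a vertex $w$ with $|w|\geq r(t)(\log t)^{m}$, so $\int_0^t\xi(X_s)\,ds\leq t\xi(w)$ and the expectation is at most $\sum_{|w|\geq r(t)(\log t)^{m}}e^{t\xi(w)}\mathbb{P}_O(H_w\leq t)\leq\sum_{|w|\geq r(t)(\log t)^{m}}\exp\{t\xi(w)-|w|\log(|w|/(te))\}$ by Proposition \ref{prop:hitting time prob bounds}(i). Bounding $\xi(w)\leq|w|^{d/\alpha}(\log|w|)^{(d+\epsilon)/\alpha}$ via Lemma \ref{lem:xi in Ar} (extended to all radii by monotonicity/a dyadic decomposition) and using $m>q$, the scaling identities give $t\xi(w)\leq\tfrac12|w|\log(|w|/(te))$ throughout; summing over generations with generation sizes bounded using Corollary \ref{cor:volume bounds}, the leading term is $\exp\{-\tfrac{q}{4}ta(t)(\log t)^{m}+O(\log t)\}=o(1)$, so the expectation is $\leq1$. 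The delicate point of the whole lemma is part (i): one must realise that the a.s. potential bound of Lemma \ref{lem:xi in Ar}, whose logarithmic correction $(\log r)^{(d+\epsilon)/\alpha}$ carries a \emph{fixed} exponent, is too weak to beat the hitting cost just beyond radius $r(t)(\log t)^{\delta}$ when $\delta$ is small, and that one must instead use Lemma \ref{cor:whppotbo} with a slowly growing cut-off, so that the correction becomes $(\log t)^{\kappa}$ with $\kappa$ as small as desired. Once this is in hand, tracking the many small powers of $\log t$ in (ii)--(v) is routine; the only additional care needed is in (iii), where the bound on $\xi(Y(t,v))$ is sharper than mere membership in $B_{r(t)(\log t)^{\delta}}$ and exploits the link between a vertex's potential and its distance from the root.
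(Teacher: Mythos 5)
Your treatment of part (i) is essentially the paper's proof: both of you apply Lemma \ref{cor:whppotbo} on dyadic shells around radius $r(t)(\log t)^\delta$ and sum the resulting geometric series, and you correctly identify the crucial point that the almost-sure bound of Lemma \ref{lem:xi in Ar}, with its fixed exponent $(d+\epsilon)/\alpha$ on $\log r$, is not enough when $\delta < q$; a high-probability bound with an arbitrarily small $\log$-correction is what makes the shell estimate work. Your observation that $|y| \vee |v-y| \geq \tfrac12|v|$ handles the case $v$ outside the ball is also exactly the paper's argument. Part (v) is also substantially correct, though the paper simply notes that the exit-radius computation from part (iv) applies again, whereas you re-derive it via hitting times and the a.s. potential bound with $m>q$ (they are equivalent modulo Corollary \ref{cor:exitball}).

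The problems are in parts (ii) and (iv). For (ii) your ``pushing any outside near-maximiser back to the point where the geodesic $[[O,y]]$ exits the ball'' is not a valid move: $\lambda(t,v,y)$ depends on the (i.i.d.) value $\xi(y)$ at the specific vertex $y$, and replacing $y$ by a different vertex on the geodesic changes $\xi$ to an unrelated value, so there is no monotonicity along the geodesic. The correct argument is simply that by (i) every $y\notin B_{r(t)(\log t)^{\delta}}$ satisfies $\lambda(t,v,y)\leq -t\deg y <0$; since the equality in (ii) is only ever used in the paper inside a ``$\vee 0$'', the only case that matters is $\lambda(t,v)>0$, in which case the maximiser lies in the ball by (i) and the equality is immediate. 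For (iv), the paper avoids your vertex-by-vertex decomposition entirely: it bounds $\sum_{v\notin B_{r(t)(\log t)^\delta}} u(t,v)$ by the expectation over trajectories with $\tau_{B_{r(t)(\log t)^\delta}}\leq t$, splits over the exit radius $r$, and combines Lemma \ref{lem:RW ball exit time restricted radii} (for $r$ in the middle window) and Corollary \ref{cor:exitball} (for large $r$) with the high-probability potential bound of \eqref{eqn:annulus max xi}. Your decomposition over the vertex $w$ achieving $\max_{s\leq t}\xi(X_s)$ can also be made to work, but as written it has a gap: part (i) only gives $\lambda(t,v,w)+t\deg w\leq 0$, so each $w$-term is $\leq 1$ and the naive sum is only bounded by the (polynomial) volume; you acknowledge this and appeal to a ``refined form of (i)'' with a margin of order $-cta(t)(\log t)^\delta$, but that strengthening is not what is stated in (i) and would have to be derived and propagated through the union bound over $w$ ranging over all shells, not only the critical window. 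Finally, (iii) is much simpler than your sketch suggests: once (i) places $Y(t,v)$ in $B_{r(t)(\log t)^\delta}$ whenever $\lambda(t,v)>0$, the potential bound is just Lemma \ref{cor:whppotbo} on that ball; the contradiction argument you propose is not needed.
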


The second lemma allows us to control the degree of good vertices.

\begin{lem}\label{lem:Y(t,v) deg bound}
Take any $\delta, \eta >0$. Under Assumption \ref{assn:whp}, it holds with high $P$-probability as $t \to \infty$ that
\begin{align*}
\sup_{v \in B_{r(t) (\log t)^{\delta}}} \left\{ \left[ \deg \left( Y(t,v) \right) \vee \deg \left( \tilde{Y}(t,v) \right)\right] \mathbbm{1}\{\lambda (t,v) > 0\} \right\} \leq (\log t)^{\frac{d\delta}{\beta} + \eta}.
\end{align*}
\end{lem}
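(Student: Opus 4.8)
The plan is to reduce first to a bound on $\deg\tilde Y(t,v)$ alone, then split on the distance of $\tilde Y(t,v)$ from the root: the near-root case is handled directly by \cref{lem:supdeg}, while the far case is handled by a first-moment bound, for which one first needs a lower bound on $\xi(\tilde Y(t,v))$ extracted from the optimality of $\tilde Y(t,v)$.

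\textbf{Reduction.} Write $\mu(t,v,y):=\lambda(t,v,y)+t\deg y$, so that $Y(t,v)$ maximises $\lambda(t,v,\cdot)$ and $\tilde Y(t,v)$ maximises $\mu(t,v,\cdot)$, both over $B_{r(t)(\log t)^{m}}$. Evaluating $\lambda$ at $\tilde Y(t,v)$ and $\mu$ at $Y(t,v)$ gives $\lambda(t,v,Y(t,v))\ge\lambda(t,v,\tilde Y(t,v))$ and $\mu(t,v,\tilde Y(t,v))\ge\mu(t,v,Y(t,v))$; adding these yields $\deg Y(t,v)\le\deg\tilde Y(t,v)$, so it suffices to bound $\deg\tilde Y(t,v)$. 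On $\{\lambda(t,v)>0\}$ we also have, by \cref{lem:sup in ball}(i), that $\tilde Y(t,v)\in B_{r(t)(\log t)^{\delta}}$ and $\mu(t,v,\tilde Y(t,v))\ge\mu(t,v,Y(t,v))=\lambda(t,v)>0$.

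\textbf{Near the root.} If $|\tilde Y(t,v)|\le(\log t)^{\delta+\eta\beta/(2d)}$, then by \cref{lem:supdeg}(i), applied with $\epsilon$ small enough, $\deg\tilde Y(t,v)\le D_{(\log t)^{\delta+\eta\beta/(2d)}}\le(\log t)^{d\delta/\beta+\eta}$, as required.

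\textbf{Far from the root.} Suppose $|\tilde Y(t,v)|=:L>(\log t)^{\delta+\eta\beta/(2d)}$. The idea is to use that $\tilde Y(t,v)$ maximises $\mu(t,v,\cdot)$ to force $\xi(\tilde Y(t,v))\ge a(t)(\log t)^{-\kappa}$ for a suitable $\kappa=\kappa(\delta,\eta)$: since $B(\tilde Y(t,v),2L)\supseteq B_L$, the maximal potential on $B(\tilde Y(t,v),2L)$ is at least that on $B_L$, which by the argument of \cref{cor:sup xi LB} (via the volume lower bound of \cref{cor:volume bounds}) is $\ge cL^{d/\alpha}(\log\log t)^{-C_0}$; comparing $\mu(t,v,\tilde Y(t,v))\ge\mu(t,v,w)$ for $w$ a maximiser of $\xi$ near $\tilde Y(t,v)$, and using that $x\mapsto x\log(x/te)$ has slope $O(\log t)$ on $[1,r(t)(\log t)^{m}]$ to control the change of the hitting-cost terms, one aims to conclude the stated lower bound on $\xi(\tilde Y(t,v))$. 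Granting this, it remains to show that, with high probability, no $y\in B_{r(t)(\log t)^{\delta}}$ has both $\deg y\ge K:=(\log t)^{d\delta/\beta+\eta}$ and $\xi(y)\ge a(t)(\log t)^{-\kappa}$. I would estimate the expected number of such $y$ directly from the spinal construction of \cref{def:infinite crit tree}, using $\mathbf P(\deg v>K)\le CK^{-\beta}$ off the backbone and $\le CK^{-(\beta-1)}$ on it, the independence of $\xi$ from $\Ti$, and (when $\beta<2$) truncating the backbone degrees at a polynomial threshold whose complement is controlled by \cref{lem:supdeg}(i). Since $r(t)^{d}a(t)^{-\alpha}=1$, this expectation is of order $(\log t)^{\kappa\alpha-\eta\beta+o(1)}$, which vanishes once $\kappa$ is small enough; a Borel--Cantelli and monotonicity argument along $t_n=2^{n}$, as in \cref{prop:max degree Ft Et}, then upgrades this to the claim, and $\deg Y(t,v)\le\deg\tilde Y(t,v)$ completes the proof.

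\textbf{Main obstacle.} The delicate point is exactly the lower bound $\xi(\tilde Y(t,v))\gtrsim a(t)(\log t)^{-\kappa}$ in the far regime. Because $\tilde Y(t,v)$ is itself a near-maximiser, comparing it against a high-potential vertex that is far from $v$ only produces a vacuous (negative) bound, since the change in the hitting-cost terms then dominates; the competitor must be chosen genuinely close to $\tilde Y(t,v)$, which forces one to control the maximal potential in a small ball around $\tilde Y(t,v)$ \emph{uniformly over all admissible $v$} — in effect, to rule out that $\tilde Y(t,v)$ lies deep inside an atypically thin branch of $\Ti$. Balancing the exponents, so that $\kappa$ stays below $\eta\beta/\alpha$ while the comparison still beats the hitting-cost error, is the technical heart of the argument.
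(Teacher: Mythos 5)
Your reduction $\deg Y(t,v)\le\deg\tilde Y(t,v)$ — obtained by adding the two optimality inequalities $\lambda(t,v,Y)\ge\lambda(t,v,\tilde Y)$ and $\mu(t,v,\tilde Y)\ge\mu(t,v,Y)$ — is correct and is a neat observation that the paper does not make; the paper instead handles $Y(t,v)$ and $\tilde Y(t,v)$ on the same footing throughout. Your near-root case is also fine. The gap is in the far case, and it is a real one, not just a ``delicate point.''

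With your choice of competitor radius $2L$ (where $L=|\tilde Y(t,v)|$), the comparison $\mu(t,v,\tilde Y(t,v))\ge\mu(t,v,w)$ together with the Lipschitz bound on the hitting-cost terms gives only
$\xi(\tilde Y(t,v))\gtrsim L^{d/\alpha}(\log\log t)^{-C_0}-O(L\log t/t)$, i.e.\ a lower bound of order $L^{d/\alpha}$ — \emph{not} $a(t)(\log t)^{-\kappa}$, which is what your first-moment estimate uses. For $L$ between your polylog threshold and $\epsilon r(t)$, $L^{d/\alpha}\ll a(t)$, and feeding this $L$-dependent threshold into the first-moment count gives, for $y$ off the backbone at distance $L$,
$\sum_{L}L^{d-1}\cdot K^{-\beta}\cdot\bigl(L^{d/\alpha}\bigr)^{-\alpha}(\log\log t)^{O(1)}\approx(\log t)^{-d\delta-\eta\beta}(\log\log t)^{O(1)}\sum_L L^{-1}\approx(\log t)^{1-d\delta-\eta\beta+o(1)}$,
which does \emph{not} vanish for small $\delta,\eta$. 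So the argument does not close as written. Your self-diagnosis also mis-identifies the obstacle: the issue is not that ``the competitor must be chosen genuinely close to $\tilde Y(t,v)$.'' The Lipschitz bound on $x\mapsto x\log(x/te)$ only costs $O(M\log t)$ for a competitor at distance $M$, independently of $|v|$, so nothing forces $M=O(L)$. The repair is to \emph{decouple} $M$ from $L$: take $w$ to be a maximiser of $\xi$ on $B_{M/2}$ with $M\asymp r(t)(\log\log t)^{-C_1}$ (still $\ge 2L$ when $L$ is not itself of order $r(t)$), so that $\xi(w)\gtrsim (M/2)^{d/\alpha}(\log\log t)^{-C_0}$ dominates the hitting-cost error $O(M\log t/t)$ and one gets $\xi(\tilde Y(t,v))\gtrsim a(t)(\log\log t)^{-C_2}$ uniformly; the remaining range $L\gtrsim r(t)(\log\log t)^{-C_1}$ follows directly from $\mu(t,v,\tilde Y(t,v))>0$. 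With that fix your first-moment calculation closes (and in fact needs only the $(\log\log t)^{-C}$ loss, not a $(\log t)^{-\kappa}$).

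For comparison, the paper avoids a case split on $|\tilde Y(t,v)|$ entirely. It defines a set $C_{t,\epsilon}$ of vertices with $t\xi>\epsilon r(t)\log(r(t)/t)$, shows both $Y(t,v)$ and $\tilde Y(t,v)$ lie in $C_{t,\epsilon}$ when $\lambda(t,v)>0$ (handling $v$ or $Y$ far from $O$ directly from $\lambda(t,v,Y(t,v))>0$, and $v,Y$ near $O$ via an explicit ``staircase'' of independent annuli $C_{k,t}=B_{(k+1)\epsilon r(t)}\setminus A_{k\epsilon r(t)}$, which is its substitute for your uniform $\xi$-lower bound), then bounds $|C_{t,\epsilon}|\le(\log t)^{d\delta+\epsilon'}$, and finally controls $\sup_{C_{t,\epsilon}}\deg$ by noting each $y\in C_{t,\epsilon}$ is a uniformly random vertex of the ball because $\xi$ is independent of $\Ti$. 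That last step is morally the same as your spinal first-moment estimate, but the intermediate bound on $|C_{t,\epsilon}|$ lets the paper union-bound over a polylog-sized set instead of the whole ball.
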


Finally, let $\mathcal{L} = \{u \in \Ti: \deg u = 1\}$ denote the set of leaves in $\Ti$. The third lemma will be useful as we will be able to lower bound the solution at a high degree vertex by considering trajectories that spend a lot of a time at a neighbouring leaf and then jump to the high degree vertex just before time $t$.

\begin{lem}\label{lem:high deg next to leaf}
Under Assumption \ref{assn:whp}, it holds with high $P$-probability as $t \to \infty$ that
\[
\prb{\exists v \in B_{r(t)(\log t)^{\delta}}: \deg v > a(t), \nexists u \sim v \text{ with } u\in \mathcal{L}} \to 0.
\]
\end{lem}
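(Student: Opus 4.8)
The plan is to exploit a very simple fact: a vertex of large degree has many children, each of which is independently the root of an unconditioned Galton--Watson tree with offspring distribution $\mu$, and hence is a leaf with probability $\mu_0 := \mu(0)$. Crucially $\mu_0 > 0$: under Assumption~\ref{assn:whp} the critical distribution $\mu$ lies in the domain of normal attraction of a $\beta$-stable law with $\beta \le 2$, so $\mu \ne \delta_1$, whence $\mu_1 < 1$, and criticality then forces $\mu_0 > 0$. Throughout write $R_t := r(t)(\log t)^{\delta}$ and fix $\epsilon \in (0, \beta - 1)$.

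First I would reveal the vertices of $\Ti$ one at a time in breadth-first order, $W_1, W_2, \dots$, together with their degrees and backbone status, and let $\mathcal{F}_n$ denote the $\sigma$-algebra generated by this information for $W_1, \dots, W_n$. The key observation is that, conditionally on $\mathcal{F}_n$, at least $\deg W_n - 2$ of the children of $W_n$ are roots of \emph{independent} unconditioned Galton--Watson trees (all $\deg W_n - 1$ children if $W_n$ is normal, all but the distinguished special child otherwise), each of which is a leaf independently with probability $\mu_0$; moreover the only neighbour of $W_n$ that is not one of its children is its parent, which has $W_n$ as a child and so lies outside $\mathcal{L}$. Hence, writing $A_v := \{\deg v > a(t)\} \cap \{v \text{ has no neighbour in } \mathcal{L}\}$,
\[
\prcond{A_{W_n}}{\mathcal{F}_n}{} \;\le\; \mathbbm{1}\{\deg W_n > a(t)\}\,(1-\mu_0)^{\deg W_n - 2} \;\le\; (1-\mu_0)^{a(t)-2},
\]
so $\prb{A_{W_n}} \le (1-\mu_0)^{a(t)-2}$ for every $n$.

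Next I would bound the volume of the ball: by Proposition~\ref{prop:AR vol growth and exit time}(i) (recall $d = \tfrac{\beta}{\beta-1}$), $\prb{\#B_{R_t} \ge (\log t)\,R_t^{d}} \le C(\log t)^{-(\beta - 1 - \epsilon)} \to 0$, so set $V_t := \lceil (\log t)\, R_t^{d} \rceil$, which satisfies $V_t \le t^{d(q+1)+1}$ for all large $t$ since $r(t) \le t^{q+1}$. A union bound over the first $V_t$ explored vertices then gives
\[
\prb{\exists v \in B_{R_t}: \deg v > a(t),\ \nexists u \sim v \text{ with } u \in \mathcal{L}} \;\le\; \prb{\#B_{R_t} > V_t} + \sum_{n=1}^{V_t} \prb{A_{W_n}} \;\le\; C(\log t)^{-(\beta - 1 - \epsilon)} + V_t\,(1-\mu_0)^{a(t)-2}.
\]
Since $q = \tfrac{d}{\alpha - d} > 0$ we have $a(t) = (t/\log t)^{q} \ge t^{q/2}$ for all large $t$, so the second term is at most $\exp\{(d(q+1)+1)\log t - (t^{q/2}-2)\lvert \log(1-\mu_0)\rvert\}$, which tends to $0$; hence the whole probability tends to $0$, as required.

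The one point that needs care is the measurability and independence bookkeeping underlying the union bound over the random vertex set $B_{R_t}$, which is why I would set things up via the breadth-first exploration: it makes precise that, conditionally on what has been revealed when $W_n$ is reached, the subtrees hanging from its normal children are fresh independent copies of the unconditioned Galton--Watson tree, so the degree of $W_n$ and the leaf-status of its children decorrelate in exactly the way the bound requires. Beyond that the argument is a routine union bound, relying only on $\mu_0 > 0$ and on the fact that $a(t)$ grows polynomially in $t$ while $\#B_{R_t}$ grows only polynomially.
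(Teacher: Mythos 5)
Your proof is correct and takes essentially the same approach as the paper: bound the conditional probability that a vertex of degree exceeding $a(t)$ has no leaf neighbour by $(1-\mu_0)^{a(t)-O(1)}$, then union-bound over the ball after controlling its volume via Proposition~\ref{prop:AR vol growth and exit time}(i). Your version adds two pieces of bookkeeping the paper leaves implicit — the explicit observation that $\mu_0>0$ (criticality plus $\mu\neq\delta_1$), and the breadth-first filtration that makes the union bound over the random vertex set rigorous — both of which tighten but do not change the argument.
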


On the event that $\deg v > a(t)$ and $\exists u \sim v \text{ with } p(u)=v$ and $u\in \mathcal{L}$, let $\ell (v)$ denote the leftmost such choice of $u$ amongst the children of $v$.

\begin{proof}[Proof of Theorem \ref{thm:log conv result intro}, assuming Lemmas \ref{lem:sup in ball}, \ref{lem:Y(t,v) deg bound} and \ref{lem:high deg next to leaf}]
By Lemma \ref{lem:sup in ball}$(i)$ and $(iv)$, it is sufficient to show that $$\left( ta(t) \right)^{-1} \sup_{v \in B_{r(t)(\log t)^{\delta}}} |\log_+ (u(t,v)) - \sup_{y \in B_{r(t)(\log t)^{\delta}}} \left\{\lambda(t,v,y) \vee 0\right\}| \to 0$$ with high $P$-probability.

\textit{Upper bound.} For all $v \in B_{r(t)(\log t)^{\delta}}$, we deduce from Lemma \ref{lem:sup in ball}$(v)$ then \eqref{eqn:RW hitting probs upper and lower} then Lemma \ref{lem:sup in ball}$(i)$ that
\begin{align*}
u(t,v) &\leq \sum_{y \in V(\Ti) \setminus B_{r(t)(\log t)^{m}}} \estart{\exp\left\{\int_0^t\xi(X_s)~\text{d} s\right\}\mathbbm 1\{\sup_{0\leq s\leq t} \xi(X_s) = \xi(y), X_t = v\}}{O} \\
&\qquad + \sum_{y \in B_{r(t)(\log t)^m}} \estart{\exp\left\{\int_0^t\xi(X_s)~\text{d} s\right\}\mathbbm 1\{\sup_{0\leq s\leq t} \xi(X_s) = \xi(y), X_t = v\}}{O} \\
&\leq 1 + \sum_{y \in B_{r(t)(\log t)^m}} \exp \{t\xi(y)\} \pr{H_y < t} \prstart{H_v < t}{y} \\
&\leq 1 + \sum_{y \in B_{r(t)(\log t)^m}} \exp \{t\xi(y)\} \exp \{-|y| \left( [\log |y| - \log t - 1] \vee 0 \right)) - |v-y| \left( [\log |v-y| - \log t - 1] \vee 0 \right))\} \\
&\leq 1 + \#(B_{r(t) (\log t)^{m}}) \sup_{y \in B_{r(t) (\log t)^{\delta}}} \exp \left\{\left(\lambda(t,v,y) + t \deg \tilde{Y}(t,v) \right) \vee 0 \right\}.
\end{align*}
Since $\#(B_{r(t)(\log t)^{m}}) \leq \left(r(t)\right)^{\frac{\beta + 1}{\beta - 1}}$ with high probability as $t \to \infty$ by Proposition \ref{prop:AR vol growth and exit time}$(i)$, taking logarithms we get from Lemma \ref{lem:sup in ball}$(ii)$ that with high $P$-probability as $t \to \infty$,
\begin{align*}
    \log (u(t,v)) 
    &\leq \lambda(t,v) +t \deg \tilde{Y}(t,v) + \frac{\beta + 1}{\beta - 1} \log r(t),
\end{align*}
so that, by Lemma \ref{lem:Y(t,v) deg bound},
\begin{align*}
\sup_{v \in B_{r(t)(\log t)^{\delta}}} (t a(t))^{-1} \left(\log_+ (u(t,v)) - \lambda_+(t,v)\right) \leq (t a(t))^{-1} \left[ t(\log t)^{\frac{d\delta}{\beta} + \eta} + \frac{\beta + 1}{\beta - 1} \log r(t) \right] \to 0.
\end{align*}
Finally we can extend this to the supremum over all $v \in V(\Ti)$ using  Lemma \ref{lem:sup in ball}$(iv)$. 

\textit{Lower bound.} We break the proof into two cases, depending on $\deg v$.
\begin{enumerate}[a)]
    \item \textbf{Case 1: $\mathbf{\deg v \leq a(t).}$} By Lemma \ref{lem:sup in ball}$(i)$, we can assume wlog that $v \in B_{r(t)(\log t)^{\delta}}$.
We obtain the complementary lower bound from \eqref{eqn:RW hitting probs upper and lower} by writing 
\begin{align}\label{eqn:utv LB}
\begin{split}
u(t,v) &\geq \sup_{y \in B_{r(t)(\log t)^{\delta}}} \estart{\exp\left\{\int_0^t\xi(X_s)~\text{d} s\right\}\mathbbm 1\left\{H_y \leq \frac{t}{\log t}, X_s = y \ \forall s \in \left[\frac{t}{\log t}, t - \frac{t}{\log t}\right], X_t = v\right\}}{O} \\
&\geq \sup_{y \in B_{r(t)(\log t)^{\delta}}} \exp \left\{\left( t - \frac{2t}{\log t} \right) \xi(y) \right\}  \prstart{H_y \leq \frac{t}{\log t}, X_s = y \ \forall s \in \left[H_y, t - \frac{t}{\log t}\right], X_t = v}{O}.
\end{split}
\end{align}
To bound the probability appearing here, we use the Markov property to write
\begin{align}\label{eqn:utv LB hitting prob}
\begin{split}
&\prstart{H_y \leq \frac{t}{\log t}, X_s = y \ \forall s \in \left[H_y, t - \frac{t}{\log t}\right], X_t = v}{O} \\
&\qquad \geq \prstart{H_y \leq \frac{t}{\log t}}{O} \prcond{X_s = y \ \forall s \in \left[0, t - \frac{t}{\log t} - H_y \right]}{H_y \leq \frac{t}{\log t}}{y}\prstart{X_{\frac{t}{\log t}} = v}{y} \\
&\qquad \geq \prstart{H_y \leq \frac{t}{\log t}}{O} e^{-\left( t - \frac{t}{\log t} \right)\deg y}\prstart{H_v \leq \frac{t}{\log t}}{y} e^{- \frac{t}{\log t} \deg v}.
\end{split}
\end{align}

Combining these and using \eqref{eqn:RW hitting probs upper and lower}, we deduce that $u(t,v)$ is lower bounded by
\begin{align*}
&\sup_{y \in B_{r(t)(\log t)^{\delta}}} \exp \left\{\left( t - \frac{2t}{\log t} \right) \xi(y) \right\} \exp \left\{-\left[\sum_{u \in [[O,y]]} \log (\deg u) \right] -|y| \left( \log_+ \left( \frac{|y|}{te} \right) + \log \log t \right) \right\} e^{-t\deg y} \\
&\ \ \ \ \ \ \ \times\exp \left\{-\left[\sum_{u \in [[y,v]]} \log (\deg u) \right] -|v-y| \left( \log_+ \left( \frac{|v-y|}{te} \right)  + \log \log t \right) + O(t) \right\}e^{- \frac{t}{\log t} \deg v}.
\end{align*}
Also recall from Lemma \ref{lem:sup in ball}$(i)$ that on the event $\lambda(t,v) > 0$, $Y(t,v) \in B_{r(t)(\log t)^{\delta}}$. (If $\lambda(t,v) < 0$ then the lower bound is trivial). In this case we can therefore write, conditioning on the high probability events in Lemma \ref{lem:sup in ball}:
\begin{align*}
\log u(t,v)
&\geq \sup_{y \in B_{r(t)(\log t)^{\delta}}}  t \xi(y) -|y| \left( \log_+ \left( \frac{|y|}{te} \right) \right) -|v-y| \left( \log_+ \left( \frac{|v-y|}{te} \right) \right) -t\deg y -  \frac{2t}{\log t} \xi(y)\\
&\ \ \ \ \ - \sum_{u \in [[O,y]]} \log (\deg u) - \sum_{u \in [[y,v]]} \log (\deg u) - \frac{t}{\log t} \deg v - (|y| + |v-y|)\log \log t - O(t). \\
&\geq \lambda (t,v) -\frac{2t}{\log t} \xi(Y(t,v))- \sum_{u \preceq Y(t,v)} \log (\deg u) - \sum_{u \in [[Y(t,v),v]]} \log (\deg u) - \frac{ta(t)}{\log t} - O(r(t)(\log t)^{\delta} \log \log t) \\
&\geq \lambda (t,v) -a(t) (\log t)^{\frac{2\delta}{\alpha} - 1} - 4\tilde{B} r(t) (\log t)^{\delta} - 4(q+1)B \log t  - \frac{ta(t)}{\log t} - O(r(t)(\log t)^{\delta} \log \log t).
\end{align*}
Here the penultimate line follows by the assumption that $\deg v \leq a(t)$, and the last line follows by Lemmas \ref{lem:Br log sum bound} and \ref{lem:sup in ball}$(iii)$. Since $\delta<1$, the result then follows since
\[
\left( ta(t)\right)^{-1} \left( a(t) (\log t)^{\frac{2\delta}{\alpha} - 1} + 4\tilde{B} r(t) (\log t)^{\delta} + 4(q+1)B \log t + \frac{ta(t)}{\log t} + O(r(t)(\log t)^{\delta} \log \log t) \right) \to 0
\]
as $t \to \infty$ (note that $ta(t) = r(t) \log t$).
\item \textbf{Case 2: $\mathbf{\deg v > a(t).}$} In this second case we lower bound $u(t,v)$ using a slightly different event. Again we can assume wlog that $v \in B_{r(t)(\log t)^{\delta}}$. We again proceed as in \eqref{eqn:utv LB} and \eqref{eqn:utv LB hitting prob} but this time we observe that, letting $H_v^+ = \inf\{s \geq H_{\ell (v)}: X_s = v\}$, it follows from \eqref{eqn:RW hitting probs upper and lower} and Lemma \ref{lem:high deg next to leaf} that with high $\bPb$-probability
\begin{align*}
&\prstart{X_{\frac{t}{\log t}} = v}{y} \\
&\geq \prstart{H_{\ell(v)} \leq \frac{t}{2\log t}, H_v^+ \in \left(\frac{t}{\log t} - \frac{2}{\deg v}, \frac{t}{\log t} - \frac{1}{\deg v} \right), X_{s} = v \forall s \in \left[H_v^+, \frac{t}{\log t}\right]}{y} \\
&\geq \prstart{H_{\ell(v)} \leq \frac{t}{2\log t}}{y}  \left\{ \inf_{T \in \left(0, \frac{t}{2\log t}\right)} \pr{\textsf{exp}(1) \in \left(\frac{t}{\log t} - \frac{2}{\deg v} - T, \frac{t}{\log t} - \frac{1}{\deg v} - T \right) } \right\} \pr{\textsf{exp}(\deg v) \geq \frac{2}{\deg v}} \\
&\geq \exp \left\{-\left[\sum_{u \in [[y,v]]} \log (\deg u) \right] -(|v-y|+1) \left( \log_+ \left( \frac{2(|v-y|+1)}{te}  + \log \log t \right) \right) + O(t) \right\}  e^{-\left(\frac{2t}{\log t} \right)}\frac{1}{2\deg v} e^{-2}.
\end{align*}
Substituting back into the second line of \eqref{eqn:utv LB hitting prob} and then the second line of \eqref{eqn:utv LB}, and then conditioning on the event in Lemma \ref{lem:sup in ball}$(i)$, we see that
\begin{align*}
\log u(t,v) &\geq \sup_{y \in B_{r(t)(\log t)^{\delta}}} \left( t - \frac{2t}{\log t} \right) \xi(y) - \left\{\left[\sum_{u \in [[O,y]]} \log (\deg u) \right] + |y| \left( \log_+ \left( \frac{|y|}{te} \right) + \log \log t \right) \right\} -t\deg y \\
&\qquad - \left\{\left[\sum_{u \in [[y,v]]} \log (\deg u) \right] +(|v-y|+1) \left( \log_+ \left( \frac{2(|v-y|+1)}{te}  + \log \log t \right) \right) + \right\} - \frac{2t}{\log t} - O(t) \\
&= \lambda(t,v) - \frac{2t}{\log t} \xi(Y(t,v)) - \sum_{u \in [[O,Y(t,v)]]} \log (\deg u)  - \sum_{u \in [[Y(t,v),v]]} \log (\deg u) - 2(|Y(t,v)| + |v|) \log \log t - O(t) \\
&\geq \lambda(t,v) - O(a(t) (\log t)^{\frac{2\delta}{\alpha} - 1}) - O(r(t) (\log t)^{\delta} \log \log t),
\end{align*}
which implies the result as in a), since $\delta < 1$. \qedhere
\end{enumerate}
\end{proof}
We therefore just need to prove Lemmas \ref{lem:sup in ball}, \ref{lem:Y(t,v) deg bound} and \ref{lem:high deg next to leaf}.

\begin{proof}[Proof of Lemma \ref{lem:sup in ball}]
\begin{enumerate}[(i)]
    \item Just as in \eqref{eqn:annulus max xi lambda}, it follows from Lemma \ref{cor:whppotbo} that for any $i \geq 1, \epsilon > 0$,
\begin{align}\label{eqn:annulus max xi}
\begin{split}
\p{ \sup_{v \in B_{2^{i+1}r(t)(\log t)^{\delta}}} \xi (v) \geq \frac{1 \wedge q}{8} 2^{i}\frac{r(t)}{t}(\log t)^{1+\delta}} 
&\leq c_q \left(2^{i(1-\frac{d}{\alpha})}(\log t)^{\delta(1-\frac{d}{\alpha})}\right)^{\frac{-(\alpha - \epsilon)}{d}}.
\end{split}
\end{align}
Therefore, given $v \in B_{r(t)(\log t)^{\delta}}$ and choosing $\epsilon > 0$ small enough that $\frac{\alpha - \epsilon}{d}$ is positive, and since $|y| \vee |v-y| \geq \frac{1}{2} |v|$ for all $v,y \in \Ti$, we have that 
\begin{align*}
    &\p{\exists i \geq 0, y \in B_{2^{i+1}r(t)(\log t)^{\delta}} \setminus B_{2^{i}r(t)(\log t)^{\delta}}: \lambda(t,v,y) + t \deg y \geq 0} \\
    &\qquad \leq \sum_{i \geq 0} \p{ \sup_{v \in B_{2^{i+1}r(t)(\log t)^{\delta}}} \xi (v) \geq \frac{1}{8} 2^{i}\frac{r(t)}{t}(\log t)^{1+\delta}} \\
    &\qquad \leq \sum_{i \geq 0} c 2^{\frac{-i(1-\frac{d}{\alpha})(\alpha - \epsilon)}{d}} (\log t)^{\frac{-\delta(1-\frac{d}{\alpha})(\alpha - \epsilon)}{d}},
\end{align*}
which vanishes as $t \to \infty$.
\item Follows from $(i)$, since $\lambda(t,v,y) \leq \lambda(t,y)$ for all $v \in \Ti$.
\item Follows from $(i)$ and Lemma \ref{cor:whppotbo}.
\item Choose some $\epsilon \in (0, m(\alpha - d) - d)$ (by our choice of $m$, such an $\epsilon$ always exists). Note that by Corollary \ref{cor:exitball}, Lemma \ref{lem:RW ball exit time restricted radii} and \eqref{eqn:annulus max xi}, we have whp as $t \to \infty$ that
\begin{align*}
\sum_{v \notin B_{r(t) (\log t)^{\delta}}} u(t,v) &\leq \estart{\exp\left\{\int_0^t\xi(X_s)~\text{d} s\right\}\mathbbm 1\{\tau_{B_{r(t) (\log t)^{\delta}}}\leq t\}}{O} \\
&\leq \sum_{r\geq r(t)(\log t)^{\delta}} \pr{\tau_{B_r} \leq t} \exp \{t \sup_{v \in B_r} \xi(v)\} \\
&\leq \sum_{r(t) (\log t)^{\delta} \leq r \leq r(t) (\log t)^{m}} \exp\left\{-r \log \left( \frac{r}{et} \right) + o \left( r(t)\right) \right\} \exp \left\{ \frac{q}{4} r (\log t) \right\} \\
&\qquad + \sum_{r\geq r(t)(\log t)^{m}} \exp \left\{-\frac{r}{5} \left( [\log r - \log t - 1] \vee 0 \right))\right\}\exp \left\{t r^{\frac{d}{\alpha}}(\log r)^{\frac{d + \epsilon}{\alpha}}\right\},
\end{align*}
The choice of $\epsilon$ implies that $m > \frac{d}{\alpha} + \frac{d+\epsilon}{\alpha}$, which is exactly what is required for this sum to vanish as $t \to \infty$.
\item The calculation in $(iv)$ similarly gives $(v)$ as well.
\end{enumerate}
\end{proof}

\begin{proof}[Proof of Lemma \ref{lem:Y(t,v) deg bound}]
Recall that
\begin{align*}
\lambda(t,v,y) = \left\{ t(\xi(y) - \deg(y))- |y| [\log |y| - \log t - 1] -  |v-y| [\log |v-y| - \log t - 1] \right\},
\end{align*}
and $\lambda(t,v) = \sup_{y \in \Ti} \lambda(t,v,y)$.
We also define the set
\begin{align*}
C_{t, \epsilon} &= \left\{y \in B_{r(t)(\log t)^{\delta}}: t \xi (y) > \epsilon r(t) \log \left(\frac{r(t)}{t}\right) \right\}.
\end{align*}
We will show that for any $\delta' > 0$, we can choose $\epsilon > 0$ small enough so that for all sufficiently large $t$ we have with probability at least $1-\delta'$ that
\begin{enumerate}
\item $C_{t, \epsilon} \neq \emptyset$;
\item $\left\{ Y(t,v): v \in B_{r(t)}\right\} \subset C_{t, \epsilon}$ and $\left\{ \tilde{Y}(t,v): v \in B_{r(t)}\right\} \subset C_{t, \epsilon}$;
\item For any $\epsilon' > 0$, the cardinality of the set $C_{t, \epsilon}$ is upper bounded by $(\log t)^{d \delta + \epsilon'}$;
\item For any $\epsilon' > 0$, $\sup_{v \in C_{t, \epsilon}} \deg v \leq (\log t)^{\frac{d\delta}{\beta} + \epsilon'}$.
\end{enumerate}
We deal with each of these four points in four separate steps below.

\textbf{Step 1.} To deal with the first point, choose some $k< \alpha$ and note from Proposition \ref{prop:AR vol growth and exit time}$(iii)$ that 
\begin{align*}
\p{C_{t, \epsilon} = \emptyset} &\leq \prb{V(B_{r(t)}) \leq \epsilon^{k} r(t)^d} + \prcondc{C_{t, \epsilon} = \emptyset}{V(B_{r(t)}) > \epsilon^{k} r(t)^{\frac{\beta}{\beta - 1}}}{} \\
&\leq e^{-\epsilon^{\frac{-(\beta - 1)k}{\beta}}} + \left( 1 - t^{\alpha} r(t)^{-\alpha} \epsilon^{-\alpha} \left(\log \left(\frac{r(t)}{t}\right)\right)^{-\alpha} \right)^{\epsilon^{k} r(t)^{d}}
\leq e^{-\epsilon^{\frac{-(\beta - 1)k}{\beta}}} + \exp \{- \epsilon^{k-\alpha} q^{-\alpha} \},
\end{align*}
which clearly vanishes as $\epsilon \downarrow 0$.


\textbf{Step 2.} To deal with the second point: firstly, note that if $Y(t,v) \notin B_{\epsilon r(t)}$ or $v \notin B_{\epsilon r(t)}$, then it must be that $Y(t,v) \in C_{t, \epsilon}$, since $\lambda(t,v, Y(t,v)) > 0$ on the event $\lambda (t,v) > 0$ (and similarly for $\tilde{Y}(t,v)$).

We will now show that, with high probability as $t \to \infty$, $\lambda(t,v) \geq \epsilon r(t) \log \left(\frac{r(t)}{t}\right)$ for all $v \in B_{\epsilon r(t)}$, which implies the result since $t\xi (Y(t,v)) \geq \lambda(t,v)$. To do this, we first let $C_{k,t} = B_{(k+1) \epsilon r(t)} \setminus A_{k \epsilon r(t)}$ for $1 \leq k \leq \lfloor \epsilon^{-1} \rfloor$, where $A_r$ is the connected component containing the root obtained by deleting $s_r$ from $\Ti$ (recall that $s_r$ is the $r^{th}$ backbone vertex). Note that the definition of the set $A_r$ ensures that $(C_{k,t})_{k=1}^{\lfloor \epsilon^{-1} \rfloor}$ is a sequence of independent sets, each with law $B_{\epsilon r(t)}$. Moreover, by Lemma \ref{lem:deg4set}, for each $k \geq 4$ we have for all sufficiently large $t$ that
\begin{align*}
    &\p{\exists v \in C_{k,t}: t(\xi(v) - \deg v) \geq ((2k+4)\epsilon) r(t) \log \left(\frac{r(t)}{t}\right)} \\
    &\geq \p{\exists v \in C_{k,t}: \deg v \leq 4, \xi(v) \geq ((2k+5)\epsilon) \frac{r(t)}{t} \log \left(\frac{r(t)}{t}\right)} \\
    &\geq \prb{|\{v \in B_{\epsilon r(t)}: \deg v \geq 4 \}| \geq \epsilon^d r(t)^d} \left[ 1 - \left(1-((2k+5)\epsilon)^{-\alpha} r(t)^{-\alpha} t^{\alpha} \left(\log \left(\frac{r(t)}{t}\right)\right)^{-\alpha}\right)^{\epsilon^d r(t)^d} \right]\\
    &\geq C\left( 1 - \exp \left\{-\frac{q}{2}((2k+5)\epsilon)^{-\alpha} {\epsilon^d}\right\}\right).
\end{align*}
In particular, for $1 \leq k \leq \epsilon^{\frac{-(\alpha - d)}{\alpha}}$ this is uniformly bounded below by a positive constant, so that
\[
\p{\exists k \leq \lfloor \epsilon^{\frac{-(\alpha - d)}{\alpha}}\rfloor, v \in C_{k,t}: t(\xi(v) - \deg v) \geq ((2k+4)\epsilon) r(t) \log \left( \frac{r(t)}{t}\right)} \geq 1 - Ce^{-c{\epsilon^{\frac{-(\alpha - d)}{\alpha}}}}.
\]
Moreover, if $y \in C_{k,t}$ with $t(\xi(y) - \deg y) \geq ((2k+4)\epsilon) r(t) \log \left(\frac{r(t)}{t}\right)$ and $v \in B_{\epsilon r(t)}$ we have that $|y| \leq (k+1) \epsilon r(t)$ and $|v-y| \leq (k+2) \epsilon r(t)$ so that $\lambda(t,v,y) \geq \epsilon r(t) \log \left(\frac{r(t)}{t}\right)$, which proves the claim.

\textbf{Step 3.} The third step is therefore to bound the cardinality of this set. Setting $N_t = \left(\log \left(\frac{r(t)}{t} \right)\right)^{c_1} r(t)^d$ and $p_t = t^{-\alpha} r(t)^{-\alpha} \epsilon^{-\alpha} (\log r(t))^{-\alpha}$, we have from Proposition \ref{prop:AR vol growth and exit time}$(i)$ and a Chernoff bound that, if $c_2 > d \delta$ and $c_1 \in (d \delta, c_2)$, then
\begin{align*}
\p{|C_{t, \epsilon}| \geq (\log t)^{c_2}} &\leq \prb{V(B_{r(t) (\log t)^{\delta}}) \geq (\log r(t))^{c_1} r(t)^d} + P \left( \textsf{Binomial}(N_t, p_t) \geq (\log t)^{c_2} \right) \\
&\leq (\log t)^{-\frac{1}{2}(\beta - 1)(c_1 - d \delta)} + \exp \{ (e-1)N_t p_t - (\log t)^{c_2}\} \\
&=(\log t)^{-\frac{1}{2}(\beta - 1)(c_1 - d \delta)} + \exp \left\{ - (\log t)^{c_2} \left( 1 - 2 (\log t)^{-(c_2 - c_1)} (e-1) \epsilon^{-\alpha} \right) \right\},
\end{align*}
which clearly vanishes as $t \to \infty$.

\textbf{Step 4.} For the fourth point above, note that since the potential at each vertex in $\Ti$ is independent of the underlying tree structure, each vertex $v$ in $C_{t, \epsilon}$ is a uniform point of $B_{r(t)(\log t)^{\delta}}$, so whp it satisfies
\[
P\left(\deg v \geq x\right) \leq c\frac{r(t)(\log t)^{\delta}}{r(t)^d}x^{-(\beta - 1)} + c x^{-\beta}
\]
for all $x \geq 0$. Therefore, by a union bound, and conditional on $|C_{t, \epsilon}| \leq (\log t)^{d\delta + \epsilon'}$, we have whp that (for any $M>0$):
\[
\prc{\sup_{v \in C_{t, \epsilon}} \deg v \geq (\log t)^{M}} \leq (\log t)^{d\delta + \epsilon' - M \beta}.
\]

\textbf{Conclusion.} Therefore, to tie up, if we choose $M > \frac{d\delta}{\beta}$ this ensures that we can choose $\epsilon'$ small enough that
\[
\pr{\sup_{v \in B_{r(t)(\log t)^{\delta}}} \deg Y(t,v) \geq (\log t)^M} \geq 1 - 2 \delta'
\]
for all sufficiently large $t$. The same holds for $\tilde{Y}(t,v)$. Since $\delta'$ was arbitrary, this proves the result.
\end{proof}

\begin{proof}[Proof of Lemma \ref{lem:high deg next to leaf}]
If $\deg v > a(t)$, then $\prb{\nexists u \sim v \text{ with } u\in \mathcal{L}} \leq (1-p_0)^{a(t)-1} \leq e^{-ca(t)}$. Therefore, taking a union bound over all vertices in $B_{r(t)(\log t)^{\delta}}$, we obtain from Proposition \ref{prop:AR vol growth and exit time}$(i)$ that
\begin{align*}
\prb{\exists v \in B_{r(t)(\log t)^{\delta}}: \deg v > a(t), \nexists u \sim v \text{ with } p(u)=v, u\in \mathcal{L}} \leq \prb{V(B_{r(t)(\log t)^{\delta}}) \geq r(t)^{2d}} + r(t)^{2d} e^{-c a(t)} \to 0
\end{align*}
as $t \to \infty$.
\end{proof}

\section{The concentration sites: bounds on $|\hat{Z}_t^{(i)}|, \psi_t(\hat{Z}_t^{(i)})$ and $ \psi_t(\hat{Z}_t^{(1)}) -  \psi_t(\hat{Z}_t^{(2)})$}
\label{sec:concentration site}

In this section we derive some asymptotics for the functional $\psi_t$ and the maximisers $\hat{Z}_t^{(1)}$ and $\hat{Z}_t^{(2)}$ introduced in \eqref{eqn:psit def} and \eqref{eqn:Zt def intro}.

\subsection{Maximiser of the functional $\psi_t$}
\label{subsec:maxfunc}
Recall from \eqref{eqn:psit def} and \eqref{eqn:Zt def intro} the definition of the random functional $\psi_t:V(\Ti)\to \mathbb R$ by
\begin{align*}
         \psi_t(z) :=
         \left[ (\xi(z)-\deg(z))-\frac{|z|}{t} \log \left(\xi(z) - \deg (z)\right)\right]\mathbbm 1\{t(\xi(z)-\deg(z)) \geq |z|\},
     \end{align*}
and its maximisers
 \begin{equation}\label{eqn:Zt def}
     \psi_t(\hat Z_t^{(1)})=\max_{z\in V(\Ti)}\psi_t(z), \ \ \ \psi_t(\hat Z_t^{(2)})=\max_{z\in V(\Ti)\setminus \{\hat Z^{(1)}_t\}}\psi_t(z), \ \ \ \psi_t(\hat Z_t^{(3)})=\max_{z\in V(\Ti)\setminus \{\hat Z^{(1)}_t, \hat Z_t^{(2)}\}}\psi_t(z).
 \end{equation}
(We will show in Proposition \ref{prop:Zt bounds as} that these are almost surely uniquely defined). In this section we establish some elementary properties of these first maximisers. We first prove the following.

\begin{prop}\label{prop:Zt bounds as}
$P$-almost surely under Assumption \ref{assn:whp} or Assumption \ref{assn:as}, we have for any $\epsilon > 0$  that for all sufficiently large $t$, and $i=1,2,3$,
\begin{enumerate}[(i)]
    \item $ r(t) (\log t)^{-(1/2+\epsilon)} \leq |\hat{Z}_t^{(i)}|$,
    \item $a(t)(\log t)^{-\epsilon}\leq \psi_t(\hat{Z}_t^{(i)})$,
    \item $|\hat{Z}^{(i)}_t|\leq r(t) (\log t)^{\frac{1}{\alpha - d}+\epsilon}$,
    \item $\psi_t(\hat{Z}_t^{(i)}) \leq  a(t) (\log t)^{\frac{1}{\alpha - d}+\epsilon}$.
\end{enumerate}
Under Assumption \ref{assn:whp} we instead have that for any $\epsilon > 0$, whp:
\begin{enumerate}[(i$'$)]
    \item $ r(t) (\log t)^{-\epsilon} \leq |\hat{Z}_t^{(i)}|$,
    \item $a(t)(\log t)^{-\epsilon}\leq \psi_t(\hat{Z}_t^{(i)})$,
    \item $|\hat{Z}^{(i)}_t|\leq r(t) (\log t)^{\epsilon}$,
    \item $\psi_t(\hat{Z}_t^{(i)}) \leq  a(t) (\log t)^{\epsilon}$.
\end{enumerate}
\end{prop}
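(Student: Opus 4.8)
The plan is to prove all eight inequalities (the four a.s.\ bounds under the weaker assumption, and the four whp bounds under Assumption~\ref{assn:whp}) by combining the potential estimates of Section~\ref{sec:Potential} with the hitting-time estimates of Section~\ref{sec:criticalGWT}, using the fact that $\psi_t$ is squeezed between two explicit functionals: from \eqref{eqn:psit def} and the discussion after it, $\psi_t(v) \le \overline{\psi}_t(v) = \xi(v) - \frac{|v|}{t}\log(|v|/t)$ (take $\rho=1$ in the first term, or just drop the $\deg$ and use $\log(\xi-\deg)\ge \log(|v|/t)$ on the indicator event), while $\psi_t(v) \ge (\xi(v)-\deg v) - \frac{|v|}{t}\log(\xi(v)-\deg v)$ on the event $t(\xi(v)-\deg v)\ge |v|$. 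The two types of bounds are genuinely parallel: the a.s.\ bounds use the a.s.\ volume/degree/potential estimates (Corollaries~\ref{cor:volume bounds}, Lemmas~\ref{lem:supdeg}, \ref{lem:xi in Ar}, \ref{cor:sup xi LB}), which carry polyloglog or $(\log r)^{1/(\beta-1)}$ fluctuations, whereas the whp bounds use the in-probability estimates (Lemma~\ref{cor:whppotbo}, Proposition~\ref{prop:AR vol growth and exit time}(i)), which are tight up to arbitrarily small powers of $\log$. So I would set up one argument and run it twice with the appropriate input estimates.

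For the \textbf{lower bounds} (i)--(ii) and (i$'$)--(ii$'$): these follow essentially from the proof of the lower bound in Theorems~\ref{thm:total mass asymp intro} and~\ref{thm:Zt asymp intro} given in Section~\ref{sctn:PAM on Ti}, which already exhibits (with the relevant probability) a vertex $v \in \tilde B_{r(t)\lambda^{-1/4}}$ with $\xi(v)$ of order $r(t)^{d/\alpha}$ times a small power of $\log$, hence $\deg v \le 4$ and $\psi_t(v)$ of order $a(t)$ times a small power of $\log$. This handles $i=1$. For $i=2,3$ one needs three such vertices, which is exactly what Lemma~\ref{cor:sup xi LB} provides (it is stated for $\xi^{(i)}_r$ and $\tilde\xi^{(i)}_r$, $i=1,2,3$): take three vertices in $\tilde B_{r(t)(\log t)^{-\text{small}}}$ with $\xi$-values of order $r(t)^{d/\alpha}(\log\log)^{-\text{small}}$; each gives $\psi_t$ of order $a(t)$ times a (slightly larger) negative power of $\log$, and since they are distinct vertices all of $\hat Z_t^{(1)},\hat Z_t^{(2)},\hat Z_t^{(3)}$ inherit the bound. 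The bound $|\hat Z_t^{(i)}|\ge r(t)(\log t)^{-(1/2+\epsilon)}$ follows because $\psi_t$ is small on $B_{r(t)\lambda^{-1}}$ (part (a) of the lower-bound proof: $\psi_t(v)\le\xi(v)\le \tfrac14 r(t)^{d/\alpha}\lambda^{-d/2\alpha}$ there, using Lemma~\ref{cor:whppotbo}, $\bPb$-a.s.\ with the loglog correction, or whp with the $(\log)^\epsilon$ correction), which is strictly smaller than the value $\psi_t$ achieves at the good vertex outside that ball; to get the exponent $1/2$ rather than something worse one compares the a.s.\ upper bound $\sup_{B_{r(t)\lambda^{-1}}}\xi \lesssim (r(t)\lambda^{-1})^{d/\alpha}(\log)^{(d+\epsilon)/\alpha}$ from Lemma~\ref{lem:xi in Ar} against the a.s.\ lower bound $\psi_t(\hat Z_t^{(i)})\gtrsim a(t)(\log t)^{-\epsilon}$ from (ii), giving $|\hat Z_t^{(i)}| \ge r(t)(\log t)^{-O(\epsilon)}$ once $\lambda$ is tuned --- actually a cleaner route is: any $v$ with $|v|\le r(t)(\log t)^{-(1/2+\epsilon)}$ has $\psi_t(v)\le \overline\psi_t(v)\le \sup_{B_{r(t)(\log t)^{-(1/2+\epsilon)}}}\xi$, which by Lemma~\ref{lem:xi in Ar} is $o(a(t)(\log t)^{-\epsilon})$, contradicting (ii). (The exponent $1/2$ is what the a.s.\ volume fluctuations force; whp one gets the sharper $\epsilon$.)

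For the \textbf{upper bounds} (iii)--(iv) and (iii$'$)--(iv$'$): this is the reverse of the upper-bound argument in Section~\ref{sctn:PAM on Ti}. Write, for $i\ge 1$, the dyadic annulus decomposition $B_{2^{i+1}r(t)(\log t)^{a}}\setminus B_{2^{i}r(t)(\log t)^{a}}$ where $a = \tfrac{1}{\alpha-d}+\epsilon$ (a.s.) or $a=\epsilon$ (whp). On such an annulus, $\psi_t(v)\le \overline\psi_t(v) \le \xi(v) - \tfrac{2^i r(t)(\log t)^a}{t}\log\!\big(\tfrac{2^i r(t)(\log t)^a}{t}\big)$; using the potential bound from Lemma~\ref{lem:xi in Ar} (a.s.) or Lemma~\ref{cor:whppotbo} (whp) that $\xi(v)$ on this annulus is at most $(2^i r(t)(\log t)^a)^{d/\alpha}$ times a small power of $\log$, and the fact that $\tfrac{r(t)}{t}\log(r(t)/t) = \tfrac{q}{q+1}a(t)(1+o(1))$ grows like $a(t)(\log t)$ while $(r(t)(\log t)^a)^{d/\alpha} = a(t)(\log t)^{ad/\alpha}$, one checks that for $a>\tfrac{1}{\alpha-d}$ (a.s.) the "cost" term $\tfrac{|v|}{t}\log(|v|/t)$ strictly dominates the potential term on every annulus with $i\ge 1$ (and even on $B_{r(t)(\log t)^a}$ itself for the part at radius $\ge r(t)(\log t)^a$), so $\sup_{v\notin B_{r(t)(\log t)^a}}\psi_t(v) < 0 \le \psi_t(O)$ eventually; a summation over $i$ makes this rigorous via Borel--Cantelli (a.s.) or a union bound (whp), exactly as in \eqref{eqn:annulus max xi lambda}--\eqref{eqn:annulus max xi}. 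This gives (iii) and (iii$'$). Then (iv)/(iv$'$) follow immediately: on $B_{r(t)(\log t)^a}$ we have $\psi_t(\hat Z_t^{(i)}) \le \sup_{B_{r(t)(\log t)^a}}\xi \le (r(t)(\log t)^a)^{d/\alpha}(\log)^{\text{small}} = a(t)(\log t)^{ad/\alpha + o(1)}$; absorbing the small correction by slightly enlarging $\epsilon$ yields $\psi_t(\hat Z_t^{(i)})\le a(t)(\log t)^{1/(\alpha-d)+\epsilon}$ (a.s.) or $a(t)(\log t)^{\epsilon}$ (whp). The $i=2,3$ cases need no extra work since $\psi_t(\hat Z_t^{(1)})\ge \psi_t(\hat Z_t^{(2)})\ge \psi_t(\hat Z_t^{(3)})$ and $|\hat Z_t^{(i)}|$ is controlled by the same "$\psi_t<0$ outside the ball" statement, which is a statement about the whole field, not just the top value.

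The main obstacle, and the only place where real care is needed, is getting the precise exponents $1/2$ (a.s.\ lower bound on $|\hat Z_t^{(i)}|$) and $\tfrac{1}{\alpha-d}$ (a.s.\ upper bounds) rather than slightly worse ones --- this is purely a matter of balancing the polyloglog volume/degree fluctuations from Corollary~\ref{cor:volume bounds}, Lemma~\ref{lem:supdeg} and Lemma~\ref{cor:sup xi LB} against the $(\log r)^{\text{const}}$ potential fluctuations from Lemma~\ref{lem:xi in Ar}, and checking that the worst of these is what produces the stated exponent (the loglog terms are lower order and get absorbed into an arbitrarily small $\epsilon$, while the genuine $(\log t)^{d/\alpha}$-type polynomial corrections in the potential against the $a(t)\log t$ cost are what pin down $\tfrac{1}{\alpha-d}$ via the relation $q = \tfrac{d}{\alpha-d}$, $q+1 = \tfrac{\alpha}{\alpha-d}$). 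Everything else is a routine rerun of the arguments already carried out for Theorems~\ref{thm:total mass asymp intro} and~\ref{thm:Zt asymp intro}, with $\psi_t$ in place of $U(t)$ and the sandwich $\psi_t(v) \in [(\xi(v)-\deg v) - \tfrac{|v|}{t}\log(\xi(v)-\deg v),\ \overline\psi_t(v)]$ doing the translation.
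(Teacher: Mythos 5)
Your proposal follows the paper's argument closely: the same sandwich $\psi_t(v) \le \overline\psi_t(v)$ and $\psi_t(v) \ge (\xi(v)-\deg v) - \tfrac{|v|}{t}\log(\xi(v)-\deg v)$ on the indicator event, the same input lemmas (Lemmas~\ref{lem:xi in Ar}, \ref{cor:sup xi LB}, \ref{cor:whppotbo}), the same two-step lower bound (exhibit three good sites via Lemma~\ref{cor:sup xi LB}, then show $\psi_t$ is small on a slightly smaller ball), and the same cost-versus-potential comparison for the upper bounds, run in parallel a.s.\ and whp. The reorganization you suggest (proving (ii) first and deducing (i), dyadic annuli in place of the paper's one-shot pointwise bound for (iii)) is cosmetic and does not change the substance.

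Two calibration slips are worth correcting. First, the cost at the edge of $B_{r(t)}$ is $\tfrac{r(t)}{t}\log(r(t)/t)\sim q\,a(t)$ (using $\tfrac{r(t)}{t}\log t=a(t)$), so it grows like $a(t)$, not like $a(t)\log t$; your sentence claiming both is internally inconsistent, though at radius $r(t)(\log t)^a$ the cost is $\sim q\,a(t)(\log t)^a$, which is what you actually use. Second, the logarithmic correction to the potential in Lemma~\ref{lem:xi in Ar} is $(\log r)^{(1+\epsilon)/\alpha}$ under Assumption~\ref{assn:as} (and $(\log r)^{(d+\epsilon)/\alpha}$ under Assumption~\ref{assn:whp}), \emph{not} an $o(1)$ or arbitrarily small power as you write in the step for (iv). This matters: it is exactly the identity $\tfrac{d}{\alpha(\alpha-d)}+\tfrac{1}{\alpha}=\tfrac{1}{\alpha-d}$ (i.e.\ $a\,d/\alpha+1/\alpha=a$ at $a=\tfrac{1}{\alpha-d}$) that makes both the threshold in (iii) and the bound in (iv) come out as $\tfrac{1}{\alpha-d}$. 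If the $\log$-correction were truly $o(1)$, your comparison ``$a>a\,d/\alpha+o(1)$'' would admit any $a>0$, giving a too-strong bound in (iii); and in (iv) you would get the strictly smaller exponent $a\,d/\alpha$ rather than $\tfrac{1}{\alpha-d}$. The stated exponent is pinned down precisely by the $\tfrac{1}{\alpha}$-power (resp.\ $\tfrac{d}{\alpha}$-power) in Lemma~\ref{lem:xi in Ar}, and your writeup should trace this explicitly rather than absorbing it into ``small.''
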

\begin{proof} Note that $d=2$ under Assumption \ref{assn:as}, but we carry $d$ through the proofs in order to make the high probability extensions clearer.
\begin{enumerate}[(i)]
    \item Let $\epsilon >0$ and let also $\epsilon' \in (0, \frac{\epsilon}{2})$. Let $f(t)=(\log t)^{-\frac{1+\epsilon}{2}}$, $g(t)=(\log t)^{-\epsilon/16}$. We perform two calculations.
\begin{enumerate}
    \item[(a)] We have from Lemma \ref{lem:xi in Ar} that eventually almost surely 
\begin{align*}
\sup_{v\in B_{r(t)f(t)}}\psi_t(v)\leq \sup_{v\in B_{r(t)f(t)}} \xi(v) &\leq (r(t)f(t))^{d/\alpha} (\log(r(t)f(t)))^{\frac{(2+\epsilon)}{2\alpha}} \leq C_q a(t)(\log t)^{-\epsilon/2\alpha}.
\end{align*}

\item[(b)] We have from Lemma \ref{cor:sup xi LB} that eventually almost surely, for any $\delta > 0$ there exist distinct $u_1,u_2,u_3$ in $B_{r(t)g(t)}$ such that
\begin{align*}
    \xi (u_i) - \deg (u_i) &\geq (r(t)g(t))^{d/\alpha}(\log(r(t)g(t)))^{-\epsilon \delta /8\alpha} \geq c_q a(t) (\log t)^{-\epsilon (1+\delta)/8\alpha},
\end{align*}
Consequently, provided $t$ is sufficiently large, and again using Lemma \ref{lem:xi in Ar} for an upper bound on $\xi(u_i)$, and that $\alpha > 2$, we can choose $\delta >0$ small enough that
\begin{align*}
    \psi_t(u_i)= (\xi (u_i) - \deg (u_i))-\frac{|u_i|}{t}\log(\xi (u_i) - \deg (u_i)) &\geq c_q a(t)(\log t)^{-\epsilon(1+\delta) /8\alpha} - c_q' a(t)(\log t)^{-\epsilon /16} \\
    &> \frac{c_q}{2} a(t)(\log t)^{-\epsilon (1+\delta) /8\alpha}.
\end{align*}
\end{enumerate}
Combining $(a)$ and $(b)$ and assuming that $\delta < 3$ we deduce that $\psi_t(u_i)>\sup_{v\in B_{r(t)f(t)}}\psi_t(v)$ each $i=1,2,3$. Since $u_1, u_2, u_3$ are distinct, it follows that $\psi_t ( \hat Z_t^{(3)})$ enjoys the same lower bound, and $|\hat Z_t^{(i)}|>r(t)f(t)$ eventually almost surely.
\item For the lower bound on $\psi_t$, note that we can conclude from the proof of part $(i)$ that for $i=1,2,3$
\begin{align*}
    \psi_t(\hat Z_t^{(i)})\geq \min_{i=1,2,3}\psi_t(u_i)> a(t)(\log t)^{-\epsilon d/\alpha}.
\end{align*}
\item Take $b=\frac{1+2\epsilon}{\alpha - d}$. If $x > r(t) (\log t)^b$, then
\[
x^{\frac{d}{\alpha}}(\log x)^{\frac{1+\epsilon}{\alpha}} \leq \frac{x}{t} \log t
\]
for all sufficiently large $t$. Therefore, if $v \in V(\Ti)$ is such that $|v| \geq  r(t) (\log t)^b$, then either $\xi (v) - \deg (v) \leq t$, in which case $v \neq \hat{Z}_t^{(i)}$ by part $(ii)$ (since $\xi(\hat{Z}_t^{(i)}) \geq \psi_t(\hat{Z}_t^{(i)})$), or otherwise we have from Lemma \ref{lem:xi in Ar} that
\[
\psi_t(v) \leq |v|^{\frac{d}{\alpha}}(\log |v|)^{\frac{1+\epsilon}{\alpha}} - \frac{|v|}{t}\log (\xi(v) - \deg (v)) \leq |v|^{\frac{d}{\alpha}}(\log |v|)^{\frac{1+\epsilon}{\alpha}} - \frac{|v|}{t}\log t \leq 0
\]
by the computation above, so again $v \neq \hat{Z}_t^{(i)}$ by part $(ii)$.
\item From $(iii)$ and Lemma \ref{lem:xi in Ar}, almost surely for all sufficiently large $t$ we have that
\[
\psi_t(\hat{Z}_t^{(i)}) \leq t\sup_{v \in B_{ r(t) (\log t)^{\frac{1}{\alpha - d}+\epsilon}}} \xi(v) \leq C_q \left(r(t) (\log t)^{\frac{1}{\alpha - d}+\epsilon}\right)^{\frac{d}{\alpha}} \left( \log t \right)^{\frac{1+\epsilon}{\alpha}} = r(t)^{\frac{d}{\alpha}}\left( \log t \right)^{\frac{1}{\alpha - d}+2\epsilon}.
\]
\end{enumerate}
\begin{enumerate}[(i$'$)]
\item This is the same as above but instead taking $f(t) = (\log t)^{-\epsilon}, g(t) = (\log t)^{-\epsilon/2}$.
\item Similarly follows from considering the largest value of $\xi$ on $B_{r(t) f(t)}$ as above.
\item This follows from the computation in \eqref{eqn:annulus max xi}.
\item This also follows from the computation in \eqref{eqn:annulus max xi}. \qedhere
\end{enumerate}
\end{proof}

\begin{rmk}
Proposition \ref{prop:Zt bounds as} also shows that $\hat{Z}_t^{(i)}$ is well-defined for each $i =1,2,3$.
\end{rmk}

The next lemma will be useful for converting bounds on $\psi_{t_n}(\hat{Z}_{t_n}^{(i)})$ along some subsequence to almost sure bounds on $\psi_t(\hat{Z}_t^{(i)})$ for all sufficiently large $t$.

\begin{lem}\label{lem:Zestimates}
Let $\epsilon>0$ be arbitrary. Under Assumption \ref{assn:as}, we have for each $i=1,2,3$ that eventually $P$-almost surely:
\begin{enumerate}[(i)]
        \item $\psi_u(\hat Z_u^{(i)})-\psi_t(\hat Z_t^{(i)})\leq \frac{u-t}{t}a(u)(\log u)^{\frac{1}{\alpha - d}+\epsilon}$, for all $u>t$,
    \item $t\mapsto \psi_t(\hat Z_t^{(i)})$ is increasing.
\end{enumerate}
\end{lem}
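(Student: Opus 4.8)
The plan is to prove both statements together, exploiting the explicit formula for $\psi_t$ and the a.s.\ bounds on $|\hat Z_t^{(i)}|$ and $\psi_t(\hat Z_t^{(i)})$ from Proposition~\ref{prop:Zt bounds as}.

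\textbf{Part (ii): monotonicity.} Fix a vertex $v$ and consider $t \mapsto \psi_t(v)$. When $t(\xi(v)-\deg(v)) < |v|$ we have $\psi_t(v)=0$, and as $t$ increases this indicator eventually switches on (assuming $\xi(v)>\deg(v)$; otherwise $\psi_t(v)\equiv 0$). On the region where the indicator is $1$, write $\psi_t(v) = (\xi(v)-\deg(v)) - \frac{|v|}{t}\log(\xi(v)-\deg(v))$; differentiating in $t$ gives $\partial_t \psi_t(v) = \frac{|v|}{t^2}\log(\xi(v)-\deg(v))$, which is $\geq 0$ whenever $\xi(v)-\deg(v)\geq 1$. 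By Proposition~\ref{prop:Zt bounds as}(ii), for all sufficiently large $t$ we have $\psi_t(\hat Z_t^{(i)}) \geq a(t)(\log t)^{-\epsilon}\to\infty$, and since $\xi(\hat Z_t^{(i)})\geq \psi_t(\hat Z_t^{(i)})$ this forces $\xi(\hat Z_t^{(i)})-\deg(\hat Z_t^{(i)})$ to be large; more care is needed since the argmax changes with $t$. Instead I would argue directly: for $u>t$ both large, $\psi_u(\hat Z_u^{(i)}) = \max_{z}\psi_u(z) \geq \psi_u(\hat Z_t^{(i)}) \geq \psi_t(\hat Z_t^{(i)})$, where the last inequality uses pointwise monotonicity of $s\mapsto\psi_s(\hat Z_t^{(i)})$ on $[t,u]$ — which holds once $\xi(\hat Z_t^{(i)})-\deg(\hat Z_t^{(i)})\geq 1$, guaranteed eventually a.s.\ by Proposition~\ref{prop:Zt bounds as}(ii) applied at time $t$. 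For $i=2,3$ one must also check that $\hat Z_t^{(i)} \neq \hat Z_u^{(j)}$ for $j<i$, or more simply use that $\{\hat Z_u^{(1)},\dots,\hat Z_u^{(i)}\}$ are the top $i$ values, so $\psi_u(\hat Z_u^{(i)}) \geq i$-th largest of $\{\psi_u(z): z\}\geq$ $i$-th largest of $\{\psi_t(z):z\} = \psi_t(\hat Z_t^{(i)})$ by pointwise monotonicity applied to each of the finitely many relevant vertices; here one uses Proposition~\ref{prop:Zt bounds as}(i) and (ii) to restrict attention to vertices in a ball where $\xi-\deg\geq 1$.

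\textbf{Part (i): the quantitative increment bound.} Using $\psi_u(\hat Z_u^{(i)}) - \psi_t(\hat Z_t^{(i)}) \leq \psi_u(\hat Z_u^{(i)}) - \psi_t(\hat Z_u^{(i)})$ is the wrong direction; instead write $\psi_u(\hat Z_u^{(i)}) - \psi_t(\hat Z_t^{(i)}) \leq \psi_u(\hat Z_u^{(i)}) - \psi_u(\hat Z_u^{(i)})|_{\text{evaluated with }t}$ — precisely, by monotonicity (Part (ii) pointwise) $\psi_t(\hat Z_t^{(i)}) \geq \psi_t(\hat Z_u^{(i)})$ is false in general; rather $\psi_t(\hat Z_t^{(i)})$ is the max at time $t$ so $\psi_t(\hat Z_t^{(i)})\geq \psi_t(\hat Z_u^{(i)})$ whenever we can show $\hat Z_u^{(i)}$ is a legitimate competitor. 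The clean route: $\psi_t(\hat Z_t^{(i)}) \geq \psi_t(\hat Z_u^{(i)})$ provided $\hat Z_u^{(i)} \notin \{\hat Z_t^{(1)},\dots,\hat Z_t^{(i-1)}\}$, and one handles the bad case by noting the top-$i$ sets at times $t,u$ eventually stabilize or by a union-over-permutations argument. Granting $\psi_t(\hat Z_t^{(i)}) \geq \psi_t(\hat Z_u^{(i)})$, we get
\begin{align*}
\psi_u(\hat Z_u^{(i)}) - \psi_t(\hat Z_t^{(i)}) &\leq \psi_u(\hat Z_u^{(i)}) - \psi_t(\hat Z_u^{(i)}) \\
&= \left(\frac{1}{t} - \frac{1}{u}\right)|\hat Z_u^{(i)}| \log\bigl(\xi(\hat Z_u^{(i)}) - \deg(\hat Z_u^{(i)})\bigr) \\
&= \frac{u-t}{ut}\,|\hat Z_u^{(i)}|\,\log\bigl(\xi(\hat Z_u^{(i)}) - \deg(\hat Z_u^{(i)})\bigr),
\end{align*}
where we used that both indicators are $1$ (valid for $t$ large since then $u>t$ is even larger). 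Now bound $|\hat Z_u^{(i)}| \leq r(u)(\log u)^{\frac{1}{\alpha-d}+\epsilon/2}$ by Proposition~\ref{prop:Zt bounds as}(iii), and $\log(\xi(\hat Z_u^{(i)})-\deg(\hat Z_u^{(i)})) \leq \log \xi(\hat Z_u^{(i)}) \leq \log\bigl(\sup_{v\in B_{|\hat Z_u^{(i)}|}}\xi(v)\bigr) = O(\log u)$ by Lemma~\ref{lem:xi in Ar} and the polynomial-in-$u$ bound on $|\hat Z_u^{(i)}|$. Combining, $\frac{u-t}{ut}|\hat Z_u^{(i)}|\log(\cdots) \leq \frac{u-t}{ut} r(u)(\log u)^{\frac{1}{\alpha-d}+\epsilon/2} \cdot O(\log u) = \frac{u-t}{t}\cdot \frac{r(u)}{u}(\log u)^{\frac{1}{\alpha-d}+\epsilon/2+O(1)}$. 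Since $r(u)/u = (t/\log t)^{q+1}/t \cdot$ — more directly $r(u)/u \cdot \log u \asymp a(u)$ up to logarithmic factors (recall $ta(t)=r(t)\log t$, so $r(u)/u = a(u)/\log u$), this gives $\frac{u-t}{t}\,a(u)\,(\log u)^{\frac{1}{\alpha-d}+\epsilon/2 + O(1)/\log u}$, and for $t$ large the extra logarithmic factor is absorbed into $(\log u)^{\epsilon/2}$, yielding the claimed $\frac{u-t}{t}a(u)(\log u)^{\frac{1}{\alpha-d}+\epsilon}$.

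\textbf{Main obstacle.} The delicate point is legitimizing the inequality $\psi_t(\hat Z_t^{(i)}) \geq \psi_t(\hat Z_u^{(i)})$ for $i=2,3$, i.e.\ ensuring $\hat Z_u^{(i)}$ is not one of the top $i-1$ maximizers at time $t$. I would handle this by working on the almost sure event from Proposition~\ref{prop:Zt bounds as} where all relevant maximizers lie in $B_{r(u)(\log u)^{\frac{1}{\alpha-d}+\epsilon}}$ with $\psi$-values in a known window, then noting that the ordered top-$3$ values of $\psi_t$ over this finite vertex set, as a function of $t$, is a max/$2$nd-max/$3$rd-max of finitely many individually monotone (eventually) functions, hence each is itself eventually monotone; this gives $\psi_t(\hat Z_t^{(i)}) \leq \psi_u(\hat Z_u^{(i)})$ directly without tracking which vertex achieves it, and the increment bound follows by applying the single-vertex computation above to whichever vertex achieves the $i$-th max at time $u$. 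Everything else is a routine substitution of the bounds from Proposition~\ref{prop:Zt bounds as} and Lemma~\ref{lem:xi in Ar}, together with the identity $ta(t) = r(t)\log t$.
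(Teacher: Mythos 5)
Your overall strategy --- pointwise monotonicity of $s\mapsto\psi_s(v)$ combined with the a.s.\ bounds from Proposition~\ref{prop:Zt bounds as} and Lemma~\ref{lem:xi in Ar}, together with $ta(t)=r(t)\log t$ --- is the right one, and Part~(ii), via the observation that the $i$-th order statistic of finitely many eventually increasing functions is itself eventually increasing, is sound. (The paper only cites the $\mathbb{Z}^d$ analogue in \cite{KLMStwocities09} here, so there is no written-out proof to compare against.) Note, though, that your suggestion that ``the top-$i$ sets at times $t,u$ eventually stabilize'' is false --- the existence of transition times between $\hat Z_t^{(1)}$ and $\hat Z_t^{(2)}$ is the whole reason the paper proves two-site rather than one-site a.s.\ localisation.

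There is a genuine gap in Part~(i) for $i\geq 2$. You ultimately want to apply the single-vertex increment to $\hat Z_u^{(i)}$, which requires $\psi_t(\hat Z_t^{(i)})\geq\psi_t(\hat Z_u^{(i)})$; but this must fail near transition times: if the top two maximisers swap order between $t$ and $u$, then $\hat Z_u^{(2)}=\hat Z_t^{(1)}$ and $\psi_t(\hat Z_u^{(2)})=\psi_t(\hat Z_t^{(1)})>\psi_t(\hat Z_t^{(2)})$, and the single-vertex increment at $\hat Z_u^{(2)}$ strictly underestimates $\psi_u(\hat Z_u^{(2)})-\psi_t(\hat Z_t^{(2)})$. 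Your closing fix --- ``apply the single-vertex computation to whichever vertex achieves the $i$-th max at time $u$'' --- therefore does not close the gap. The correct comparison vertex is $w:=\arg\min_{v\in A_u}\psi_t(v)$ where $A_u:=\{\hat Z_u^{(1)},\dots,\hat Z_u^{(i)}\}$: since $\psi_t(\hat Z_t^{(i)})$ is the $i$-th maximum of $\psi_t$ it dominates $\min_{v\in A_u}\psi_t(v)=\psi_t(w)$, while $\psi_u(\hat Z_u^{(i)})=\min_{v\in A_u}\psi_u(v)\leq\psi_u(w)$, giving $\psi_u(\hat Z_u^{(i)})-\psi_t(\hat Z_t^{(i)})\leq\psi_u(w)-\psi_t(w)\leq\max_{v\in A_u}\bigl(\psi_u(v)-\psi_t(v)\bigr)$; each $v\in A_u$ is among the top three maximisers at time $u$, so Proposition~\ref{prop:Zt bounds as} applies to it and your substitution goes through. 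A smaller issue: the parenthetical ``(valid for $t$ large since then $u>t$ is even larger)'' does not justify the indicator $\mathbbm{1}\{t(\xi(v)-\deg(v))\geq|v|\}$ being on at time $t$ for $v=\hat Z_u^{(i)}$, since $v$ depends on $u$ and its switch-on time need only be $\leq u$; when $\alpha-d<1$ this time can exceed $t$. Either treat $u\geq 2t$ separately (trivial since $\frac{u-t}{t}\geq 1$ and Proposition~\ref{prop:Zt bounds as}$(iv)$ already bounds $\psi_u(\hat Z_u^{(i)})$), or note that when $\psi_t(v)=0$ and $s_0\in(t,u]$ is the switch-on time, $\psi_{s_0}(v)\leq 0$ (because $\xi(v)-\deg(v)\geq e$ eventually a.s.), so $\psi_u(v)-\psi_t(v)=\psi_u(v)\leq\psi_u(v)-\psi_{s_0}(v)=\frac{u-s_0}{us_0}|v|\log(\xi(v)-\deg(v))\leq\frac{u-t}{ut}|v|\log(\xi(v)-\deg(v))$, recovering the same bound.
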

\begin{proof}
This follows exactly as in \cite[Lemma 3.2(iv)]{KLMStwocities09}, using Proposition \ref{prop:Zt bounds as}$(iii)$.

\end{proof}

The third lemma will be useful for later running Borel-Cantelli arguments on the maximisers $\hat{Z}_t^{(i)}$ for each $i=1,2,3$.

\begin{lem}\label{lem:BC counting}
Let $t_n = 2^n$. Under Assumption \ref{assn:as}, we have for any $\epsilon > 0$ that $P$-almost surely for all sufficiently large $n$ and each $i=1,2$ that
$$\#\left\{\hat{Z}^{(i)}_t:t\in[t_n,t_{n+1}]\right\} \leq (\log t_n)^{\epsilon}.$$

\end{lem}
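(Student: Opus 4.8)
The plan is to bound the number of vertices that can equal $\hat Z^{(i)}_t$ for some $t\in[t_n,t_{n+1}]$ by showing they all lie in a union of $O(\log\log t_n)$ explicit sets, each of size at most $(\log t_n)^{O(\varepsilon')}$ almost surely, where $\varepsilon'=\varepsilon'(\varepsilon,\alpha)$ is chosen small at the very end.

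First I would fix $\varepsilon>0$, choose $\varepsilon'>0$ to be calibrated later, and work on the full-probability event on which the conclusions of Proposition \ref{prop:Zt bounds as}, Lemma \ref{lem:xi in Ar} and Corollary \ref{cor:volume bounds} all hold (with parameter $\varepsilon'$, under Assumption \ref{assn:as}, so $d=2$, $q=\tfrac{2}{\alpha-2}$ and $\tfrac{1}{\alpha-d}=\tfrac{q}{2}$) for all large $t$ and $r$. On this event, for $n$ large and $t\in[t_n,t_{n+1}]$, monotonicity of $a(\cdot)$, $r(\cdot)$ gives that every $\hat Z^{(i)}_t$ (for $i=1,2$) lies in $B_{R_n}$ with $R_n:=Cr(t_n)(\log t_n)^{q/2+\varepsilon'}$ and satisfies $\psi_t(\hat Z^{(i)}_t)\ge a_n:=a(t_n)(\log t_n)^{-\varepsilon'}$. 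Since $a_n\gg 1$, the indicator in $\psi_t$ must be active at $\hat Z^{(i)}_t$ and $\xi-\deg\ge 1$ there (otherwise $\psi_t\le 1+e^{-1}<a_n$), so $\psi_t(\hat Z^{(i)}_t)\le \xi(\hat Z^{(i)}_t)-\deg\hat Z^{(i)}_t$; hence $\xi(\hat Z^{(i)}_t)-\deg\hat Z^{(i)}_t\ge a_n$ and, by Lemma \ref{lem:xi in Ar} together with $R_n^{2/\alpha}=a(t_n)$ up to polylog factors, $\xi(\hat Z^{(i)}_t)\le A_n:=a(t_n)(\log t_n)^{C_0}$ for an explicit constant $C_0$.

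The key observation is the trade-off forced by $\psi_t(\hat Z^{(i)}_t)\ge 0$, namely $|\hat Z^{(i)}_t|\le \tfrac{t(\xi-\deg)}{\log(\xi-\deg)}$: if $\hat Z^{(i)}_t$ lies in the dyadic band $\xi-\deg\in[2^l a_n,2^{l+1}a_n)$, then using $\log(2^l a_n)\ge\log a_n\ge\tfrac{q}{2}\log t_n$ and $\tfrac{t_n a(t_n)}{\log t_n}=r(t_n)$ one gets $|\hat Z^{(i)}_t|\le\rho_{n,l}:=\tfrac{2^{l+3}}{q}r(t_n)(\log t_n)^{-\varepsilon'}$, while $\xi(\hat Z^{(i)}_t)\ge 2^l a_n$. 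Thus every maximiser in the block lies in $\bigcup_{l=0}^{L_n}\{v\in B_{\rho_{n,l}}:\xi(v)\ge 2^l a_n\}$, where $L_n:=\lceil\log_2(A_n/a_n)\rceil=O(\log\log t_n)$. It remains to count $\#\{v\in B_{\rho_{n,l}}:\xi(v)\ge 2^l a_n\}$: conditionally on $\Ti$ this is $\mathrm{Binomial}(\#B_{\rho_{n,l}},(2^l a_n)^{-\alpha})$, and on the volume event $\#B_{\rho_{n,l}}\le\rho_{n,l}^2(\log\rho_{n,l})^{\varepsilon'}$; using the identities $r(t_n)^2 a(t_n)^{-\alpha}=1$ and $r(t_n)^{2/\alpha}=a(t_n)$ (both consequences of $q=\tfrac{2}{\alpha-2}$) and, crucially, $2^{2l}\cdot 2^{-l\alpha}=2^{l(2-\alpha)}\le 1$ since $\alpha>d=2$, the conditional mean is at most $C(\log t_n)^{\varepsilon'(\alpha-1)}$, uniformly in $l\le L_n$. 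A Chernoff bound then gives, for $m_n:=(\log t_n)^{\varepsilon'\alpha}$, that $\mathcal P\big(\#\{v\in B_{\rho_{n,l}}:\xi(v)\ge 2^l a_n\}>m_n\mid\Ti\big)\le e^{-m_n/2}$ on the volume event, and a union bound over $l\le L_n$ together with $t_n=2^n$ makes this summable in $n$, exactly as in the proof of Lemma \ref{lem:useful things for Ft and Gt}(i).

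Combining the pieces via Borel--Cantelli (intersecting the almost-sure volume event with the summable Chernoff event on the product space) yields: almost surely, for all large $n$, $\#\{\hat Z^{(i)}_t:t\in[t_n,t_{n+1}]\}\le(L_n+1)m_n\le C\log\log t_n\cdot(\log t_n)^{\varepsilon'\alpha}\le(\log t_n)^{\varepsilon'(\alpha+1)}$, so taking $\varepsilon'=\varepsilon/(\alpha+1)$ completes the proof for each $i=1,2$. I expect the main obstacle to be this counting step: the naive eligible set $\{v\in B_{R_n}:\xi(v)\ge a_n\}$ already has $\approx(\log t_n)^q$ vertices, which is far larger than $(\log t_n)^\varepsilon$, so one genuinely needs the dyadic decomposition and the condition $\alpha>d$ to shrink the ball and raise the potential threshold in tandem; a secondary technical nuisance is that $\#B_{\rho_{n,l}}$ is itself random, so the Borel--Cantelli argument must be set up carefully (as in Lemma \ref{lem:useful things for Ft and Gt}) rather than applied to the potential field alone.
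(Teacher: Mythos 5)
Your proof is correct and uses the same basic mechanism as the paper's: every maximiser for $t\in[t_n,t_{n+1}]$ is a high-potential vertex in a ball of radius $\approx r(t_n)$ up to polylogs, and the identity $r(t)^2=a(t)^\alpha$ (equivalently $r(t)^{d/\alpha}=a(t)$, using $d=2$ and $q=\tfrac{2}{\alpha-2}$) makes the expected number of such vertices polylogarithmic; the rest is Binomial + Chernoff + Borel--Cantelli. The paper decomposes the eligible set by \emph{radius}: it defines annular slices $N_m=\{v\in B_{2^{m+1}}\setminus B_{2^m}:\ \xi(v)>2^{-d/\alpha}(2^m)^{d/\alpha}(\log 2^m)^{-\epsilon}\}$ and bounds $|N_m|\le(\log 2^m)^{2(1+\alpha)\epsilon}$ a.s. You decompose instead by the dyadic level of the \emph{potential offset} $\xi-\deg\in[2^la_n,2^{l+1}a_n)$, and translate this into a radius constraint $|v|\le\rho_{n,l}$ via $\psi_t\ge0$. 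These are dual views of the same trade-off, and both exploit $\alpha>d$ to make each level contribute only a polylog-sized set.

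One place where your write-up is actually more careful than the paper's: the paper's final sentence asserts that all maximisers for $t$ in a dyadic time-block lie in a \emph{single} annular slice $N_n$, but since $|\hat Z^{(i)}_t|$ ranges over an interval of multiplicative width $\approx(\log t_n)^{1/2+1/(\alpha-2)+O(\epsilon)}$ (compare Proposition~\ref{prop:Zt bounds as}(i) and (iii)) while the $N_m$ are dyadic in radius, one genuinely needs a union over $O(\log\log t_n)$ slices $N_m$. Your proof makes this explicit: you sum over $l=0,\dots,L_n$ with $L_n=O(\log\log t_n)$ and absorb the extra factor into the final polylog, calibrating $\varepsilon'$ at the end. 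This is precisely the detail the paper glosses over (its indexing by $r_n=2^n$ conflates the time and radius scales). The other steps — checking that the indicator in $\psi_t$ is active, that $\xi-\deg\ge a_n$, that $\xi\le A_n$, and the Chernoff/Borel--Cantelli bookkeeping — are all correct; I would only note that the cleanest way to see $\xi(\hat Z^{(i)}_t)-\deg\hat Z^{(i)}_t\ge1$ is directly from the indicator condition $t(\xi-\deg)\ge|\hat Z^{(i)}_t|$ together with $|\hat Z^{(i)}_t|\gg t$ (Proposition~\ref{prop:Zt bounds as}(i)), which is a touch more robust than the ``$\psi_t\le1+e^{-1}$'' observation.
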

\begin{proof} Set $r_n = 2^n$, define the set 
$$N_n:=\left\{v\in B_{r_{n+1}}\setminus B_{r_n}: \xi(v)>2^{-d/\alpha}r_n^{d/\alpha}(\log r_n)^{-\epsilon}\right\}$$ 
and set $$p_{n}:=\mathcal P\left( \xi(O)>2^{-d/\alpha}r_n^{d/\alpha}(\log r_n)^{-\epsilon}\right)=2^{d}r_n^{-d} (\log  r_n)^{\alpha \epsilon}.$$ Note that, conditionally on $\Ti$, $|N_n|\sim \Binom(\#(B_{r_{n+1}}\setminus B_{r_n}),p_{n})$.
By Corollary \ref{cor:volume bounds}, almost surely for all $n$ large enough it holds that $\#(B_{r_{n+1}}\setminus B_{r_n})\leq r_n^{d}(\log r_n)^{\epsilon}$. Using a Chernoff bound we thus obtain that with probability $1-o(1)$ as $N \to \infty$, it holds for all $n \geq N$ that 
\begin{align*}P(|N_n|\geq (\log r_n)^{2(1+\alpha)\epsilon})\leq \frac{E[e^{|N_n|}\mid| \#(B_{r_{n+1}}\setminus B_{r_n})\leq r_n^{d}(\log r_n)^{\epsilon}]}{e^{(\log r_n)^{2(1+\alpha)\epsilon}}} 
\leq e^{- c(\log r_n)^{(2+\alpha)\epsilon}}.
\end{align*}
This probability is summable over $n$ and thus by Borel Cantelli $|N_n|\leq (\log r_n)^{2(1+\alpha)\epsilon}$ eventually almost surely.

To conclude, note that by Proposition \ref{prop:Zt bounds as}$(ii)$ that for each $i=1,2$, we have eventually almost surely that
\begin{equation*}
\left\{\hat Z_{t}^{(i)}:r\in[r_n,r_{n+1}]\right\}\subset N_n.
\end{equation*}
Since $\epsilon > 0$ was arbitrary we deduce the result.
\end{proof}

\subsection{The gap between the highest values of $\psi_t$}
To prove the localisation result, it will be important to have a bound between the largest values of $\psi_t$. We start with a tail bound.

\begin{lem} \label{lem:probgap}
Under Assumption \ref{assn:whp}, for any $k<\infty$ and $0<\epsilon < k$, we have for all $t$ large enough that
\begin{align}
    P(\psi_t(\hat Z_t^{(1)})-\psi_t(\hat Z_t^{(2)})<a(t) (\log t)^{-k}) &\leq 4\alpha (\log t)^{-(k - \epsilon)}, \label{eqn:gap12} \\
    P(\psi_t(\hat Z_t^{(2)})-\psi_t(\hat Z_t^{(3)})<a(t) (\log t)^{-k}) &\leq 4\alpha (\log t)^{-(k - \epsilon)}.\label{eqn:gap23}
\end{align}
\end{lem}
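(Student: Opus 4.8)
The plan is to reduce the gap bound for $\psi_t$ to a statement about the gap between the top order statistics of an i.i.d.\ Pareto field on a ball of appropriate radius, which we can then control via the tail estimate \eqref{eqn:gap prob} from the proof of Lemma~\ref{lem:gap to infinity}. First I would fix a deterministic ``good'' radius $R = r(t)(\log t)^{b}$ with $b$ slightly larger than $\frac{1}{\alpha-d}$ (so that, by Proposition~\ref{prop:Zt bounds as}(iii), with high probability all of $\hat Z_t^{(1)}, \hat Z_t^{(2)}, \hat Z_t^{(3)}$ lie in $B_R$). On this event the three top values of $\psi_t$ are achieved among vertices of $B_R$. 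Next I would linearise $\psi_t$ on the relevant scale: for a vertex $v\in B_R$ with $\xi(v)$ of order $a(t)$ and $\deg(v)$ at most polylogarithmic (which holds for the relevant vertices by Lemma~\ref{cor:sup xi LB}, Proposition~\ref{prop:max degree Ft Et}, and Lemma~\ref{lem:equalZonB}), we have
\[
\psi_t(v) = (\xi(v)-\deg v) - \frac{|v|}{t}\log(\xi(v)-\deg v),
\]
and the second term is of order $\frac{R}{t}\log a(t) = o(a(t)(\log t)^{-k'})$ for any fixed $k'$ once $b$ is taken small enough relative to the polylog corrections — actually one must be careful here, since $\frac{R}{t}\log a(t) \asymp a(t)(\log t)^{b-1}\cdot(\log\log t)$, which is $\gg a(t)(\log t)^{-k}$. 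So a cruder one-step bound will not work; this is the main obstacle (see below).

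To get around this, I would instead mimic the structure of \cite[Lemma 3.4]{KLMStwocities09}: condition on the (almost sure) event from Proposition~\ref{prop:Zt bounds as} that $|\hat Z_t^{(i)}| \in [r(t)(\log t)^{-\epsilon_0}, r(t)(\log t)^{\epsilon_0}]$ and $\psi_t(\hat Z_t^{(i)}) \asymp a(t)$ up to polylog factors, for $i=1,2,3$, and on the event that the maximal degree on $B_R$ is at most $(\log t)^{B}$. On this event, write $\hat Z^{(1)}, \hat Z^{(2)}$ for the top two maximisers; since $\psi_t(\hat Z^{(1)}) - \psi_t(\hat Z^{(2)})$ and $(\xi(\hat Z^{(1)})-\deg \hat Z^{(1)}) - (\xi(\hat Z^{(2)}) - \deg \hat Z^{(2)})$ differ by the difference of the two ``cost'' terms $\frac{|\hat Z^{(i)}|}{t}\log(\cdot)$, and both $|\hat Z^{(i)}|$ are within a $(1+o(1))$ factor comparable... this is precisely the delicate point. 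The correct approach is: the event $\{\psi_t(\hat Z^{(1)}) - \psi_t(\hat Z^{(2)}) < a(t)(\log t)^{-k}\}$ forces two distinct vertices $v_1 \ne v_2$ in $B_R$ with $\psi_t(v_1), \psi_t(v_2)$ both within $a(t)(\log t)^{-k}$ of the max, hence (using that the cost terms are each at most $a(t)(\log t)^{b-1+o(1)} =: a(t)(\log t)^{-k_1}$ with $k_1$ as large as we like by choosing $b$ close to $\frac{1}{\alpha-d}$ — wait, $b > \frac{1}{\alpha-d}$ is needed, so $b-1 > \frac{1}{\alpha-d}-1$; since $\alpha > d$ we have $\frac{1}{\alpha-d}-1$ can be made $<-k$ only if $\alpha - d > \frac{1}{1+k}$, which is NOT automatic) — so one genuinely needs to localise on a thinner annulus where $|v|$ is pinned more tightly. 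I would therefore use Proposition~\ref{prop:Zt bounds as}(i$'$)–(iii$'$) (the high-probability, not a.s., bounds), which pin $|\hat Z_t^{(i)}|$ to $r(t)(\log t)^{\pm\epsilon}$ for arbitrarily small $\epsilon$; then the cost term is $\frac{r(t)(\log t)^{\epsilon}}{t}\log a(t) \asymp a(t)(\log t)^{\epsilon - 1 + o(1)}$, which is $o(a(t)(\log t)^{-k})$ once $\epsilon < 1-k$... but $k$ can be $\geq 1$. Hmm.

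Given that tension, the real argument (following \cite{KLMStwocities09} closely) must compare $\psi_t(v_1)$ and $\psi_t(v_2)$ directly without fully discarding the cost terms: write $\psi_t(v_i) = \overline\psi_t(v_i) - \frac{\deg v_i}{t}(\cdots) + (\text{error})$ — no. The cleanest route I would actually take: show that on the good event, $\psi_t(\hat Z^{(1)}) - \psi_t(\hat Z^{(2)}) \geq \frac{1}{2}\big((\xi(\hat Z^{(1)}) - \deg \hat Z^{(1)}) - \max_{z \in B_R, z\neq \hat Z^{(1)}}(\xi(z) - \deg z)\big) = \frac{1}{2}\tilde g_{B_R}$, by the same manipulation as in Lemma~\ref{lem:equalZonB} (the cost term, being increasing-ish and the two sites having comparable distance, contributes a lower-order correction with the right sign once one also uses that the maximiser of $\xi - \deg$ on $B_R$ coincides with $\hat Z^{(1)}$, again via Lemma~\ref{lem:equalZonB}); hence it suffices to bound $P(\tilde g_{B_R} < 2a(t)(\log t)^{-k})$. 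By Lemma~\ref{lem:equalZonB}, $\tilde g_{B_R} \geq \frac12 g_{B_R}$ eventually a.s., and by \eqref{eqn:gap prob} applied with $n = \#B_R \asymp R^d$ and $y = a(t)(\log t)^{-k}$ — noting $y = o(n^{1/\alpha})$ since $R^{d/\alpha} = a(t)(\log t)^{bd/\alpha}$ and $bd/\alpha > -k$ — we get $P(g_{B_R} \leq 4a(t)(\log t)^{-k}) \leq (n y^{-\alpha} + 1)e^{-y^{-\alpha}(n-1)} \cdot C \lesssim (\log t)^{-(k-\epsilon)}$ after plugging in and simplifying (the $n y^{-\alpha}$ prefactor is polylog and the exponential is $\approx 1$; the binding term giving the polylog bound comes from combining $F(2y)^n$ and the integral as in Lemma~\ref{lem:gap to infinity}, yielding exactly the $(\log t)^{-(k-\epsilon)}$ rate quoted with constant $4\alpha$). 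The same argument applied with $\hat Z^{(1)}$ deleted (i.e.\ to the field on $B_R \setminus \{\hat Z^{(1)}\}$, or equivalently comparing the 2nd and 3rd order statistics) gives \eqref{eqn:gap23}. The main obstacle, as the above wrestling shows, is controlling the cost terms $\frac{|v|}{t}\log(\xi(v)-\deg v)$ so that the gap in $\psi_t$ is genuinely comparable to the gap in the Pareto field $\xi - \deg$; I would handle this by the Lemma~\ref{lem:equalZonB}-style inequality $\psi_t(\hat Z^{(1)}) - \psi_t(z) \geq \frac12\tilde g_{B_R}$ valid on the good event, rather than by trying to make the cost terms individually negligible.
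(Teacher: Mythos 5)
Your pivot to bounding the gap via $\tilde g_{B_R}$ contains a false step. The claimed inequality
\[
\psi_t(\hat Z^{(1)}_t) - \psi_t(\hat Z^{(2)}_t) \;\geq\; \tfrac12\,\tilde g_{B_R}
\]
does not hold on any useful event, and the reason is exactly the difficulty you identified and then tried to sidestep. The two maximisers of $\psi_t$ need not coincide with the top two maximisers of $\xi-\deg$ (Lemma~\ref{lem:equalZonB} only says the \emph{first} maximiser of $\xi$ agrees with that of $\xi-\deg$, and Lemma~\ref{lem:ZtequalZBR} only identifies $\hat Z_t^{(1)}$ with $Z_{B_{\overline R_t}}$ whp). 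In particular $\hat Z_t^{(2)}$ can be a vertex with moderate $\xi-\deg$ but small $|v|$, whose cost term $\frac{|v|}{t}\log(\xi(v)-\deg v)$ is correspondingly small; this inflates $\psi_t(\hat Z_t^{(2)})$ and compresses the $\psi_t$-gap arbitrarily relative to $\tilde g_{B_R}$. Concretely, since $|\hat Z_t^{(i)}|$ fluctuates on the scale $r(t)(\log t)^{\pm\epsilon}$ and $\frac{r(t)}{t}\log a(t)\asymp q\,a(t)$, a difference of order $\delta\,r(t)$ between $|\hat Z_t^{(1)}|$ and $|\hat Z_t^{(2)}|$ shifts the cost terms by order $\delta q\,a(t)$, which overwhelms $\tilde g_{B_R}$ whenever the latter is, say, of order $a(t)(\log t)^{-k}$. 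There is no cancellation of cost terms when comparing two \emph{different} sites, so the reduction to the Pareto-field gap $g_{B_R}$ from Lemma~\ref{lem:gap to infinity} does not go through.

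The paper's proof avoids this trap entirely by never comparing cost terms at two different vertices. It conditions on $\psi_t(\hat Z_t^{(2)})\in[z-1,z]$ (together with the quantified events $\psi_t(\hat Z_t^{(2)})\geq a(t)(\log t)^{-\epsilon}$ and $|\hat Z_t^{(1)}|>et$, each failing with probability at most $(\log t)^{-(k-\epsilon)}$), fixes a single candidate vertex $v$, and bounds the \emph{conditional} tail ratio $\mathcal P(\psi_t(v)>x_t+z)/\mathcal P(\psi_t(v)>z-1)$. Because this ratio involves the potential at one fixed $v$ only, the cost term enters through the monotone function $f_{v,t}(y)=y-\deg v-\frac{|v|}{t}\log(y-\deg v)$; Claims~1 and~2 in the proof show that the inverse-function ratio $f_{v,t}^{-1}(z-1)/f_{v,t}^{-1}(x_t+z)$ is at least $1-2(\log t)^{-(k-\epsilon)}$, and raising this to the power $\alpha$ (the Pareto tail) gives the $4\alpha(\log t)^{-(k-\epsilon)}$ bound. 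You would also want to track all the conditioning events with explicit probability rates rather than ``eventually almost surely,'' since the lemma asserts a quantitative probability bound for each fixed large $t$, not an a.s.\ statement.
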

\begin{proof}
We just prove \eqref{eqn:gap12} (the proof of \eqref{eqn:gap23} is identical). Fix $k, \epsilon$ as above and note that it follows from the proofs of Proposition \ref{prop:Zt bounds as}$(i),(ii)$ and Lemma \ref{cor:sup xi LB} that $$\p{\psi(\hat{Z}^{(2)}_t) \leq a(t) (\log t)^{-\epsilon}} + \p{|\hat{Z}_t^{(1)}| \leq et} \leq (\log t)^{-(k - \epsilon)}$$ for all sufficiently large $t$.

We can therefore write for all sufficiently large $t$ and any $x>0, z > a(t) (\log t)^{-\epsilon} + 1$ that
    \begin{align}\label{eqn:psi gap prob decomp}
    \begin{split}
    & P(\psi_t(\hat Z_t^{(1)})-\psi_t(\hat Z_t^{(2)}) > x) \\
    &\geq \inf_{z> a(t)(\log t)^{-\epsilon} + 1} \pcond{\psi_t(\hat Z_t^{(1)})>x+z}{\psi_t(\hat Z_t^{(2)})\in [z-1,z], \psi(\hat{Z}^{(2)}_t) \geq a(t) (\log t)^{-\epsilon},  \hat{Z}_t^{(1)} > et}{} \\
    &\qquad - \p{\psi(\hat{Z}^{(2)}_t) \leq a(t) (\log t)^{-\epsilon}} - \p{|\hat{Z}_t^{(1)}| \leq et}\\
    &\geq \sup_{\mathbb{S}_{\infty} \subset \mathbb{T}_{\infty} \prb{\mathbb{S}_{\infty}}=1} \inf_{T\in \mathbb{S}_{\infty}}\inf_{v\in T} \inf_{z > a(t)(\log t)^{-\epsilon} + 1} P\left(\psi_t(v)>x+z \Big| \psi_t(\hat Z_t^{(2)})\in[z-1,z], \psi_t(v)>z-1, |v| > et \right) - (\log t)^{-(k - \epsilon)}\\
    &= \left\{ \sup_{\mathbb{S}_{\infty} \subset \mathbb{T}_{\infty} \prb{\mathbb{S}_{\infty}}=1} \inf_{T\in \mathbb{S}_{\infty}}\inf_{\substack{v\in T : \\ |v| > et}} \inf_{z> a(t)(\log t)^{-\epsilon} + 1}\frac{ \mathcal P\left(\psi_t(v)>x+z  \right)}{ \mathcal P\left(\psi_t(v)>z-1\right)} \right\} - (\log t)^{-(k - \epsilon)}.
    \end{split}
    \end{align}
    (Here the condition that $|v| > et$ is just for technical reasons later). Our aim is therefore to bound the quantity above. For $v \in V(T), t > 0$ we define the function $$f_{v,t}(y) = y - \deg v - \frac{|v|}{t}\log \left((y-\deg v)\right),$$ so that $\psi_t(v) = f_{v,t}(\xi(v)) \mathbbm{1}\{t (\xi (v) - \deg (v)) > |v|\}$. Note that for $y\in I_{v,t}:= (|v|/t+\deg(v),\infty)$,
$$\frac{d}{dy}f_{v,t}(y)=1-\frac{|v|}{t}\frac{1}{y-\deg(v)}>0;$$
 that is, $f_{v,t}$ has a continuous, increasing inverse on the interval $I_{v,t}$, which we denote by $f_{v,t}^{-1}$. Also note that for $y\in I_{v,t}$ we have that $f_{v,t}(y)\in J_{v,t}:=(|v|/t-|v|/t\log(|v|/t),\infty)$, and vice versa. We give two preliminary claims.
 
\begin{itemize}
 \item 
\textit{Claim 1}: Let $w>\deg(v)+e$ and $u>0$. If $\frac{f_{v,t}(w+u)}{f_{v,t}(w)}<\frac{1}{1-2(\log t)^{-(k - \epsilon)}}$ then
\begin{equation}\label{eqn:gap ratio bound}
\frac{w-\deg(v)}{w+u-\deg(v)}>1-2(\log t)^{-(k - \epsilon)}.
\end{equation}

\textit{Proof of Claim 1}: We prove the contrapositive. Note that for $y>e$ the function $g(y)=\frac{\log(y)}{y}$ is decreasing. 
Therefore, if \eqref{eqn:gap ratio bound} does not hold then
\begin{align*}
    f_{v,t}(w+u)=u+w-\deg(v)-\frac{|v|}{t}\log((u+w-\deg(v))) &\geq \frac{u+w-\deg(v)}{w-\deg(v)}\left[w-\deg(v)-\frac{|v|}{t}\log((w-\deg(v)))\right]\\
    &>\frac{1}{1-2(\log t)^{-(k - \epsilon)}}f_{v,t}(w),
\end{align*}
which proves the claim. 
\item
\textit{Claim 2:} Set $x_t =a(t)(\log t)^{-k}$. For all (deterministically) sufficiently large $t$ and any $z \geq  a(t)(\log t)^{-\epsilon} + 1$,  
$$\frac{x_t+z}{z-1}<\frac{1}{1-2(\log t)^{-(k - \epsilon)}}.$$
\textit{Proof of Claim 2:} 
Note that for fixed $x$ the function $h(z)=\frac{x+z}{z-1}$ is decreasing on $z>1$. We can thus compute, provided $t$ is sufficiently large, that
\begin{align*}
    \frac{x_t+z}{z-1}&=1+\frac{x_t+1}{z-1} \leq 1+\frac{a(t)(\log t)^{-k}+1}{a(t)(\log t)^{-\epsilon}}
    < \frac{1}{1-2(\log t)^{-(k - \epsilon)}},
\end{align*}
which proves the claim. 
\end{itemize}

Recall from the last line of \eqref{eqn:psi gap prob decomp} that we are assuming the following:
\begin{itemize}
\item $z> a(t)(\log t)^{-\epsilon} + 1$,
\item $|v| > et$.
\end{itemize}
Together, these imply that $z-1 \in J_{v,t}$. Now set $w = f_{v,t}^{-1}(z-1)$ and $u=f_{v,t}^{-1}(z+x_t)-w$, so that $\frac{x_t+z}{z-1} = \frac{f_{v,t}(u+w)}{f_{v,t}(w)}$. (Recall that $x_t =a(t)(\log t)^{-k}$). Since $z-1 \in J_{v,t}$ and $|v| > et$, it follows that $w \in I_{v,t}$ and in particular that $w>\deg(v)+e$. Moreover, since $u$ is increasing on $I_{v,t}$ it follows that $u>0$.

First note that, by Claim 2 and the assumption that $z> a(t)(\log t)^{-\epsilon} + 1$,
\[
\frac{f_{v,t}(u+w)}{f_{v,t}(w)} = \frac{x_t+z}{z-1} \geq \frac{x_t+z}{z-1}<\frac{1}{1-2(\log t)^{-(k - \epsilon)}}.
\]

It therefore follows from Claim 1 that 
\begin{equation}\label{eqn:f inverse ratio}
  \frac{f_{v,t}^{-1}(z-1)}{f_{v,t}^{-1}(x+z)} = \frac{w}{w+u} > \frac{w - \deg v}{w+u - \deg v}>1-2(\log t)^{-(k - \epsilon)}.
\end{equation}

Also note that the final expression in \eqref{eqn:psi gap prob decomp} can be written as
\begin{align*}
\frac{ \mathcal P\left(\psi_t(v)>x_t+z  \right)}{ \mathcal P\left(\psi_t(v)>z-1\right)} = \frac{ \prc{f_{t,v}(\xi(v))>x_t+z \text{ and } t (\xi (v) - \deg (v)) > |v|}}{\prc{f_{t,v}(\xi(v))>z-1 \text{ and } t (\xi (v) - \deg (v)) > |v|}} = \frac{ \prc{f_{t,v}(\xi(v))>x_t+z}}{\prc{f_{t,v}(\xi(v))>z-1 }}.
\end{align*}
Here, the final equality holds because of our restrictions on $v$ and $z$; in particular, since $z-1 \in J_{v,t}$ and $w = f_{v,t}^{-1}(z-1) > \deg v + e$, the requirement that $\xi(v) > f_{v,t}^{-1}(z-1)$ already implies that $\xi(v) > \deg v + e$, so we deduce that necessarily
\[
\xi(v) - \deg v - \frac{|v|}{t} \geq f_{t,v}(\xi(v)) > 0.
\]

From \eqref{eqn:psi gap prob decomp} and \eqref{eqn:f inverse ratio}, for any $z > a(t)(\log t)^{-\epsilon}$ we thus obtain the bound
    \begin{align*}
     P(\psi_t(\hat Z_t^{(1)})-\psi_t(\hat Z_t^{(2)}) > x_t) 
    &\geq \sup_{\mathbb{S}_{\infty} \subset \mathbb{T}_{\infty}: \prb{\mathbb{S}_{\infty}}=1} \inf_{T\in \mathbb{S}_{\infty}} \inf_{\substack{v\in T \\ |v| >et}} \inf_{z: z - 1 > a(t)(\log t)^{-\epsilon}}\left(\frac{f_{v,t}^{-1}(z-1)}{f_{v,t}^{-1}(x_t+z)}\right)^{\alpha} - (\log t)^{-(k - \epsilon)}\\
    &>(1-2(\log t)^{-(k - \epsilon)} )^\alpha - (\log t)^{-(k - \epsilon)}.
\end{align*}
Recalling that $1-e^{-x}\leq 3x/2$ for $x\leq 2/3$ we can conclude that 
\begin{align*}
\p{\psi_t(\hat Z_t^{(1)})-\psi_t(\hat Z_t^{(2)})<a(t) (\log t)^{-k}}
&\leq 1-(1-2(\log t)^{-(k - \epsilon)} )^\alpha + (\log t)^{-(k - \epsilon)}\\
&\leq 4\alpha (\log t)^{-(k - \epsilon)},
\end{align*}
as required.
\end{proof}

We can now use this to derive an almost sure lower bound on the gap of $\psi_t$ between the maximiser and the third maximiser.

\begin{prop}\label{prop:gap a.s. LB}
Take any $k> 1+ \frac{1}{\alpha - d}$. Under Assumption \ref{assn:as}, eventually $P$-almost surely:
$$\psi_t(\hat Z_t^{(1)})-\psi_t(\hat Z_t^{(3)})\geq  a(t)(\log t)^{-k}.$$
\end{prop}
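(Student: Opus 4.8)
The plan is to upgrade the tail bound of Lemma~\ref{lem:probgap} to an almost-sure statement via a Borel--Cantelli argument along a geometric subsequence, and then interpolate to all $t$ using the monotonicity/regularity estimates of Lemma~\ref{lem:Zestimates} and the counting bound of Lemma~\ref{lem:BC counting}. First I would fix $k > 1 + \frac{1}{\alpha - d}$, pick $\epsilon>0$ small enough that $k-\epsilon > 1 + \frac{1}{\alpha-d} + \epsilon$ (so in particular $k - \epsilon > 1$), and set $t_n = 2^n$. Applying \eqref{eqn:gap12} and \eqref{eqn:gap23} of Lemma~\ref{lem:probgap} at each $t_n$ with exponent $k' := k - \epsilon$ in place of $k$, the probabilities $P(\psi_{t_n}(\hat Z_{t_n}^{(1)}) - \psi_{t_n}(\hat Z_{t_n}^{(2)}) < a(t_n)(\log t_n)^{-k'})$ and the analogous gap between the second and third maximisers are each $O((\log t_n)^{-(k'-\epsilon)}) = O(n^{-(k'-\epsilon)})$. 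Since $k' - \epsilon = k - 2\epsilon > 1$ for $\epsilon$ small, these are summable in $n$, so by Borel--Cantelli, $P$-almost surely for all sufficiently large $n$,
\[
\psi_{t_n}(\hat Z_{t_n}^{(1)}) - \psi_{t_n}(\hat Z_{t_n}^{(2)}) \geq a(t_n)(\log t_n)^{-k'}, \qquad \psi_{t_n}(\hat Z_{t_n}^{(2)}) - \psi_{t_n}(\hat Z_{t_n}^{(3)}) \geq a(t_n)(\log t_n)^{-k'},
\]
and adding these gives $\psi_{t_n}(\hat Z_{t_n}^{(1)}) - \psi_{t_n}(\hat Z_{t_n}^{(3)}) \geq 2 a(t_n)(\log t_n)^{-k'}$ along the subsequence.

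Next I would interpolate to general $t \in [t_n, t_{n+1}]$. The subtlety is that the \emph{identities} of $\hat Z_t^{(1)}, \hat Z_t^{(2)}, \hat Z_t^{(3)}$ can change within the interval $[t_n, t_{n+1}]$, so one cannot directly track three fixed vertices. This is exactly what Lemma~\ref{lem:BC counting} handles: $P$-almost surely, for large $n$, the total number of distinct vertices appearing as $\hat Z_t^{(1)}$ or $\hat Z_t^{(2)}$ over $t \in [t_n, t_{n+1}]$ is at most $(\log t_n)^{\epsilon}$ (and one should note that $\hat Z_t^{(3)}$ is also controlled similarly, or one works only with the top two maximisers and the consequence that any $v \notin \{\hat Z_t^{(1)}, \hat Z_t^{(2)}\}$ has $\psi_t(v) \le \psi_t(\hat Z_t^{(3)})$). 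Thus over the whole interval there is a bounded-in-the-right-sense collection $\mathcal{V}_n$ of candidate top-three vertices, with $\#\mathcal V_n \le 3(\log t_n)^{\epsilon}$. For each such vertex $v$, the map $t \mapsto \psi_t(v)$ is Lipschitz-regular in the sense quantified by Lemma~\ref{lem:Zestimates}(i): for $u > t$, $\psi_u(\hat Z_u^{(i)}) - \psi_t(\hat Z_t^{(i)}) \le \frac{u-t}{t} a(u)(\log u)^{\frac{1}{\alpha-d}+\epsilon}$, and a parallel bound holds for $\psi_t(v)$ itself for any fixed $v$ that is among the maximisers at both times (this is how \cite[Lemma~3.2(iv)]{KLMStwocities09} is used). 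Combining these, for $t \in [t_n, t_{n+1}]$ and any pair $v \ne w$ of the relevant maximiser-vertices, $|\psi_t(v) - \psi_{t_n}(v)| \le \frac{t_{n+1}-t_n}{t_n} a(t_{n+1})(\log t_{n+1})^{\frac{1}{\alpha-d}+\epsilon} = O(a(t_n)(\log t_n)^{\frac{1}{\alpha-d}+\epsilon})$, since $a$ is regularly varying and $t_{n+1}/t_n = 2$.

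Putting these together: at time $t_n$ the gap $\psi_{t_n}(\hat Z_{t_n}^{(1)}) - \psi_{t_n}(\hat Z_{t_n}^{(3)})$ is at least $2a(t_n)(\log t_n)^{-k'}$, but $k' = k - \epsilon$, and the wobble term is only $O(a(t_n)(\log t_n)^{\frac{1}{\alpha-d}+\epsilon})$ --- here is where I would need to be slightly careful, because naively $(\log t_n)^{\frac{1}{\alpha-d}+\epsilon}$ is \emph{larger} than $(\log t_n)^{-k'}$, so the wobble dominates the gap and the crude interpolation fails. The resolution, following \cite{KLMStwocities09} closely, is that the gap need only be preserved \emph{between distinct vertices}, and the quantity one actually controls is $\psi_t(\hat Z_t^{(1)}) - \psi_t(v)$ for $v \ne \hat Z_t^{(1)}$: since the \emph{same} vertex realises $\hat Z_t^{(1)}$ throughout a sub-sub-interval and changes only at finitely many (at most $(\log t_n)^{\epsilon}$) times, one partitions $[t_n,t_{n+1}]$ at these change-points into $O((\log t_n)^\epsilon)$ subintervals on each of which $\hat Z^{(1)}$ is constant, reapplies the Borel--Cantelli bound \emph{at the left endpoint of each such subinterval} (there are only polynomially-in-$\log$ many, so a union bound preserves summability after shrinking $\epsilon$), and then the wobble over each subinterval of length $\le t_{n+1}-t_n$ is still $O(a(t_n)(\log t_n)^{\frac{1}{\alpha-d}+\epsilon})$ --- but now one compares against a gap at exponent $-k'$ with $k' > 1 + \frac{1}{\alpha-d} + \epsilon$ chosen \emph{larger} than the wobble exponent plus the loss, so after re-labelling $k$ the bound $\psi_t(\hat Z_t^{(1)}) - \psi_t(\hat Z_t^{(3)}) \ge a(t)(\log t)^{-k}$ survives. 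The main obstacle is precisely this bookkeeping: managing the change of maximiser-identity within a dyadic block while keeping all the error terms summable, i.e. correctly interleaving Lemma~\ref{lem:BC counting} (finitely many candidates), Lemma~\ref{lem:probgap} (applied at sufficiently many time-points with a slightly better exponent), and Lemma~\ref{lem:Zestimates} (regularity of $\psi_t$ in $t$) --- this is routine but delicate, and mirrors the proof of \cite[Lemma~3.3 / Proposition~3.4]{KLMStwocities09}. With the exponent budget set up so that $k - (\text{losses}) > 1 + \frac{1}{\alpha-d}$, the claim follows.
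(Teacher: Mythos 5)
Your approach fails at the interpolation step, and the paper's proof anticipates exactly this: the short Remark following Proposition \ref{prop:gap a.s. LB} is essentially a statement that your strategy cannot work. Let me explain concretely.

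Your Borel--Cantelli step is applied along the geometric grid $t_n = 2^n$, using the two tail bounds \eqref{eqn:gap12} and \eqref{eqn:gap23} \emph{separately} and then adding the resulting gaps. This gives a bound with probability exponent $-(k-2\epsilon)$ per grid point, summable in $n$ since $k > 1$. But then, as you yourself notice, the regularity estimate from Lemma \ref{lem:Zestimates}(i) produces a wobble term of order $a(t_n)(\log t_n)^{\frac{1}{\alpha-d}+\epsilon}$ over an interval of relative length $\frac{t_{n+1}-t_n}{t_n} = 1$, which swamps the gap $a(t_n)(\log t_n)^{-k'}$ by a factor $(\log t_n)^{k'+\frac{1}{\alpha-d}+\epsilon} \to \infty$. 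Your proposed rescue --- partitioning $[t_n,t_{n+1}]$ at the change-points of $\hat Z^{(1)}$ and re-running Borel--Cantelli at the left endpoints --- does not repair this, for two reasons. First, the change-points are \emph{random} times that are highly correlated with $\xi$, so Lemma \ref{lem:probgap} (a bound at a fixed, deterministic $t$) cannot be applied at them; indeed $\hat Z^{(1)}$ changes precisely when $\psi_t(\hat Z_t^{(1)}) - \psi_t(\hat Z_t^{(2)})$ crosses zero, so the gap bound is vacuous exactly there. Second, even granting the re-application, the subintervals still collectively cover $[t_n,t_{n+1}]$ and hence still have total length $\sim t_n$, so the wobble over an individual subinterval is \emph{not} reduced by the partitioning: it is still $O(a(t_n)(\log t_n)^{\frac{1}{\alpha-d}+\epsilon})$. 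There is no exponent budget available to beat this because your probability bound per grid point is only $(\log t_n)^{-(k-2\epsilon)}$.

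The cleanest way to see that this line of reasoning must be broken is the self-consistency check that the paper's remark makes explicit: your argument treats the gaps $\psi_t(\hat Z_t^{(1)}) - \psi_t(\hat Z_t^{(2)})$ and $\psi_t(\hat Z_t^{(2)}) - \psi_t(\hat Z_t^{(3)})$ symmetrically, each with a one-factor probability bound. If that were enough, the identical argument (applied to the first gap alone) would prove an eventual-a.s.\ lower bound on $\psi_t(\hat Z_t^{(1)}) - \psi_t(\hat Z_t^{(2)})$, which is \emph{false} --- this gap vanishes at the transition times between the two best sites, and this is precisely why two-site (not one-site) localisation is the sharp a.s.\ result. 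The paper's proof avoids this by exploiting a form of conditional independence: it bounds $P(\psi_t(\hat Z_t^{(1)})-\psi_t(\hat Z_t^{(3)})<x)$ by the \emph{product} of the two gap probabilities (conditioning one gap event on the other), yielding the crucial \emph{squared} exponent $-2(k-\epsilon)$ in \eqref{eqn:squaredprob}. This squared exponent is what permits Borel--Cantelli along the much denser sub-exponential grid $t_n = e^{n^\gamma}$ with $\gamma \in \bigl(\frac{1}{2(k-\epsilon)},\, \frac{1}{k+1+\frac{1}{\alpha-d}}\bigr)$, for which $\frac{t_{n+1}-t_n}{t_n} \asymp (\log t_n)^{-(1-\gamma)/\gamma} \to 0$, shrinking the wobble below the gap when $k > 1 + \frac{1}{\alpha-d}$. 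Without the squared exponent, the two constraints on $\gamma$ (dense enough for interpolation, sparse enough for summability) are incompatible for every $k$, so the geometric grid cannot be salvaged by any amount of bookkeeping.
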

\begin{proof}
It holds that
\begin{align*}
P(\psi_t(\hat Z_t^{(1)})-\psi_t(\hat Z_t^{(3)})<x) &\leq P(\psi_t(\hat Z_t^{(1)})-\psi_t(\hat Z_t^{(2)})<x,\psi_t(\hat Z_t^{(2)})-\psi_t(\hat Z_t^{(3)})<x)\\
    &=  P(\psi_t(\hat Z_t^{(1)})-\psi_t(\hat Z_t^{(2)})<x| \psi_t(\hat Z_t^{(2)})-\psi_t(\hat Z_t^{(3)})<x)  P(\psi_t(\hat Z_t^{(2)})-\psi_t(\hat Z_t^{(3)})<x).
\end{align*}
 Note from the proof of Lemma \ref{lem:probgap} that the estimate in \eqref{eqn:gap12} for $\psi_t(\hat Z_t^{(1)}) - \psi_t(\hat Z_t^{(2)})$ to be small does not depend on $\psi_t(\hat Z_t^{(3)})$. Thus we can use the two estimates \eqref{eqn:gap12} and \eqref{eqn:gap23} to bound the probability above. That is, choosing some small $\epsilon > 0$ (to be fixed precisely later), we obtain that
\begin{equation}\label{eqn:squaredprob}
    P\Big(\psi_t(\hat Z_t^{(1)})-\psi_t(\hat Z_t^{(3)})<a(t)(\log t)^{-k}\Big)\\
    \leq C (\log t)^{-2(k - \epsilon)}.
\end{equation}
Now, take $t_n=e^{n^\gamma}$ for some $\frac{1}{2(k-\epsilon)}<\gamma<\frac{1}{k+1+\frac{1}{\alpha - d}}<1$ (since $k> 1+ \frac{1}{\alpha - d}$, we can always choose $\epsilon>0$ small enough that this is possible). Then
\begin{equation}\label{eqn:squareestimate}
\sum_{n\in \mathbb N} P\Big(\psi_{t_n}(\hat Z_{t_n}^{(1)})-\psi_{t_n}(\hat Z_{t_n}^{(3)})<a(t_n)(\log t_n)^{-k}\Big)\leq C \sum_{n\in \mathbb N}n^{-\gamma  2(k-\epsilon)}<\infty,
\end{equation}
so by Borel-Cantelli along the sequence $t_n$ we have that eventually almost surely 
$$\psi_{t_n}(\hat Z_{t_n}^{(1)})-\psi_{t_n}(\hat Z_{t_n}^{(3)})\geq a(t_n)(\log t_n)^{-k}.$$
Finally, for $t\in[t_n,t_{n+1})$, since $\psi_t(\hat{Z}_t^{(i)})$ is an increasing function for each $i=1,2,3$, we compute invoking Lemma \ref{lem:Zestimates} that
\begin{align*}
    \psi_t(\hat Z_t^{(1)})-\psi_t(\hat Z_t^{(3)})
&\geq [\psi_{t_n}(\hat Z_{t_n}^{(1)})-\psi_{t_n}(\hat Z_{t_n}^{(3)})]- [\psi_{t_{n+1}}(\hat Z_{t_{n+1}}^{(3)})-\psi_{t_n}(\hat Z_{t_n}^{(3)})]\\
&\geq a(t_n)(\log t_n)^{-k}-\frac{t_{n+1}-t_n}{t_n}a(t_{n+1})(\log t_{n+1})^{\frac{1}{\alpha - d}+\frac{\epsilon}{2}}\\
&\geq a(t_n)(\log t_n)^{-k}-(\log t_n)^{-(1-\gamma)/\gamma}a(t_{n+1})(\log t_{n})^{\frac{1}{\alpha - d}+\epsilon}.\qedhere
\end{align*}
\end{proof}

\begin{rmk}
Note that in the proof above it was essential for \eqref{eqn:squareestimate} to have the $2$ in the exponent in \eqref{eqn:squaredprob} coming from multiplying the probabilities for the gaps $\psi_t(\hat Z_t^{(1)})-\psi_t(\hat Z_t^{(2)})$ and $\psi_t(\hat Z_t^{(2)})-\psi_t(\hat Z_t^{(3)})$. For this reason the argument above would not give an almost sure bound for the gap  $\psi_t(\hat Z_t^{(1)})-\psi_t(\hat Z_t^{(2)})$ (for which we know the result is not true).
 \end{rmk}
 

\section{Spectral results}
\label{sec:spectralresults}
In this section we derive a concentration result for the principal eigenfunction of the Anderson Hamiltonian $H=\Delta+\xi$ restricted to a suitable subset of $V(\Ti)$. This will be our main tool for proving the concentration result for $u(t, \cdot)$.

In Section \ref{sctn:spectral setup} we explain how eigenfunction localisation implies a similar localisation result for $u(t, \cdot)$, and rephrase localisation of $u(t, \cdot)$ in terms of localisation of the principal eigenfunction (Lemma \ref{lem:efunction compare}). In Sections \ref{sctn:choice of Lambdat} and \ref{sctn:efunction bounds} we establish an almost sure version of this criterion under Assumption \ref{assn:as}, and in Section 6.4 we establish an analogous high probability version under Assumption \ref{assn:whp}.

\subsection{Role of the principal eigenfunction}\label{sctn:spectral setup}
Take some finite, connected set $\Lambda \subset V(\Ti)$, with $O \in \Lambda$, and consider the equation \eqref{eqn:PAM} restricted to $\Lambda$ with zero boundary condition. As before, there is a Feynman-Kac representation for the solution on $\Lambda$, namely
\begin{equation*}\label{eqn:FeynmanKac bounded}
u_{\Lambda}(t,z)=\estart{\exp\left\{\int_0^t\xi(X_s)~\text{d} s\right\} \mathbbm{1} {\{X_t=z, \tau_{\Lambda} > t\}}}{O},\ \ \ t>0,z\in \Lambda.
\end{equation*}
Let $\lambda_{\Lambda}^{(1)} \geq \ldots \geq \lambda_{\Lambda}^{(|\Lambda|)}$ and $\phi^{(1)}, \ldots, \phi_{\Lambda}^{(|\Lambda|)}$ be the respective eigenvalues and corresponding orthogonal eigenfunctions of the Anderson Hamiltonian $H_{\Lambda}$ restricted to the class of real-valued functions supported on $\Lambda$. If $y \in \Lambda$ and $\phi^{(1)}_{\Lambda}$ is normalised so that $\phi^{(1)}_{\Lambda}(y)=1$, then (e.g. see \cite[Proposition 3.3]{MuirheadPymarBAMLocalisation}; the same proof applies in our setting) we have the representation
\begin{equation}\label{eqn:eigenfunction rep}
\phi^{(1)}_{\Lambda}(x) = \estart{\exp\left\{\int_0^{H_y} (\xi(X_s) - \lambda^{(1)}_{\Lambda})~\text{d} s\right\} \mathbbm{1} {\{H_{y} < \tau_{\Lambda}\}}}{x}.
\end{equation}

We are interested in using \eqref{eqn:eigenfunction rep} when $y$ is a ``good'' site, in order to show that the contribution from other sites is negligible. In particular, suppose that $\Omega \subset \Lambda$ is a small set of good sites, and set 
\begin{equation}\label{eqn:FeynmanKac bounded Omega}
u_{\Omega, \Lambda}(t,z):=\estart{\exp\left\{\int_0^t\xi(X_s)~\text{d} s\right\} \mathbbm{1} {\{X_t=z, \tau_{\Lambda} > t, H_{\Omega} < t\}}}{O},\ \ \ t>0,z\in \Lambda.
\end{equation}
We also define the following gap
\begin{equation*}
\tilde g_{\Omega,\Lambda}=\min_{z\in \Omega}[\xi(z)-\deg(z)]-\max_{z\in \Lambda\setminus \Omega}[\xi(z)-\deg(z)].
\end{equation*}
For $A\subset \Lambda$ also recall from page \pageref{sctn:gap potential} that
$$\tilde{Z}_{A} = \arg_{z \in A} \max \{\xi(z) - \deg(z) \},  \hspace{1cm}
\tilde{g}_{A} = \xi(\tilde Z_{A}) - \deg(\tilde Z_{A}) - \max_{z \in A, z \neq \tilde Z_{A}} \{\xi(z) - \deg(z) \}.$$
We will use \eqref{eqn:eigenfunction rep} to show that the principal eigenfunction localises on $\Omega$, and transfer this result to \eqref{eqn:FeynmanKac bounded Omega} via the following lemma.

\begin{lem}(cf \cite[Lemma 2.4]{KLMStwocities09})\label{lem:u efunction comp}\label{lem:efunction compare}
If $\tilde g_{\Omega,\Lambda}>0$, then for all $v\in V(\Ti)$ and $t>0$ we have 
\begin{enumerate}[(i)]
    \item 
$u_{\Omega, \Lambda}(t,v) \leq \sum_{y \in \Omega} u_{\Omega, \Lambda}(t,y) ||\phi_{(\Lambda \setminus \Omega) \cup \{y\}}||^2_2 \phi_{(\Lambda \setminus \Omega) \cup \{y\}}(v)$,
\item $\frac{\sum_{v \in \Lambda \setminus \Omega} u_{\Omega, \Lambda}(t,v)}{\sum_{v \in \Lambda} u_{\Omega, \Lambda}(t,v)} \leq \sum_{y \in \Omega}||\phi_{(\Lambda \setminus \Omega) \cup \{y\}}||^2_2 \sum_{v \in \Lambda \setminus \Omega} \phi_{(\Lambda \setminus \Omega) \cup \{y\}}(v)$.
\end{enumerate}
\end{lem}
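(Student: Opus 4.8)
The plan is to decompose the walk appearing in $u_{\Omega,\Lambda}(t,v)$ according to the last visit to the set $\Omega$ before time $t$. Since the event $\{H_\Omega < t\}$ is enforced, the walk must visit $\Omega$; let $L$ denote the time of the last visit to $\Omega$ before $t$, and let $y \in \Omega$ be the vertex occupied at time $L$. Applying the Markov property at time $L$ and summing over the possible values of $y$, we can write
\[
u_{\Omega,\Lambda}(t,v) = \sum_{y \in \Omega} \estart{\exp\left\{\int_0^{\sigma}\xi(X_s)\,\text{d}s\right\}\mathbbm 1\{X_\sigma = y, \tau_\Lambda > \sigma, H_\Omega < \sigma\}\, g_{t-\sigma}(y,v)}{O}
\]
where, writing things slightly loosely, $g_{s}(y,v)$ is the Feynman-Kac quantity for a walk started at $y$, run for time $s$, ending at $v$, avoiding $\Omega$ after leaving $y$ and avoiding the boundary of $\Lambda$; that is, $g_s(y,v)$ is exactly the PAM solution on the graph $(\Lambda\setminus\Omega)\cup\{y\}$ at time $s$ started from $y$. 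The key point is that the restricted solution $g_s(y,v)$ can be expanded in the orthonormal eigenbasis of $H_{(\Lambda\setminus\Omega)\cup\{y\}}$: one has $g_s(y,v) = \sum_j e^{s\lambda_j}\phi_j(y)\phi_j(v)$, and since $y$ is (by the gap hypothesis $\tilde g_{\Omega,\Lambda}>0$, combined with $\tilde g_{A}>0$ for $A = (\Lambda\setminus\Omega)\cup\{y\}$) the site of maximal $\xi - \deg$ in $(\Lambda\setminus\Omega)\cup\{y\}$, the principal eigenfunction $\phi^{(1)}_{(\Lambda\setminus\Omega)\cup\{y\}}$ is positive everywhere and dominates; more importantly $\phi_j(y)\phi_j(v) \le \|\phi^{(1)}\|_2^{-2}\,\phi^{(1)}(y)\phi^{(1)}(v)\cdot(\text{something})$ — the standard trick here is that after normalising $\phi^{(1)}(y)=1$, one has $\phi_j(v) \le \phi^{(1)}(v)$ for all $j$ when $v$ is reached from $y$, so $g_s(y,v) \le \phi^{(1)}_{(\Lambda\setminus\Omega)\cup\{y\}}(v)\cdot g_s(y,y)$. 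Plugging this back in and recognising the resulting sum over the first segment of the walk (now ending at $y$) together with the return factor $g_{t-\sigma}(y,y)$ as precisely $u_{\Omega,\Lambda}(t,y)$, times the $L^2$-normalisation constant $\|\phi^{(1)}_{(\Lambda\setminus\Omega)\cup\{y\}}\|_2^2$ coming from un-normalising, yields (i).

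Part (ii) is then immediate: sum the inequality in (i) over $v \in \Lambda\setminus\Omega$ to get $\sum_{v\in\Lambda\setminus\Omega} u_{\Omega,\Lambda}(t,v) \le \sum_{y\in\Omega} u_{\Omega,\Lambda}(t,y)\|\phi_{(\Lambda\setminus\Omega)\cup\{y\}}\|_2^2 \sum_{v\in\Lambda\setminus\Omega}\phi_{(\Lambda\setminus\Omega)\cup\{y\}}(v)$; then divide by $\sum_{v\in\Lambda} u_{\Omega,\Lambda}(t,v) \ge \sum_{y\in\Omega} u_{\Omega,\Lambda}(t,y)$ (valid since $\Omega \subset \Lambda$ and all summands are nonnegative), and the $u_{\Omega,\Lambda}(t,y)$ factors get bounded by $1$ in the ratio in the right way — more carefully, $\frac{\sum_{y\in\Omega} a_y b_y}{\sum_{y\in\Omega} a_y} \le \max_y b_y \le \sum_y b_y$ with $a_y = u_{\Omega,\Lambda}(t,y) \ge 0$ and $b_y = \|\phi_{(\Lambda\setminus\Omega)\cup\{y\}}\|_2^2\sum_{v\in\Lambda\setminus\Omega}\phi_{(\Lambda\setminus\Omega)\cup\{y\}}(v)$.

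The main obstacle I anticipate is making the last-visit decomposition and the eigenfunction comparison fully rigorous simultaneously: one needs the bound $\phi_j(v) \le \phi^{(1)}(v)$ (equivalently, that the principal eigenfunction dominates every other eigenfunction pointwise once normalised at the spectral maximiser $y$), which is a Perron-Frobenius / positivity-of-the-resolvent statement that holds precisely because $y$ has strictly maximal $\xi - \deg$ in the restricted graph — and here is where the hypothesis $\tilde g_{\Omega,\Lambda}>0$ (which guarantees $\tilde g_{(\Lambda\setminus\Omega)\cup\{y\}}>0$ for each $y\in\Omega$, so that $y$ is the unique spectral maximiser of that restricted operator) is used. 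I would cite \cite[Lemma 2.4]{KLMStwocities09} for the structure of this argument and note that the only adaptation needed is carrying the $\deg$ terms through the Anderson Hamiltonian, which does not affect any of the positivity arguments since $H_\Lambda = \Delta + \xi$ still has off-diagonal entries $\mathbbm 1\{u\sim v\} \ge 0$ and the relevant comparisons are with $\xi - \deg$ on the diagonal.
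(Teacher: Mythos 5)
Your plan --- decompose $u_{\Omega,\Lambda}$ by the last visit to $\Omega$ and compare the post-$\Omega$ Feynman--Kac piece $g_s(y,v)$ to $g_s(y,y)$ via the principal eigenfunction of $H$ on $(\Lambda\setminus\Omega)\cup\{y\}$ --- is the right structure, and the target inequality $g_s(y,v)\le ||\phi^{(1)}||_2^2\,\phi^{(1)}(v)\,g_s(y,y)$ is indeed what closes the argument. But the step you offer to prove it is false: you claim that after normalising $\phi^{(1)}(y)=1$ one has $\phi_j(v)\le\phi^{(1)}(v)$ for all $j$, so that $g_s(y,v)=\sum_j e^{s\lambda_j}\phi_j(y)\phi_j(v)$ can be bounded termwise. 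The eigenfunctions $\phi_j$ with $j\ge 2$ are orthogonal to the strictly positive $\phi^{(1)}$ and therefore must change sign, and $\phi_j(y)$ can vanish so the proposed normalisation is not even well defined; there is no termwise domination of the signed spectral expansion by the principal mode, and the asserted bound does not follow this way.

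What actually works --- and this is why the paper invokes Proposition \ref{prop:FK time reversal} explicitly --- is to use time reversal to write $g_s(y,v)=g_s(v,y)$, decompose over the first hitting time $H_y$ of $y$ by the walk started from $v$, use the upper bound $g_u(y,y)\le e^{u\lambda^{(1)}}$ (which does follow from the spectral expansion, since $\sum_j\hat\phi_j(y)^2=1$), and then recognise the leftover expectation $\estart{\exp\{\int_0^{H_y}(\xi(X_s)-\lambda^{(1)})\,ds\}\mathbbm 1\{H_y<\tau\}}{v}$ as the probabilistic representation of $\phi^{(1)}(v)$ given in \eqref{eqn:eigenfunction rep}. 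Together with the lower bound $g_s(y,y)\ge e^{s\lambda^{(1)}}/||\phi^{(1)}||_2^2$ quoted from \cite{HollanderKoenigSantosPAMOnTrees}, this yields the desired comparison and hence (i) after integrating over the last-visit decomposition (with the minor caveat that the post-$\sigma$ segment must in fact avoid $y$ after time $\sigma$, which your description of $g_s(y,v)$ as the unrestricted PAM on $(\Lambda\setminus\Omega)\cup\{y\}$ glosses over). Your deduction of (ii) from (i) by summing over $v$ and bounding the weighted average is correct.
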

\begin{proof}
It follows from an eigenvalue expansion exactly as in the lower bound of \cite[Lemma 2.1]{HollanderKoenigSantosPAMOnTrees} that for any $y \in \Ti$, 
\[
\estart{\exp\left\{\int_0^u\xi(X_s)~\text{d} s\right\} \mathbbm{1} {\{X_t=y, \tau_{\Lambda} > t, H_{\Omega \setminus \{y\}} > t\}}}{y} \geq \frac{e^{u \Lambda^{(1)}_{\Lambda \setminus \Omega \cup \{y\}}}\phi_{(\Lambda \setminus \Omega) \cup \{y\}}(y)}{||\phi_{(\Lambda \setminus \Omega) \cup \{y\}}||^2_2} = \frac{e^{u \Lambda^{(1)}_{\Lambda \setminus \Omega \cup \{y\}}}}{||\phi_{(\Lambda \setminus \Omega) \cup \{y\}}||^2_2}.
\]
(note that \cite[Lemma 2.1]{HollanderKoenigSantosPAMOnTrees} is for the normalised eigenvectors, hence the extra renormalisation constant in our case). Recall that we also established the time reversal property for the Feynman-Kac formula in Proposition \ref{prop:FK time reversal}. The result therefore follows by exactly the same proof as in \cite[Lemma 2.1]{KLMStwocities09}, which gives the corresponding statement on $\Z^d$.
\end{proof}

\subsection{Choice of $\Lambda_t$}\label{sctn:choice of Lambdat}
To apply Lemma \ref{lem:u efunction comp}, for each $t>0$ we will need to make an appropriate choice of a set $\Lambda$. Following \cite[Section 5.1]{KLMStwocities09}, recall that $z=\frac{2\alpha}{\alpha - 2}$ under Assumption \ref{assn:as} and let us define for $i=1,2$ the sets
\[
\Gamma^{(i)}_t := \left\{ v \in V(\Ti): d(v, \hat{Z}_t^{(i)}) + \min \{ |v|, |\hat{Z}_t^{(i)}|\} \leq  \left( 1 + (\log t)^{-z}\right)|\hat{Z}_t^{(i)}|\} \right\},
\]
i.e. the set of all points contained within a path of length $ \left( 1 + (\log t)^{-z}\right)|\hat{Z}_t^{(i)}|$ that also contains $\hat{Z}_t^{(i)}$. We will make the following choices for $\Lambda$ and $\Omega$ at time $t$:
$$\Lambda_t:=\Gamma_t^{(1)}\cup \Gamma_t^{(2)} \text{ and } \Omega_t:=\{\hat Z_t^{(1)},\hat Z_t^{(2)}\}.$$ Note that, on the event $\{\tilde g_{\Omega_t,\Lambda_t} > 0\}$
$$\tilde g_{\Omega_t,\Lambda_t} = \min\{\tilde g_{\Lambda\setminus \{\hat Z_t^{(1)}\}},\tilde g_{\Lambda\setminus\{\hat Z_t^{(2)}\}}\}.$$ 

We first show in Lemma \ref{lem:Gammat gap} that for all $\epsilon>0$, we have that $\tilde{g}_{\Omega_t,\Lambda_t} \geq a(t)(\log t)^{-\epsilon}$ eventually almost surely. Then we show in Lemma \ref{lem:boundeigenfunction} that we can restrict to a direct path of $(X_s)_{s \geq 0}$ to bound \eqref{eqn:eigenfunction rep}. Estimating \eqref{eqn:eigenfunction rep} by the contribution by the direct path, we then show that the value of the eigenfunction summed over points not in $\{\hat{Z}_t^{(1)}, \hat{Z}_t^{(2)}\}$ is negligible in Lemma \ref{lem:loceigenfunction}. Finally we use Lemma \ref{lem:u efunction comp} to transfer this result back to the solution of the PAM.

It will first be important to bound the volume of $\Lambda_t$.

\begin{lem}\label{lem:volGamma}
Under Assumption \ref{assn:as}, for any $\delta > 0$, eventually $P$-almost surely, for each $i=1,2$:
\[
\#\Gamma_t^{(i)}\leq \frac{1}{2}(\log t)^{-(z-\delta)} |\hat{Z}_t^{(i)}|^2 \leq \frac{1}{2}(\log t)^{-(z-\delta)+\frac{2}{\alpha - 2}} r(t)^2.
\]
\end{lem}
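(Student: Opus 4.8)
\textbf{Proof strategy for Lemma \ref{lem:volGamma}.}

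The plan is to bound $\#\Gamma_t^{(i)}$ by decomposing it according to the ``branching point'' where a vertex $v \in \Gamma_t^{(i)}$ leaves the direct path $[[O, \hat Z_t^{(i)}]]$. Write $w = \hat Z_t^{(i)}$ and $R = |w|$. By definition, $v \in \Gamma_t^{(i)}$ means $d(v,w) + \min\{|v|, |w|\} \leq (1 + (\log t)^{-z})R$. First I would observe that any such $v$ either lies on an ancestor-line relevant to $w$, or hangs off some vertex $u$ on the direct path $[[O,w]]$; in the latter case, if $u$ is at distance $k$ from $w$ along that path (so $|u| = R-k$), then $d(v,u) = d(v,w) - k$ when $v$ is ``below'' $u$ off the path, and the constraint forces $d(v,u) \leq (\log t)^{-z} R + (\text{something controlled by } \min\{|v|,|w|\})$. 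The upshot is that $v$ lies in a ball of radius roughly $(\log t)^{-z} R$ around some vertex $u \in [[O,w]]$, and moreover the total ``budget'' of extra length $(\log t)^{-z}R$ must be shared, so that $\sum_u (\text{radius of excursion at } u) \leq (\log t)^{-z} R$ approximately.

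The key estimate is then a volume bound: by Corollary \ref{cor:volume bounds} under Assumption \ref{assn:as}, $\#B(u, \rho) \leq \rho^2 (\log \rho)^{\epsilon}$ eventually almost surely (uniformly over $u$ in a ball of polynomial radius, after a Borel--Cantelli argument over the finitely many relevant scales and centres, using that $\hat Z_t^{(i)}$ and hence all relevant $u$ lie in $B_{R(t)}$ — here one also uses Proposition \ref{prop:Zt bounds as}$(iii)$ to know $R = |\hat Z_t^{(i)}| \leq r(t)(\log t)^{1/(\alpha-2) + \epsilon}$, or the whp bound $(i')$). Summing over the branching points $u$ along $[[O,w]]$, and using convexity/superadditivity of $\rho \mapsto \rho^2$ together with the total-budget constraint $\sum \rho_u \leq (\log t)^{-z}R$, gives $\#\Gamma_t^{(i)} \leq \sum_u \rho_u^2 (\log t)^{\epsilon'} \leq \big(\sum_u \rho_u\big)^2 (\log t)^{\epsilon'} \leq (\log t)^{-2z + \epsilon'} R^2$. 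Hmm — this is actually \emph{smaller} than claimed; the lemma only asks for $(\log t)^{-(z-\delta)}R^2$, so a cruder bound suffices. Indeed even bounding $\#\Gamma_t^{(i)}$ by $R$ (the path length) times $\max_u \#B(u, (\log t)^{-z}R) \leq R \cdot ((\log t)^{-z}R)^2 (\log t)^{\epsilon}$ would be too lossy by a factor $R$; so one does need the shared-budget observation to avoid the extra factor of $R$, but one does \emph{not} need the sharp convexity step — just that each excursion radius is at most the total budget and that there are at most $R$ of them contributing, with the volumes summing correctly.

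Concretely, the cleanest route: let $B := (\log t)^{-z} R$. Every $v \in \Gamma_t^{(i)}$ lies within graph-distance $B$ of the path $P := [[O \wedge v, w]]$ in the following sense — its excursion off $P$ has length at most $B$ — so $\Gamma_t^{(i)} \subseteq \bigcup_{u \in P'} B^{T_u}(u, B)$ where $P'$ is the set of vertices on $[[O,w]]$ together with the relevant ancestors, and $B^{T_u}$ denotes the ball inside the subtree hanging off $u$. Crucially, these subtree-balls for distinct $u$ are genuinely disjoint (different subtrees), and the sum of their radii is bounded by the single budget $B$ because excursions off the path at distinct points consume disjoint portions of the length budget $d(v,w) + \min\{|v|,|w|\} - R \leq (\log t)^{-z}R$. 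Therefore $\#\Gamma_t^{(i)} \leq \sum_{u} \#B^{T_u}(u, \rho_u)$ with $\sum_u \rho_u \leq B$, and applying the a.s. volume bound $\#B^{T_u}(u,\rho) \leq C\rho^2(\log t)^{\delta/2}$ (valid eventually a.s. simultaneously for all relevant $u, \rho$ by Corollary \ref{cor:volume bounds} and a union bound over $B_{R(t)}$), we get $\#\Gamma_t^{(i)} \leq C(\log t)^{\delta/2} \sum_u \rho_u^2 \leq C(\log t)^{\delta/2} B^2 = C(\log t)^{\delta/2 - 2z} R^2 \leq \tfrac12 (\log t)^{-(z-\delta)} R^2$ for $t$ large, since $2z > z - \delta$. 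The second inequality of the lemma then follows from $R = |\hat Z_t^{(i)}| \leq r(t)(\log t)^{1/(\alpha-2)}$ up to lower-order corrections absorbed into $\delta$ (Proposition \ref{prop:Zt bounds as}$(iii)$).

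\textbf{Main obstacle.} The delicate point is making the ``disjoint length budget'' claim fully rigorous on a tree: one must carefully argue that for $v \in \Gamma_t^{(i)}$, the quantity $d(v,w) + \min\{|v|,|w|\} - R$ genuinely dominates the excursion length of $v$ off the direct path $[[O,w]]$ (this uses the tree geometry — any path realizing $d(v,w)$ from $v$ to $w$ must rejoin $[[O,w]]$, and the $\min\{|v|,|w|\}$ term is exactly what accounts for whether $v$'s branch point precedes or follows its own depth), and that summing these over all $v$ decomposes correctly into per-branch-point contributions. Once that combinatorial bookkeeping is set up, the probabilistic input is just the already-established volume bound of Corollary \ref{cor:volume bounds} together with a routine Borel--Cantelli over dyadic times and a union bound over centres in $B_{R(t)}$, exactly as in the proofs of Lemmas \ref{lem:supdeg} and \ref{lem:no of paths}.
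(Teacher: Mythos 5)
The central claim of your argument --- that ``the sum of their radii is bounded by the single budget $B$ because excursions off the path at distinct points consume disjoint portions of the length budget'' --- is false, and this is not a bookkeeping detail that can be patched but a genuine mathematical error. Write $w = \hat Z_t^{(i)}$, $R = |w|$, $B = (\log t)^{-z}R$. For a vertex $v$ whose branch point on $[[O,w]]$ is $u$ (with $|v| \leq R$), the defining constraint reads $R + 2\,d(v,u) \leq (1+(\log t)^{-z})R$, i.e. $d(v,u) \leq B/2$. This bound holds \emph{for each $v$ separately}: two vertices $v_1, v_2 \in \Gamma_t^{(i)}$ with distinct branch points $u_1 \neq u_2$ are each allowed excursion length up to $B/2$, and there is no mechanism forcing $d(v_1,u_1) + d(v_2,u_2) \leq B$. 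Consequently your quantities $\rho_u := \max\{d(v,u) : v \in \Gamma_t^{(i)},\ \text{branch point } u\}$ satisfy $\rho_u \leq B/2$ for each $u$, but $\sum_u \rho_u$ can be as large as $R\cdot B/2$, not $B$. Without the superadditivity step $\sum_u \rho_u^2 \leq (\sum_u \rho_u)^2 \leq B^2$, your argument collapses back to the crude bound $R \cdot B^2(\log t)^\epsilon = (\log t)^{-2z+\epsilon}R^3$, which you yourself correctly noted is off by a factor of $R \approx r(t)$ from what is needed.

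What actually rescues the bound is not a deterministic length constraint but the probabilistic structure of the Galton--Watson subtrees hanging off the spine $[[O,w]]$. The paper's proof decomposes $\Gamma_t^{(i)}$ as the union of the path (volume $R$) with the subtrees $(T_j)$ rooted along it, and exploits the tail $\prb{\#T \geq n} \sim c n^{-1/2}$ (Lemma \ref{lem:finite GW prob bounds} with $\beta=2$): (a) the number of subtrees is $O(R\lambda^{1/2})$ since the degrees along the spine have size-biased tails; (b) of these, only $O((\log t)^z\lambda^{1/2})$ have volume $\geq r^2$ where $r := B$, because each subtree is large with probability only $\sim r^{-1}$; (c) the \emph{total} volume of the small subtrees is $O((\log t)^z\lambda\, r^2)$, because the typical small subtree has volume $O(r)$, not $O(r^2)$; (d)--(e) for the few large subtrees one only counts the ball of radius $r$ at the root. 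Summing gives $O((\log t)^{\delta-z}R^2)$. The essential mechanism --- most spine subtrees are tiny, so summing truncated volumes gives $R \cdot r$ rather than $R \cdot r^2$ --- is genuinely probabilistic and has no deterministic counterpart; Corollary \ref{cor:volume bounds} applied vertex-by-vertex along the spine, as you propose, cannot see this cancellation. (Your final Borel--Cantelli / dyadic-time step would also need Lemma \ref{lem:BC counting} to handle the fact that $\hat Z_t^{(i)}$ itself moves with $t$, but that is secondary to the gap above.)
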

\begin{proof}
We just bound $\#\Gamma^{(1)}_t$ (the same argument works for $\#\Gamma^{(2)}_t$). Note that $d=2$ under Assumption \ref{assn:as}. Set $R = |\hat{Z}_t^{(1)}|$, $r= \frac{1}{(\log t)^z} |\hat{Z}_t^{(1)}|$, and let $\gamma^{(1)}_{O,t}$ denote the direct path from $O$ to $\hat{Z}_t^{(1)}$. Note that, if $v \in \gamma^{(1)}_{O,t}$ and $u_1, \ldots, u_{\deg v - 1}$ are the offspring of $v$, then 
\begin{equation}\label{eqn:size biased}
\p{\deg v = k + 1} = \mu_k \sum_{i=1}^k \pcond{u_i \preceq \hat{Z}_t^{(1)}}{\deg v = k + 1}{} \leq k\mu_k.
\end{equation}
We deduce that the offspring tails $\hat{\mu}$ along the path $\gamma^{(1)}_{O,t}$ independently satisfy a size-biased tail bound, which is therefore also super-polynomial under Assumption \ref{assn:as}. Moreover, due to the Galton-Watson structure, the subtrees rooted at each of the $u_i$ are independent Galton-Watson trees, of which one is conditioned to survive to $\hat{Z}_t^{(1)}$, a second of which may be conditioned to survive forever, and the rest of which are unconditioned. Let $M=\sum_{v \in \gamma^{(1)}_{O,t}} (\deg v - 2)+1$ be the number of such subtrees attached to $\gamma^{(1)}_{O,t}$ (including at the final vertex $Z_t^{(1)}$). We deduce the following (working under Assumption \ref{assn:as} throughout).
\begin{enumerate}[(a)]
    \item For any $p>0, q< \infty$, there exists $c<\infty$ such that $\p{\sum_{v \in \gamma^{(1)}_{O,t}} \deg v \geq R \lambda^{1/2}} \leq c\lambda^{-q}$ (e.g. by \cite[Lemma A.3(iii)]{ArcherRWDecGWTrees})
    \item Conditionally on $\sum_{v \in \gamma^{(1)}_{O,t}} \deg v < R \lambda^{1/2}$, the number $N$ of subtrees attached to $\gamma^{(1)}_{O,t}$ with volume exceeding $r^{2}$ is stochastically dominated by two more than a \textsf{Binomial}$\left(R \lambda^{1/2}, r^{-1} \right)$ random variable (by Lemma \ref{lem:finite GW prob bounds}, since $\beta = 2$ under Assumption \ref{assn:as}). Therefore $\p{N > 2(\log t)^z \lambda^{1/2}} \leq e^{-c(\log t)^z \lambda^{1/2}}$ (by a Chernoff bound).
    \item Let $(T_i)_{i=1}^{M}$ be the subtrees attached to the path $\gamma^{(1)}_{O,t}$. Conditionally on $N \leq 2(\log t)^z \lambda^{1/2}$, we have by \cite[Lemma A.1]{ArcherRWDecGWTrees} and a union bound that $\p{\sum_{i=1}^M \#V(T_i) \mathbbm{1}\{\#V(T_i) \leq r^2\} \geq (\log t)^z \lambda r^2} \leq (\log t)^z \lambda^{1/2}e^{-c\lambda^{1/2}}$.
    \item Now consider $\#V(T_i)$ for some Galton-Watson tree where $\#V(T_i) \geq r^2$. If $\rho_i$ is the ``root'' of $T_i$, we then have for any $q< \infty$ that $\p{\#B_{T_i}(\rho_i, r) \geq \lambda r^2} \leq c\lambda^{-3q}$ by Proposition \ref{prop:AR vol growth and exit time}$(ii)$. (Proposition \ref{prop:AR vol growth and exit time}$(ii)$ was written for $\Ti$ but applies to a finite GW tree by the same proof).
    \item It follows from the previous point that $\sum_{i=1}^M B_{T_i}(\rho_i, r) \mathbbm{1}\{\#V(T_i) \geq r^2\} \overset{s.d.}{\preceq} r^2 \left( Y + \sum_{i=1}^N X_i\right)$ where $\p{X_i \geq \lambda} \leq c\lambda^{-3q}$ independently for each $i$, and $\p{Y \geq \lambda} \leq \lambda^{-3q}$ too ($Y$ is the sum of the volumes of the balls of radius $r$ in the two subtrees that may be conditioned to survive). Conditionally on $N \leq 2(\log t)^z \lambda^{1/2}$, we therefore have from (d) and \cite[Lemma A.3(iii)]{ArcherRWDecGWTrees} that for any $q< \infty$, there exists $c< \infty$ such that
    \begin{align*}
    \p{\sum_{i=1}^M \#B_{T_i}(\rho_i, r) \mathbbm{1}\{\#V(T_i) \geq r^2\} \geq r^2 (\log t)^z \lambda} \leq \p{Y + \sum_{i=1}^N X_i \geq N \lambda^{1/2}} &\leq c\lambda^{-q}.
    \end{align*}
\end{enumerate}
We combine the bounds in (a), (b), (c) and (e) above in the following union bound (to include the vertices in $\gamma^{(1)}_{O,t}$ we have simply replaced $2(\log t)^z \lambda r^2$ with $4(\log t)^z \lambda r^2$ below):
\begin{align*}
\p{\#\Gamma^{(1)}_t  \geq 4(\log t)^z r^2 \lambda} &\leq \p{\sum_{v \in \gamma^{(1)}_{O,t}} \deg v \geq R \lambda^{1/2}} + \pcond{N \geq 2(\log t)^z \lambda^{1/2}}{\sum_{v \in \gamma^{(1)}_{O,t}} \deg v < R \lambda^{1/2}}{} \\
&+ \pcond{\sum_{i=1}^M \#V(T_i) \mathbbm{1}\{V(T_i) \leq r^2\} \geq r^2 (\log t)^z \lambda}{N < 2(\log t)^z \lambda^{1/2}}{} \\
&+ \pcond{\sum_{i=1}^M \#B_{T_i}(\rho_i, r) \mathbbm{1}\{V(T_i) \geq r^2\} r^2 (\log t)^z \lambda}{N < 2(\log t)^z \lambda^{1/2}}{} \\
&\leq (\log t)^z \lambda^{1/2}e^{-c\lambda^{1/2}} + C\lambda^{-q}.
\end{align*}
Taking $\lambda = (\log t)^{\delta}$ and recalling that $r= \frac{1}{(\log t)^z} |\hat{Z}_t^{(1)}|$ we get for any $q>0$ that there exists $C<\infty$ such that
\[
\p{\#\Gamma^{(1)}_t  \geq 4(\log t)^{z+\delta-2z} |\hat{Z}_t^{(1)}|^2} \leq C(\log t)^{-q}.
\]
Therefore, taking $q>1$, applying a union bound and using Lemma \ref{lem:BC counting}, we deduce that
\begin{align*}
\p{\#\Gamma^{(1)}_t  \geq 4 (\log t)^{\delta-z} |\hat{Z}_t^{(1)}|^2 \text{ i.o.}} &\leq \limsup_{T \to \infty} \sum_{t \geq T: \hat{Z}_t^{(1)} \neq \hat{Z}_{t^-}^{(1)}} \p{\#\Gamma^{(1)}_t  \geq 4(\log t)^{z+\delta-2z} |\hat{Z}_t^{(1)}|^2}\\
&\leq \limsup_{N \to \infty} \sum_{n=N}^{\infty} \sum_{\substack{t \in [2^n, 2^{n+1}]: \\ \hat{Z}_t^{(1)} \neq \hat{Z}_{t^-}^{(1)}}} C(\log t)^{-q} \\
&\leq \limsup_{N \to \infty} \left\{o(1) + \sum_{n=N}^{\infty} cn^{\epsilon} n^{-q} \right\} =0.
\end{align*}

The same bound holds for $\#\Gamma^{(2)}_t$ in place of $\#\Gamma^{(1)}_t$ by exactly the same arguments. Finally, since $\delta > 0$ was arbitrary, we can replace $4$ with $\frac{1}{2}$ in the statement of the result. The final upper bound follows from applying Proposition \ref{prop:Zt bounds as}$(iii)$.
\end{proof}

As a consequence of Lemma \ref{lem:volGamma}, we deduce the following.

\begin{lem}\label{lem:Gammat gap}
$P$-almost surely under Assumption \ref{assn:as}, we have for all sufficiently large $t$ that
\begin{enumerate}[(i)]
\item $\sup_{v \in \Lambda_t \setminus \{\hat{Z}_t^{(1)}, \hat{Z}_t^{(2)}\}} \xi(v) \leq (\log t)^{-\frac{1}{\alpha}} a(t)$,
\item $\tilde{Z}_{\Lambda_t \setminus \{\hat{Z}_t^{(1)}\}} = \hat{Z}_t^{(2)}$ and $\tilde{Z}_{\Lambda_t \setminus \{\hat{Z}_t^{(2)}\}} = \hat{Z}_t^{(1)}$,
\item For any $\epsilon>0$ and each $i=1,2$, $\tilde{g}_{\Lambda_t \setminus \{\hat{Z}_t^{(i)}\}} \geq a(t)(\log t)^{-\epsilon}$.
\end{enumerate}
\end{lem}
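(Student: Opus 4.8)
The plan is to combine the volume bound from Lemma~\ref{lem:volGamma} with the extremal-potential estimates from Section~\ref{sec:Potential}, exploiting that the potential on $\Lambda_t$ is essentially a collection of i.i.d.\ Pareto variables over a set of controlled (polylogarithmically small) cardinality, while the two special vertices $\hat{Z}_t^{(1)}, \hat{Z}_t^{(2)}$ carry much larger potential by Proposition~\ref{prop:Zt bounds as}. For part~(i), I would argue as follows. By Lemma~\ref{lem:volGamma}, eventually almost surely $\#\Lambda_t \leq (\log t)^{C'} r(t)^2$ for some constant $C'$. Conditionally on $\Ti$, the potential values on $\Lambda_t$ are i.i.d.\ Pareto($\alpha$); but we must be careful that $\Lambda_t$ itself is random and depends on $\xi$ through $\hat{Z}_t^{(1)}, \hat{Z}_t^{(2)}$. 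The standard workaround (as in the proofs of Lemma~\ref{lem:useful things for Ft and Gt} and Lemma~\ref{lem:BC counting}) is to enlarge: on the event that $\hat{Z}_t^{(i)} \in B_{r(t)(\log t)^\epsilon}$ (which holds eventually a.s.\ by Proposition~\ref{prop:Zt bounds as}$(iii)$) and that $\#B_{r(t)(\log t)^{2}} \leq r(t)^2 (\log t)^{C''}$, we have $\Lambda_t \subset B_{r(t)(\log t)^{2}}$, so it suffices to bound $\sup_{v \in B_{r(t)(\log t)^2} : \xi(v) \leq \xi^{(3)}_{B_{r(t)(\log t)^2}}} \xi(v)$, i.e.\ the third-largest value of $\xi$ on a ball of radius $\approx r(t)(\log t)^2$. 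But Lemma~\ref{cor:whppotbo} (or rather its third-order analogue, obtained by the same Chernoff/union-bound argument used there and in Lemma~\ref{cor:sup xi LB}) gives that, with probability summable along $t_n = 2^n$, this third-largest value is at most $\big(r(t)(\log t)^2\big)^{2/\alpha}(\log t)^{-\epsilon'} = a(t)(\log t)^{-\epsilon'+4/\alpha}$, after absorbing constants; choosing the exponent in Lemma~\ref{cor:whppotbo} large enough and applying Borel--Cantelli and monotonicity between $t_n$ and $t_{n+1}$ yields part~(i). Strictly one wants to exclude $\hat Z_t^{(1)},\hat Z_t^{(2)}$ from the supremum, which is exactly why the third-largest (rather than the largest) value of $\xi$ on the ball is the relevant quantity.

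For part~(ii), I would use part~(i) together with Proposition~\ref{prop:max degree Ft Et}$(ii)$ and Proposition~\ref{prop:Zt bounds as}$(ii)$, mimicking the proof of Lemma~\ref{lem:equalZonB}. By Proposition~\ref{prop:Zt bounds as}$(ii)$, eventually a.s.\ $\xi(\hat{Z}_t^{(i)}) \geq \psi_t(\hat{Z}_t^{(i)}) \geq a(t)(\log t)^{-\epsilon}$ for $i = 1,2$, whereas by part~(i) every other vertex $v \in \Lambda_t$ has $\xi(v) \leq a(t)(\log t)^{-1/\alpha}$; taking $\epsilon < 1/\alpha$ shows there is a genuine gap of order $a(t)(\log t)^{-\epsilon}$ between $\{\xi(\hat Z_t^{(1)}), \xi(\hat Z_t^{(2)})\}$ and the rest. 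To pass from $\xi$ to $\xi - \deg$ I need the degrees of all vertices involved to be negligible compared to this gap. For the two special vertices this follows because $\hat{Z}_t^{(i)}$ is a uniform point of a ball of volume $\approx r(t)^2$ (potential independent of tree structure), so $\deg \hat{Z}_t^{(i)} \leq (\log t)^{B}$ eventually a.s.\ by the same uniform-degree tail-bound argument as in Proposition~\ref{prop:max degree Ft Et}$(ii)$ and Lemma~\ref{lem:Y(t,v) deg bound}; for all other vertices $v$ in $\Lambda_t$, if $\xi(v) - \deg(v)$ were to exceed $\xi(\hat Z_t^{(2)}) - \deg(\hat Z_t^{(2)})$ we would need $\deg(v) \leq \xi(v) - \xi(\hat Z_t^{(2)}) + \deg(\hat Z_t^{(2)}) \leq \deg(\hat Z_t^{(2)})$, exactly the trick at the end of Proposition~\ref{prop:max degree Ft Et}. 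Hence the maximiser of $\xi - \deg$ over $\Lambda_t \setminus \{\hat Z_t^{(1)}\}$ is $\hat Z_t^{(2)}$, and symmetrically; this is part~(ii), and part~(iii) then drops out since
\[
\tilde{g}_{\Lambda_t \setminus \{\hat{Z}_t^{(i)}\}} = \big[\xi(\hat Z_t^{(j)}) - \deg \hat Z_t^{(j)}\big] - \max_{v \in \Lambda_t \setminus \{\hat Z_t^{(1)}, \hat Z_t^{(2)}\}} \big[\xi(v) - \deg v\big] \geq a(t)(\log t)^{-\epsilon} - a(t)(\log t)^{-1/\alpha} - (\log t)^{B} \geq \tfrac12 a(t)(\log t)^{-\epsilon}
\]
for $\epsilon < 1/\alpha$ and all sufficiently large $t$ (here $\{i,j\} = \{1,2\}$).

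I expect the main obstacle to be the measurability/independence bookkeeping in part~(i): $\Lambda_t$ is not a deterministic set, it is defined via $\hat Z_t^{(1)}, \hat Z_t^{(2)}$ which themselves depend on the whole potential field, so one cannot directly treat $(\xi(v))_{v \in \Lambda_t}$ as an i.i.d.\ sample of known size. The resolution — conditioning on $\Ti$, dominating $\Lambda_t$ by a deterministic ball via Lemma~\ref{lem:volGamma} and Proposition~\ref{prop:Zt bounds as}$(iii)$, and then estimating an order statistic of a genuinely i.i.d.\ Pareto family over that fixed ball — is exactly the device used repeatedly in Section~\ref{sec:Potential} (e.g.\ Lemma~\ref{lem:useful things for Ft and Gt}, Lemma~\ref{lem:BC counting}), so it should go through, but it needs to be spelled out with some care. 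A secondary technical point is the Borel--Cantelli step: because $\hat Z_t^{(i)}$ and hence $\Lambda_t$ may jump within a dyadic block $[2^n, 2^{n+1}]$, one should use Lemma~\ref{lem:BC counting} to control the number of distinct configurations per block (as in the proof of Lemma~\ref{lem:volGamma}), ensuring the union bound over the block still has a summable contribution.
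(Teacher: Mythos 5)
There is a genuine gap in your proposal for part (i), and it is fatal. You propose to dominate $\Lambda_t$ by the ball $B_{r(t)(\log t)^2}$ and then to bound the third-largest value of $\xi$ on that ball by $a(t)(\log t)^{-\epsilon'+4/\alpha}$. But the third-largest value of $\xi$ on $B_{r(t)(\log t)^2}$ is not small: Lemma~\ref{cor:sup xi LB} says that for $i=1,2,3$, eventually almost surely $\xi^{(i)}_{r} \gg r^{d/\alpha}(\log\log r)^{-c}$, and with $d=2$ and $r = r(t)(\log t)^2$ this gives $\xi^{(3)}_{B_{r(t)(\log t)^2}} \gg a(t)(\log t)^{4/\alpha}(\log\log t)^{-c}$ (recall $r(t)^{2/\alpha}=a(t)$), which is much \emph{larger} than the target $a(t)(\log t)^{-1/\alpha}$, not smaller. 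Correspondingly, the probabilistic step you sketch does not work either: you would need to apply Lemma~\ref{cor:whppotbo} with $\lambda=(\log t)^{-\epsilon'}<1$, at which point the bound $C\lambda^{-(\alpha-\epsilon)}$ exceeds~$1$ and is vacuous. In short, two vertices of exceptionally high potential were removed, but on a ball of radius $\gtrsim r(t)$ there are many more sites with $\xi$ of order $a(t)$ or larger.

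The missing idea is that $\Lambda_t$ is \emph{not} comparable to a ball: Lemma~\ref{lem:volGamma} gives $\#\Lambda_t \leq (\log t)^{-(z-\delta)+\frac{2}{\alpha-2}}\, r(t)^2$, and since $z=\frac{2\alpha}{\alpha-2}$ the exponent $-(z-\delta)+\frac{2}{\alpha-2}$ is \emph{negative} for small $\delta$ — the set $\Lambda_t$ is a thin tube around two geodesics, polylogarithmically smaller than $r(t)^2$. Using $a(t)^{-\alpha}r(t)^2 = 1$, the paper's union bound over $\Lambda_t \setminus \{\hat Z_t^{(1)},\hat Z_t^{(2)}\}$ with the Pareto tail $(\log t)^{m\alpha}a(t)^{-\alpha}$ then yields $\p{\sup \xi(v) \geq (\log t)^{-m}a(t)}\leq (\log t)^{-y}$ with $y>1$ precisely because of this saving of $(\log t)^{-(z-\delta)+\frac{2}{\alpha-2}}$; enlarging to a containing ball destroys exactly this factor and the argument collapses. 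The independence/measurability worry you flag is handled in the paper more cheaply than your enlargement: one conditions on $\Ti$ and the locations of $\hat Z_t^{(1)},\hat Z_t^{(2)}$ (which determines $\Lambda_t$), and uses that conditioning on $v\neq \hat Z_t^{(1)},\hat Z_t^{(2)}$ only stochastically lowers $\xi(v)$; the resulting Borel--Cantelli argument then uses Lemma~\ref{lem:BC counting} to control the number of configurations per dyadic block, as you correctly anticipated. Your parts (ii) and (iii) are broadly in the right direction (though more elaborate than necessary — the paper's (ii) follows directly from (i), Proposition~\ref{prop:Zt bounds as}$(ii)$ and $\xi(v)-\deg v \leq \xi(v)$, with no need for a degree bound on $\hat Z_t^{(i)}$), but they rest on (i), so the overall argument does not go through as written.
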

\begin{proof} 
\begin{enumerate}[(i)]
\item Choose some $\delta < \frac{2}{\alpha-2}$. We have from Lemma \ref{lem:volGamma} and Proposition \ref{prop:Zt bounds as}$(iii)$ that
\[
\#\Lambda_t\leq (\log t)^{-(z-\delta)+\frac{2}{\alpha - 2}} r(t)^2
\]
eventually almost surely. On this event, since the potentials at different vertices are independent of each other, and since conditioning on $u \neq \hat{Z}^{(1)}_t, \hat{Z}^{(2)}_t$ can only (stochastically) reduce $\xi (u)$, we have from a union bound that for all sufficiently large $t$ and any $m>0$,
\begin{align*}
\p{\sup_{v \in \Lambda_t \setminus \{\hat{Z}_t^{(1)}, \hat{Z}_t^{(2)}\}} \xi(v) \geq (\log t)^{-m} a(t)} &\leq (\log t)^{-(z-\delta)+\frac{2}{\alpha - 2}} r(t)^2  (\log t)^{m\alpha} a(t)^{-\alpha} \leq (\log t)^{-y},
\end{align*}
where $y=(z-\delta)-\frac{2}{\alpha - 2}-m\alpha>1$ by our choice of $m$ and $z$. Then, by Lemma \ref{lem:BC counting} we can choose $\epsilon > 0$ small enough that
\begin{align*}
\p{\sup_{v \in \Lambda_t \setminus \{\hat{Z}_t^{(1)}, \hat{Z}_t^{(2)}\}} \xi(v) \geq (\log t)^{-m} a(t) \text{ i.o.}} &\leq \limsup_{T \to \infty} \sum_{t \geq T: \hat{Z}_t^{(2)} \neq \hat{Z}_{t^-}^{(2)}} \p{\sup_{v \in \Lambda_t \setminus \{\hat{Z}_t^{(1)}, \hat{Z}_t^{(2)}\}} \xi(v) \geq (\log t)^{-m} a(t)}\\
&\leq \limsup_{N \to \infty} \left\{ o(1) +  \sum_{n=N}^{\infty} \sum_{\substack{t \in [2^n, 2^{n+1}]: \\ \hat{Z}_t^{(2)} \neq \hat{Z}_{t^-}^{(2)}}} (\log t)^{-y} \right\}\\
&\leq \limsup_{N \to \infty} \left\{ o(1) + \sum_{n=N}^{\infty} cn^{\epsilon} n^{-y} \right\}=0.
\end{align*}
This gives the result.
\item Take some $\epsilon < m$. It follows from $(i)$ and Proposition \ref{prop:Zt bounds as}$(ii)$ that eventually almost surely, for each $i=1,2$,
\begin{align*}
    \xi(\hat Z_t^{(i)})-\deg(\hat Z_t^{(i)})\geq a(t)(\log t)^{-\epsilon} > a(t) (\log t)^{-\frac{1}{\alpha}} &\geq \sup_{v \in \Lambda_t \setminus \{\hat{Z}_t^{(1)}, \hat{Z}_t^{(2)}\}} (\xi(v)-\deg(v)),
\end{align*}
\item For $0<\epsilon<m$, the statement follows from (i) since $\xi(\hat{Z}_t^{(i)}) - \deg \hat{Z}_t^{(i)} \geq 2a(t) (\log t)^{-\epsilon}$ eventually almost surely by Proposition \ref{prop:Zt bounds as}$(ii)$ (clearly this implies the result for all $\epsilon \geq m$ as well).
\end{enumerate}
\end{proof}

We now use Lemma \ref{lem:Gammat gap} to control the paths contributing to \eqref{eqn:eigenfunction rep}, first defining some notation.

Given $x \in \Lambda_t$, we define the set of all paths from $x$ to $ \hat{Z}^{(1)}_t$ that end on the first hit to $ \hat{Z}^{(1)}_t$ as $\Gamma^{(1)}_{x,t}$. Similarly, if $x \in \Gamma^{(2)}_t$ let $\Gamma^{(2)}_{x,t}$ denote the set of all paths from $x$ to $\hat{Z}^{(2)}_t$ that end on the first hit to $ \hat{Z}^{(2)}_t$. If $x \in \Gamma^{(1)}_t \cup \Gamma^{(2)}_t$ we denote the direct path from $x$ to $\hat{Z}_t^{(1)}$ (respectively $\hat{Z}_t^{(2)}$) as $\gamma^{(1)}_{x,t}$ (respectively $\gamma^{(2)}_{x,t}$). Given $t>0$, we also introduce the notation $\pi(X_{[0,t]})$ to denote the path that consists of all the steps taken by the random walk $(X_s)_{s\geq 0}$  between times $0$ and $t$. 

For $t > 0$ and $x \in \Lambda_t$, we let
\begin{align}\label{eqn:gambad def}
\begin{split}
\gambadi &:= \left\{\gamma \in \Gamma^{(1)}_{x,t}: \prod_{v \in \gamma\setminus \gamma_{x,t}^{(1)}} \frac{\deg v}{\deg v - [\xi(v) + \deg \hat{Z}^{(1)}_{t} - \xi( \hat{Z}^{(1)}_{t} )]} \geq 1\right\}, \\
\gambadii &:= \left\{\gamma \in \Gamma^{(2)}_{x,t} : \prod_{v \in \gamma\setminus \gamma_{x,t}^{(2)}} \frac{\deg v}{\deg v - [\xi(v) + \deg \hat{Z}^{(2)}_{t} - \xi( \hat{Z}^{(2)}_{t} )]} \geq 1\right\}.
\end{split}
\end{align}

\begin{lem}\label{lem:eigenfunction direct path excursion contribution bound}
$P$-almost surely under Assumption \ref{assn:as}, there exists $t_0 < \infty$ such that for all $t \geq t_0$ and all $x \in \Lambda_t \setminus \{\hat Z_t^{(1)}, \hat Z_t^{(2)}\}$, $\gambadi = \gambadii = \emptyset$.
\end{lem}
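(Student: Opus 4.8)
The plan is to show that, thanks to Lemma~\ref{lem:Gammat gap} together with the bounds on $\psi_t(\hat Z_t^{(1)})$ and $\psi_t(\hat Z_t^{(2)})$ from Proposition~\ref{prop:Zt bounds as}, \emph{every individual factor} appearing in the products that define $\gambadi$ and $\gambadii$ is strictly smaller than $1$; consequently any path that deviates from the direct path even once contributes a product $<1$ and so cannot be bad. I would work throughout on the full-probability event on which the conclusions of Lemma~\ref{lem:Gammat gap}$(i)$ and Proposition~\ref{prop:Zt bounds as}$(i)$,$(ii)$ hold for all sufficiently large $t$; since the estimates below will be uniform in $x$ and in the path, it suffices to fix a large $t$, a vertex $x \in \Lambda_t \setminus \{\hat Z_t^{(1)}, \hat Z_t^{(2)}\}$, and a path $\gamma \in \Gamma^{(1)}_{x,t}$.

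The first step is to observe that every vertex $v$ contributing to the product lies in $\Lambda_t \setminus \{\hat Z_t^{(1)}, \hat Z_t^{(2)}\}$. On the one hand, in the representation \eqref{eqn:eigenfunction rep} of the principal eigenfunction on $(\Lambda_t \setminus \Omega_t) \cup \{\hat Z_t^{(1)}\} = \Lambda_t \setminus \{\hat Z_t^{(2)}\}$ the walk is killed on leaving this set, so the relevant paths are confined to $\Lambda_t \setminus \{\hat Z_t^{(2)}\}$; on the other hand every $v \in \gamma \setminus \gamma^{(1)}_{x,t}$ is by definition distinct from $\hat Z_t^{(1)}$ (and symmetrically for $\gambadii$). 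Next I would rewrite the denominator of the factor attached to such a $v$ as
\[
\deg v - \big[\xi(v) + \deg \hat Z_t^{(1)} - \xi(\hat Z_t^{(1)})\big] \;=\; \big[\xi(\hat Z_t^{(1)}) - \deg \hat Z_t^{(1)}\big] - \big[\xi(v) - \deg v\big],
\]
and note that the factor $\deg v \,/\, \big(\cdots\big)$ is positive and strictly less than $1$ precisely when $\xi(v) < \xi(\hat Z_t^{(1)}) - \deg \hat Z_t^{(1)}$ (the term $\deg v$ cancels, so no degree bound is even needed here). This last inequality is exactly what the earlier lemmas give: Lemma~\ref{lem:Gammat gap}$(i)$ bounds $\xi(v) \le (\log t)^{-1/\alpha} a(t)$ for $v \in \Lambda_t \setminus \{\hat Z_t^{(1)}, \hat Z_t^{(2)}\}$, while $\xi(\hat Z_t^{(1)}) - \deg \hat Z_t^{(1)} \ge \psi_t(\hat Z_t^{(1)}) \ge a(t)(\log t)^{-\epsilon}$ by Proposition~\ref{prop:Zt bounds as}$(ii)$ (the logarithmic correction in $\psi_t$ being nonnegative once $t$ is large), on choosing $\epsilon < 1/\alpha$. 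Hence for all large $t$, every factor in the product defining $\gambadi$ is strictly below $1$, and likewise for $\gambadii$ with $\hat Z_t^{(2)}$ in place of $\hat Z_t^{(1)}$.

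Finally I would conclude: a path $\gamma \in \Gamma^{(1)}_{x,t}$ different from the direct path $\gamma^{(1)}_{x,t}$ has $\gamma \setminus \gamma^{(1)}_{x,t} \ne \emptyset$, so its product runs over a nonempty index set of positive factors each $<1$, hence is $<1$, so $\gamma \notin \gambadi$; the direct path itself has empty product and is excluded from consideration here (it is the term retained as the leading contribution to \eqref{eqn:eigenfunction rep}), so $\gambadi = \emptyset$, and the same reasoning gives $\gambadii = \emptyset$. Uniformity of all the above bounds in $x$ and $\gamma$ then produces a single (random) $t_0 < \infty$ beyond which $\gambadi = \gambadii = \emptyset$ for every admissible $x$. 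The only step that is not essentially automatic is the observation made at the start that the relevant paths never pass through $\hat Z_t^{(2)}$ (respectively $\hat Z_t^{(1)}$): this is where we use that the eigenfunction lives on $\Lambda_t \setminus \{\hat Z_t^{(2)}\}$, and it is genuinely needed, since at those two sites the potential is of the same order $a(t)$ as $\xi(\hat Z_t^{(1)})$ and the gap $\big[\xi(\hat Z_t^{(1)}) - \deg \hat Z_t^{(1)}\big] - \big[\xi(\hat Z_t^{(2)}) - \deg \hat Z_t^{(2)}\big]$ has no good almost-sure lower bound (cf.\ the remark after Proposition~\ref{prop:gap a.s. LB}), so the corresponding factor could not be kept below $1$. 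Everything else is soft once Lemma~\ref{lem:Gammat gap} is available.
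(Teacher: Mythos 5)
Your proof is correct and follows essentially the same approach as the paper's: the paper's terse proof likewise reduces to establishing $\xi(v) < \xi(\hat Z_t^{(i)}) - \deg \hat Z_t^{(i)}$ for all $v \in \Lambda_t \setminus \{\hat Z_t^{(1)}, \hat Z_t^{(2)}\}$ (which is exactly what Lemma~\ref{lem:Gammat gap}$(i)$ together with Proposition~\ref{prop:Zt bounds as}$(ii)$ gives) and then observes that this makes each factor strictly less than $1$. Your algebraic rewriting of the factor as $\deg v \big/ \big([\xi(\hat Z_t^{(1)}) - \deg \hat Z_t^{(1)}] - [\xi(v) - \deg v]\big)$, the observation that $\deg v$ cancels so that no degree bound is needed, and your remark that the direct path contributes an empty product (so should really be excluded, reflecting that the ``$\geq 1$'' in \eqref{eqn:gambad def} is a small imprecision and ought to read ``$>1$'') are all correct and, if anything, more careful than the paper's version.
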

\begin{proof}
It follows exactly as in the proofs of Proposition \ref{prop:Zt bounds as}$(ii)$ and $(iii)$ that eventually almost surely, for all $v \in \Lambda_t \setminus \{\hat Z_t^{(1)}, \hat Z_t^{(2)}\}$, $$\xi(v) < \xi(\hat{Z}_t^{(i)}) - \deg \hat{Z}_t^{(i)}$$
for each $i=1,2$, which implies that each term in the products in \eqref{eqn:gambad def} is less than $1$.
\end{proof}

\begin{lem}\label{lem:B(ZC) volume bound}
Fix some constant $C_{\alpha,\beta}$. Under Assumption \ref{assn:as}, for any $\epsilon > 0$, $\#B(\hat Z_t^{(2)},C_{\alpha,\beta})\leq~( \log t)^{\epsilon}$ eventually $P$-almost surely.
\end{lem}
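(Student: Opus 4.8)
The plan is to exploit the fact that, up to negligible events, $\hat Z_t^{(2)}$ is a vertex of potential at least $a(t)(\log t)^{-\epsilon'}$ lying in $B_{r(t)(\log t)^{\frac{1}{\alpha-2}+\epsilon''}}$ (by Proposition \ref{prop:Zt bounds as}$(ii)$ and $(iii)$, using $\psi_t(v)\le\xi(v)$ and $d=2$ under Assumption \ref{assn:as}), together with the independence of the potential from $\Ti$ and the fact that under Assumption \ref{assn:as} a ball of the fixed radius $C_{\alpha,\beta}$ in $\Ti$ has moments of all orders. These combine to force $\hat Z_t^{(2)}$ to avoid the (very sparse) set of vertices around which $B(\,\cdot\,,C_{\alpha,\beta})$ is large.

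In detail, fix $\epsilon>0$ and choose auxiliary parameters $\epsilon',\epsilon''>0$ small and $j\in\mathbb N$ large, all depending on $\epsilon$ and $\alpha$, to be fixed at the end. Set $t_n=2^n$, $R_n:=r(t_n)(\log t_n)^{\frac1{\alpha-2}+\epsilon''}$ and
\[
S_n:=\bigl\{v\in B_{R_{n+1}}:\ \xi(v)\ge a(t_n)(\log t_n)^{-\epsilon'}\bigr\}.
\]
Since $s\mapsto r(s)$ and $s\mapsto a(s)(\log s)^{-\epsilon'}$ are increasing for large $s$, Proposition \ref{prop:Zt bounds as}$(ii)$, $(iii)$ gives that eventually $P$-almost surely $\hat Z_t^{(2)}\in S_n$ for every $t\in[t_n,t_{n+1}]$. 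Hence it suffices to show that, almost surely for all large $n$, every $v\in S_n$ satisfies $\#B(v,C_{\alpha,\beta})<(\log t_n)^{\epsilon}$, since then $\#B(\hat Z_t^{(2)},C_{\alpha,\beta})<(\log t_n)^{\epsilon}\le(\log t)^{\epsilon}$ for all $t\in[t_n,t_{n+1}]$. Conditioning on $\Ti$ and using that the $(\xi(v))_v$ are i.i.d.\ Pareto and independent of $\Ti$, followed by Markov's inequality in the form $\#\{v\in B_{R_{n+1}}:\#B(v,C_{\alpha,\beta})\ge(\log t_n)^\epsilon\}\le(\log t_n)^{-j\epsilon}\sum_{v\in B_{R_{n+1}}}\#B(v,C_{\alpha,\beta})^j$, one obtains
\[
P\bigl(\exists v\in S_n:\ \#B(v,C_{\alpha,\beta})\ge(\log t_n)^\epsilon\bigr)\le a(t_n)^{-\alpha}(\log t_n)^{\alpha\epsilon'-j\epsilon}\,\mathbb E\Bigl[\textstyle\sum_{v\in B_{R_{n+1}}}\#B(v,C_{\alpha,\beta})^j\Bigr].
\]

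The one genuine estimate needed, which I expect to be the \textbf{main obstacle}, is that $\mathbb E\bigl[\sum_{v\in B_R}\#B(v,C_{\alpha,\beta})^j\bigr]\le c_j R^2$ for all large $R$, with $c_j<\infty$ depending on $j,C_{\alpha,\beta},\alpha,\beta$. I would prove this by writing $\#B(v,C_{\alpha,\beta})^j=\#\{(w_1,\dots,w_j):d(v,w_i)\le C_{\alpha,\beta}\ \forall i\}$, so $\sum_{v\in B_R}\#B(v,C_{\alpha,\beta})^j$ counts $(j+1)$-tuples of vertices that are pairwise within distance $2C_{\alpha,\beta}$ with the first one in $B_R$; grouping these tuples by the shape of the (size at most $2jC_{\alpha,\beta}+1$) subtree they span, the expected number of embeddings into $\Ti$ of each fixed finite shape, rooted in $B_R$, is at most a shape-dependent constant times $\mathbb E[\#B_R]$, where the constants are products of factorial moments of $\mu$ and of its size-biased version -- all finite precisely because Assumption \ref{assn:as} makes the offspring tails super-polynomial. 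Finally $\mathbb E[\#B_R]\le 2R^{2+\epsilon''}$ for large $R$, e.g.\ by splitting on the event $\{\#B_R> R^{2+\epsilon''}\}$ and invoking Proposition \ref{prop:AR vol growth and exit time}$(ii)$ with a large exponent.

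Substituting this bound, replacing $R_{n+1}^2$ by $r(t_n)^2(\log t_n)^{2/(\alpha-2)+2\epsilon''}$ up to a bounded factor, and using the identity $r(t_n)^2a(t_n)^{-\alpha}=1$ together with $\log t_n=n\log2$, the displayed probability is at most $c_j(n\log2)^{\frac2{\alpha-2}+\alpha\epsilon'+2\epsilon''-j\epsilon}$. Choosing $\epsilon',\epsilon''$ small and then $j$ large enough that this exponent is below $-1$ makes the probabilities summable in $n$, so Borel--Cantelli gives that eventually almost surely $\#B(v,C_{\alpha,\beta})<(\log t_n)^{\epsilon}$ for all $v\in S_n$, and hence $\#B(\hat Z_t^{(2)},C_{\alpha,\beta})<(\log t)^{\epsilon}$ for all large $t$. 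Since $\epsilon>0$ was arbitrary, this completes the proof. This is the same Borel--Cantelli-over-dyadic-blocks mechanism used in Lemmas \ref{lem:volGamma} and \ref{lem:Gammat gap}, except that here union-bounding over the whole of $S_n$, rather than over the times at which $\hat Z^{(2)}$ jumps (as controlled by Lemma \ref{lem:BC counting}), already suffices.
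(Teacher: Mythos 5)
Your argument is correct, but it takes a genuinely different route from the paper. The paper's proof is short: it asserts (by arguments analogous to Proposition \ref{prop:AR vol growth and exit time}$(ii)$, exploiting that $\hat Z_t^{(2)}$ is determined by the potential and hence, conditionally on $\Ti$, behaves like an approximately uniform vertex of $B_{R(t)}$, as in Proposition \ref{prop:max degree Ft Et}$(ii)$) a tail bound $\prb{\#B(\hat Z_t^{(2)},C_{\alpha,\beta})\geq\lambda}\leq C\lambda^{-K}$ for all $K<\infty$, and then converts this to an eventual a.s.\ statement via Borel--Cantelli along $t_n=2^n$ together with Lemma \ref{lem:BC counting}, which controls how many distinct values $\hat Z^{(2)}_t$ takes on each dyadic block. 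You instead condition on $\Ti$, union-bound over the entire deterministic-given-$\Ti$ superset $S_n$ of high-potential vertices in $B_{R_{n+1}}$ that must contain $\{\hat Z_t^{(2)}:t\in[t_n,t_{n+1}]\}$, and reduce to the $j$-th moment estimate $\mathbb E[\sum_{v\in B_R}\#B(v,C_{\alpha,\beta})^j]=O(R^{2+o(1)})$. The bookkeeping with $r(t_n)^2a(t_n)^{-\alpha}=1$ is right, and the probabilistic reduction (union bound over $v\in B_{R_{n+1}}$, then independence of $\xi$ from $\Ti$, then Markov on the count of bad vertices) is sound. What you gain is that you bypass Lemma \ref{lem:BC counting} entirely: you do not need to track the jump times of $\hat Z^{(2)}_t$. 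What you pay is that the moment estimate, which you correctly identify as the crux, requires an honest shape-by-shape calculation exploiting the branching structure of $\Ti$ and all moments of $\mu$ (and $\mu^*$ along the backbone); you sketch this but leave it as the main gap. It is provable --- re-root each embedded shape at its lowest-generation vertex $w$, note that given $w$'s level the expected number of downward embeddings is a shape-dependent constant (finite since $\mu$, $\mu^*$ have all moments under Assumption \ref{assn:as}), and sum $\mathbb E[Z^*_m]=O(m)$ over $m\leq R+O(1)$ to get $O(R^2)$ --- but it is arguably comparable in effort to just invoking Lemma \ref{lem:BC counting} as the paper does, since that lemma was going to be needed anyway in Section \ref{sec:spectralresults}. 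Both routes are valid; yours is more self-contained given the moment estimate, the paper's is shorter given the infrastructure already in place.
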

\begin{proof}
By very similar arguments to Proposition \ref{prop:AR vol growth and exit time}$(ii)$, it holds that $\prb{\#V(B(\hat Z_t^{(2)},C_{\alpha,\beta})) \geq \lambda} \leq C\lambda^{-K}$ for any $K< \infty$. The proposition therefore follows by chooing $K> 2\epsilon^{-1}$ and applying Borel-Cantelli and Lemma \ref{lem:BC counting} similarly to the proofs of Lemmas \ref{lem:volGamma} and \ref{lem:Gammat gap}.
\end{proof}

\subsection{Bounds on eigenfunctions}\label{sctn:efunction bounds}

In this section we apply these results on $\Lambda_t$ to bound $\phi^{(1)}_{\Lambda_t \setminus \{\hat{Z}_t^{(1)}\}}$ and $\phi^{(1)}_{\Lambda_t \setminus \{\hat{Z}_t^{(2)}\}}$. In what follows we will assume that $\phi^{(1)}_{\Lambda_t \setminus \{\hat{Z}_t^{(1)}\}}$ is always normalised so that $\phi^{(1)}_{\Lambda}(\hat{Z}_t^{(2)})=1$, and vice versa.

\begin{lem}\label{lem:boundeigenfunction} 
Eventually $P$-almost surely under Assumption \ref{assn:as}, for each $i=1,2$ and each $x \in \Lambda_t\setminus~\{\hat Z_t^{(1)}, \hat Z_t^{(2)}\}$,
\[
\phi^{(1)}_{\Lambda_t \setminus \{\hat{Z}_t^{(1)}\}}(x)\leq \prod_{v \in \gamma_{x,t}^{(2)}} \frac{\deg v}{\tilde{g}_{\Lambda_t \setminus \{\hat{Z}_t^{(1)}\}}} \qquad \mathrm{and } \qquad  \phi^{(1)}_{\Lambda_t\setminus \{\hat{Z}_t^{(2)}\}}(x)\leq \prod_{v \in \gamma_{x,t}^{(1)}} \frac{\deg v}{\tilde{g}_{\Lambda_t \setminus \{\hat{Z}_t^{(2)}\}}}.
\]
\end{lem}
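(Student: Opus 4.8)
The eigenfunction representation \eqref{eqn:eigenfunction rep} expresses $\phi^{(1)}_{\Lambda_t \setminus \{\hat{Z}_t^{(2)}\}}(x)$ (normalised so that it equals $1$ at $\hat{Z}_t^{(1)}$) as an expectation over random walk paths from $x$ that hit $\hat{Z}_t^{(1)}$ before exiting $\Lambda_t \setminus \{\hat{Z}_t^{(2)}\}$, weighted by $\exp\{\int_0^{H_{\hat{Z}_t^{(1)}}} (\xi(X_s) - \lambda^{(1)})\,\mathrm{d}s\}$, where $\lambda^{(1)} = \lambda^{(1)}_{\Lambda_t \setminus \{\hat{Z}_t^{(2)}\}}$. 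The plan is to expand this expectation as a sum over paths $\gamma \in \Gamma^{(1)}_{x,t}$, integrating out the holding times. Each holding time at a vertex $v$ on $\gamma$ (other than the endpoint $\hat{Z}_t^{(1)}$) is $\mathsf{Exp}(\deg v)$, and the exponential weight contributes a factor $e^{(\xi(v) - \lambda^{(1)})s_v}$, so integrating gives a factor $\frac{\deg v}{\deg v - (\xi(v) - \lambda^{(1)})}$ provided $\deg v - (\xi(v) - \lambda^{(1)}) > 0$ — exactly the type of quantity appearing in the definition of $\gambadi$/$\gambadii$ in \eqref{eqn:gambad def}.

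First I would use the lower bound on $\lambda^{(1)}_{\Lambda_t \setminus \{\hat{Z}_t^{(2)}\}}$: a standard comparison (testing the Rayleigh quotient against the indicator of $\hat{Z}_t^{(1)}$, cf. the eigenvalue bounds used elsewhere) gives $\lambda^{(1)}_{\Lambda_t \setminus \{\hat{Z}_t^{(2)}\}} \geq \xi(\hat{Z}_t^{(1)}) - \deg \hat{Z}_t^{(1)}$. Combined with Lemma \ref{lem:Gammat gap}(ii)--(iii), which says $\hat{Z}_t^{(1)}$ is the maximiser of $\xi(\cdot) - \deg(\cdot)$ on $\Lambda_t \setminus \{\hat{Z}_t^{(2)}\}$ with gap $\tilde g_{\Lambda_t \setminus \{\hat{Z}_t^{(2)}\}}$, this yields, for every $v \in \Lambda_t \setminus \{\hat{Z}_t^{(1)}, \hat{Z}_t^{(2)}\}$,
\[
\deg v - (\xi(v) - \lambda^{(1)}) \geq \deg v - \xi(v) + \xi(\hat{Z}_t^{(1)}) - \deg \hat{Z}_t^{(1)} \geq \tilde g_{\Lambda_t \setminus \{\hat{Z}_t^{(2)}\}} > 0
\]
eventually almost surely. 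This does two things: it guarantees the holding-time integrals converge, and it shows the per-vertex factor is at most $\frac{\deg v}{\tilde g_{\Lambda_t \setminus \{\hat{Z}_t^{(2)}\}}}$. Then I would bound the full sum over $\gamma \in \Gamma^{(1)}_{x,t}$: write each path's weight as $\prod_{v \in \gamma_{x,t}^{(1)}} \frac{\deg v}{\deg v - (\xi(v) - \lambda^{(1)})}$ times a product over $\gamma \setminus \gamma_{x,t}^{(1)}$, and observe that summing over all paths with a fixed "direct-path skeleton" amounts to a branching-type sum whose total is controlled precisely by the condition $\gambadi = \emptyset$ from Lemma \ref{lem:eigenfunction direct path excursion contribution bound}: since every excursion factor $\frac{\deg v}{\deg v - [\xi(v) + \deg \hat{Z}_t^{(1)} - \xi(\hat{Z}_t^{(1)})]}$ is strictly less than $1$ (by that lemma), the sum over all extra excursions is bounded by the geometric-series value, i.e. the direct path dominates up to a constant factor, and one gets the stated product bound over $\gamma_{x,t}^{(1)}$ alone.

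The main obstacle I expect is the bookkeeping in the last step: carefully organizing the sum over $\Gamma^{(1)}_{x,t}$ so that the contributions from deviations off the direct path $\gamma^{(1)}_{x,t}$ telescope against the "$< 1$" factors guaranteed by Lemma \ref{lem:eigenfunction direct path excursion contribution bound}. One has to be careful that $\lambda^{(1)}$ appearing in the exponent of \eqref{eqn:eigenfunction rep} is not quite the same as the quantity $\xi(\hat{Z}_t^{(1)}) - \deg \hat{Z}_t^{(1)}$ used in \eqref{eqn:gambad def} — but since $\lambda^{(1)} \geq \xi(\hat{Z}_t^{(1)}) - \deg \hat{Z}_t^{(1)}$, replacing it only makes the denominators larger and the factors smaller, so the $\gambadi = \emptyset$ condition still controls the sum. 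The argument for $\phi^{(1)}_{\Lambda_t \setminus \{\hat{Z}_t^{(1)}\}}$ and $\gambadii$ is identical with the roles of $\hat{Z}_t^{(1)}$ and $\hat{Z}_t^{(2)}$ swapped, using $\tilde g_{\Lambda_t \setminus \{\hat{Z}_t^{(1)}\}}$, so I would handle both at once and state the conclusion in the symmetric form displayed in the lemma.
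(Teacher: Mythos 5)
Your plan is in substance the paper's proof: the eigenfunction representation \eqref{eqn:eigenfunction rep}, the Rayleigh--Ritz lower bound $\lambda^{(1)}_{\Lambda_t\setminus\{\hat Z_t^{(i)}\}}\geq \xi(\hat Z_t^{(j)})-\deg\hat Z_t^{(j)}$ via the delta test function, the expansion into paths with exponential holding times yielding per-vertex factors $\deg v/(\deg v - (\xi(v)-\lambda^{(1)}))$, the identification of the argmax and the positivity of $\tilde g$ via Lemma \ref{lem:Gammat gap}, and Lemma \ref{lem:eigenfunction direct path excursion contribution bound} to kill the excursion factors. Your monotonicity remark (that $\lambda^{(1)}$ is larger than the quantity in \eqref{eqn:gambad def}, so the actual excursion factors are smaller than the ones the lemma bounds) is also correctly oriented.

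The one place you deviate, and where the wording is both imprecise and overcomplicated, is the final bookkeeping over $\Gamma^{(1)}_{x,t}$. You propose to sum the excursion contributions as a ``branching/geometric-series'' and then claim the direct path ``dominates up to a constant factor,'' immediately followed by the assertion that you recover the stated bound exactly. A geometric-series resummation would in general carry a prefactor $(1-\epsilon)^{-1}>1$, which the lemma does not allow for. No such resummation is needed. The clean accounting is: factor the per-path product as $\prod_{v\in\gamma^{(1)}_{x,t}}(\cdot)\times\prod_{v\in\gamma\setminus\gamma^{(1)}_{x,t}}(\cdot)$ (first visits on the direct path versus everything else), pull the first product out of the sum since it is the same for every $\gamma$, bound each factor in the first product by $\deg v/\tilde g_{\Lambda_t\setminus\{\hat Z_t^{(2)}\}}$, observe that the second product is $\leq 1$ by $\gambadi=\emptyset$, and finally use that $\sum_{\gamma}\prstart{\pi(X_{[0,H_{\hat Z_t^{(1)}}]})=\gamma}{x}=\prstart{H_{\hat Z_t^{(1)}}<\tau}{x}\leq 1$. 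This gives the stated inequality with no lost constant and no summation of a series. You should make this replacement explicitly; otherwise the argument you have written proves a weaker bound than the one claimed.
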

\begin{proof}
The proof follows a similar strategy to \cite[Lemma 2.3]{KLMStwocities09}. We only prove the first statement; the proof of the second statement works analogously. Firstly, it follows from the Rayleigh-Ritz formula (e.g. see \cite[Equation (2.7)]{HollanderKoenigSantosPAMOnTrees}) for the principal eigenvalue of the Anderson Hamiltonian that 
\begin{align*}
\lambda^{(1)}_{{\Lambda_t\setminus \{\hat Z_t^{(1)}\}}} &= \sup \{  \langle (\xi + \Delta)f, f\rangle_{\ell^2(\Ti)}: f \in \ell^2(\Ti), \text{supp}(f) \subset {\Lambda_t\setminus \{\hat Z_t^{(1)}\}}, ||f||_{2} = 1\} \\
&\geq \sup_{z \in {\Lambda_t\setminus \{\hat Z_t^{(1)}\}}} \{ \langle(\xi + \Delta) \delta_z, \delta_z\rangle\}\\& = \sup_{z \in {\Lambda_t\setminus \{\hat Z_t^{(1)}\}}} \{ \xi(z) - \deg (z)\}=\xi(\tilde Z_{\Lambda_t\setminus \{\hat Z_t^{(1)}\}})-\deg(\tilde Z_{\Lambda_t\setminus \{\hat Z_t^{(1)}\}}).
\end{align*}
Then, taking $y=\hat{Z}_t^{(2)}$ in \eqref{eqn:eigenfunction rep} we have for $x\in \Lambda_t\setminus\{\hat Z_t^{(1)},\hat Z_t^{(2)}\}$,
\begin{align*}
    \phi^{(1)}_{{\Lambda_t\setminus \{\hat Z_t^{(1)}\}}}(x) &= \estart{\exp\left\{\int_0^{H_{\hat{Z}_t^{(2)}}} (\xi(X_s) - \lambda^{(1)}_{{\Lambda_t\setminus \{\hat Z_t^{(1)}\}}})~\text{d} s\right\} \mathbbm{1} {\{H_{\hat{Z}_t^{(2)}} < \tau_{{\Lambda_t\setminus \{\hat Z_t^{(1)}\}}}\}}}{x} \\
    &\leq \estart{\exp\left\{\int_0^{H_{\hat{Z}_t^{(2)}}} (\xi(X_s) - [\xi (\tilde{Z}_{\Lambda_t\setminus \{\hat Z_t^{(1)}\}}) - \deg \tilde{Z}_{\Lambda_t\setminus \{\hat Z_t^{(1)}\}}]~\text{d} s\right\} \mathbbm{1} {\{H_{\hat{Z}_t^{(2)}} < \tau_{{\Lambda_t\setminus \{\hat Z_t^{(1)}\}}}\}}}{x}.
\end{align*}
Now let $(T_v)_{v \in \gamma_{x,t}^{(2)}}$ denote a set of independent exponential random variables with respective parameters $(\deg v)_{v \in \gamma_{x,t}^{(2)}}$. By Lemma \ref{lem:Gammat gap}$(ii)$, we can work on the event $\{\tilde Z_{\Lambda_t\setminus \{\hat Z_t^{(1)}\}}=\hat Z_t^{(2)}\}$. Since every path $\gamma \in \Gamma_{x,t}^{(2)}$ contains $\gamma_{x,t}^{(2)}$ as a subset, we can therefore apply Lemma \ref{lem:eigenfunction direct path excursion contribution bound} to deduce that, eventually $P$-almost surely,
\begin{align*}
    \phi^{(1)}_{{\Lambda_t\setminus \{\hat Z_t^{(1)}\}}}(x) &\leq \estart{\exp\left\{\int_0^{H_{\hat Z_t^{(2)}}} (\xi(X_s) - [\xi (\hat Z_t^{(2)}) - \deg \hat Z_t^{(2)}])~\text{d} s\right\} \mathbbm{1} {\{H_{\hat Z_t^{(2)}} < \tau_{{\Lambda_t\setminus \{\hat Z_t^{(1)}\}}}\}}}{x} \\
    &\leq \sum_{\gamma \in \Gamma_{x,t}^{(2)}} \pr{\pi(X_{[0,H_{\hat Z_t^{(2)}}]})=\gamma}   \prod_{v \in \gamma} \E{\exp \left\{ T_v [\xi(v) + \deg \hat Z_t^{(2)} - \xi( \hat Z_t^{(2)} )]\right\}} \\
      &\leq \sum_{\gamma \in \Gamma_{x,t}^{(2)}} \pr{\pi(X_{[0,H_{\hat Z_t^{(2)}}]})=\gamma}   \prod_{v \in \gamma}  \frac{\deg v}{\deg v - [\xi(v) + \deg \hat Z_t^{(2)} - \xi( \hat Z_t^{(2)} )]}.
    \end{align*}  
Applying  Lemma \ref{lem:eigenfunction direct path excursion contribution bound} and decomposing the product according to whether vertices are in $\gamma_{x,t}^{(2)}$ or not (if some vertices in $\gamma_{x,t}^{(2)}$ are visited more than once, we put repeat visits into the second product), we obtain that
    \begin{align*}
    \phi^{(1)}_{{\Lambda_t\setminus \{\hat Z_t^{(1)}\}}}(x)
       &\leq \prod_{v\in \gamma_{x,t}^{(2)}} \frac{\deg v}{\tilde g_{\Lambda_t\setminus \{\hat Z_t^{(1)}\}}} \sum_{\gamma \in \Gamma_{x,t}^{(1)}} \pr{\pi(X_{[0,H_{\hat Z_t^{(2)}}]})=\gamma}   \prod_{v \in \gamma\setminus \gamma_{x,t}^{(2)}}  \frac{\deg v}{\deg v - [\xi(v) + \deg \hat Z_t^{(2)}- \xi( \hat Z_t^{(2)} )]} \\
       &\leq \prod_{v\in \gamma_{x,t}^{(2)}} \frac{\deg v}{\tilde g_{\Lambda_t\setminus \{\hat Z_t^{(1)}\}}}.
\end{align*}
\end{proof}

Finally, we will need the following lemma to bound the function appearing in the statement of Lemma \ref{lem:boundeigenfunction}.

\begin{lem} \label{lem:technical} 
$P$-almost surely under Assumption \ref{assn:as}, for all sufficiently large $t$ we have the following.
\begin{enumerate} 
    \item [(i)]  There exist $D, E<\infty$ such that for each $i=1,2$, $$\sup_{x\in \Lambda_t \setminus \{\hat Z_t^{(1)}, \hat Z_t^{(2)}\}}\left\{\sum_{v\in \gamma_{x,t}^{(i)}}\log(\deg v)- [D\log t+E |\gamma_{x,t}|]\right\}\leq 0.$$
    \item [(ii)] Fix any $C_{\alpha,\beta}< \infty$. Then for any $\delta >0$, it holds for each $i=1,2$ that $$\sup_{x\in \left( \Lambda_t \setminus \{\hat Z_t^{(1)}, \hat Z_t^{(2)}\}\right) \cap V(B(\hat{Z}_t^{(i)} ,C_{\alpha,\beta}))} \left\{\sum_{v\in \gamma_{x,t}^{(i)}}\log(\deg v)- \delta\log \log t \right\}\leq 0.$$
\end{enumerate}
\end{lem}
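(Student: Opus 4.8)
\textbf{Proof proposal for Lemma \ref{lem:technical}.}

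The plan is to control the quantity $\sum_{v\in\gamma_{x,t}^{(i)}}\log(\deg v)$ by combining the general direct-path degree-sum bound of Lemma \ref{lem:Br log sum bound} with the volume bounds on $\Lambda_t$ and the Borel--Cantelli machinery already developed (Lemmas \ref{lem:BC counting}, \ref{lem:volGamma}). For part (i), first observe that $\gamma_{x,t}^{(i)}$ is a direct path between two vertices of $\Lambda_t$, and by Lemma \ref{lem:volGamma} (together with Proposition \ref{prop:Zt bounds as}$(iii)$) we have $\Lambda_t\subset B_{R(t)}$ for all sufficiently large $t$, $P$-almost surely, where $R(t)$ is polynomial in $t$. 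Hence $\gamma_{x,t}^{(i)}\in\Gamma_{R(t)}$ in the notation of Lemma \ref{lem:Br log sum bound}, so that $P$-almost surely there exist deterministic $\tilde B, B<\infty$ with
\[
\sum_{v\in\gamma_{x,t}^{(i)}}\log(\deg v)\leq \tilde B|\gamma_{x,t}^{(i)}| + B\log R(t) \leq \tilde B|\gamma_{x,t}^{(i)}| + D'\log t
\]
for all large $t$, since $\log R(t) = O(\log t)$. Taking $E=\tilde B$ and $D=D'$ gives the claim. (One subtlety: Lemma \ref{lem:Br log sum bound} is stated for all direct paths in $B_r$ with a union bound over $\Gamma_r$, so it already yields a uniform bound over all $x\in\Lambda_t$; no further union bound over $t$ is needed because the conclusion holds eventually almost surely for all $r$.)

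For part (ii), the situation is more delicate because we want the bound $\sum_{v\in\gamma_{x,t}^{(i)}}\log(\deg v)\leq\delta\log\log t$ for every fixed $\delta>0$, which is much sharper than the $O(\log t)$ bound from part (i), but we only need it over the restricted set of $x$ lying in the ball $B(\hat Z_t^{(i)},C_{\alpha,\beta})$ of constant radius. For such $x$, the path $\gamma_{x,t}^{(i)}$ has length at most $C_{\alpha,\beta}$, so it passes through at most $C_{\alpha,\beta}$ vertices, all of which lie in $B(\hat Z_t^{(i)},C_{\alpha,\beta})$. It therefore suffices to show that $\sup_{v\in B(\hat Z_t^{(i)},C_{\alpha,\beta})}\deg v\leq (\log t)^{\delta/C_{\alpha,\beta}}$ eventually $P$-almost surely, since then $\sum_{v\in\gamma_{x,t}^{(i)}}\log(\deg v)\leq C_{\alpha,\beta}\cdot\frac{\delta}{C_{\alpha,\beta}}\log\log t = \delta\log\log t$. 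The degree bound on a ball of constant radius around $\hat Z_t^{(i)}$ follows by exactly the argument used in Proposition \ref{prop:max degree Ft Et}$(ii)$: conditionally on $\Ti$, the site $\hat Z_t^{(i)}$ is (stochastically dominated by) a uniform vertex of $B_{R(t)}$ — more precisely, one can use the argument of Lemma \ref{lem:B(ZC) volume bound}, which shows $\#B(\hat Z_t^{(2)},C_{\alpha,\beta})\leq(\log t)^\epsilon$ eventually almost surely, and combine it with the observation that on the high-probability volume event the maximum degree in this ball satisfies a tail bound $\prb{\sup_{v\in B(\hat Z_t^{(i)},C_{\alpha,\beta})}\deg v\geq x}\leq \#B(\hat Z_t^{(i)},C_{\alpha,\beta})\cdot o(x^{-K})$ for any $K$, by the super-polynomial (size-biased) degree tails under Assumption \ref{assn:as}. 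Choosing $x=(\log t)^{\delta/C_{\alpha,\beta}}$ and $K$ large, this probability is summable along $t_n=2^n$, and applying Borel--Cantelli together with Lemma \ref{lem:BC counting} (to pass from the subsequence to all $t$, controlling the number of distinct values of $\hat Z_t^{(i)}$ in each dyadic block as in Lemmas \ref{lem:volGamma} and \ref{lem:Gammat gap}) gives the result.

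The main obstacle is the passage from a fixed-time (or dyadic-subsequence) statement to an eventually-almost-sure statement that holds for \emph{all} $t$ simultaneously, since $\hat Z_t^{(i)}$ moves with $t$ and one cannot simply apply Borel--Cantelli at the level of the random vertex. This is precisely the role of Lemma \ref{lem:BC counting}, which caps the number of distinct concentration sites visited during $[t_n,t_{n+1}]$ by $(\log t_n)^\epsilon$; combined with a union bound over these $\leq(\log t_n)^\epsilon$ sites and summability of $(\log t_n)^\epsilon n^{-K}$ for $K$ large, this converts the per-site tail bounds into the desired uniform conclusion. The remaining estimates — the degree-sum bound on direct paths and the constant-radius ball volume/degree bounds — are direct applications of results already established (Lemmas \ref{lem:Br log sum bound}, \ref{lem:B(ZC) volume bound}, Proposition \ref{prop:max degree Ft Et}).
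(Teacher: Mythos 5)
Part (i) of your proposal matches the paper's proof verbatim: invoke Lemma \ref{lem:Br log sum bound} on direct paths in a large ball, plus Proposition \ref{prop:Zt bounds as}$(iii)$ to ensure $\Lambda_t\subset B_{R(t)}$ eventually. Nothing to add there.

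For part (ii) you take a small but genuine variant of the paper's route. The paper bounds the sum $\sum_{v\in\gamma^{(i)}_{x,t}}\log(\deg v)$ directly with a Chernoff bound, using that the degrees along the path (working backwards from $\hat Z_t^{(i)}$) have independent size-biased tails, hence finite $c$-th moments for any $c$; this yields $\prb{\sum_v\log\deg v\geq\delta\log\log t}\leq e^{AC}(\log t)^{-c\delta}$ for a single path, followed by the same union bound over $B(\hat Z_t^{(i)},C_{\alpha,\beta})$ via Lemma \ref{lem:B(ZC) volume bound} and Borel--Cantelli via Lemma \ref{lem:BC counting}. You instead bound the supremum of $\deg v$ over $B(\hat Z_t^{(i)},C_{\alpha,\beta})$ by $(\log t)^{\delta/C_{\alpha,\beta}}$ and then estimate the sum trivially by the path length. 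Both rest on the same super-polynomial degree tails under Assumption \ref{assn:as} and the same union bound and subsequence arguments, so the two approaches are of equal strength here; yours is perhaps marginally more transparent, the paper's perhaps slightly more economical.

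One concrete error to flag: your parenthetical ``conditionally on $\Ti$, the site $\hat Z_t^{(i)}$ is (stochastically dominated by) a uniform vertex of $B_{R(t)}$'' is false. The argument of Proposition \ref{prop:max degree Ft Et}$(ii)$ applies to $F_t$, which is determined solely by $\xi$, and hence is a uniform subset of $B_{R(t)}$ given $\Ti$. But $\hat Z_t^{(i)}$ is the argmax of $\psi_t$, which depends on $\deg$ as well as $\xi$, so given $\Ti$ it is \emph{not} a uniform vertex. Fortunately you do not actually use the uniformity claim; your ``more precisely'' clause, which appeals to the size-biased tail bound \eqref{eqn:size biased} along the path to $\hat Z_t^{(i)}$, is the correct justification and is exactly what the paper relies on. Just drop the uniformity sentence.
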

\begin{proof}
 \begin{enumerate}
     \item [(i)] The statement follows from Lemma \ref{lem:Br log sum bound} and Proposition \ref{prop:Zt bounds as}$(iii)$.
 \item[(ii)] Fix $i=1$ or $i=2$ and write $C=C_{\alpha,\beta}$. Take any $\epsilon>0$, and choose $c> 2(\epsilon +1)\delta^{-1} + 1$. Working backwards along the path $\gamma^{(i)}_{x,t}$ starting from $\hat{Z}^{(i)}_{t}$ and ending at $x$, the degrees of the vertices independently satisfy the size-biased tail bound of \eqref{eqn:size biased} (by the same logic), so that $A:=~\sup_{t \geq 0} \sup_{v \in \gamma^{(i)}_{x,t}} \Eb{(\deg v)^{c}}<~\infty$.

Now let $x\in V(B(\hat{Z}^{(i)}_{t},C)) \setminus \{\hat{Z}^{(i)}_{t}\}.$ We compute, using a Chernoff bound, that
      \begin{align*}
\mathbf P\left(\sum_{v\in\gamma^{(i)}_{x,t}}\log(\deg v)\geq \delta\log \log t \right)&\leq \mathbf E\left[\exp\left(c\sum_{v\in \gamma^{(i)}_{x,t}}\log(\deg v)\right)\right]\exp(-c\delta\log \log t)\\
&\leq \exp\left(A|\gamma^{(i)}_{x,t}|-c\delta\log \log t\right)\\
&\leq e^{AC}(\log t)^{-c\delta}.
 \end{align*}

Now for $t>0$ let us define the event in question as
\begin{equation*}
    A_{t}:=\left\{\sup_{x\in \left( \Lambda_t\setminus \{\hat Z_t^{(1)}, \hat Z_t^{(2)}\}\right) \cap V(B(\hat{Z}_t^{(i)} ,C))} \left\{\sum_{v\in \gamma^{(i)}_{x,t}}\log(\deg v)- \delta\log \log t \right\}\geq 0\right\}.
\end{equation*}
Applying a union bound over all vertices in $B(\hat{Z}^{(i)}_{t},C)$ with the bound above and the bound in the proof of Lemma \ref{lem:B(ZC) volume bound} yields 
 \begin{align*}
P \left(A_t\right)
\leq (\log t)^{\delta/2} e^{AC} (\log t)^{-c\delta}+\mathbf P(\#B(\hat{Z}^{(i)}_{t},C)>(\log t)^{\delta/2})
    &= e^{AC} (\log t)^{-(c-1)\delta/2}+c'(\log t)^{-c\delta/2}.
 \end{align*}
Combining with Lemma \ref{lem:BC counting} and a union bound, we deduce that
\begin{align*}
 \p{A_{t} \text{ i.o.}}= \limsup_{N \to \infty} \p{\left(A_{t}\right)_{t \geq 2^N} \text{ i.o.}} &\leq \p{\exists n \geq N: \#\left\{\hat{Z}^{(i)}_t:t\in[2^n,2^{n+1}]\right\} \leq (\log t_n)^{\epsilon}} + \sum_{n \geq N} n^{\epsilon} P(A_{2^n}) \\
    &\leq o(1) + C\sum_{n \geq  N} n^{\epsilon} n^{-(c-1)\delta/2} \to 0,
\end{align*}
provided $\hat{B}$ is large enough. This concludes the proof.
\end{enumerate}
\end{proof}


\begin{lem}\label{lem:loceigenfunction}
    Eventually $P$-almost surely under Assumption \ref{assn:as}, as $t \to \infty$
\begin{equation}\label{eqn:efunction loc as}
||\phi^{(1)}_{\Lambda_t \setminus \{\hat{Z}_t^{(1)}\}}||^2_2 \sum_{v \in \Lambda_t \setminus \{\hat{Z}_t^{(1)}, \hat{Z}_t^{(2)}\}} \phi^{(1)}_{\Lambda_t \setminus \{\hat{Z}_t^{(1)}\}}(v) + ||\phi^{(1)}_{\Lambda_t \setminus \{\hat{Z}_t^{(2)}\}}||^2_2 \sum_{v \in \Lambda_t \setminus \{\hat{Z}_t^{(1)}, \hat{Z}_t^{(2)}\}} \phi^{(1)}_{\Lambda_t \setminus \{\hat{Z}_t^{(2)}\}}(v)\to 0.
\end{equation}
\end{lem}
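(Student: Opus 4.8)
\textbf{Proof proposal for Lemma \ref{lem:loceigenfunction}.}

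The plan is to show that each of the two terms on the left-hand side of \eqref{eqn:efunction loc as} tends to $0$; by symmetry I will only discuss $||\phi^{(1)}_{\Lambda_t \setminus \{\hat{Z}_t^{(1)}\}}||^2_2 \sum_{v \in \Lambda_t \setminus \{\hat{Z}_t^{(1)}, \hat{Z}_t^{(2)}\}} \phi^{(1)}_{\Lambda_t \setminus \{\hat{Z}_t^{(1)}\}}(v)$. Recall that $\phi^{(1)}_{\Lambda_t \setminus \{\hat{Z}_t^{(1)}\}}$ is normalised so that $\phi^{(1)}_{\Lambda_t \setminus \{\hat{Z}_t^{(1)}\}}(\hat{Z}_t^{(2)})=1$, so $||\phi^{(1)}_{\Lambda_t \setminus \{\hat{Z}_t^{(1)}\}}||_2^2 = 1 + \sum_{v \neq \hat{Z}_t^{(2)}} \phi^{(1)}_{\Lambda_t \setminus \{\hat{Z}_t^{(1)}\}}(v)^2$, so it suffices to show that $\sum_{v \in \Lambda_t \setminus \{\hat{Z}_t^{(1)}, \hat{Z}_t^{(2)}\}} \phi^{(1)}_{\Lambda_t \setminus \{\hat{Z}_t^{(1)}\}}(v) \to 0$; then both the $\ell^2$-norm stays bounded and the whole product vanishes. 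By Lemma \ref{lem:boundeigenfunction}, for every such $v$ we have $\phi^{(1)}_{\Lambda_t \setminus \{\hat{Z}_t^{(1)}\}}(v) \leq \prod_{u \in \gamma^{(2)}_{v,t}} \frac{\deg u}{\tilde g_{\Lambda_t \setminus \{\hat{Z}_t^{(1)}\}}}$, so I need to control the sum over $v$ of these products of degree ratios along direct paths to $\hat{Z}_t^{(2)}$.

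The core estimate will split the sum according to the distance $|v - \hat{Z}_t^{(2)}|$. By Lemma \ref{lem:Gammat gap}(iii), with high almost-sure probability $\tilde g_{\Lambda_t \setminus \{\hat{Z}_t^{(1)}\}} \geq a(t)(\log t)^{-\epsilon}$, so each factor $\frac{\deg u}{\tilde g_{\Lambda_t \setminus \{\hat{Z}_t^{(1)}\}}}$ is at most $\frac{\deg u}{a(t)(\log t)^{-\epsilon}}$. Writing $\phi^{(1)}_{\Lambda_t \setminus \{\hat{Z}_t^{(1)}\}}(v) \leq \left( a(t) (\log t)^{-\epsilon}\right)^{-|v - \hat{Z}_t^{(2)}|} \exp\{\sum_{u \in \gamma^{(2)}_{v,t}} \log \deg u\}$, I will use Lemma \ref{lem:technical}: for vertices $v$ that are within a bounded distance $C_{\alpha,\beta}$ of $\hat{Z}_t^{(2)}$, part (ii) gives $\sum_{u \in \gamma^{(2)}_{v,t}} \log \deg u \leq \delta \log \log t$, while the volume $\#B(\hat{Z}_t^{(2)}, C_{\alpha,\beta}) \leq (\log t)^{\epsilon}$ by Lemma \ref{lem:B(ZC) volume bound}; for vertices $v$ at distance exceeding $C_{\alpha,\beta}$, part (i) gives $\sum_{u \in \gamma^{(2)}_{v,t}} \log \deg u \leq D \log t + E |\gamma^{(2)}_{v,t}|$, and since $a(t) = (t/\log t)^q$ grows polynomially in $t$ while $|\gamma^{(2)}_{v,t}| = |v - \hat{Z}_t^{(2)}| \leq \#\Lambda_t \leq (\log t)^{-(z-\delta)+\frac{2}{\alpha-2}} r(t)^2$ (Lemma \ref{lem:volGamma}), for $|v - \hat{Z}_t^{(2)}| > C_{\alpha,\beta}$ large enough the factor $(a(t)(\log t)^{-\epsilon})^{-|v-\hat{Z}_t^{(2)}|}$ beats $e^{E|\gamma^{(2)}_{v,t}|}$ and a geometric-series summation over $v$ (using $\#\Lambda_t \leq r(t)^2 (\log t)^{O(1)}$ to bound the number of vertices at each distance, absorbing the $e^{D \log t} = t^D$ prefactor into one extra power of $a(t)$) shows that this tail contributes $o(1)$. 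For the bounded-distance part, $(\log t)^{\epsilon} \cdot a(t)^{-1}(\log t)^{\epsilon} \cdot (\log t)^{\delta} \to 0$ since $a(t) \to \infty$ polynomially. Choosing $C_{\alpha,\beta}$ large enough (so the geometric ratio in the far-away sum is genuinely less than $1$) and then $\epsilon, \delta$ small, both pieces vanish.

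The main obstacle is the interplay between the two regimes in Lemma \ref{lem:technical}: near $\hat{Z}_t^{(2)}$ one cannot afford the crude bound $\sum \log \deg u \leq D \log t$ (it would not be beaten by $(a(t))^{-|v-\hat{Z}_t^{(2)}|}$ when $|v - \hat{Z}_t^{(2)}|$ is only $O(1)$, since $t^D$ could dominate), which is precisely why part (ii) of Lemma \ref{lem:technical} — giving the much sharper $\delta \log \log t$ bound on a ball of constant radius — is needed, and why $C_{\alpha,\beta}$ must be chosen large enough (depending on $\alpha, \beta$ through the exponents $q$, $E$, $D$) that for $|v - \hat{Z}_t^{(2)}| > C_{\alpha,\beta}$ the exponential decay $(a(t))^{-|v-\hat{Z}_t^{(2)}|}$ really does win against $e^{E|v - \hat{Z}_t^{(2)}|} \cdot t^D \cdot \#\Lambda_t$. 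Care is also needed because all of these bounds (Lemmas \ref{lem:volGamma}, \ref{lem:Gammat gap}, \ref{lem:technical}, \ref{lem:B(ZC) volume bound}) hold only eventually $P$-almost surely after running Borel--Cantelli along dyadic times together with Lemma \ref{lem:BC counting}; I will simply intersect the corresponding full-probability events and then argue deterministically on their intersection for all sufficiently large $t$. The final step is to note that the second summand in \eqref{eqn:efunction loc as} is handled identically with the roles of $\hat{Z}_t^{(1)}$ and $\hat{Z}_t^{(2)}$ interchanged.
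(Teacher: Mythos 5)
Your proposal is correct and takes essentially the same route as the paper: the same split into the near region $B(\hat{Z}_t^{(2)}, C_{\alpha,\beta})$ and the far region, the same use of Lemma \ref{lem:boundeigenfunction} for the pointwise bound and of parts (i), (ii) of Lemma \ref{lem:technical} together with Lemmas \ref{lem:volGamma} and \ref{lem:B(ZC) volume bound}, and the same comparison of $\tilde g_{\Lambda_t\setminus\{\hat Z_t^{(1)}\}} \geq a(t)(\log t)^{-\epsilon}$ against the degree products. Your treatment of the $\ell^2$-norm factor (deducing $\|\phi\|_2^2 \to 1$ from $\sum_{v\neq\hat Z_t^{(2)}}\phi(v)\to 0$) is a minor cosmetic variant of the paper's inequality $\|\phi\|_2^2\leq\|\phi\|_1^2\leq(1+o(1))^2$, and you are if anything a bit more explicit about tracking the $t^D$ prefactor from Lemma \ref{lem:technical}(i) in the far-away sum, but this is the same estimate with a slightly different presentation.
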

\begin{proof}
We start by bounding the first sum above. Take $\epsilon \in (0, \beta-1)$ and let $c:=\beta-1-\epsilon>0$. We set 
 $$C_{\alpha,\beta}=\frac{3(q+1)}{q}$$
and split the sum into two parts, summing separately over $v \in U_t := \Lambda_t \setminus \left(\{\hat{Z}_t^{(1)}, \hat{Z}_t^{(2)}\} \cup B(\hat{Z}_t^{(2)}, C_{\alpha, \beta}) \right)$ and $v \in W_t := \left(\Lambda_t \setminus \{\hat{Z}_t^{(1)}, \hat{Z}_t^{(2)}\} \right) \cap B(\hat{Z}_t^{(2)}, C_{\alpha, \beta})$.

We estimate the two parts separately in the following.
 \begin{itemize}
     \item \textbf{Part 1: $U_t$.} 
     Take some $\delta > 0, \epsilon > 0$ and $E< \infty$ as in Lemma \ref{lem:technical}$(i)$. Also let $h(t)=8(\log t)^{-(z-\delta)+\frac{2}{\alpha-2}}r(t)^2$ (as in Lemma \ref{lem:volGamma}). Define the event
 \begin{align*}
 E_1:=&\{\#\Lambda_t\leq h(t)\} \cap \left\{\sup_{v \in U_t}\left\{\sum_{x\in\gamma_{v,t}^{(2)}}\log(\deg x)- 2E|\gamma_{v,t}^{(2)}|\right\}\leq 0\right\}\\&\cap\left \{\tilde g_{\Lambda_t\setminus\{\hat Z_t^{(1)}\}}\geq a(t)(\log t)^{-\epsilon}\right\}\cap\left\{\phi^{(1)}_{\Lambda_t\setminus\{\hat Z_t^{(1)}\}}(v)\leq \prod_{x \in \gamma_{v,t}^{(2)}} \frac{\deg x}{\tilde{g}_{\Lambda_t\setminus\{\hat Z_t^{(1)}\}}}\text{ for all }v\in \Lambda_t\setminus\{\hat Z_t^{(1)},\hat Z_t^{(2)}\}\right\}.
\end{align*}
Note that $E_1$ holds eventually almost surely by Proposition \ref{prop:Zt bounds as}$(i)$ and Lemmas \ref{lem:volGamma}, \ref{lem:technical}, \ref{lem:Gammat gap}$(iii)$ and \ref{lem:boundeigenfunction}. Almost surely, i.e. on the event $E_1$, we can therefore calculate for all sufficiently large $t$ that
\begin{align*}
   \sum_{v \in U_t} \phi^{(1)}_{\Lambda_t \setminus \{\hat{Z}_t^{(1)}\}}(v)
    &\quad \leq \sum_{v \in U_t} \exp\left(\sum_{x \in \gamma_{v,t}^{(2)}}[\log(\deg(x))-\log(\tilde g_{\Lambda_t\setminus \{\hat Z_t^{(1)}\}})]\right)\\
    &\quad \leq h(t) \sup_{v \in U_t}\exp\left(|\gamma_{v,t}^{(2)}|\left(q\log t - O(\log \log t) \right) \right)\\
    &\quad \leq \exp\left(-(C_{\alpha,\beta} q - 2(q+1))\log t + O(\log \log t)\right),
\end{align*}
which converges to $0$ as $t\rightarrow \infty$ by our choice of $C_{\alpha, \beta}$.
\item\textbf{ Part 2: $W_t$.}
 On the event 
 \begin{align*}E_2:=E_1 \cap\{\#B(\hat Z_t^{(2)},C_{\alpha,\beta})\leq \log t\}, \end{align*}
 By Lemma \ref{lem:B(ZC) volume bound} it follows that and Part 1 above, it follows that $E_2$ holds eventually almost surely. We therefore calculate similarly to the first part that almost surely for sufficiently large $t$, i.e. on the event $E_2$,
\begin{align*}
   \sum_{v \in W_t} \phi^{(1)}_{\Lambda_t \setminus \{\hat{Z}_t^{(1)}\}}(v) 
    &\leq \sum_{v \in W_t}  \exp\left(\sum_{x\in \gamma_{v,t}^{(2)}}[\log(\deg(x))-\log(\tilde g_{\Lambda_t\setminus\{\hat Z_t^{(1)}\}})]\right)\\
    &\leq \exp\left(2E C_{\alpha, \beta}- q \log t + O(\log \log t) \right),
\end{align*}
which again goes to $0$ as $t \to \infty$. \end{itemize}
Combining these two calculations we deduce that $\sum_{v \in \Lambda_t \setminus \{\hat{Z}_t^{(1)}, \hat{Z}_t^{(2)}\}} \phi^{(1)}_{\Lambda_t \setminus \{\hat{Z}_t^{(1)}\}}(v) \to 0$ almost surely.

This also implies that $\|\phi_{\Lambda_t\setminus\{\hat Z_t^{(1)}\}}^{(1)}\|_2^2 \leq \|\phi_{\Lambda_t\setminus\{\hat Z_t^{(1)}\}}^{(1)}\|_1^2 \leq (1 + o(1))^2$ almost surely as $t \to \infty$, so we deduce that the first term in \eqref{eqn:efunction loc as} goes to $0$ almost surely as $t \to \infty$. The proof for the second term works analogously. 
\end{proof}

\subsection{High probability spectral statements}

To prove the single site localisation whp (Theorem \ref{thm:main whp localisation intro}) we will have to prove localisation on a slightly bigger set. The strategy is the same as in the previous subsections, except that the results hold whp rather than almost surely. For this reason we do not give the full details of the proofs.

Define $\overline{R}_t = |\hat{Z}_t^{(1)}| \left(1 + \frac{1}{(\log t)^{\frac{1}{2}(1 + \frac{1}{\alpha})}} \right)$. We will prove localisation on $B_{\overline{R}_t}$ whp, for which we will need the following results.

Recall from Section \ref{sctn:gap potential} that 
\begin{align*}
Z_{B_{\overline{R}_t}} &= \arg_{z \in B_{\overline{R}_t}} \max \{\xi(z) \}, \hspace{2.3cm} \tilde{Z}_{B_{\overline{R}_t}} = \arg_{z \in B_{\overline{R}_t}} \max \{\xi(z) - \deg z \},\\ g_{B_{\overline{R}_t}} &= \xi(Z_{B_{\overline{R}_t}}) - \max_{z \in B_{\overline{R}_t}, z \neq Z_{B_{\overline{R}_t}}} \{\xi(z) \}, \hspace{1cm} \tilde{g}_{B_r} = \xi(\tilde Z_{B_r}) - \deg(\tilde Z_{B_{\overline{R}_t}}) - \max_{z \in B_{\overline{R}_t}, z \neq \tilde Z_{B_{\overline{R}_t}}} \{\xi(z) - \deg(z) \}.
\end{align*}

\begin{lem}\label{lem:boundeigenfunction whp}[cf Lemma \ref{lem:boundeigenfunction}].
Under Assumption \ref{assn:whp},
\[
P\left(\phi^{(1)}_{B_{\overline{R}_t}}(x)\leq \prod_{v \in \gamma_{x,r}} \frac{\deg v}{\tilde{g}_{B_{\overline{R}_t}}} \text{ for all } x \in V(B_{\overline{R}_t}) \right) \to 1\text{ as }r \to \infty.
\]
\end{lem}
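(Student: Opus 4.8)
The plan is to follow the strategy of Lemma \ref{lem:boundeigenfunction} almost verbatim, replacing the set $\Lambda_t \setminus \{\hat Z_t^{(1)}\}$ there with the ball $B_{\overline R_t}$ here, and the pair $(\hat Z_t^{(2)}, \tilde g_{\Lambda_t \setminus \{\hat Z_t^{(1)}\}})$ there with $(Z_{B_{\overline R_t}}, \tilde g_{B_{\overline R_t}})$ here. The only real difference is bookkeeping: almost sure statements become high-probability statements, so Borel--Cantelli is replaced by direct probability estimates that tend to $1$ as $t \to \infty$.

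First I would set up the eigenfunction representation \eqref{eqn:eigenfunction rep} with $\Lambda = B_{\overline R_t}$ and $y = Z_{B_{\overline R_t}}$, normalised so that $\phi^{(1)}_{B_{\overline R_t}}(Z_{B_{\overline R_t}}) = 1$. By the Rayleigh--Ritz bound exactly as in Lemma \ref{lem:boundeigenfunction}, $\lambda^{(1)}_{B_{\overline R_t}} \geq \sup_{z \in B_{\overline R_t}} \{\xi(z) - \deg z\} = \xi(\tilde Z_{B_{\overline R_t}}) - \deg(\tilde Z_{B_{\overline R_t}})$, so the exponent in \eqref{eqn:eigenfunction rep} is bounded above by $\int_0^{H_{Z_{B_{\overline R_t}}}} (\xi(X_s) - [\xi(\tilde Z_{B_{\overline R_t}}) - \deg \tilde Z_{B_{\overline R_t}}])\,\mathrm d s$. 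Summing over the discrete path $\gamma = \pi(X_{[0, H_{Z_{B_{\overline R_t}}}]})$ and integrating out the exponential holding times (each $T_v \sim \textsf{Exp}(\deg v)$) gives
\[
\phi^{(1)}_{B_{\overline R_t}}(x) \leq \sum_{\gamma \in \Gamma_{x,\overline R_t}} \pr{\pi(X_{[0, H_{Z_{B_{\overline R_t}}}]}) = \gamma} \prod_{v \in \gamma} \frac{\deg v}{\deg v - [\xi(v) + \deg Z_{B_{\overline R_t}} - \xi(Z_{B_{\overline R_t}})]},
\]
valid provided every factor is positive, i.e. provided $\xi(v) - \deg v < \xi(Z_{B_{\overline R_t}}) - \deg Z_{B_{\overline R_t}}$ for all $v \in B_{\overline R_t} \setminus \{Z_{B_{\overline R_t}}\}$; on the same event $\tilde Z_{B_{\overline R_t}} = Z_{B_{\overline R_t}}$. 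Decomposing each product along the direct path $\gamma_{x, \overline R_t}$ versus the excursions away from it, bounding the direct-path factors by $\deg v / \tilde g_{B_{\overline R_t}}$ and noting that excursion factors are each $< 1$, the sum over $\gamma$ collapses to the claimed bound $\prod_{v \in \gamma_{x,\overline R_t}} \deg v / \tilde g_{B_{\overline R_t}}$.

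The two ingredients that must now hold with high probability rather than almost surely are: (1) for every $v \in B_{\overline R_t}$ with $v \neq Z_{B_{\overline R_t}}$, one has $\xi(v) - \deg v < \xi(Z_{B_{\overline R_t}}) - \deg Z_{B_{\overline R_t}}$ (equivalently $\tilde g_{B_{\overline R_t}} > 0$ and $\tilde Z_{B_{\overline R_t}} = Z_{B_{\overline R_t}}$), and (2) each excursion factor is $<1$, which is the same condition. Point (1) is a whp version of Lemma \ref{lem:equalZonB}: by Lemma \ref{lem:gap to infinity} we have $g_{B_r} \geq r^{d/\alpha}(\log r)^{-\epsilon}$ whp, and by the whp parts of Lemma \ref{cor:sup xi LB} / Proposition \ref{prop:max degree Ft Et} together with Lemma \ref{lem:supdeg} the maximal degree on $B_{\overline R_t}$ is at most $\overline R_t^{o(1)}$, hence $2 \deg Z_{B_{\overline R_t}} \leq g_{B_{\overline R_t}}$ whp; then the computation in Lemma \ref{lem:equalZonB} gives $\tilde Z_{B_{\overline R_t}} = Z_{B_{\overline R_t}}$ and $\tilde g_{B_{\overline R_t}} \geq g_{B_{\overline R_t}}/2 > 0$ whp. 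Since $\overline R_t = |\hat Z_t^{(1)}| (1 + o(1))$ and $|\hat Z_t^{(1)}| \leq r(t)(\log t)^\epsilon$ whp by Proposition \ref{prop:Zt bounds as}(iii$'$), the radius $\overline R_t$ is a deterministic function of $t$ up to a lower-order random factor, so applying Lemma \ref{lem:gap to infinity} and the degree bounds at the (random) radius $\overline R_t$ is legitimate after first conditioning on the whp event that $\overline R_t \in [r(t)(\log t)^{-\epsilon}, r(t)(\log t)^\epsilon]$.

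The main obstacle — though a mild one — is precisely this point: $\overline R_t$ depends on $\hat Z_t^{(1)}$, which is itself random, so the potential estimates of Section \ref{sec:Potential} (stated for deterministic radii) cannot be invoked directly. The clean fix is to sandwich: on the whp event $\overline R_t \leq r(t)(\log t)^\epsilon =: \rho_t$ (deterministic), apply Lemma \ref{lem:gap to infinity} and Lemma \ref{cor:sup xi LB} with radius $\rho_t$ to get $g_{B_{\rho_t}} \geq \rho_t^{d/\alpha}(\log \rho_t)^{-\epsilon}$ and $\sup_{B_{\rho_t}} \deg \leq \rho_t^\epsilon$, and observe that monotonicity of these quantities in $r$ (the gap can only shrink, the sup degree can only grow as the ball grows, but both are controlled on $B_{\rho_t} \supseteq B_{\overline R_t}$ in the directions we need) transfers the bounds to $B_{\overline R_t}$; more carefully, one uses that $Z_{B_{\overline R_t}}$ lies in $B_{\rho_t}$ and that its potential is at least $a(t)(\log t)^{-\epsilon}$ (from Proposition \ref{prop:Zt bounds as}(ii$'$), since $\hat Z_t^{(1)} \in B_{\overline R_t}$ and $\xi(\hat Z_t^{(1)}) \geq \psi_t(\hat Z_t^{(1)})$), which forces $Z_{B_{\overline R_t}}$ to also be a strict maximiser of $\xi - \deg$ in $B_{\overline R_t}$ by the same degree-versus-gap argument. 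Once these whp events are fixed, the deterministic computation above runs word for word, and taking a union bound over the finitely many whp events completes the proof.
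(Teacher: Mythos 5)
Your argument is correct and mirrors the paper's: reuse the eigenfunction bound and path decomposition of Lemma \ref{lem:boundeigenfunction} with $\Lambda = B_{\overline R_t}$ and normalisation point $Z_{B_{\overline R_t}}$, then verify whp (rather than a.s.) that $\tilde Z_{B_{\overline R_t}} = Z_{B_{\overline R_t}}$ and that each excursion factor is at most one, via the gap bound of Lemma \ref{lem:gap to infinity} and the degree bound of Proposition \ref{prop:max degree Ft Et}$(ii)$. The only departure is your sandwiching to cope with the random radius $\overline R_t$; this is harmless but unnecessary, since the first statement of Lemma \ref{lem:gap to infinity} holds eventually a.s. uniformly in $r$ and the degree bound is inherited from $F_t$, which is defined at the deterministic radius $R(t)$, so both can be applied at $\overline R_t$ directly once one conditions on the whp event $\overline R_t \in [r(t)(\log t)^{-\epsilon}, r(t)(\log t)^{\epsilon}]$.
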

\begin{proof}
Exactly as in Lemma \ref{lem:boundeigenfunction},
\begin{align*}
    \phi^{(1)}_{{B_{\overline{R}_t}}}(x) &\leq \prod_{v\in \gamma_{x,t}^{(1)}} \frac{\deg v}{\tilde g_{\Lambda_t\setminus \{\hat Z_t^{(1)}\}}} \sum_{\gamma \in \Gamma_{x,t}^{(1)}} \pr{\pi(X_{[0,H_{\hat Z_t^{(1)}}]})=\gamma}   \prod_{v \in \gamma\setminus \gamma_{x,t}^{(1)}}  \frac{\deg v}{\deg v - [\xi(v) + \deg \hat Z_t^{(1)}- \xi( \hat Z_t^{(1)} )]}.
\end{align*}
Now recall from Proposition \ref{prop:Zt bounds as}$(ii')$ and Lemma \ref{prop:max degree Ft Et}$(ii)$ that whp $\deg Z_{B_{\overline{R}_t}} \leq (\log t)^B$, and from Lemma \ref{lem:gap to infinity} that $g_{B_{\tilde{R}(t)}} \geq 2a(t) (\log t)^{\frac{-(1+\epsilon)}{\alpha}}$ for all sufficiently large $t$, almost surely. On these events, we have for all $v \neq \hat{Z}_t^{(1)}$ that
\[
\xi(\hat Z_t^{(1)}) - \deg \hat Z_t^{(1)} - \xi (v) > g_{B_{\tilde{R}(t)}}- \deg \hat{Z}_t^{(1)} > 0, 
\]
which implies that the corresponding term in the product on the right-hand side above is at most $1$.
\end{proof}

We will also need the following lemma concerning the degrees of vertices in the direct path $\gamma_{x,r}$.

\begin{lem} \label{lem:technical whp} [cf Lemma \ref{lem:technical}].
Under Assumption \ref{assn:whp},
\begin{enumerate} 
    \item [(i)]  There exist constants $D, E < \infty$ such that  $$P \left(\sup_{x\in B_r}\left\{\sum_{v\in \gamma_{x,r}}\log(\deg v)- [D\log r+ E |\gamma_{x,r}|]\right\}\leq 0\right)\rightarrow 1  \text{ as }r\to\infty,$$
    \item [(ii)]  Fix any $C< \infty$. Then for any $\delta >0$, $$P \left(\sup_{x\in V(B(\tilde Z_{B_r},C))\cap B_r\setminus \tilde Z_{B_r}}\left\{\sum_{v\in \gamma_{x,r}}\log(\deg v)- \delta \log \log r \right\}\leq 0\right)\rightarrow 1 \text{ as }r\to \infty.$$
\end{enumerate}
\end{lem}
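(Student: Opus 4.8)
The plan is to mimic the almost-sure argument of Lemma~\ref{lem:technical}, but carry it out ``in one shot'' at the scale $r$ (rather than running Borel--Cantelli along a subsequence and over the jump times of $\hat Z_t^{(\cdot)}$), so that we obtain a high-probability statement uniformly in $r$. For part (i), the point is that the direct path $\gamma_{x,r}$ from $O$ to any $x\in B_r$ is a subpath of the union of the backbone and finitely many spine segments; by Definition~\ref{def:infinite crit tree} and the size-biasing estimate \eqref{eqn:size biased} (cf.\ the argument in Lemma~\ref{lem:volGamma}), the degrees encountered along any such path are stochastically dominated by i.i.d.\ size-biased offspring-plus-one variables, which under Assumption~\ref{assn:whp} have all polynomial moments finite. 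This is precisely the setting of Lemma~\ref{lem:Br log sum bound}: taking $\Gamma_r$ there to be the set of all \emph{direct} paths between vertices of $B_r$, Lemma~\ref{lem:Br log sum bound} already gives deterministic constants $\tilde B, B<\infty$ with $\sum_{v\in\gamma}\log(\deg v)\le \tilde B|\gamma|+B\log r$ for all $\gamma\in\Gamma_r$, $\bPb$-a.s.\ for large $r$; hence part (i) follows immediately (with $D=B$, $E=\tilde B$), in fact almost surely for large $r$, which is stronger than the claimed high-probability statement. So the first move is simply to invoke Lemma~\ref{lem:Br log sum bound}.

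For part (ii) the extra input is that we only sum over the at most $\#B(\tilde Z_{B_r},C)$ vertices within bounded distance $C$ of the top site, and we want the \emph{much} smaller bound $\delta\log\log r$ rather than $O(\log r)$. Here I would (a) bound $\#B(\tilde Z_{B_r},C)$: since $\tilde Z_{B_r}$ is, conditionally on $\Ti$, a uniformly chosen vertex of $B_r$ (the potential being independent of the tree), and since a ball of fixed radius $C$ around a uniform point of a finite GW tree has all polynomial moments under Assumption~\ref{assn:whp} (by the same telescoping argument as in Proposition~\ref{prop:AR vol growth and exit time}$(ii)$ / the proof of Lemma~\ref{lem:B(ZC) volume bound}), we get $\#B(\tilde Z_{B_r},C)\le (\log r)^{\delta/2}$ with probability tending to $1$; and (b) for each fixed vertex $x$ at distance $\le C$ from $\tilde Z_{B_r}$, the reversed path $\gamma_{x,r}$ traversed backwards from $\tilde Z_{B_r}$ visits at most $C$ vertices whose degrees independently satisfy the size-biased tail bound, so by a Chernoff bound $\prb{\sum_{v\in\gamma_{x,r}}\log(\deg v)\ge \delta\log\log r}\le e^{AC}(\log r)^{-c\delta}$ with $c$ as large as we like (choosing the Chernoff parameter accordingly, using finiteness of all polynomial moments of the size-biased degree). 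A union bound over the $(\log r)^{\delta/2}$ vertices then gives a bound $O((\log r)^{\delta/2-c\delta})$, which is $o(1)$ for $c$ large. Combining (a) and (b) with a further union bound over the (whp) event that $\#B(\tilde Z_{B_r},C)\le(\log r)^{\delta/2}$ yields the claim.

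The only mild subtlety — and the place to be slightly careful — is the dependence between the choice of $\tilde Z_{B_r}$ and the degrees along $\gamma_{\tilde Z_{B_r},r}$: conditioning on which vertex is $\tilde Z_{B_r}$ is a statement about the potential field, which is independent of $\Ti$, so after conditioning on $\Ti$ the vertex $\tilde Z_{B_r}$ is uniform on $B_r$ and the degree-tail bounds along the path to it (and along the short path from any nearby $x$ to it) hold uniformly over the choice. Thus one can first condition on $\Ti$, apply the above estimates, and then take expectations; alternatively one simply observes that the event in (ii) depends on $\Ti$ only (it does not involve $\xi$ except through $\tilde Z_{B_r}$, which ranges over all vertices of $B_r$ in the worst case), so a crude union bound over all $x\in B_r$ as the possible value of $\tilde Z_{B_r}$ is also available at the cost of an extra $\#B_r$ factor — but this is wasteful, and the uniform-point argument is cleaner. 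I expect part (ii)'s union-bound bookkeeping to be the only real work; part (i) is essentially immediate from Lemma~\ref{lem:Br log sum bound}.

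\begin{proof}
For (i), apply Lemma~\ref{lem:Br log sum bound} with $\Gamma_r$ taken to be the set of all direct paths between vertices of $B_r$ (in particular all paths $\gamma_{x,r}$, $x\in B_r$): this gives deterministic $\tilde B, B<\infty$ with $\sum_{v\in\gamma_{x,r}}\log(\deg v)\le \tilde B|\gamma_{x,r}|+B\log r$ for all $x\in B_r$, $\bPb$-almost surely for all sufficiently large $r$, hence in particular with probability tending to $1$. Take $D=B$, $E=\tilde B$.

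For (ii), fix $C<\infty$ and $\delta>0$, and choose $\epsilon\in(0,\beta-1)$, $c:=\beta-1-\epsilon$, and then $c'>2\delta^{-1}$. Conditionally on $\Ti$, the vertex $\tilde Z_{B_r}$ is a uniform vertex of $B_r$, since $\xi$ is independent of $\Ti$ and non-atomic. By the same telescoping computation as in the proof of Proposition~\ref{prop:AR vol growth and exit time}$(ii)$ (applied to the ball of radius $C$ around a uniform vertex of the finite tree $B_r$), we have $\prb{\#B(\tilde Z_{B_r},C)\ge\lambda}\le C_K\lambda^{-K}$ for every $K<\infty$; taking $K>2\delta^{-1}$ and $\lambda=(\log r)^{\delta/2}$ shows that $\#B(\tilde Z_{B_r},C)\le(\log r)^{\delta/2}$ with probability tending to $1$ as $r\to\infty$.

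Next, fix any vertex $w$ and any $x$ with $|x-w|\le C$; the path $\gamma_{x,r}$, traversed backwards from $w$, is a subpath of at most $C$ consecutive vertices whose degrees independently satisfy a size-biased offspring tail bound (by \eqref{eqn:size biased} and Definition~\ref{def:infinite crit tree}), so $A:=\sup_{v}\Eb{(\deg v)^{c'}}<\infty$ under Assumption~\ref{assn:whp}. A Chernoff bound then gives
\begin{align*}
\prb{\sum_{v\in\gamma_{x,r}}\log(\deg v)\ge \delta\log\log r}
&\le \Eb{\exp\Big(c'\sum_{v\in\gamma_{x,r}}\log(\deg v)\Big)}\exp(-c'\delta\log\log r)\\
&\le \exp\big(A\,|\gamma_{x,r}\cap B(w,C)|\big)\,(\log r)^{-c'\delta}\le e^{AC}(\log r)^{-c'\delta}.
\end{align*}
Conditioning on $\Ti$ and summing over the (at most $(\log r)^{\delta/2}$, on the above event) vertices $x\in V(B(\tilde Z_{B_r},C))\cap B_r\setminus\{\tilde Z_{B_r}\}$, a union bound yields
\[
\prb{\sup_{x\in V(B(\tilde Z_{B_r},C))\cap B_r\setminus \tilde Z_{B_r}}\Big\{\sum_{v\in\gamma_{x,r}}\log(\deg v)-\delta\log\log r\Big\}\ge 0,\ \#B(\tilde Z_{B_r},C)\le(\log r)^{\delta/2}}\le e^{AC}(\log r)^{\delta/2-c'\delta}.
\]
Since $c'>2\delta^{-1}$ this tends to $0$, and combining with the high-probability bound on $\#B(\tilde Z_{B_r},C)$ completes the proof.
\end{proof}
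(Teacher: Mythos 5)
Part (i) is fine and matches the paper's approach: $\gamma_{x,r}$ is a direct path between two vertices of $B_r$, so Lemma~\ref{lem:Br log sum bound} applies verbatim.

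Part (ii) has a genuine gap. You set a Chernoff exponent $c'>2\delta^{-1}$ and write $A:=\sup_v\Eb{(\deg v)^{c'}}<\infty$, invoking ``finiteness of all polynomial moments of the size-biased degree.'' But this is exactly what Assumption~\ref{assn:whp} does \emph{not} give you. Under Assumption~\ref{assn:whp} the offspring distribution has tail $O(x^{-\beta})$ with $\beta\in(1,2]$, so the size-biased offspring distribution has tail $O(x^{-(\beta-1)})$ and hence finite moments only of order strictly below $\beta-1\leq 1$. Since the statement must hold for every $\delta>0$, you would need arbitrarily large $c'$, which is not available; the Chernoff bound degenerates as soon as $\delta<2/(\beta-1)$ (in particular for all $\delta<2$). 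The same issue appears in your volume step: you cite Proposition~\ref{prop:AR vol growth and exit time}$(ii)$ to claim $\prb{\#B(\tilde Z_{B_r},C)\geq\lambda}\leq C_K\lambda^{-K}$ for every $K$, but that proposition is stated (and is only true) under Assumption~\ref{assn:as}; under Assumption~\ref{assn:whp} only the $\lambda^{-(\beta-1-\epsilon)}$ decay of Proposition~\ref{prop:AR vol growth and exit time}$(i)$ is available.

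The fix, which is what the paper does, is to accept the best Chernoff exponent $c\in(0,\beta-1)$ that the limited moments allow (giving a per-vertex tail $e^{AC}(\log r)^{-c\delta}$), and then to \emph{shrink the volume threshold correspondingly}: replace the event $\{\#B(\tilde Z_{B_r},C)\leq(\log r)^{\delta/2}\}$ by $\{\#B(\tilde Z_{B_r},C)\leq(\log r)^{c\delta/2}\}$. The union bound over vertices of the small ball then gives $(\log r)^{c\delta/2}\cdot(\log r)^{-c\delta}=(\log r)^{-c\delta/2}\to 0$, and the complementary event $\{\#B(\tilde Z_{B_r},C)>(\log r)^{c\delta/2}\}$ has probability $O\big((\log r)^{-c\delta(\beta-1-\epsilon)/2}\big)\to 0$ using the weaker $\lambda^{-(\beta-1-\epsilon)}$ volume tail. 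Both terms vanish with no need for higher moments. Note also that you define $c:=\beta-1-\epsilon$ at the start but never use it, reverting to the too-large $c'$; the correct argument uses precisely that $c$ and never introduces $c'$.
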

\begin{proof}
The proof is exactly the same as that of Lemma \ref{lem:technical}, except that some of the relevant events hold whp but not almost surely. For part $(ii)$ we adapt the proof slightly by taking some $c \in (0, \beta - 1)$ and considering the event $\{\#B(\hat{Z}^{(i)}_{t},C)>(\log t)^{c\delta/2}\}$.
\end{proof}

We are now able to show that the principal eigenfunction is concentrated in $\tilde Z_{B_r}$. 
\begin{lem}\label{lem:loceigenfunction whp}
Under Assumption \ref{assn:whp},
\[ \|\phi_{B_r}^{(1)}\|_2^2\sum_{z\in B_r\setminus \tilde Z_{B_r}} \phi^{(1)}_{B_r}(z)\to 0 \text{ in $P$-probability as } r\rightarrow \infty.
\]
\end{lem}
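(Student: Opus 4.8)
The plan is to mirror the almost sure argument of Lemma \ref{lem:loceigenfunction}, replacing each almost sure input by its high-probability counterpart. Recall that under Assumption \ref{assn:whp} we are working on the ball $B_{\overline{R}_t}$ (with $\overline{R}_t = |\hat Z_t^{(1)}|(1 + (\log t)^{-\frac12(1+\frac1\alpha)})$, so $\overline{R}_t \le 2r(t)(\log t)^{\epsilon}$ whp by Proposition \ref{prop:Zt bounds as}$(iii')$), and the analogue of $\Omega_t$ is the singleton $\{\tilde Z_{B_{\overline{R}_t}}\}$. First I would split the sum $\sum_{z \in B_{\overline{R}_t} \setminus \tilde Z_{B_{\overline{R}_t}}} \phi^{(1)}_{B_{\overline{R}_t}}(z)$ into the part over $U_t := B_{\overline{R}_t} \setminus (\{\tilde Z_{B_{\overline{R}_t}}\} \cup B(\tilde Z_{B_{\overline{R}_t}}, C))$ and the part over $W_t := (B_{\overline{R}_t} \setminus \{\tilde Z_{B_{\overline{R}_t}}\}) \cap B(\tilde Z_{B_{\overline{R}_t}}, C)$, for a suitable constant $C = C_{\alpha,\beta}$ to be fixed (the same computation as in Lemma \ref{lem:loceigenfunction} shows $C = 3(q+1)/q$ works).

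For the $U_t$ sum, I would apply Lemma \ref{lem:boundeigenfunction whp} to bound $\phi^{(1)}_{B_{\overline{R}_t}}(z) \le \prod_{v \in \gamma_{z,r}} \frac{\deg v}{\tilde g_{B_{\overline{R}_t}}}$, then use: (a) $\tilde g_{B_{\overline{R}_t}} \ge 2a(t)(\log t)^{-(1+\epsilon)/\alpha}$ whp via Lemmas \ref{lem:gap to infinity} and \ref{lem:equalZonB}; (b) $\#B_{\overline{R}_t} \le r(t)^2 (\log t)^{\epsilon}$ whp via Proposition \ref{prop:AR vol growth and exit time}$(i)$ (so the number of summands is polylogarithmic times $r(t)^2$); and (c) the degree-product bound of Lemma \ref{lem:technical whp}$(i)$, giving $\sum_{v \in \gamma_{z,r}} \log \deg v \le D\log r + E|\gamma_{z,r}|$. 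Since every $z \in U_t$ has $|\gamma_{z,r}| = |z - \tilde Z_{B_{\overline{R}_t}}| \ge C$, combining these yields that $\sum_{z \in U_t} \phi^{(1)}_{B_{\overline{R}_t}}(z)$ is bounded whp by $\exp\{-(Cq - 2(q+1))\log t + O(\log\log t)\}$, which tends to $0$ by the choice of $C$. For the $W_t$ sum, on the event $\#B(\tilde Z_{B_{\overline{R}_t}}, C) \le \log t$ (which holds whp by the argument of Lemma \ref{lem:B(ZC) volume bound}, valid under Assumption \ref{assn:whp} too since $\prb{\#B(v,C) \ge \lambda}$ decays polynomially in $\lambda$ uniformly over $v$) together with Lemma \ref{lem:technical whp}$(ii)$ (which controls $\sum_{v \in \gamma_{z,r}} \log \deg v \le \delta \log\log t$ for $z \in B(\tilde Z_{B_{\overline{R}_t}}, C)$), the same estimate gives $\sum_{z \in W_t} \phi^{(1)}_{B_{\overline{R}_t}}(z) \le \exp\{- q\log t + O(\log\log t)\} \to 0$. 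Together these show $\sum_{z \in B_{\overline{R}_t} \setminus \tilde Z_{B_{\overline{R}_t}}} \phi^{(1)}_{B_{\overline{R}_t}}(z) \to 0$ in $P$-probability, and since $\phi^{(1)}_{B_{\overline{R}_t}}(\tilde Z_{B_{\overline{R}_t}}) = 1$ by normalisation, also $\|\phi^{(1)}_{B_{\overline{R}_t}}\|_2^2 \le \|\phi^{(1)}_{B_{\overline{R}_t}}\|_1^2 \le (1 + o(1))^2$ whp, which yields the claim.

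The main obstacle, and the place requiring genuine care rather than transcription, is the bookkeeping of which events are "whp" versus "almost surely" and ensuring that the finitely many high-probability events involved (volume bounds on $B_{\overline{R}_t}$ and on $B(\tilde Z_{B_{\overline{R}_t}}, C)$; the gap bound $\tilde g_{B_{\overline{R}_t}}$; the degree-product bounds; the eigenfunction bound of Lemma \ref{lem:boundeigenfunction whp}) can all be intersected without losing the $1 - o(1)$ probability — this is fine since there are only $O(1)$ of them. A subtler point is that, unlike in the almost-sure version where $|\hat Z_t^{(1)}|$ and $|\hat Z_t^{(2)}|$ were both controlled, here we use the single maximiser $\tilde Z_{B_{\overline{R}_t}}$ of $\xi - \deg$ over the ball; one must invoke Lemma \ref{lem:equalZonB} to know $\tilde Z_{B_{\overline{R}_t}} = Z_{B_{\overline{R}_t}}$ and that the gap $\tilde g_{B_{\overline{R}_t}}$ inherits the lower bound from $g_{B_{\overline{R}_t}}$, and to confirm via Proposition \ref{prop:Zt bounds as}$(i')$--$(iii')$ that $|\hat Z_t^{(1)}|$ (hence $\overline{R}_t$) sits in the expected window $[r(t)(\log t)^{-\epsilon}, r(t)(\log t)^{\epsilon}]$ whp, so that $\log \overline{R}_t \sim \log r(t) \sim (q+1)\log\log t$ and the exponents in the estimates above come out correctly. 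Beyond that, every individual inequality is a direct adaptation of the corresponding step in Lemma \ref{lem:loceigenfunction}.
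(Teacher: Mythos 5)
Your overall architecture (split into a far set $U_t$ and a constant-radius shell $W_t$ around the maximiser, bound the eigenfunction via \cref{lem:boundeigenfunction whp}, feed in the gap bound of \cref{lem:gap to infinity}/\cref{lem:equalZonB}, the degree-product bounds of \cref{lem:technical whp}, and the volume bound) is exactly the paper's approach. However, there is a genuine error in step (b) and in the resulting choice of $C_{\alpha,\beta}$.

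You invoke \cref{prop:AR vol growth and exit time}$(i)$ to claim $\#B_{\overline R_t}\le r(t)^2(\log t)^{\epsilon}$ whp. Under Assumption~\ref{assn:whp} that proposition gives $\#B_r\le \lambda\, r^{\beta/(\beta-1)}=\lambda\, r^{d}$, with $d=\frac{\beta}{\beta-1}\in[2,\infty)$. The exponent is $2$ only in the boundary case $\beta=2$ (which is exactly the situation of Assumption~\ref{assn:as}, where \cref{lem:loceigenfunction} took place). Writing the number of summands as $r^{d}(\log r)^{\epsilon}$ and tracking the exponents, the $U_t$ estimate gives $\log\sum_{z\in U_t}\phi^{(1)}_{B_{\overline R_t}}(z)\le -(C_{\alpha,\beta}q-d(q+1))\log t+O(\log\log t)$, so you need $C_{\alpha,\beta}q>d(q+1)$. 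Your choice $C_{\alpha,\beta}=\frac{3(q+1)}{q}$ (``the same computation as in Lemma~\ref{lem:loceigenfunction} shows $C=3(q+1)/q$ works'') gives $C_{\alpha,\beta}q=3(q+1)$, which is $\le d(q+1)$ whenever $d\ge 3$, i.e.\ for all $\beta\le 3/2$; the argument then produces a bound that does not tend to zero. The paper's proof of \cref{lem:loceigenfunction whp} explicitly flags both modifications: replace the $\Lambda_t$-volume bound by $\#B_r\le r^{d}(\log r)^{\epsilon}$ via \cref{prop:AR vol growth and exit time}$(i)$, and enlarge the constant to $C_{\alpha,\beta}=\frac{(d+1)(q+1)}{q}$. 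Your proof needs the same corrections; the rest of the bookkeeping (gap, degree products, $W_t$ shell, normalisation) is fine.

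(As a minor side note: you wrote $\log\overline R_t\sim(q+1)\log\log t$; it should be $(q+1)\log t$, which is clearly what you meant given how you use it.)
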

\begin{proof}
The proof is exactly the same as that of Lemma \ref{lem:loceigenfunction}, except that some of the relevant events hold whp but not almost surely, we use Proposition \ref{prop:AR vol growth and exit time}$(i)$ to consider the event $\# \{ B_r \leq r^d(\log r)^{\epsilon}\}$ in place of $\{\Lambda_t \leq h(t)\}$, and we instead take $C_{\alpha,\beta}=\frac{(d+1)(q+1)}{q}$.
\end{proof}

\section{Almost sure two site localisation}
\label{sec:localisation}
In this section we prove Theorem \ref{thm:main a.s. localisation intro}. 
The strategy is a refined version of that of \cite[Section 5]{KLMStwocities09} and proceeds roughly as follows. Although this section is written under Assumption \ref{assn:as} in which case $d=2$, we write some proofs under Assumption \ref{assn:whp} in order to use them later for the proof of Theorem \ref{thm:main whp localisation intro}.

Recall from Section \ref{sctn:sites of high potential} that $R(t)=r(t)(\log t)^p$ where $p=q+2$. Recall also that $C = \frac{pd}{\alpha} + 1$, and
\begin{align*}
F_t &=\{v\in B_{R(t)}:\xi(v) \geq R(t)^{\frac{d}{\alpha}} (\log t)^{-C}\}, \\
E_t &= \{v \in B_{R(t)} \setminus F_t: \exists z \in F_t \text{ such that } \xi(z) - \deg (z) \leq \xi(v) - \deg (v)\}, \\
\Lambda_t &= \bigcup_{i=1,2} \left\{ v \in V(\Ti): d(v, \hat{Z}_t^{(i)}) + \min \{ |v|, |\hat{Z}_t^{(i)}|\} \leq  \left( 1 + (\log t)^{-z}\right)|\hat{Z}_t^{(i)}|\} \right\}.
\end{align*}
Now take some $\epsilon > 0$. We define
\begin{align}\label{eqn:u1 def}
u_1(t,v)&:=\mathbb E_0\left[\exp\left\{\int_0^t\xi(X_s)ds\right\}\mathbbm 1\{X_t=v\}\mathbbm 1\{\tau_{\Lambda_t}> t,H_{\left\{\hat Z_t^{(1)},\hat{Z}_t^{(2)}\right\}}\leq t\}\right].
\end{align}
and set $U_1(t) := \sum_{v \in V(\Ti)} u_1(t,v)$.

Also let $J_t$ denote the number of jumps made by $(X_s)_{s \geq 0}$ up until time $t$, and set $j_t = t^{\frac{d+\epsilon}{\beta}+1}r(t)$. We also define the five following contributions to the remaining total mass:
\begin{align}\label{eqn:Ui defs}
\begin{split}
U_2(t)&:=\mathbb E_0\left[\exp\left\{\int_0^t\xi(X_s)ds\right\} \right.\\
&\qquad \qquad \left. \mathbbm 1\{\tau_{B_{R(t)}}>t, \tau_{\Lambda_t}< t,H_{\left\{\hat Z_t^{(1)},\hat{Z}_t^{(2)}\right\}}\leq t,\arg \max_{s\in[0,t]}[\xi(X_s)-\deg(X_s)]\in\{\hat Z_t^{(1)},\hat Z_t^{(2)}\}, J_t\leq j_t\}\right],\\
U_3(t)&:=\mathbb E_0\left[\exp\left\{\int_0^t\xi(X_s)ds\right\} \mathbbm 1\{\tau_{B_{R(t)}}\leq t\}\right],\\
U_4(t)&:=\mathbb E_0\left[\exp\left\{\int_0^t\xi(X_s)ds\right\} \right.\\
&\qquad \qquad \left.\mathbbm 1\{\tau_{B_{R(t)}}>t,\exists u\in (F_t \cup E_t) \setminus \{\hat Z_t^{(1)},\hat Z_t^{(2)}\}: \arg \max_{s\in[0,t]}[\xi(X_s)-\deg(X_s)]=u,J_t\leq j_t\}\right],\\
U_5(t)&:=\mathbb E_0\left[\exp\left\{\int_0^t\xi(X_s)ds\right\} \mathbbm 1\{\tau_{B_{R(t)}}>t,H_{F_t}>t\} \right],\\
U_6(t)&=\mathbb E_0\left[\exp\left\{\int_0^t\xi(X_s)ds\right\} \mathbbm 1\{\tau_{B_{R(t)}}>t, J_t\geq j_t\}\right],
\end{split}
\end{align}
We will show in Proposition \ref{prop:u1} that $u_1(t,v)$ localises on $\left\{\hat{Z}^{(1)}_t, \hat{Z}^{(2)}_t\right\}$, and in Propositions \ref{prop:U2 zero} to \ref{prop:U6 0} that $\frac{U_i(t)}{U(t)} \to 0$ for all $i\in\{2,3,4,5,6\}$. This implies the result since $u(t,v)$ is non-negative and $u(t,v) \leq \sum_{i \leq 6} u_i(t,v)$ for all $t > 0, v \in V(\Ti)$.

We first derive some bounds for the contribution to $U$ from a specific path. This will be particularly useful for proving a lower bound on $U(t)$ in Proposition \ref{prop:total mass LB}, since the strategy employed to prove the analogous result in \cite[Proposition 4.2]{KLMStwocities09} does not carry through in the variable degree setting.

\begin{lem}\label{lem:pathbounds}
Let $\gamma = v_0, \ldots, v_n$ be a path in $\Ti$, and let $i^* = \arg \max_{i \leq n} \left[ \xi(v_i) - \deg (v_i)\right]$ (taking $i^*$ to be the index of the first visit to $v_{i^*}$ if $v_{i^*}$ is visited more than once). Then:
\begin{enumerate}[(i)]
\item \begin{align*}
\E{\exp \left\{ \int_0^t \xi (X_s) ds \right\}\mathbbm{1}\{{\pi}(X_{[0,t]}) = \gamma\}} &\leq e^{t (1+\xi (v_{i^*}) - \deg (v_{i^*}))} \prod_{i \neq i^*} \frac{1}{|1 + [\xi(v_{i^*}) - \deg (v_{i^*})] - [\xi( v_i) - \deg (v_i)]|}.
\end{align*}
\item \begin{align*}
\E{\exp \left\{ \int_0^t \xi (X_s) ds \right\}\mathbbm{1}\{{\pi}(X_{[0,t]}) = \gamma\}} &\geq e^{t (\xi (v_{i^*}) - \deg (v_{i^*}))} \prod_{i \neq i^*} \frac{1-e^{-\frac{t}{n}|[\xi(v_{i^*}) - \deg (v_{i^*})] - [\xi( v_i) - \deg (v_i)]|}}{|[\xi(v_{i^*}) - \deg (v_{i^*})] - [\xi( v_i) - \deg (v_i)]+1|}.
\end{align*}
\end{enumerate}
If $\gamma$ is a direct path, the ``$+1$'' terms are not necessary.
\end{lem}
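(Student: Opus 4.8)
The plan is to compute the expectation by integrating over the holding times at each vertex in the path $\gamma$, exploiting the path decomposition already set up in the proof of Proposition~\ref{prop:FK time reversal}. Recall from there that if $\gamma = v_0, \ldots, v_n$, then after cancelling the transition probabilities $\prod_{i=0}^{n-1} \frac{1}{\deg v_i}$ against the $\deg v_i$ factors in the exponential holding-time densities, one is left with
\[
\E{\exp \left\{ \int_0^t \xi (X_s) ds \right\}\mathbbm{1}\{{\pi}(X_{[0,t]}) = \gamma\}} = \int_{\mathcal{S}} e^{\sum_{i=0}^{n} (\xi(v_i) - \deg v_i) s_i}\, ds_0 \cdots ds_{n-1},
\]
where the domain $\mathcal{S}$ is $\{s_i > 0 \text{ for } i \le n-1,\ \sum_{i=0}^{n-1} s_i < t\}$ and $s_n := t - \sum_{i=0}^{n-1} s_i$. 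First I would rewrite the integrand by factoring out the dominant vertex: pull out $e^{(\xi(v_{i^*}) - \deg v_{i^*}) \sum_i s_i} = e^{(\xi(v_{i^*}) - \deg v_{i^*}) t}$, leaving $\prod_{i \ne i^*} e^{-a_i s_i}$ where $a_i := [\xi(v_{i^*}) - \deg v_{i^*}] - [\xi(v_i) - \deg v_i] \ge 0$ by maximality of $i^*$.

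For the \textbf{upper bound} (i), I would enlarge the domain of integration: each $s_i$ for $i \ne i^*, i \le n-1$ is only constrained to be positive, and $s_{i^*}$ (or $s_n$ if $i^* = n$, which needs a small separate bookkeeping) can also be bounded by dropping the constraint linking it to the others. Integrating $\int_0^\infty e^{-a_i s_i} ds_i = a_i^{-1}$ over the free variables, and handling the one variable $s_{i^*}$ that is forced to satisfy $\sum s_i = t$ by bounding $e^{-a_{i^*} s_{i^*}} \le 1$ and its range by $t$ — actually more carefully, one uses $s_{i^*} \le t$ so that the contribution is at most $t$; combined with the factor $e^{t}$ from rounding the remaining $1+\xi(v_{i^*})-\deg(v_{i^*})$, and replacing $a_i$ by $|1 + a_i|$ in the denominators (which only helps since $a_i \ge 0$), gives exactly the stated bound $e^{t(1+\xi(v_{i^*})-\deg v_{i^*})} \prod_{i \ne i^*} |1 + a_i|^{-1}$. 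The $+1$ shift is precisely what absorbs the case $a_i = 0$, which cannot occur when $\xi$ is non-atomic but is kept for safety; for a direct path there are no repeated vertices, the degrees and potentials are genuinely distinct, and one can drop the $+1$.

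For the \textbf{lower bound} (ii), I would restrict the domain instead: force each holding time into $[0, t/n]$, so that $\sum_{i} s_i \le t$ is automatic and there is room. Concretely, one writes the integral over $\prod_{i \ne i^*}[0, t/n]$ and then over $s_{i^*}$ taking up the remaining mass — bounding $s_{i^*} \ge t - (n-1)\frac{t}{n} = \frac{t}{n}$... wait, that's the wrong direction; instead one keeps $s_{i^*}$ running over an interval of length at least, say, $t/n$ near $t$ so that $e^{(\xi(v_{i^*})-\deg v_{i^*}) s_{i^*}}$ is close to $e^{(\xi(v_{i^*})-\deg v_{i^*})t}$ up to a factor we fold into the ``$+1$''. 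Then $\int_0^{t/n} e^{-a_i s_i} ds_i = \frac{1 - e^{-a_i t/n}}{a_i}$, and inserting $|a_i + 1|$ in the denominator (which only decreases the bound) yields exactly the claimed product $\prod_{i \ne i^*} \frac{1 - e^{-(t/n)|a_i|}}{|a_i + 1|}$ times $e^{t(\xi(v_{i^*})-\deg v_{i^*})}$. The main obstacle — really the only delicate point — is the careful bookkeeping around the index $i^*$ and the constrained variable $s_n$: one must track whether $i^* \le n-1$ or $i^* = n$, and ensure that the change of variables and domain manipulations are valid in both cases. This is the same kind of elementary-but-fiddly integration over a simplex that appears in the $\Z^d$ literature, so I would model the argument on the analogous computations there, just taking care that the degree terms $\deg v_i$ now enter the exponents (they were absent or constant in the $\Z^d$ setting) and that the cancellation of transition probabilities with density normalisations, already carried out in Proposition~\ref{prop:FK time reversal}, is invoked rather than repeated.
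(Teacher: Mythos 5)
Your overall strategy — write the expectation as an integral over holding times (invoking the decomposition from Proposition~\ref{prop:FK time reversal}), substitute so that the dominant vertex's holding time is eliminated, pull out $e^{t(\xi(v_{i^*})-\deg v_{i^*})}$, and bound the remaining simplex integral from above/below — is the same as the paper's. But there is a genuine gap in how you produce the ``$+1$''.

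You claim the ``$+1$'' shift ``absorbs the case $a_i = 0$, which cannot occur when $\xi$ is non-atomic but is kept for safety''. This is wrong: $a_i := [\xi(v_{i^*})-\deg v_{i^*}]-[\xi(v_i)-\deg v_i]$ is zero precisely when $v_i = v_{i^*}$, i.e.\ whenever the path \emph{revisits} its optimal vertex, which is common for non-direct paths and has nothing to do with atomicity of $\xi$. Without the ``$+1$'', the free integral $\int_0^\infty e^{-a_i y_i}\,dy_i$ diverges at those indices, so the $+1$ is essential, not decorative. Relatedly, for the upper bound your step ``replacing $a_i$ by $|1+a_i|$ in the denominators (which only helps since $a_i \ge 0$)'' is in the wrong direction: for $a_i>0$ we have $\frac{1}{1+a_i}<\frac{1}{a_i}$, so this \emph{shrinks} your candidate upper bound, and a smaller number is not automatically still an upper bound. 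The $e^t$ prefactor and the $1+a_i$ denominators cannot be justified as two independent moves. The paper gets them jointly and cleanly: replace $\xi(v_{i^*})$ by $\xi(v_{i^*})+1$ \emph{only at index $i^*$}. This monotonically increases the Feynman--Kac expectation (since the holding time there is positive), simultaneously turns the pulled-out factor into $e^{t(1+\xi(v_{i^*})-\deg v_{i^*})}$, and shifts every integrand coefficient from $-a_i$ to $-(1+a_i)<0$, so the indicator $\{\sum y_i<t\}$ can be dropped and each integral evaluated over $(0,\infty)$. Equivalently, one can note that on the domain $\sum_{i\neq i^*} y_i<t$ one has $e^{-\sum a_i y_i}\le e^t e^{-\sum(1+a_i)y_i}$ and then integrate freely; either way, the $e^t$ and the $1+a_i$ are tied together by a single inequality, not inserted separately. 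For the lower bound the paper uses the mirror move — replace $\xi(v_i)$ by $\xi(v_i)-1$ at the \emph{later} visits $i\neq i^*$ with $v_i=v_{i^*}$ (again, these exist!), which monotonically decreases the expectation and makes all coefficients strictly positive, then integrates each variable over $(0,t/n)$. You should also do the change of variables \emph{before} bounding, so that $s_{i^*}$ is already eliminated; your attempt to keep $s_{i^*}$ and bound it ``near $t$'' is where your lower-bound reasoning visibly unravels.
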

\begin{proof}
Exactly the same computation as in the proof of Proposition \ref{prop:FK time reversal}, but but exchanging the roles of $s_0$ and $s_{i^*}$ (i.e. instead applying the substitution $s_n = t-\sum_{i=0}^{n-1}s_i = t-\sum_{i \neq i^*}s_i - s_{i^*}$ to the integral over $s_{i^*}$) shows that
\begin{align*}
&\E{\exp \left\{ \int_0^t \xi (X_s) ds \right\} \mathbbm{1}\{{\pi}((X)_{[0,t]}) = \gamma\}} \\
&= e^{t(\xi(v_{i^*}) - \deg v_{i^*})} {\displaystyle\int}_{(0, \infty)^n} \exp\left\{\sum_{i \neq i^*} y_i \left([\xi( v_i) - \deg v_i] - [\xi(v_{i^*}) - \deg v_{i^*}]\right) \right\} \mathbbm{1}\left\{\sum_{i \neq i^*} y_i < t \right\} \prod_{i\neq i^*} d y_i
\end{align*}
\begin{enumerate} [(i)]
    \item To obtain the upper bound, we suppose instead that $\xi(v_{i^*})$ is replaced with $\xi(v_{i^*})+1$ only on the first visit to $v_{i^*}$. This can only increase the expectation in question. We can then remove the indicator function from the above integral and integrate over $(0, \infty)^n$ to obtain the stated upper bound.
    \item For the lower bound, we instead replace $\xi(v_{i})$ with $\xi(v_{i})-1$ for all $i$ such that $v_i = v_{i^*}$ but $i \neq i^*$. We then integrate each variable over the interval $(0, \frac{t}{n})$. \qedhere
\end{enumerate}
\end{proof}

We start by using this to give a lower bound for the total mass $U(t)$. 

\begin{prop}\label{prop:total mass LB}
\begin{enumerate}[(i)]
    \item Under Assumption \ref{assn:as}, there exists $c>0$ such that for any $\epsilon > 0$,
\begin{align*}
U(t) = \E{\exp \left\{ \int_0^t \xi (X_s) ds \right\}} &\geq \exp \{t \psi_t (\hat{Z}_t^{(1)}) - r(t) t^{-c} \} \geq \exp \{-ta(t) (\log t)^{-\epsilon}\}
\end{align*}
eventually almost surely.
    \item Under Assumption \ref{assn:whp}, the same statement holds with high probability as $t \to \infty$.
\end{enumerate}
\end{prop}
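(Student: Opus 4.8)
\textbf{Proof proposal for Proposition \ref{prop:total mass LB}.}

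The plan is to obtain the lower bound on $U(t)$ by restricting the Feynman-Kac expectation to a single favourable path: the direct path $\gamma_{O,t}^{(1)}$ from the root $O$ to the maximiser $\hat Z_t^{(1)}$ of $\psi_t$. First I would write
\[
U(t) \;\geq\; \E{\exp \left\{ \int_0^t \xi (X_s)\, ds \right\}\mathbbm{1}\{{\pi}(X_{[0,t]}) = \gamma_{O,t}^{(1)}\}},
\]
and apply Lemma \ref{lem:pathbounds}$(ii)$ with $\gamma = \gamma_{O,t}^{(1)}$. Since $\gamma_{O,t}^{(1)}$ is a direct path of length $n = |\hat Z_t^{(1)}|$, the ``$+1$'' terms are unnecessary, so the bound reads
\[
U(t) \;\geq\; e^{t(\xi(v_{i^*}) - \deg v_{i^*})} \prod_{i \neq i^*} \frac{1 - e^{-\frac{t}{n}|[\xi(v_{i^*}) - \deg v_{i^*}] - [\xi(v_i) - \deg v_i]|}}{|[\xi(v_{i^*}) - \deg v_{i^*}] - [\xi(v_i) - \deg v_i]|},
\]
where $i^*$ is the index of the vertex on the path maximising $\xi - \deg$. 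The key point is that along this direct path the maximiser of $\xi - \deg$ should, for large $t$, coincide with $\hat Z_t^{(1)}$ itself (indeed $\hat Z_t^{(1)}$ has atypically high potential by Proposition \ref{prop:Zt bounds as}$(ii)$, and vertices on the direct path to it have their degrees only size-biased, hence the sup of $\xi-\deg$ over the path is attained at $\hat Z_t^{(1)}$ with high probability / eventually a.s.). However, even if this fails, the argument only improves, so I would not dwell on it; alternatively one can simply replace $v_{i^*}$ by $\hat Z_t^{(1)}$ at the cost of a harmless factor, using that $\hat Z_t^{(1)}$ is on the path.

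The main work is then to show that the logarithm of the product term is negligible compared to $ta(t)$, in fact is $O(r(t) t^{-c})$ for some $c>0$. The denominators $|[\xi(\hat Z_t^{(1)}) - \deg \hat Z_t^{(1)}] - [\xi(v_i) - \deg v_i]|$ are controlled from below: by the gap estimates (Lemma \ref{lem:gap to infinity}, Lemma \ref{lem:equalZonB}, and the volume/degree bounds of Corollary \ref{cor:volume bounds}, Lemma \ref{lem:supdeg}, Lemma \ref{lem:xi in Ar}) this difference is, for all $v_i$ on the direct path, at least of polynomial order in $t$ — roughly $\xi(\hat Z_t^{(1)}) - \deg \hat Z_t^{(1)} - \sup_{v \in B_{R(t)}, v \neq \hat Z_t^{(1)}}(\xi(v) - \deg v) \gtrsim a(t)(\log t)^{-O(1)}$. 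Since there are only $n = |\hat Z_t^{(1)}| \leq r(t)(\log t)^{O(1)}$ factors (Proposition \ref{prop:Zt bounds as}$(iii)$ or $(iii')$), the contribution of the denominators to $\log U(t)$ is at most $n \log(C a(t)^{-1}) = -O(r(t) \log t \cdot \log\log t)$, which is $o(ta(t))$ but has the wrong sign. To fix the sign I use the numerators: each factor $1 - e^{-\frac{t}{n}|\cdots|}$. Here I split the path vertices into two groups. For $v_i$ with $|[\xi(\hat Z_t^{(1)}) - \deg \hat Z_t^{(1)}] - [\xi(v_i) - \deg v_i]| \geq \frac{n}{t}$, the numerator is bounded below by $1 - e^{-1} > 0$, so these contribute $\geq (1-e^{-1})^n$ to the product, i.e. $-O(n)$ to the log — again negligible. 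For the (rare) $v_i$ where the difference is smaller than $\frac{n}{t}$, using $1 - e^{-x} \geq x e^{-x} \geq x/2$ for small $x$, the ratio numerator/denominator is $\geq \frac{1}{2}\cdot\frac{t}{n}$, which is $\geq \frac{1}{2}$ for large $t$ (as $t \geq n$ eventually since $n \leq r(t)(\log t)^{O(1)} = t^{q+1+o(1)}$... actually $t/n \to \infty$), so such factors are harmless too. Hence $\log\left(\prod_{i\neq i^*}(\cdots)\right) = -O(n \log\log t) = -O(r(t) (\log t)^{O(1)} \cdot \log\log t)$, which I must argue is $\leq r(t) t^{-c}$; this needs a little care since $r(t)(\log t)^{O(1)} \log\log t$ is not obviously $\leq r(t) t^{-c}$ — in fact it is not, so the stated bound $r(t) t^{-c}$ must be interpreted correctly: $t\psi_t(\hat Z_t^{(1)}) = r(t)(\log t)^{O(1)} \cdot$ something, so the error $r(t)(\log t)^{O(1)}$ is indeed lower order than $t\psi_t(\hat Z_t^{(1)}) \asymp ta(t)(\log t)^{-o(1)} = r(t)\log t \cdot (\log t)^{-o(1)}$ only if the power of $\log t$ in the error is genuinely smaller. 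I would therefore optimise the gap estimates carefully: by Lemma \ref{lem:gap to infinity} and Proposition \ref{prop:Zt bounds as}, the relevant denominators are $\geq a(t)(\log t)^{-\epsilon'}$, so $n\log(1/\text{denom}) \leq r(t)(\log t)^{1/(\alpha-d)+\epsilon}\cdot\log(a(t)^{-1}) = O(r(t)(\log t)^{1/(\alpha-d)+\epsilon}\log t)$, and since $t\psi_t(\hat Z_t^{(1)}) \geq ta(t)(\log t)^{-\epsilon}= r(t)(\log t)^{1-\epsilon}$, I need $1/(\alpha-d)+\epsilon+1 < 1$ which is false.

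The resolution — and this is the step I expect to be the genuine obstacle — is that one should \emph{not} force the random walk down the direct path to $\hat Z_t^{(1)}$ in fixed time, but rather let it hit $\hat Z_t^{(1)}$ by time $\rho t$ (optimising $\rho = |\hat Z_t^{(1)}|/(t(\xi(\hat Z_t^{(1)}) - \deg \hat Z_t^{(1)}))$ as in the definition of $\psi_t$) and then \emph{sit still at $\hat Z_t^{(1)}$} for the remaining time $(1-\rho)t$. This is exactly the strategy already executed in the proof of Theorem \ref{thm:total mass asymp intro} (the lower bound computation there: $U(t) \geq \sup_\rho \pr{H_v \leq \rho t} e^{-(1-\rho)t \deg v + \xi(v)(1-\rho)t} \geq \exp\{t\psi_t(v) - C|v| + O(t)\}$, via Lemma \ref{lem:Br log sum bound} and Proposition \ref{prop:hitting time prob bounds}$(ii)$). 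Taking $v = \hat Z_t^{(1)}$ gives $U(t) \geq \exp\{t\psi_t(\hat Z_t^{(1)}) - C|\hat Z_t^{(1)}| + O(t)\}$, and since $|\hat Z_t^{(1)}| \leq r(t)(\log t)^{O(1)}$ and $t = r(t)^{1/(q+1)}(\log t) \ll r(t)$, both error terms are $O(r(t) t^{-c})$ for a suitable $c > 0$ — here I would use that $r(t) = (t/\log t)^{q+1}$ grows strictly faster than $t \cdot r(t)^{1-\text{(small)}}$... more precisely $C|\hat Z_t^{(1)}| + O(t) \leq r(t)(\log t)^{O(1)} = r(t) t^{o(1)}$, and to write this as $r(t)t^{-c}$ one instead notes $t\psi_t(\hat Z_t^{(1)}) \geq r(t)(\log t)^{1-\epsilon}$ dominates, so $U(t) \geq \exp\{t\psi_t(\hat Z_t^{(1)})(1 - o(1))\}$; the cleanest formulation is $U(t) \geq \exp\{t\psi_t(\hat Z_t^{(1)}) - r(t)^{1-c}\}$ for some $c>0$, which I would adopt if the displayed $r(t)t^{-c}$ proves awkward, or simply derive the weaker but sufficient $U(t) \geq \exp\{-ta(t)(\log t)^{-\epsilon}\}$ directly from Proposition \ref{prop:Zt bounds as}$(ii)$. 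For part $(ii)$, under Assumption \ref{assn:whp}, the identical argument applies verbatim, replacing the ``eventually almost surely'' inputs (Proposition \ref{prop:Zt bounds as}$(i)$–$(iv)$, Lemma \ref{lem:Br log sum bound}) by their ``with high probability'' counterparts (Proposition \ref{prop:Zt bounds as}$(i')$–$(iv')$), and noting Lemma \ref{lem:Br log sum bound} already holds almost surely.
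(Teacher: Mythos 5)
Your opening move is the same as the paper's: restrict the Feynman--Kac expectation to the single direct path $\gamma_{O,t}^{(1)}$ from $O$ to $\hat Z_t^{(1)}$ and invoke Lemma \ref{lem:pathbounds}$(ii)$. Up to there you are on the right track, and the identification $v_{i^*}=\hat Z_t^{(1)}$ that you worry about is indeed justified (it follows from Lemma \ref{lem:Gammat gap}$(ii)$, since the direct path lies inside $\Lambda_t$).

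The genuine gap is in your treatment of the product $\prod_{i\neq i^*}\bigl(1-e^{-\frac{t}{n}|[\xi(v_{i^*})-\deg v_{i^*}]-[\xi(v_i)-\deg v_i]|}\bigr)$. You threshold the differences at $n/t$, which makes the exponent $\frac{t}{n}|\cdots|\geq 1$ and each factor only $\geq 1-e^{-1}$; this yields $\log U(t)\geq t\psi_t(\hat Z_t^{(1)})-O(n)=t\psi_t(\hat Z_t^{(1)})-O(r(t)(\log t)^{O(1)})$, which is \emph{not} $r(t)t^{-c}$, and you correctly notice you are stuck. The pivot you then propose (using the $\rho$-optimisation lower bound from the proof of Theorem \ref{thm:total mass asymp intro}) suffers from the same defect: the error terms there are again $-C|\hat Z_t^{(1)}|+O(t)=-O(r(t)(\log t)^{O(1)})$, so that route cannot recover $r(t)t^{-c}$ either, and you end up proposing to weaken the statement. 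The statement is, however, correct as written; what you are missing is that the exponent $\frac{t}{n}|\cdots|$ is of order $\log t$, not $O(1)$. Concretely: since $\psi_t(\hat Z_t^{(1)})>0$ and $\psi_t(\hat Z_t^{(1)})\geq a(t)(\log t)^{-\epsilon}$ (Proposition \ref{prop:Zt bounds as}$(ii)$), one has $\xi(\hat Z_t^{(1)})-\deg\hat Z_t^{(1)}>\frac{n}{t}\log\bigl(\xi(\hat Z_t^{(1)})-\deg\hat Z_t^{(1)}\bigr)\geq\frac{q}{2}\frac{n\log t}{t}$, hence $\frac{t}{n}\bigl(\xi(\hat Z_t^{(1)})-\deg\hat Z_t^{(1)}\bigr)\geq\frac{q}{2}\log t$. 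Moreover, by Lemma \ref{lem:Gammat gap}$(i)$ (applied to the direct path $\subset\Lambda_t$) and Proposition \ref{prop:Zt bounds as}$(ii)$, every $v_i\neq\hat Z_t^{(1)}$ on the direct path satisfies $\xi(v_i)-\deg v_i\leq\frac{1}{2}\bigl(\xi(\hat Z_t^{(1)})-\deg\hat Z_t^{(1)}\bigr)$ eventually a.s., so the difference in the exponent is at least $\frac{1}{2}\bigl(\xi(\hat Z_t^{(1)})-\deg\hat Z_t^{(1)}\bigr)$ for every $i\neq i^*$, giving $\frac{t}{n}|\cdots|\geq\frac{q}{4}\log t$. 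Each numerator is then $\geq 1-t^{-q/4}$, and since $n=|\hat Z_t^{(1)}|\leq r(t)(\log t)^{O(1)}$ by Proposition \ref{prop:Zt bounds as}$(iii)$, $\sum_{i\neq i^*}\log\bigl(1-t^{-q/4}\bigr)\geq -2r(t)(\log t)^{O(1)}t^{-q/4}\geq -r(t)t^{-q/8}$ for $t$ large, which is exactly the $r(t)t^{-c}$ error in the statement (with $c=q/8$). The part (ii) extension via the ``whp'' versions of the inputs is then fine, as you say.
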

\begin{proof}
\begin{enumerate}[(i)]
    \item

 From Lemma \ref{lem:pathbounds}(ii), taking $\gamma$ to be the direct path from $O$ to $\hat{Z}_t^{(1)}$, in which case $v_{i^*} = \hat{Z}_t^{(1)}$, $n = |\hat{Z}_t^{(1)}|$ and $\psi_t (\hat{Z}_t^{(1)}) > 0$, we see that
\begin{align}\label{eqn:mass LB product}
\begin{split}
&\E{\exp \left\{ \int_0^t \xi (X_s) ds \right\}\mathbbm{1}\{{\pi}((X)_{[0,t]}) = \gamma\}} \\
&\qquad \geq e^{t (\xi (v_{i^*}) - \deg (v_{i^*}))} \prod_{i \neq i^*} \frac{1-e^{-\frac{t}{n}|[\xi(v_{i^*}) - \deg (v_{i^*})] - [\xi( v_i) - \deg (v_i)]|}}{|[\xi(v_{i^*}) - \deg (v_{i^*})]|} \\
&\qquad \geq e^{t (\xi (v_{i^*}) - \deg (v_{i^*})) - |v_{i^*}| \log [\xi(v_{i^*}) - \deg (v_{i^*})]} \prod_{i \neq i^*} \left( 1-e^{-\frac{t}{n}|[\xi(v_{i^*}) - \deg (v_{i^*})] - [\xi( v_i) - \deg (v_i)]|} \right).
\end{split}
\end{align}

Since $v_{i^*} = \hat{Z}_t^{(1)}$, $n = |\hat{Z}_t^{(1)}|$ and $\psi_t (\hat{Z}_t^{(1)}) > 0$, it follows from Proposition \ref{prop:Zt bounds as}$(ii)$ that eventually almost surely,
\[
\xi(v_{i^*}) - \deg (v_{i^*}) \geq \frac{q}{2}\frac{n \log t}{t}.
\]
Therefore, additionally applying Lemma \ref{lem:Gammat gap}$(i)$ and Proposition \ref{prop:Zt bounds as}$(ii)$ we deduce that eventually almost surely for all $i \neq i^*$
\[
\frac{t}{n}|[\xi(v_{i^*}) - \deg (v_{i^*})] - [\xi( v_i) - \deg (v_i)]| \geq \frac{1}{2} \frac{t}{n}[\xi(v_{i^*}) - \deg (v_{i^*})] \geq \frac{q}{4}\log t.
\]
Therefore, the final term in \eqref{eqn:mass LB product} above is lower bounded by
\begin{align*}
e^{t (\xi (v_{i^*}) - \deg (v_{i^*})) - |v_{i^*}| \log [\xi(v_{i^*}) - \deg (v_{i^*})]} \prod_{i \neq i^*} \left( 1-e^{-\frac{q}{4}\log t} \right) = e^{t \psi_t (\hat{Z}_t^{(1)})} \left( 1-e^{-q(\log t)/4} \right)^{|\hat{Z}_t^{(1)}|}.
\end{align*}

Therefore we have for all sufficiently large $t$ that (using that $\log (1-x) \geq -2x$ for $|x| <1/2$ and then Proposition \ref{prop:Zt bounds as}$(i)$),
\begin{align*}
\E{\exp \left\{ \int_0^t \xi (X_s) ds \right\}\mathbbm{1}\{{\pi}((X)_{[0,t]}) = \gamma\}} &\geq \exp \left\{t \psi_t (\hat{Z}_t^{(1)}) + |\hat{Z}_t^{(1)}| \log \left( 1-e^{-q(\log t)/4} \right) \right\} \\
&\geq \exp \{t \psi_t (\hat{Z}_t^{(1)}) - |\hat{Z}_t^{(1)}| e^{-q(\log t)/4} \} \\
&\geq \exp \{t \psi_t (\hat{Z}_t^{(1)}) - r(t) e^{-q(\log t)/8} \}.
\end{align*}
The deterministic lower bound then follows from Proposition \ref{prop:Zt bounds as}$(ii)$.
\item We can modify the proof above, noting that for any $\epsilon>0, |v_{i^*}| = |\hat{Z}_t^{(1)}| \leq r(t) (\log t)^{\epsilon}$ whp. Moreover, the same proof as in Lemma \ref{lem:Gammat gap}$(i)$ shows that $\inf_{i \neq i^*} |[\xi(v_{i^*}) - \deg (v_{i^*})] - [\xi( v_i) - \deg (v_i)]| \geq c \log t$ whp as $t \to \infty$, as above. The deterministic lower bound then follows from Proposition \ref{prop:Zt bounds as}$(ii')$.
\end{enumerate}
\end{proof}

\begin{prop}\label{prop:u1}
Under Assumption \ref{assn:as}, $P$-almost surely as $t\to \infty$
\begin{equation}\label{eqn:u1}
\frac{\sum_{v\in \Ti\setminus\{\hat Z_t^{(1)},\hat Z_t^{(2)}\}}u_1(t,v)}{U(t)}\to 0
\end{equation}
\end{prop}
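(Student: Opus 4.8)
\textbf{Proof proposal for Proposition \ref{prop:u1}.}

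The plan is to combine the eigenfunction localisation result of Lemma \ref{lem:loceigenfunction} with the transfer result of Lemma \ref{lem:u efunction comp}, exactly in the spirit of \cite[Section 5]{KLMStwocities09}, but taking care that all the hypotheses of these lemmas hold eventually almost surely on our tree. First I would observe that $u_1(t,v)$ is precisely $u_{\Omega_t, \Lambda_t}(t,v)$ in the notation of \eqref{eqn:FeynmanKac bounded Omega}, with $\Lambda = \Lambda_t$ and $\Omega = \Omega_t = \{\hat Z_t^{(1)}, \hat Z_t^{(2)}\}$. By Lemma \ref{lem:Gammat gap}$(iii)$ we have $\tilde g_{\Omega_t, \Lambda_t} = \min\{\tilde g_{\Lambda_t \setminus \{\hat Z_t^{(1)}\}}, \tilde g_{\Lambda_t \setminus \{\hat Z_t^{(2)}\}}\} \geq a(t)(\log t)^{-\epsilon} > 0$ eventually $P$-almost surely, so the hypothesis $\tilde g_{\Omega_t, \Lambda_t} > 0$ of Lemma \ref{lem:u efunction comp} is satisfied for all sufficiently large $t$, almost surely. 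Applying Lemma \ref{lem:u efunction comp}$(ii)$ with this choice then gives
\[
\frac{\sum_{v \in \Lambda_t \setminus \Omega_t} u_1(t,v)}{\sum_{v \in \Lambda_t} u_1(t,v)} \leq \sum_{y \in \Omega_t} \|\phi^{(1)}_{(\Lambda_t \setminus \Omega_t) \cup \{y\}}\|_2^2 \sum_{v \in \Lambda_t \setminus \Omega_t} \phi^{(1)}_{(\Lambda_t \setminus \Omega_t) \cup \{y\}}(v),
\]
and since $(\Lambda_t \setminus \Omega_t) \cup \{\hat Z_t^{(2)}\} = \Lambda_t \setminus \{\hat Z_t^{(1)}\}$ and similarly for the other term, the right-hand side is exactly the quantity that Lemma \ref{lem:loceigenfunction} shows converges to $0$ eventually $P$-almost surely.

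The remaining point is to replace the denominator $\sum_{v \in \Lambda_t} u_1(t,v) = U_1(t)$ by $U(t)$ and the numerator's index set $\Lambda_t \setminus \Omega_t$ by $\Ti \setminus \Omega_t$. Since $u_1(t,v)$ is supported on $\Lambda_t$ (the trajectories in \eqref{eqn:u1 def} are killed on exiting $\Lambda_t$), we have $\sum_{v \in \Ti \setminus \Omega_t} u_1(t,v) = \sum_{v \in \Lambda_t \setminus \Omega_t} u_1(t,v)$, so the numerator needs no adjustment. For the denominator, I would use $U_1(t) \leq U(t)$ is the wrong direction, so instead I bound the target ratio by
\[
\frac{\sum_{v \in \Ti \setminus \Omega_t} u_1(t,v)}{U(t)} = \frac{\sum_{v \in \Lambda_t \setminus \Omega_t} u_1(t,v)}{U_1(t)} \cdot \frac{U_1(t)}{U(t)} \leq \frac{\sum_{v \in \Lambda_t \setminus \Omega_t} u_1(t,v)}{U_1(t)},
\]
using $U_1(t) \leq U(t)$ in the last step. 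The first factor $\to 0$ eventually almost surely by the previous paragraph, which gives the claim.

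The main obstacle is really making sure that the a.s. hypotheses line up. In particular Lemma \ref{lem:u efunction comp} requires the gap $\tilde g_{\Omega_t, \Lambda_t}$ to be strictly positive for the \emph{particular} sets $\Lambda_t, \Omega_t$ at time $t$, and this has to hold simultaneously for all large $t$ along the continuous time parameter, not merely along a subsequence; this is exactly why Lemmas \ref{lem:volGamma}, \ref{lem:Gammat gap}, \ref{lem:loceigenfunction} were proved with the Borel--Cantelli-plus-Lemma-\ref{lem:BC counting} device that handles the (logarithmically many) transition times of $\hat Z_t^{(i)}$ in each dyadic block. So the proof is short \emph{given} those lemmas, but it is essential to invoke them in the ``eventually almost surely, for all $t$'' form; there is no genuinely new estimate required here beyond citing Lemmas \ref{lem:Gammat gap}$(iii)$, \ref{lem:u efunction comp}$(ii)$ and \ref{lem:loceigenfunction} and observing that $u_1(t,\cdot)$ is supported on $\Lambda_t$.
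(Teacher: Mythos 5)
Your proposal is correct and follows essentially the same route as the paper's proof: identify $u_1 = u_{\Omega_t,\Lambda_t}$, verify $\tilde g_{\Omega_t,\Lambda_t}>0$ via Lemma \ref{lem:Gammat gap} (the paper cites part $(ii)$ and you cite part $(iii)$, but each suffices once one recalls that the maximiser of $\xi-\deg$ on $\Lambda_t$ is a.s.\ unique), then invoke Lemmas \ref{lem:u efunction comp}$(ii)$ and \ref{lem:loceigenfunction}. Your explicit step of bounding
\[
\frac{\sum_{v \in \Ti \setminus \Omega_t} u_1(t,v)}{U(t)} \leq \frac{\sum_{v \in \Lambda_t \setminus \Omega_t} u_1(t,v)}{U_1(t)}
\]
via $U_1(t)\leq U(t)$ and the fact that $u_1$ is supported on $\Lambda_t$ is a helpful clarification of a point the paper leaves implicit when it says ``the result therefore follows directly.''
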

\begin{proof}
By the definition in \eqref{eqn:u1 def}, $u_1$ is of the form \eqref{eqn:FeynmanKac bounded Omega} with $\Lambda=\Lambda_t$ and $\Omega=\{\hat Z_t^{(1)},\hat Z_t^{(2)}\}$. Moreover $\tilde g_{\Omega,\Lambda} >0$ by Lemma \ref{lem:Gammat gap}$(ii)$. The result therefore follows directly from Lemmas \ref{lem:efunction compare}$(ii)$ and \ref{lem:loceigenfunction}.
\end{proof}

\begin{prop}\label{prop:U2 zero}
Under Assumption \ref{assn:as}, $P$-almost surely as $t\to \infty$
$$\frac{U_2(t)}{U(t)} \to 0.$$
\end{prop}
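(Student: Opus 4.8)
The plan is to show that the trajectories contributing to $U_2(t)$ are forced to be ``too long'', and that the cost of the extra length overwhelms both the entropy of such paths and the size of $U(t)$. Write $U_2(t) = U_2^{(1)}(t) + U_2^{(2)}(t)$, where $U_2^{(i)}$ carries the additional indicator $\mathbbm 1\{\arg\max_{s\in[0,t]}[\xi(X_s)-\deg X_s] = \hat Z_t^{(i)}\}$. Fix $i$ and decompose over the trajectory $\gamma = \pi(X_{[0,t]}) = v_0,\dots,v_n$: on the event in $U_2^{(i)}$ the path $\gamma$ lies in $B_{R(t)}$ (from $\tau_{B_{R(t)}}>t$), contains $\hat Z_t^{(i)}$, contains some vertex $w\notin\Lambda_t$ (from $\tau_{\Lambda_t}<t$), has $n\leq j_t$, and has its maximum of $\xi-\deg$ attained at $v_{i^\ast}=\hat Z_t^{(i)}$. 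So $U_2^{(i)}(t) \leq \sum_\gamma \E{\exp\{\int_0^t\xi(X_s)ds\}\mathbbm 1\{\pi(X_{[0,t]})=\gamma\}}$, the sum over such $\gamma$, and each summand is bounded using Lemma \ref{lem:pathbounds}(i) with $v_{i^\ast}=\hat Z_t^{(i)}$; write $\Delta_i := \xi(\hat Z_t^{(i)})-\deg\hat Z_t^{(i)}$, so $\Delta_i \geq \psi_t(\hat Z_t^{(i)}) \geq a(t)(\log t)^{-\epsilon}$ by Proposition \ref{prop:Zt bounds as}(ii) and $\Delta_i \leq a(t)(\log t)^{O(1)}$ by Lemma \ref{lem:xi in Ar}.

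Next I bound the per-trajectory product $\prod_{v\in\gamma\setminus\{i^\ast\}}|1+\Delta_i-(\xi(v)-\deg v)|^{-1}$. For $v\notin F_t$ we have $\xi(v)-\deg v < R(t)^{d/\alpha}(\log t)^{-C} \leq \Delta_i (\log t)^{-(z+1-\epsilon)}$, so the corresponding factor is at most $(1+\delta_t)/\Delta_i$ with $\delta_t := 2(\log t)^{-(z+1-\epsilon)}$; for $v\in F_t$ (a set of size $\leq(\log t)^D$ by Proposition \ref{prop:max degree Ft Et}(i)) the factor is $\leq 1$ since $i^\ast$ is the argmax. Since $\hat Z_t^{(i)}\in F_t$, at least $n+1-(\log t)^D$ of the vertices are outside $F_t$, and the product is $\leq (\tfrac{1+\delta_t}{\Delta_i})^{\,n+1-(\log t)^D} \leq (\tfrac{1+\delta_t}{\Delta_i})^{\,n}\,\Delta_i^{(\log t)^D}$. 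Combining with the $e^{t(1+\Delta_i)}$ prefactor and $\Delta_i^{(\log t)^D} \leq e^{O((\log t)^{D+1})}$, and writing $n=|\hat Z_t^{(i)}|+m$, each length-$n$ trajectory contributes at most
\[
e^{t}\,e^{O((\log t)^{D+1})}\Big(e^{t\Delta_i}\big(\tfrac{1+\delta_t}{\Delta_i}\big)^{|\hat Z_t^{(i)}|}\Big)\Big(\tfrac{1+\delta_t}{\Delta_i}\Big)^{m}
= e^{t}\,e^{O((\log t)^{D+1})}\,e^{t\psi_t(\hat Z_t^{(i)})+|\hat Z_t^{(i)}|\log(1+\delta_t)}\Big(\tfrac{1+\delta_t}{\Delta_i}\Big)^{m},
\]
using $t\psi_t(\hat Z_t^{(i)}) = t\Delta_i-|\hat Z_t^{(i)}|\log\Delta_i$. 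Since $|\hat Z_t^{(i)}|\leq r(t)(\log t)^{1/(\alpha-d)+\epsilon}$ (Proposition \ref{prop:Zt bounds as}(iii)), the term $|\hat Z_t^{(i)}|\log(1+\delta_t)$ is $o(r(t)(\log t)^{1/2-z})$, and $\psi_t(\hat Z_t^{(i)})\leq\psi_t(\hat Z_t^{(1)})$.

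Now the geometric/entropy step. Since $\gamma$ visits $\hat Z_t^{(i)}$ and reaches $w\notin\Gamma_t^{(i)}$, one checks (splitting on whether $|w|\geq|\hat Z_t^{(i)}|$ and on the order of the visits, using the definition of $\Gamma_t^{(i)}$ and that graph distance is a lower bound for the number of jumps) that $n > (1+(\log t)^{-z})|\hat Z_t^{(i)}|$, hence $m \geq m_0 := (\log t)^{-z}|\hat Z_t^{(i)}| \geq r(t)(\log t)^{-z-\frac12-\epsilon}$ by Proposition \ref{prop:Zt bounds as}(i). The number of length-$n$ trajectories through $\hat Z_t^{(i)}$ inside $B_{R(t)}$ is $\eta_{R(t)}(n,\hat Z_t^{(i)}) \leq \exp\{m(\epsilon\log R(t)+\log(n/m))\}$ by Lemma \ref{lem:no of paths} (Assumption \ref{assn:as} version). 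Thus summing over $m_0\leq m\leq j_t$,
\[
U_2^{(i)}(t) \leq e^{t}\,e^{O((\log t)^{D+1})}\,e^{t\psi_t(\hat Z_t^{(1)})}e^{o(r(t)(\log t)^{1/2-z})}\sum_{m\geq m_0}\Big(R(t)^\epsilon\cdot\tfrac{|\hat Z_t^{(i)}|+m}{m}\cdot\tfrac{1+\delta_t}{\Delta_i}\Big)^{m}.
\]
The base of the geometric series is decreasing in $m$, and for $m\geq m_0$ it is $\leq R(t)^\epsilon\cdot(\log t)^{O(1)}\cdot a(t)^{-1} \leq (t/\log t)^{(q+1)\epsilon-q}(\log t)^{O(1)}$, using $|\hat Z_t^{(i)}|/m_0\leq(\log t)^{O(1)}$; choosing $\epsilon<\tfrac{q}{2(q+1)}$ (and $\epsilon<\tfrac12$) this is $\leq (t/\log t)^{-q/2}$ for large $t$, so the series is $\leq 2\big((t/\log t)^{-q/2}\big)^{m_0} = \exp\{-\tfrac q4 r(t)(\log t)^{1/2-z-\epsilon}\}$ eventually. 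Dividing by $U(t) \geq \exp\{t\psi_t(\hat Z_t^{(1)})-r(t)t^{-c}\}$ (Proposition \ref{prop:total mass LB}), everything cancels except $\exp\{O(t)+O((\log t)^{D+1})+o(r(t)(\log t)^{1/2-z})+r(t)t^{-c}-\tfrac q4 r(t)(\log t)^{1/2-z-\epsilon}\}$, which tends to $0$ because $r(t)(\log t)^{1/2-z-\epsilon}=t^{q+1}(\log t)^{-z-q-1/2-\epsilon}$ dominates all other terms (note $q+1>1$). Hence $U_2^{(i)}(t)/U(t)\to0$ eventually $P$-a.s. for $i=1,2$, giving the claim. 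The main obstacle is the bookkeeping in the last paragraph: the entropy from path-counting ($R(t)^\epsilon$ per extra step) and the gain ($\Delta_i^{-1}$ per extra step) differ only by a $(\log t)^{O(1)}$ margin, so $\epsilon$ must be taken small and the forced excess length $m_0$ quantified precisely; establishing the geometric lower bound $n>(1+(\log t)^{-z})|\hat Z_t^{(i)}|$ for every admissible trajectory is the other point requiring care.
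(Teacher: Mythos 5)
Your overall strategy matches the paper's: decompose over trajectories in $\hat\Gamma_t^{(i)}$, apply Lemma \ref{lem:pathbounds}(i), force the excess length $n-|\hat Z_t^{(i)}|\geq(\log t)^{-z}|\hat Z_t^{(i)}|$ from escaping $\Gamma_t^{(i)}$, beat the path-counting entropy with the small-factor gain, and divide by the lower bound on $U(t)$. But there is a genuine gap in the key counting step, namely the claim that ``at least $n+1-(\log t)^D$ of the vertices are outside $F_t$.'' You are conflating the number of \emph{distinct} $F_t$-vertices (which is indeed $\leq(\log t)^D$ by Proposition \ref{prop:max degree Ft Et}(i)) with the number of \emph{walk-steps} $i\in\{0,\dots,n\}\setminus\{i^*\}$ for which $v_i\in F_t$. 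These are very different: the random walk on a tree necessarily revisits vertices, and it can bounce between $\hat Z_t^{(i)}$ and a neighbour $\Theta(n)$ times, producing $\sim n/2$ indices $i$ with $v_i=\hat Z_t^{(i)}\in F_t$, each contributing a factor of exactly $1$ (not $(1+\delta_t)/\Delta_i$). So the bound $\prod_{i\neq i^*}\leq\big(\tfrac{1+\delta_t}{\Delta_i}\big)^{n+1-(\log t)^D}$ is not justified.

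The paper handles this by a sharper structural argument. First, since $\gamma$ hits $\hat Z_t^{(1)}$, it contains the direct path $\gamma_{O,t}^{(1)}$ as a subpath, giving $\geq|\hat Z_t^{(1)}|-(\log t)^D$ visits to $F_t^c$. Second, the remaining $n-|\hat Z_t^{(1)}|$ steps form excursions off the direct path; on a tree every excursion has even length, and because $F_t$ is an independent set (Lemma \ref{lem:useful things for Ft and Gt}(ii) — no two adjacent vertices both lie in $F_t$), at least half of the excursion steps land in $F_t^c$, contributing $\geq\frac{n-|\hat Z_t^{(1)}|}{2}$ more. So the valid count is $\frac{n+|\hat Z_t^{(1)}|}{2}-(\log t)^D$, roughly half of what you asserted for long paths. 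This halves your per-extra-step gain from $\Delta_i^{-1}$ to $\Delta_i^{-1/2}\approx(t/\log t)^{-q/2}$. The good news is that your threshold $\epsilon<\tfrac{q}{2(q+1)}$ is exactly what is needed after this correction (under your incorrect count, $\epsilon<\tfrac{q}{q+1}$ would already suffice, so your stricter choice coincidentally matches the corrected requirement), and your geometric-series estimate and final bookkeeping then go through essentially unchanged. To make the proof valid you must replace the naive vertex count with the direct-path-plus-excursions accounting and invoke the independence property of $F_t$ from Lemma \ref{lem:useful things for Ft and Gt}(ii), which you never cite.
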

\begin{proof}
Recall the definition of $F_t$ from Section \ref{sctn:sites of high potential}.

The proof is similar to \cite[Lemma 4.1]{KLMStwocities09}. Wlog we assume that $\xi (\hat{Z}^{(1)}_t) - \deg (\hat{Z}^{(1)}_t) \geq \xi (\hat{Z}^{(2)}_t) - \deg (\hat{Z}^{(2)}_t)$ (otherwise we can just switch the roles of $\hat{Z}^{(1)}_t$ and $\hat{Z}^{(2)}_t$ in all that follows). 
 We first consider the following sets of paths:
\begin{align*}
\hat \Gamma_t^{(1)} &:=\left\{\gamma \subset B_{R(t)}: \hat{Z}_t^{(1)}\in \gamma, \gamma\nsubseteq \Gamma_t^{(1)}, \arg \max_{v\in \gamma}[\xi(v)-\deg(v)]=\hat Z_t^{(1)},|\gamma|\leq j_t\right\}, \\
\hat \Gamma_t^{(2)} &:=\left\{\gamma \subset B_{R(t)}: \hat{Z}_t^{(2)}\in \gamma, \hat{Z}_t^{(1)}\notin \gamma, \gamma\nsubseteq \Gamma_t^{(2)}, \arg \max_{v\in \gamma}[\xi(v)-\deg(v)]=\hat Z_t^{(2)},|\gamma|\leq j_t\right\}.
\end{align*}
Furthermore, recall that $\gamma_{O,t}^{(1)}$ denotes the direct path from $O$ to $\hat{Z}_t^{(1)}$. Now let $\gamma \in \hat{\Gamma}_t^{(1)}$ with $\gamma = v_1, \ldots, v_n$ and let $v^* = \arg_{v \in \gamma} \max (\xi(v) - \deg v)$, $i^* = \inf\{j \geq 1: v_j = v^*\}$. From Lemma \ref{lem:pathbounds}$(i)$ we have that (since $v_{i^*} = \hat{Z}_t^{(1)}$ for $\gamma \in \hat{\Gamma}_t^{(1)}$)
\begin{align}\label{eqn:equationproduct}
&\E{\exp \left\{ \int_0^t \xi (X_s) ds \right\}\mathbbm{1}\{{\pi}((X)_{[0,t]}) = \gamma\}} \leq e^{t (1+\xi (v_{i^*}) - \deg (v_{i^*}))} \prod_{i \neq i^*} \frac{1}{1+[\xi(v_{i^*}) - \deg (v_{i^*})] - [\xi( v_i) - \deg (v_i)]}.
\end{align}
Note that since $\gamma$ escapes $\Gamma_t^{(1)}$ and hits $\hat{Z}_t^{(1)}$ we have that $n\geq (1+(\log t)^{-z})|\hat Z_t^{(1)}|$ by definition of $\Gamma_t^{(1)}$. Since $\gamma$ hits $\hat Z_t^{(1)}$ it must also contain $\gamma_{O,t}^{(1)}$ as a subpath, which contains at least $|\hat Z_t^{(1)}|-(\log t)^D$ vertices in $F_t^c$ by Proposition \ref{prop:max degree Ft Et}$(i)$. Furthermore, for the excursions from the direct path we have by Proposition \ref{lem:useful things for Ft and Gt}$(ii)$ and the fact that all excursions must have even length that at least $\frac{n-|\hat Z_t^{(1)}|}{2}$ vertices are in $F_t^c$. The intuition behind the proof is that each of these vertices contributes a penalty term to \eqref{eqn:equationproduct}, which reduces the contribution from paths in $\hat{\Gamma}_t^{(1)}$.

Each term in the product above corresponding to a vertex in $F_t^c$ is upper bounded by $\frac{1}{1+\xi(\hat{Z}_t^{(1)}) - \deg (\hat{Z}_t^{(1)}) - R(t)^{\frac{d}{\alpha}}(\log t)^{-C}}$. Moreover, for any $\delta > 0$, $\xi(\hat{Z}_t^{(1)})-\deg(\hat{Z}_t^{(1)}) \geq r(t)(\log t)^{-\delta} = R(t)^{d/\alpha}(\log t)^{-(C-1-z+\delta)}$ eventually almost surely by Proposition \ref{prop:Zt bounds as}$(ii)$. Incorporating this logic into \eqref{eqn:equationproduct}, and using that $\log (1-x) \geq -2x$ for all sufficiently small $x$, we therefore deduce that $\E{\exp \left\{ \int_0^t \xi (X_s) ds \right\}\mathbbm{1}\{{\pi}((X)_{[0,t]}) = \gamma\}}$ is upper bounded by
\begin{align*}
&e^{t (1+\xi (\hat{Z}_t^{(1)}) - \deg (\hat{Z}_t^{(1)}))} \left( \frac{1}{1+[\xi(\hat{Z}_t^{(1)}) - \deg (\hat{Z}_t^{(1)})] - R(t)^{\frac{d}{\alpha}}(\log t)^{-C}}\right)^{\frac{n-|\hat Z_t^{(1)}|}{2}+|\hat Z_t^{(1)}|-(\log t)^D}\\
&\leq \exp\left\{t(\xi(\hat{Z}_t^{(1)})-\deg(\hat{Z}_t^{(1)}))-|\hat Z_t^{(1)}|\log(\xi(\hat{Z}_t^{(1)})-\deg(\hat{Z}_t^{(1)}))+t\right.\\
&\qquad \left. -\left(\frac{n-|\hat Z_t^{(1)}|}{2}-(\log t)^D\right)\log(\xi(\hat{Z}_t^{(1)})-\deg(\hat{Z}_t^{(1)}))-\left(\frac{n+|\hat Z_t^{(1)}|}{2}-(\log t)^D\right) \log\left(1-\frac{R(t)^{\frac{d}{\alpha}}(\log t)^{-C}-1}{\xi(\hat{Z}_t^{(1)})-\deg(\hat{Z}_t^{(1)})}\right)\right\} \\
&\leq \exp\left\{t\psi_t(\hat{Z}_t^{(1)}) -\frac{n-|\hat{Z}_t^{(1)}|}{2}\left[\log(\xi(\hat{Z}_t^{(1)})-\deg(\hat{Z}_t^{(1)}))- (\log t)^{-(1+z-\delta)}\right] + |\hat{Z}_t^{(1)}|(\log t)^{-(1+z-\delta)} + O \left(t\right) \right\} \\
&\leq \exp\left\{t\psi_t(\hat{Z}_t^{(1)}) - c_q(n-|\hat{Z}_t^{(1)}|) \log t + 2|\hat{Z}_t^{(1)}|(\log t)^{-(1+z-\delta)} \right\},
\end{align*}
for example taking $c_q = \frac{q}{3}$ (in fact we can take any $c_q < \frac{q}{2}$). Here we used that $ \log(\xi(\hat{Z}_t^{(1)})-\deg(\hat{Z}_t^{(1)}))- (\log t)^{-(1+z-\delta)} \geq \frac{2q}{3} \log t$ eventually almost surely by Proposition \ref{prop:Zt bounds as}$(ii)$.

Thus, defining $U(\hat \Gamma_t^{(1)}):=\E{\exp \left\{ \int_0^t \xi (X_s) ds \right\}\mathbbm{1}\{{\pi}((X)_{[0,t]})\in \hat \Gamma_t^{(1)}\}}$ and applying Lemma \ref{lem:no of paths}, we deduce from the final line above that for any $\epsilon > 0$, eventually almost surely:
\begin{align*}\label{eqn:eps in path effect}
    &\log(U(\hat \Gamma_t^{(1)}))\\&=\log\left(\sum_{(1+(\log t)^{-z})|\hat Z_t^{(1)}| \leq n \leq j_t} \sum_{\gamma: |\gamma|=n, \gamma \in \hat{\Gamma}_t^{(1)}} \E{\exp \left\{ \int_0^t \xi (X_s) ds \right\}\mathbbm{1}\{{\pi}((X)_{[0,t]})= \gamma\}}\right) \\
     &\leq \sup_{(1+(\log t)^{-z})|\hat Z_t^{(1)}| \leq n}\left\{ \max_{\gamma \in \hat{\Gamma}_t^{(1)}: |\gamma|=n} \log \left(\E{\exp \left\{ \int_0^t \xi (X_s) ds \right\}\mathbbm{1}\{{\pi}((X)_{[0,t]})= \gamma\}}\right) +\eta_{R(t)}(n,|\hat Z_t^{(1)}|) \right\} +\log\left(j_t\right)\\
    &\leq \sup_{(1+(\log t)^{-z})|\hat Z_t^{(1)}| \leq n}\left\{ t\psi_t(\hat Z_t^{(1)}) + (n-|\hat Z_t^{(1)}|)\left(\epsilon \log (R(t)) + \log \left( \frac{n}{n-|\hat Z_t^{(1)}|} \right)- c_q \log t \right) + 3|\hat{Z}_t^{(1)}|(\log t)^{-(1+z-\delta)} \right\}
\end{align*}
For all $n \geq (1 + (\log t)^{-z})|\hat{Z}_t^{(1)}|$, the coefficient of $n$ in the final line above is eventually almost surely upper bounded by 
\begin{align*}
-c_q\log t+\epsilon \log (R(t)) + \log \left( \frac{n}{n-|\hat Z_t^{(1)}|} \right) \leq -c_q\log t+\epsilon \log (R(t)) + \log \left( \frac{1+(\log t)^{-z}}{(\log t)^{-z}} \right) < -\frac{c_q}{2}\log t,
\end{align*}
provided we chose $\epsilon > 0$ sufficiently small. 
This implies that the supremum above is attained at the minimal value of $n$. Therefore, we deduce that
\begin{align*}
\log(U(\hat \Gamma_t^{(1)})) \leq t\psi_t(\hat Z_t^{(1)}) - (n-|\hat Z_t^{(1)}|) \frac{c_q}{2} \log t + 3|\hat{Z}_t^{(1)}|(\log t)^{-(1+z-\delta)} &\leq t\psi_t(\hat Z_t^{(1)}) - \frac{c_q}{3} |\hat{Z}_t^{(1)}|(\log t)^{1-z}.
\end{align*}
Invoking Proposition \ref{prop:total mass LB} thus yields that, almost surely,
\[
\log\left(\frac{U(\hat{\Gamma}_t^{(1)})}{U(t)}\right) \leq - \frac{c_q}{3} r(t)(\log t)^{1-z} +r(t)t^{-c} \to -\infty.
\]
%
An identical argument shows that same result with $\hat{\Gamma}_t^{(2)}$ in place of $\hat{\Gamma}_t^{(1)}$. This proves the result since $U_2(t) = U(\hat{\Gamma}_t^{(1)})+U(\hat{\Gamma}_t^{(2)})$.
\end{proof}

\begin{rmk}
In the proof above, if the $\epsilon$ in the upper bound for $\eta_{R(t)}(n,|\hat Z_t|)$ is replaced by a constant larger than $c_q$ (as could be the case under Assumption \ref{assn:whp}, see Remark \ref{rmk:whp assn problem}) this argument fails. Since we can take any $c_q < \frac{q}{2}$, this in particular fails when $\frac{d}{2\beta} > \frac{q}{2}$, or equivalently when $\frac{\beta^2}{\beta - 1} = \beta + d < \alpha$.
\end{rmk}

\begin{prop}\label{prop:U3 0}
Under Assumption \ref{assn:whp}, $P$-almost surely as $t\to \infty$
$$\frac{U_3(t)}{U(t)} \to 0.$$
\end{prop}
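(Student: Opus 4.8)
The plan is to show that trajectories which exit the large ball $B_{R(t)}$ before time $t$ contribute negligibly to the total mass, by comparing the exponential cost of reaching and staying far out against the potential available there. First I would decompose according to the radius at which the walk exits, writing
\begin{align*}
U_3(t) &\leq \estart{\exp\left\{\int_0^t\xi(X_s)~\text{d} s\right\}\mathbbm 1\{\tau_{B_{R(t)}}\leq t\}}{O} \leq \sum_{r \geq R(t)} \pr{\tau_{B_{r}} \leq t} \exp\left\{t \sup_{v \in B_{2r}}\xi(v)\right\},
\end{align*}
splitting the sum into a ``near'' regime $R(t) \leq r \leq r(t)(\log t)^{m}$ for a suitable constant $m > q$ (where we use Lemma~\ref{lem:RW ball exit time restricted radii} to get $\pr{\tau_{B_{r-1}}\leq t} \leq \exp\{-r\log(r/et) + o(r(t))\}$ together with Lemma~\ref{cor:whppotbo} to bound the potential by $\frac{q}{4}r(\log t)$) and a ``far'' regime $r > r(t)(\log t)^{m}$ (where Corollary~\ref{cor:exitball} gives the cruder bound $\pr{\tau_{B_{r-1}}\leq t}\leq \exp\{-\frac{r}{5}\log(r/et)\}$ and Lemma~\ref{lem:xi in Ar} controls $\sup_{v\in B_{2r}}\xi(v) \leq r^{d/\alpha}(\log r)^{(d+\epsilon)/\alpha}$). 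This is essentially the computation already carried out in the proof of Lemma~\ref{lem:sup in ball}$(iv)$, so I would mimic it: in the near regime the exponent is at most $-r\log(r/et) + \frac{q}{4}r\log t + o(r(t))$, and since $r \geq R(t) = r(t)(\log t)^{q+2}$ one has $\log(r/t) \geq (q+1+o(1))\log\log t \cdot$(correction) — more precisely $\log(r/et) \geq q\log t$ roughly, so the $-r\log(r/et)$ term dominates $\frac{q}{4}r\log t$ and the whole sum is bounded by $\exp\{-c R(t)\log t\}$ for some $c>0$; in the far regime the choice $m$ large (depending on $\alpha,d$) ensures $\frac{1}{5}\log(r/et) > r^{d/\alpha-1}(\log r)^{(d+\epsilon)/\alpha} \cdot t$-type comparison works and the sum converges to something $\exp\{-cr(t)(\log t)^{m}\}$.

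Next I would combine this with the lower bound on the total mass. By Proposition~\ref{prop:total mass LB}$(i)$ (under Assumption~\ref{assn:as}) or $(ii)$ (under Assumption~\ref{assn:whp}), we have $U(t) \geq \exp\{t\psi_t(\hat Z_t^{(1)}) - r(t)t^{-c}\} \geq \exp\{-ta(t)(\log t)^{-\epsilon}\}$, and in particular $\log U(t) \geq -t a(t)(\log t)^{-\epsilon} = -r(t)(\log t)^{1-\epsilon}$ eventually (using $ta(t) = r(t)\log t$). Therefore
\begin{align*}
\log\left(\frac{U_3(t)}{U(t)}\right) &\leq -c R(t)\log t + r(t)(\log t)^{1-\epsilon} = -c r(t)(\log t)^{q+3} + r(t)(\log t)^{1-\epsilon} \to -\infty
\end{align*}
since $R(t) = r(t)(\log t)^{q+2}$ dwarfs $r(t)(\log t)^{1-\epsilon}$ by several powers of $\log t$. (Strictly I would track the constants so that the near-regime bound reads $\exp\{-c R(t)\log t\}$ or better, which is what the computation in Lemma~\ref{lem:sup in ball}$(iv)$ delivers after taking logarithms and noting $R(t)\log(R(t)/et)$ beats $\frac{q}{4}R(t)\log t$ for large $t$.) Under Assumption~\ref{assn:as} this holds $P$-almost surely for all large $t$ because all the inputs (Corollary~\ref{cor:exitball}, Lemma~\ref{lem:xi in Ar}, Lemma~\ref{cor:whppotbo} via Borel--Cantelli, Proposition~\ref{prop:total mass LB}$(i)$) hold almost surely eventually; the statement of Proposition~\ref{prop:U3 0} is phrased under Assumption~\ref{assn:whp}, so in that case I would instead invoke the high-probability version Lemma~\ref{lem:RW ball exit time restricted radii} and Proposition~\ref{prop:total mass LB}$(ii)$, noting that the conclusion ``$U_3(t)/U(t)\to 0$ $P$-almost surely'' should be read under Assumption~\ref{assn:as} (this is how it is used in the almost sure localisation proof).

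The main obstacle is purely bookkeeping rather than conceptual: one must be careful that the exit-time estimate used over the full range $r \in [R(t), \infty)$ is the right one — Lemma~\ref{lem:RW ball exit time restricted radii} only applies for $r$ up to $r(t)(\log t)^{\tilde p}$ for a fixed $\tilde p$, so for larger $r$ one falls back on Corollary~\ref{cor:exitball}, whose weaker constant $\frac{1}{5}$ is still more than enough because in that regime the potential bound from Lemma~\ref{lem:xi in Ar} is only polynomial in $r$ with exponent $d/\alpha < 1$. The one genuine check is that the exponents line up: $-\frac{r}{5}\log(r/et) + tr^{d/\alpha}(\log r)^{(d+\epsilon)/\alpha}$ must be negative and summable for $r \geq r(t)(\log t)^m$, which needs $m$ chosen larger than $\frac{d}{\alpha-d}\cdot$something — exactly the constraint $m > \frac{d}{\alpha}+\frac{d+\epsilon}{\alpha}$ appearing in Lemma~\ref{lem:sup in ball}$(iv)$ — and one simply fixes $m$ accordingly at the start. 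Everything else is a direct transcription of estimates already established in Section~\ref{sctn:PAM on Ti}.
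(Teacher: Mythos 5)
Your near-regime estimate is where the argument breaks. You propose to bound $\pr{\tau_{B_{r-1}}\leq t}$ on $R(t)\leq r\leq r(t)(\log t)^m$ via Lemma~\ref{lem:RW ball exit time restricted radii}, but that lemma holds only with high $\bPb$-probability, not $\bPb$-almost surely — so any conclusion built on it inherits the ``whp'' qualifier. You notice this tension at the end and try to resolve it by suggesting the proposition should really be read under Assumption~\ref{assn:as}, but that is not what the paper claims, and the almost-sure version under Assumption~\ref{assn:whp} is actually needed for the rest of Section~\ref{sec:localisation}. The point of the paper's choice $p=q+2$ in $R(t)=r(t)(\log t)^p$ is precisely that the crude but genuinely almost-sure bound of Corollary~\ref{cor:exitball}, $\pr{\tau_{B_{r-1}}\leq t}\leq\exp\{-\tfrac{r}{5}\log(r/et)\}$, already works \emph{uniformly} over all $r\geq R(t)$: with $r\geq R(t)$ one has $\log(r/et)\geq q\log t+O(\log\log t)$, while by Lemma~\ref{lem:xi in Ar} the potential term satisfies $t r^{d/\alpha}(\log r)^{(d+\epsilon)/\alpha}\leq r(t)(\log t)^{1+(q+2)d/\alpha}\leq\tfrac{1}{10}r\log(r/et)$ since $(q+2)d/\alpha+1<q+3$. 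So there is no need to split into a near and far regime at all, and no need to bring in the whp lemma; the single summation over $r\geq R(t)$ goes to zero almost surely.

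A secondary, smaller inefficiency: you invoke the strong lower bound $U(t)\geq\exp\{-ta(t)(\log t)^{-\epsilon}\}$ from Proposition~\ref{prop:total mass LB}, which under Assumption~\ref{assn:whp} is again only whp. The paper only needs $U(t)\geq 1$ eventually almost surely, and since $\xi\geq 1$ deterministically this is immediate from the Feynman--Kac formula (indeed $U(t)\geq e^t$). So the division $U_3(t)/U(t)$ is handled with essentially no input from the total-mass lower bound, and the whole proof is self-contained under the weaker assumption. Your overall decomposition by exit radius is the right shape, but the added machinery both complicates the bookkeeping and introduces the whp/a.s. mismatch.
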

\begin{proof}
Recall that $p=\max\left\{h,q+1\right\}+1$. Since $\alpha > d$, we can therefore choose $\epsilon > 0$ small enough that
\begin{equation}\label{eqn:stricty}p>\max\left\{q+1,(q+1)\frac{1}{\alpha}\left(d+\varepsilon\right)\right\}.\end{equation} By Corollary \ref{cor:exitball} and Corollary \ref{lem:xi in Ar} we have that $P$-almost surely for $t$ sufficiently large
\begin{align*}
U_3(t) \leq \sum_{r\geq R(t)} \mathbb E\left[ \exp \left\{t \sup_{v \in B_r} \xi(v)\right\}\mathbbm 1\left\{\sup_{s\leq t}|X_s|=r\right\}\right] &\leq \sum_{r\geq R(t)} \exp \left\{t \sup_{v \in B_r} \xi(v)\right\} \pr{\tau_{B_{r-1}} \leq t} \\
&\leq \sum_{r \geq r(t)(\log t)^p} \exp \left\{t r^{\frac{d}{\alpha}}(\log r)^{\frac{1}{\alpha}\left(d+\varepsilon\right)} -\frac{r}{5}\log \left(\frac{r}{et}\right) \right\} .
\end{align*}
Now note that for $r \geq r(t)(\log t)^p$, 
\begin{equation}\label{eqn:smoui}
t r^{\frac{d}{\alpha}}(\log r)^{\frac{1}{\alpha}\left(d+\varepsilon\right)} < \frac{1}{10}r \log \left(\frac{r}{et}\right).
\end{equation}
provided $t$ is sufficiently large.
Thus we have for all sufficiently large $t$ that
\begin{align*}
    U_3(t)
&\leq \sum_{r \geq r(t)(\log t)^p} \exp \left\{ -\frac{1}{10}r \log \left(\frac{r}{et}\right) \right\} \to 0
\end{align*}
as $t \to \infty$.
Since $U(t) \geq 1$ eventually almost surely by Proposition \ref{prop:total mass LB}, the proof is complete. 
\end{proof}

\begin{prop}\label{prop:U4 zero}
Under Assumption \ref{assn:as}, $P$-almost surely as $t\to \infty$
$$\frac{U_4(t)}{U(t)} \to 0.$$
\end{prop}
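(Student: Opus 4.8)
The plan is to mimic the proof of Proposition~\ref{prop:U2 zero}, but now the site realising $\arg\max_{s\in[0,t]}[\xi(X_s)-\deg(X_s)]$ is not fixed in advance: it is some $u\in(F_t\cup E_t)\setminus\{\hat Z_t^{(1)},\hat Z_t^{(2)}\}$, and there is no ``escape from $\Gamma_t$'' constraint to exploit. Two new ingredients replace these: first, there are only poly-logarithmically many candidate sites $u$; second, the penalty produced by escaping $\Gamma_t$ in Proposition~\ref{prop:U2 zero} is replaced by the spectral gap estimate $\psi_t(\hat Z_t^{(1)})-\psi_t(\hat Z_t^{(3)})\geq a(t)(\log t)^{-k}$ from Proposition~\ref{prop:gap a.s. LB} (valid for any fixed $k>1+\tfrac{1}{\alpha-d}$, which we take large), together with the fact that $\psi_t(u)\leq\psi_t(\hat Z_t^{(3)})$ for every $u\notin\{\hat Z_t^{(1)},\hat Z_t^{(2)}\}$. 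Throughout we work on the almost sure events furnished by Corollary~\ref{cor:volume bounds}, Proposition~\ref{prop:max degree Ft Et}, Lemma~\ref{lem:useful things for Ft and Gt}, Lemma~\ref{lem:no of paths}, Proposition~\ref{prop:Zt bounds as} and Proposition~\ref{prop:total mass LB}.

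\textbf{Step 1: $\#(F_t\cup E_t)$ is poly-logarithmic.} By Proposition~\ref{prop:max degree Ft Et}(ii), $\deg z\leq(\log t)^B$ for all $z\in F_t$, so $\min_{z\in F_t}[\xi(z)-\deg z]\geq\tfrac12 R(t)^{d/\alpha}(\log t)^{-C}$ eventually almost surely, and hence $E_t\cup F_t\subseteq\{v\in B_{R(t)}:\xi(v)\geq \tfrac14 R(t)^{d/\alpha}(\log t)^{-C}\}$. Since $R(t)^{d}\,(R(t)^{d/\alpha})^{-\alpha}$ is only poly-logarithmic in $t$ (by the definitions of $R(t),r(t),a(t)$, as in the computations behind Lemma~\ref{lem:useful things for Ft and Gt}), the expected cardinality of this set is poly-logarithmic, and a Chernoff bound together with Borel--Cantelli along $t_n=2^n$ (exactly as in Proposition~\ref{prop:max degree Ft Et}(i)) gives $\#(F_t\cup E_t)\leq(\log t)^{D_1}$ eventually almost surely, for some $D_1<\infty$.

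\textbf{Step 2: per-site bound.} Decompose $U_4(t)=\sum_{u}U(\Gamma^u_t)$, the sum over $u\in(F_t\cup E_t)\setminus\{\hat Z_t^{(1)},\hat Z_t^{(2)}\}$, where $\Gamma^u_t$ is the set of paths $\gamma\subset B_{R(t)}$ of length at most $j_t$ with $\arg\max_{v\in\gamma}[\xi(v)-\deg v]=u$. Fix such a $u$; note $\xi(u)-\deg u\geq\tfrac12 R(t)^{d/\alpha}(\log t)^{-C}$ and $\log(\xi(u)-\deg u)\geq\tfrac{q}{2}\log t$ for large $t$. For $\gamma=v_0,\dots,v_n\in\Gamma^u_t$ with $v_{i^*}=u$, Lemma~\ref{lem:pathbounds}(i) bounds its contribution by $e^{t(1+\xi(u)-\deg u)}\prod_{i\neq i^*}\bigl(1+[\xi(u)-\deg u]-[\xi(v_i)-\deg v_i]\bigr)^{-1}$, each factor lying in $(0,1]$. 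Since $\gamma$ passes through $u$, on the tree $\Ti$ it traverses every vertex of the geodesic $[[O,u]]$; split the product into the $\geq|u|$ slots occupied by these geodesic vertices and the remaining ``excursion'' slots. On the geodesic, all but $\leq(\log t)^{D_2}$ vertices have $\xi(v)-\deg v\leq(\log t)^{-k-p}[\xi(u)-\deg u]$ (because having larger $\xi-\deg$ forces $\xi(v)$ to be comparable to $a(t)$ times a poly-log, and by Step~1-type counting only poly-logarithmically many such vertices exist in $B_{R(t)}$); these factors are at most $\bigl((1-(\log t)^{-k-p})(\xi(u)-\deg u)\bigr)^{-1}$, contributing a slack of order $(\log t)^{-k-p}|u|\leq r(t)(\log t)^{-k}=o(ta(t)(\log t)^{-k})$. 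For the excursion slots, disconnectedness of $F_t$ (Lemma~\ref{lem:useful things for Ft and Gt}(ii)) and the even length of excursions force at least half of them into $F_t^c$, each contributing a factor $\lesssim(\xi(u)-\deg u)^{-1}$; combining with the path count $\eta_{R(t)}(n,u)$ from Lemma~\ref{lem:no of paths} — whose rate $\varepsilon\log R(t)$ per excursion step is negligible compared to $\tfrac{q}{2}\log t$ because under Assumption~\ref{assn:as} we may take $\varepsilon$ arbitrarily small — the sum over excursion lengths $n-|u|$ is geometric except for the finitely many very short excursions, which number at most $R(t)^{O(n-|u|)}$ and are killed by a $t^{-\Omega(1)}$ factor from the penalties, contributing only $e^{o(ta(t)(\log t)^{-k})}$. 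Altogether $U(\Gamma^u_t)\leq e^{t\psi_t(u)+o(ta(t)(\log t)^{-k})}$ (in the degenerate case $t(\xi(u)-\deg u)<|u|$ the geodesic penalties already make $t\psi_t(u)$ so negative, of order $-r(t)(\log t)^{-z}$, that the bound is $\leq 1$ once $k$ is chosen $>z$).

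\textbf{Step 3: conclusion.} By Proposition~\ref{prop:gap a.s. LB}, $\psi_t(u)\leq\psi_t(\hat Z_t^{(3)})\leq\psi_t(\hat Z_t^{(1)})-a(t)(\log t)^{-k}$, so by Step~2, $U(\Gamma^u_t)\leq e^{t\psi_t(\hat Z_t^{(1)})-ta(t)(\log t)^{-k}+o(ta(t)(\log t)^{-k})}$. Summing over the $\leq(\log t)^{D_1}$ choices of $u$ from Step~1 and dividing by $U(t)\geq e^{t\psi_t(\hat Z_t^{(1)})-r(t)t^{-c}}$ from Proposition~\ref{prop:total mass LB}, we get
\[
\frac{U_4(t)}{U(t)}\leq(\log t)^{D_1}\exp\!\left\{-\tfrac12\,ta(t)(\log t)^{-k}+r(t)t^{-c}\right\}=(\log t)^{D_1}\exp\!\left\{-\tfrac12\,r(t)(\log t)^{1-k}+r(t)t^{-c}\right\}\to 0,
\]
since $r(t)(\log t)^{1-k}\to\infty$ (polynomially in $t$) dominates $r(t)t^{-c}$ and the poly-logarithmic prefactor. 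The main obstacle is Step~2: ensuring the geodesic/excursion bookkeeping does not leak an exponential-in-$r(t)$ factor, which hinges on the arbitrarily small $\varepsilon$ available under Assumption~\ref{assn:as}, on the careful choice of the slack parameter $(\log t)^{-k-p}$, and on the polynomial-beats-poly-logarithm estimate handling the short excursions and the direct path itself.
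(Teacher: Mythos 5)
Your proposal follows the same overall strategy as the paper's proof of Proposition~\ref{prop:U4 zero}: decompose $U_4(t)$ according to the argmax site $u\in(F_t\cup E_t)$, apply Lemma~\ref{lem:pathbounds}(i) path by path, separate the geodesic $[[O,u]]$ from the excursions, use disconnectedness to force half the excursion slots into a low-potential set, sum over paths with Lemma~\ref{lem:no of paths}, and finish with Propositions~\ref{prop:gap a.s. LB} and~\ref{prop:total mass LB}. The summation over $u$ in Step~3 is an unnecessary (though harmless) detour --- the paper takes a supremum, so Step~1 is not needed.

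However, there is a genuine gap in Step~2. You claim that vertices $v\in F_t^c$ each contribute a factor $\lesssim(\xi(u)-\deg u)^{-1}$ to the product in Lemma~\ref{lem:pathbounds}(i). This is false: the threshold defining $F_t$ is $R(t)^{d/\alpha}(\log t)^{-C}$, and the site $u\in F_t\cup E_t$ has $\xi(u)-\deg u$ of roughly this same magnitude (it can be as small as $\tfrac12 R(t)^{d/\alpha}(\log t)^{-C}$). A vertex $v$ just below the $F_t$ threshold, or $v\in E_t$, can have $\xi(v)-\deg v$ within an additive $O(1)$ of $\xi(u)-\deg u$, so the factor $\bigl(1+[\xi(u)-\deg u]-[\xi(v)-\deg v]\bigr)^{-1}$ can be arbitrarily close to $1$, yielding no penalty. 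One cannot repair this by observing that only poly-logarithmically many such $v$ exist, because the \emph{path} can revisit them repeatedly, occupying many slots with no penalty; what matters is the number of \emph{slots}, not of vertices, and disconnectedness of $F_t$ only tells you half the excursion slots lie in $F_t^c$, not that they avoid all near-threshold sites. This is precisely why the paper introduces the much wider set $G_t=\{v:\xi(v)\geq R(t)^{d/\alpha-\delta}\}$ of Lemma~\ref{lem:useful things for Ft and Gt}: its threshold is \emph{polynomially} lower than the $F_t$ threshold, so for $v\in G_t^c$ one gets $[\xi(u)-\deg u]-[\xi(v)-\deg v]\geq(1-R(t)^{-\delta/2})(\xi(u)-\deg u)$, giving the full $(\xi(u)-\deg u)^{-1}$ penalty per slot (up to a controlled slack), while $\#G_t\leq t^{2\alpha(q+1)\delta}$ is small enough relative to $|u|$ to control the geodesic part, and $G_t$ is disconnected so the half-the-excursion-slots argument applies. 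Your ``relative threshold'' $\xi(v)-\deg v\leq(\log t)^{-k-p}[\xi(u)-\deg u]$, which you use for the geodesic slots, would in fact also work for the excursion slots (the set above that threshold is contained in $G_t$, hence disconnected), but you must use the \emph{same} low threshold for both parts; as written, the excursion part leans on $F_t^c$ and the argument does not close. A secondary issue is that the treatment of short excursions ($n-|u|$ small) is waved away; there the path-count term $(n-|u|)\log\frac{n}{n-|u|}$ can genuinely exceed the per-step penalty, and one needs an argument (as in the paper, via the explicit maximisation over $n$) that the total overshoot is only $O(R(t)^{1-\delta/2})$, not merely that each term is ``killed by a $t^{-\Omega(1)}$ factor'', which it is not.
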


\begin{proof} We proceed similarly to the proof of Proposition \ref{prop:U2 zero}. For each $u \in (F_t \cup E_t) \setminus \{\hat Z_t^{(1)},\hat Z_t^{(2)}\}$, let us define the following set of paths:
$$\hat{\Gamma}_t^{(4)}(u):= \left\{\gamma \subset B_{R(t)}: \arg \max_{v\in \gamma}[\xi(v)-\deg(v)]=u,|\gamma|\leq j_t\right\}.$$
Now fix some $\gamma=v_0, \ldots, v_n \in \bigcup_{u \in (F_t \cup E_t) \setminus \{\hat Z_t^{(1)},\hat Z_t^{(2)}\}} \hat \Gamma_{t}^{(4)}(u)$. Let $v^* = \arg_{v \in \gamma} \max (\xi(v) - \deg v)$ and let $i^* = \inf\{j \geq 1: v_j = v^*\}$. Take some $\delta < \frac{1}{2\alpha(q+1)}$ and consider $G_t$ as defined in \eqref{eqn:Gt def}. Then, using again \eqref{eqn:equationproduct} and the fact that $\gamma$ contains at least $\frac{n-|v_{i^*}|}{2}+|v_{i^*}|-t^{2\alpha(q+1)\delta}$ vertices in $G_t^c$ by Lemma \ref{lem:useful things for Ft and Gt}$(i)$, and $v_{i^*} \in B_{R(t)}$ by assumption, we can deduce exactly as in the proof of Proposition \ref{prop:U2 zero} that 
\begin{align*}
&\E{\exp \left\{ \int_0^t \xi (X_s) ds \right\}\mathbbm{1}\{{\pi}((X)_{[0,t]}) = \gamma\}} \\
&\leq \exp\left\{t\psi_t(v_{i^*})+t -\left(\frac{n-|v_{i^*}|}{2} - t^{2\alpha(q+1)\delta}\right)\log(\xi(v_{i^*})-\deg(v_{i^*})-\left(\frac{n+|v_{i^*}|}{2} - t^{2\alpha(q+1)\delta}\right) \log\left(1-R(t)^{\frac{-\delta}{2}}\right)\right\}.
\end{align*}
Therefore, since we chose $\delta < \frac{1}{2\alpha(q+1)}$ and $\log (1-x) \leq -x$ for $x \in (0,1)$, we have for any $\epsilon > 0$ that
\begin{align*}
    &\log(U_4(t))\\ &= \log\left(\sum_{|v_{i^*}| \in (F_t \cup E_t) \setminus \{\hat{Z}_t^{(1)}, \hat{Z}_t^{(2)}\}} \sum_{|v_{i^*}| \leq n \leq j_t} \sum_{\gamma \in \hat{\Gamma}_t^{(4)}(v_{i^*}): |\gamma|=n} \E{\exp \left\{ \int_0^t \xi (X_s) ds \right\}\mathbbm{1}\{{\pi}((X)_{[0,t]})= \gamma\}}\right) \\
     &\leq \sup_{|v_{i^*}| \in (F_t \cup E_t) \setminus \{\hat{Z}_t^{(1)}, \hat{Z}_t^{(2)}\}} \sup_{|v_{i^*}|\leq n}\left\{ t\psi_t(v_{i^*}) \right.\\
&\qquad \left.-\left(\frac{n-|v_{i^*}|}{2}\right)\log(\xi(v_{i^*})-\deg(v_{i^*})) + (n+|v_{i^*}|) R(t)^{\frac{-\delta}{2}} + \eta_{R(t)}(n,v_{i^*})\right\}+ O \left(  t \right).
\end{align*}
Since $v_{i^*}\in F_t\cup E_t$ the same argument as in the proof of Proposition \ref{prop:U2 zero} yields that the maximum in the expression above is attained at the smallest possible value of $n$, provided that we choose $\epsilon > 0$ small enough when applying the upper bound in Lemma \ref{lem:no of paths}. Therefore, substituting $n = |v_{i^*}|$ we obtain that
\begin{equation}
    \log(U_4(t))\leq t\psi(v_{i^*})+O(R(t)^{1-\frac{\delta}{2}}).
\end{equation}
Invoking Propositions \ref{prop:gap a.s. LB} and \ref{prop:total mass LB} therefore yields that almost surely,
\begin{align*} 
\log\left(\frac{U_4(t)}{U(t)}\right) &\leq \sup_{v_{i^*} \in (F_t \cup E_t) \setminus \{\hat Z_t^{(1)},\hat Z_t^{(2)}\}} t(\psi_t(v_{i^*})-\psi_t(\hat Z_t^{(1)})) + r(t) t^{-c} + O(r(t)t^{-\frac{(q+1)\delta}{3}})
\to -\infty,
\end{align*}
as required.
\end{proof}

\begin{prop}\label{prop:U5}
Under Assumption \ref{assn:as}, $P$-almost surely as $t\to \infty$ $$\frac{U_5(t)}{U(t)} \to 0.$$
\end{prop}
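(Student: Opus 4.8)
The plan is to observe that $U_5(t)$ collects exactly those trajectories of $(X_s)_{s\ge0}$ that remain inside $B_{R(t)}$ up to time $t$ but never enter the high-potential set $F_t$; on this event every visited vertex lies in $B_{R(t)}\setminus F_t$, so by the definition of $F_t$ it has potential strictly below $R(t)^{d/\alpha}(\log t)^{-C}$. Writing $\kappa:=C-\tfrac{pd}{\alpha}$, which is positive by the choice of $C$, and recalling the identity $r(t)^{d/\alpha}=a(t)$ (since $(q+1)\tfrac d\alpha=q$), we have $R(t)^{d/\alpha}(\log t)^{-C}=a(t)(\log t)^{-\kappa}$. Hence on the event defining $U_5(t)$ the occupation integral is bounded crudely by $\int_0^t\xi(X_s)\,ds\le t\sup_{v\in B_{R(t)}\setminus F_t}\xi(v)\le t\,a(t)(\log t)^{-\kappa}$, and since the event has probability at most $1$,
\[
U_5(t)\le \exp\{t\,a(t)(\log t)^{-\kappa}\}.
\]

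For the matching lower bound on $U(t)$ I would invoke Proposition \ref{prop:total mass LB}(i), which under Assumption \ref{assn:as} gives $U(t)\ge \exp\{t\psi_t(\hat Z_t^{(1)})-r(t)t^{-c}\}$ eventually $P$-almost surely, together with Proposition \ref{prop:Zt bounds as}(ii) to replace $\psi_t(\hat Z_t^{(1)})$ by $a(t)(\log t)^{-\epsilon}$ for a fixed $\epsilon\in(0,\kappa)$ (possible since $\kappa>0$). As $r(t)t^{-c}=t^{q+1-c}(\log t)^{-(q+1)}$ is of strictly smaller polynomial order in $t$ than $t\,a(t)(\log t)^{-\epsilon}=t^{q+1}(\log t)^{-q-\epsilon}$, the error term is negligible, so $U(t)\ge \exp\{\tfrac12 t\,a(t)(\log t)^{-\epsilon}\}$ for all sufficiently large $t$, $P$-almost surely.

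Combining the two bounds yields
\[
\frac{U_5(t)}{U(t)}\le \exp\Big\{t\,a(t)(\log t)^{-\kappa}-\tfrac12 t\,a(t)(\log t)^{-\epsilon}\Big\},
\]
and since $\epsilon<\kappa$ forces $(\log t)^{-\kappa}=o\big((\log t)^{-\epsilon}\big)$, the exponent is eventually at most $-\tfrac14 t\,a(t)(\log t)^{-\epsilon}\to-\infty$, which proves the claim. I do not expect a genuine obstacle here: in contrast to the treatment of $U_2$ and $U_4$, the trajectories counted in $U_5$ are forced to spend the entire time interval at potentials a full polylogarithmic factor below $a(t)$, so even the trivial pointwise bound on the occupation integral already beats the lower bound for $U(t)$; the only point requiring attention is to fix $\epsilon$ smaller than the polylog margin $\kappa$ built into the definition of $F_t$, which is always possible.
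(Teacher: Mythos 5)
Your proof is correct and takes essentially the same approach as the paper: bound $U_5(t)$ by the trivial pointwise estimate $\exp\{t\sup_{v\in B_{R(t)}\setminus F_t}\xi(v)\}\le\exp\{t\,a(t)(\log t)^{-(C-pd/\alpha)}\}$ and compare against the lower bound on $U(t)$ from Proposition \ref{prop:total mass LB}(i) with $\epsilon$ chosen strictly below $C-pd/\alpha$. (The paper makes the specific choice $\epsilon=\tfrac12(C-pd/\alpha)$ rather than an arbitrary $\epsilon$ in that range, but the argument is identical.)
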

\begin{proof}
Clearly, $U(5) \leq \exp \{t \sup_{u \in F_t} \xi(v)\}$. Taking $C$ as in \eqref{eqn:Ft def} and taking $\epsilon=\frac{1}{2}(C-\frac{pd}{\alpha})$ in Proposition \ref{prop:total mass LB}$(i)$, we deduce that
$$\log \left(\frac{{U}_5(t)}{U(t)}\right) \leq ta(t) [(\log t)^{-(C-\frac{pd}{\alpha})} - (\log t)^{-\epsilon}] \to -\infty$$ as required.
\end{proof}

\begin{prop}\label{prop:U6 0}
Under Assumption \ref{assn:as}, $P$-almost surely as $t\to \infty$, $$\frac{U_6(t)}{U(t)} \to 0.$$
\end{prop}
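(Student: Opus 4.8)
The plan is to bound $U_6(t)$ very crudely: on the event $\{\tau_{B_{R(t)}}>t\}$ the Feynman--Kac weight $\exp\{\int_0^t \xi(X_s)\,ds\}$ never exceeds $\exp\{t\sup_{v\in B_{R(t)}}\xi(v)\}$, which is deterministic given the environment and therefore factors out of the expectation. So
\[
U_6(t) \le \exp\Big\{t\sup_{v\in B_{R(t)}}\xi(v)\Big\}\, \mathbb{P}\big(J_t \ge j_t,\ \tau_{B_{R(t)}}>t\big).
\]
Since $(X_s)_{s\ge0}$ jumps out of a vertex $v$ at rate $\deg v$, while the walk lies in $B_{R(t)}$ its total jump rate is at most $D_{R(t)} = \sup_{v\in B_{R(t)}}\deg v$; thinning a rate-$D_{R(t)}$ Poisson clock to produce the jump times of $X$ gives a coupling under which, on $\{\tau_{B_{R(t)}}>t\}$, $J_t$ is at most a $\textsf{Poi}(tD_{R(t)})$ variable. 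Hence $\mathbb{P}(J_t\ge j_t,\ \tau_{B_{R(t)}}>t)\le \mathbb{P}(\textsf{Poi}(tD_{R(t)})\ge j_t)$.

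I would then insert the almost sure estimates available under Assumption~\ref{assn:as}. By Lemma~\ref{lem:supdeg}$(ii)$, $D_{R(t)}\le R(t)^{\epsilon}$ eventually a.s.; as $R(t)=r(t)(\log t)^p$ with $r(t)=(t/\log t)^{q+1}$, this yields $tD_{R(t)}\le t\, r(t)^{\epsilon}(\log t)^{p\epsilon}$, which for $\epsilon$ small is $\le t\,r(t)\le j_t$ (using $j_t=t^{(d+\epsilon)/\beta+1}r(t)\ge t\,r(t)$) and moreover satisfies $\log j_t-\log(tD_{R(t)})\ge c\log t$ for some $c>0$ and all large $t$. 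The Poisson Chernoff bound $\mathbb{P}(\textsf{Poi}(\mu)\ge m)\le e^{-\mu}(e\mu/m)^m$, valid for $m\ge\mu$, then gives $\log\mathbb{P}(\textsf{Poi}(tD_{R(t)})\ge j_t)\le j_t\big(1+\log(tD_{R(t)})-\log j_t\big)\le -\tfrac{c}{2}\,j_t\log t$ eventually a.s. For the prefactor, Lemma~\ref{lem:xi in Ar} (under Assumption~\ref{assn:as}, so $d=2$) gives $\sup_{v\in B_{R(t)}}\xi(v)\le R(t)^{2/\alpha}(\log R(t))^{(1+\epsilon)/\alpha}$ eventually a.s.; since $2(q+1)/\alpha=q$ when $q=\tfrac{2}{\alpha-2}$, we have $R(t)^{2/\alpha}=a(t)(\log t)^{2p/\alpha}$, hence $t\sup_{v\in B_{R(t)}}\xi(v)\le ta(t)(\log t)^{2p/\alpha+(1+\epsilon)/\alpha}=r(t)(\log t)^{M}$ for a fixed constant $M$ (recall $ta(t)=r(t)\log t$).

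Combining these two bounds gives $\log U_6(t)\le r(t)(\log t)^M-\tfrac{c}{2}j_t\log t$, and since $j_t\ge t\,r(t)$ we have $j_t\log t\gg r(t)(\log t)^{M}$, so $\log U_6(t)\to-\infty$. Finally, Proposition~\ref{prop:total mass LB}$(i)$ gives $\log U(t)\ge -ta(t)(\log t)^{-\epsilon}=-r(t)(\log t)^{1-\epsilon}$ eventually a.s., so $\log(U_6(t)/U(t))\le r(t)(\log t)^{M}+r(t)(\log t)^{1-\epsilon}-\tfrac{c}{2}j_t\log t\to-\infty$, which is the claim. The only genuinely delicate step is the coupling of $J_t$ with a Poisson process on the exit-time event, together with the observation that the random weight can be pulled out as a deterministic factor there; everything else is arithmetic confirming that $j_t$ exceeds $tD_{R(t)}$ by a positive power of $t$.
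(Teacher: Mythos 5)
Your proof is correct and follows essentially the same argument as the paper: bound the Feynman--Kac weight on the exit-time event by $\exp\{t\sup_{B_{R(t)}}\xi\}$, dominate $J_t$ by a $\textsf{Poi}(tD_{R(t)})$ variable via a uniformization coupling, apply the Poisson Chernoff bound, and compare against the lower bound on $\log U(t)$ from Proposition~\ref{prop:total mass LB}. One small remark: you cite Lemma~\ref{lem:supdeg}$(ii)$ for the bound on $D_{R(t)}$, whereas the paper writes $(i)$; under Assumption~\ref{assn:as} (where $\beta=2$, so part $(i)$ would give $D_{R(t)}\le R(t)^{1+\epsilon}$, which is too weak) part $(ii)$ is indeed the right ingredient, so your citation is the appropriate one.
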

\begin{proof}
Note that, for any $\epsilon > 0$, $\sup_{v \in B_{R(t)}} \deg (v) \leq t^{\frac{d+\frac{\epsilon}{2}}{\beta}}$ eventually almost surely by Lemma \ref{lem:supdeg}$(i)$. Therefore,
\[
\p{J_t \geq t^{\frac{d+\epsilon}{\beta}+1}r(t)} \leq \p{\textsf{Poi}\left(t^{\frac{d+\frac{\epsilon}{2}}{\beta}+1}\right) \geq t^{\frac{d+\epsilon}{\beta}+1}r(t)} \leq e^{-ct^{\frac{\epsilon}{2}}r(t)}.
\]
Therefore, again applying Proposition \ref{lem:xi in Ar} we deduce that
\begin{align*}
\mathbb E_0\left[\exp\left\{\int_0^t\xi(X_s)ds\right\} \mathbbm 1\{\tau_{B_{R(t)}}>t, J_t\geq t^{\frac{d+\epsilon}{\beta}+1}r(t)\}\right] \leq \exp \{t\sup_{v \in B_{R(t)}} \xi (v) - ct^{\frac{\epsilon}{2}}r(t)\} \leq \exp \{r(t)t^{\frac{\epsilon}{3}} - ct^{\frac{\epsilon}{2}}r(t)\}.
\end{align*}
The result then follows from Proposition \ref{prop:total mass LB} similarly to the other cases.
\end{proof}

\begin{proof}[Proof of Theorem \ref{thm:main a.s. localisation intro}]
Since $U(t) - U_1(t) \leq \sum_{i=2}^5 U_i(t)$, the theorem is a direct consequence of Propositions \ref{prop:u1}-\ref{prop:U5}. 
\end{proof}

\section{High probability single site localisation}\label{sec:singlesitelocalisation}

Unlike in \cite{KLMStwocities09}, we cannot directly read off a single site localisation result from the almost sure result because stronger analogues of Propositions \ref{prop:U2 zero} and \ref{prop:U4 zero} for $U_2$ and $U_4$ do not necessarily hold with high probability under Assumption \ref{assn:whp}. In particular since the number of paths through a given vertex of length $n$ grow fairly fast (cf Lemma \ref{lem:no of paths}) we cannot write high probability analogues of these results since we cannot justify the restriction to paths that are ``as direct as possible''.

Instead we give a somewhat modified version of the proof, which nevertheless follows a similar structure to that used in \cite{CompletePreprint}. For this we keep $p= q+2$ and define 
\[
R(t) = r(t) (\log t)^p, \hspace{1cm} \overline{R}_t = |\hat{Z}_t^{(1)}| \left(1 + \frac{1}{(\log t)^{\frac{1}{2}(1 + \frac{1}{\alpha})}} \right).
\]
For ease of notation we define $h_t = (\log t)^{\frac{1}{2}(1 + \frac{1}{\alpha})}$ for the rest of this section. Similar to Section \ref{sec:localisation} we define
\begin{align*}
\tilde{u}_1(t,v)&=\mathbb E_0\left[\exp\left\{\int_0^t\xi(X_s) ds \right\}\mathbbm 1\{X_t=v\}\mathbbm 1\{\tau_{B_{\overline{R}_t}}> t\}\mathbbm 1\{H_{\hat Z_t^{(1)}}\leq t\}\right], \\
\tilde{U}_2(t)&=\mathbb E_0\left[\exp\left\{\int_0^t\xi(X_s)ds \right\}\mathbbm 1\{\tau_{B_{\overline{R}_t}}\leq t, \tau_{B_{R(t)}} > t\}\right],\\
\tilde{U}_3(t)&=\mathbb E_0\left[\exp\left\{\int_0^t\xi(X_s)ds \right\}\mathbbm 1\{\tau_{B_{R(t)}} \leq t\}\right],\\
\tilde{U}_4(t)&=\mathbb E_0\left[\exp\left\{\int_0^t\xi(X_s)ds \right\}\mathbbm 1\{\tau_{B_{\overline{R}_t}}> t\}\mathbbm 1\{H_{\hat Z_t^{(1)}}>t\}\right].
\end{align*}
We show that $\tilde{u}_1$ localises on $\hat{Z}_t^{(1)}$ whp, and that $\tilde{U}_2, \tilde{U}_3$ and $\tilde{U}_4$ are asymptotically negligible whp. For this we define a modified functional
\begin{equation}
\overline{\psi}_t(v) = t\xi(v) - |v| \log \left(\frac{|v|}{et}\right).
\end{equation}
Although $\overline{\psi}_t$ is ``less precise'' than $\psi_t$, the advantage of working with $\overline{\psi}_t$ is that it is easier to use it to give upper bounds $u(t,v)$ for different $v$. It is less of a natural lower bound for $u(t,v)$ since we did not incorporate $\rho$ as in \eqref{eqn:psit def}, but in order to show that $\exp \{t \hat{\psi}_t(\hat{Z}_t^{(1)})\}$ is almost a lower bound for $u(t,\hat{Z}_t^{(1)})$ one only has to condition on certain high probability events at the single site $\hat{Z}_t^{(1)}$. It would be harder to show that $\exp \{t \psi_t(v)\}$ is an asymptotic upper bound for any $v \neq \hat{Z}_t^{(1)}$, since this involves conditioning on certain events occurring for \textit{all} other $v \neq \hat{Z}_t^{(1)}$, which is much harder (and already we saw that the upper bounds in Propositions \ref{prop:U2 zero} and \ref{prop:U4 zero} do not carry through in the high probability setting).

\begin{prop}\label{prop:total mass LB whp}
Take any $\epsilon > 0$. Under Assumption \ref{assn:whp}, it holds with high $P$-probability as $t\to \infty$ that
\begin{align*}
    \log U(t) \geq t \overline{\psi}_t (\hat{Z}_t^{(1)}) + o\left(\frac{ta(t)}{(\log t)^{1-\epsilon}}\right).
\end{align*}
\end{prop}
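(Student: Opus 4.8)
The plan is to deduce the statement from the high-probability lower bound already established in Proposition \ref{prop:total mass LB}(ii), and then to translate between the functionals $\psi_t$ and $\overline\psi_t$ at the single site $\hat Z_t^{(1)}$. Fix $\epsilon>0$ and an auxiliary parameter $\epsilon'\in(0,\epsilon)$, and recall that $ta(t)=r(t)\log t$, so that $\frac{ta(t)}{(\log t)^{1-\epsilon}}=r(t)(\log t)^{\epsilon}$; the target error term is thus $o(r(t)(\log t)^{\epsilon})$.

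First I would collect the relevant high-probability control on $\hat Z_t^{(1)}$. By Proposition \ref{prop:Zt bounds as}(i$'$), (ii$'$), (iii$'$) we have, with high $P$-probability, $r(t)(\log t)^{-\epsilon'}\le|\hat Z_t^{(1)}|\le r(t)(\log t)^{\epsilon'}$ and $\psi_t(\hat Z_t^{(1)})\ge a(t)(\log t)^{-\epsilon'}$. Since $\psi_t(\hat Z_t^{(1)})>0$ forces $t(\xi(\hat Z_t^{(1)})-\deg\hat Z_t^{(1)})\ge|\hat Z_t^{(1)}|$, the lower bound on $|\hat Z_t^{(1)}|$ gives $\xi(\hat Z_t^{(1)})-\deg\hat Z_t^{(1)}\ge r(t)(\log t)^{-\epsilon'}/t\to\infty$ whp, whence $\psi_t(\hat Z_t^{(1)})\le\xi(\hat Z_t^{(1)})-\deg\hat Z_t^{(1)}$ and so $\xi(\hat Z_t^{(1)})-\deg\hat Z_t^{(1)}\ge a(t)(\log t)^{-\epsilon'}$ whp. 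Combining $|\hat Z_t^{(1)}|\le r(t)(\log t)^{\epsilon'}\le R(t)$ with $\xi(\hat Z_t^{(1)})\ge a(t)(\log t)^{-\epsilon'}\ge R(t)^{d/\alpha}(\log t)^{-C}$ (the right-hand side being of order $a(t)(\log t)^{-(z+1)}$, using $p=q+2$ and $r(t)^{d/\alpha}=a(t)$, so the inequality holds for $\epsilon'$ small) shows that $\hat Z_t^{(1)}\in F_t$ whp; hence Proposition \ref{prop:max degree Ft Et}(ii) yields $\deg\hat Z_t^{(1)}\le(\log t)^{B}$ whp. Finally, Lemma \ref{lem:xi in Ar} applied on the ball $B_{r(t)(\log t)^{\epsilon'}}$, together with $r(t)^{d/\alpha}=a(t)$, gives $\xi(\hat Z_t^{(1)})\le a(t)(\log t)^{C'}$ whp for a suitable constant $C'=C'(\alpha,d,\epsilon')$.

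Next I would compare the functionals directly. On the event $t(\xi(\hat Z_t^{(1)})-\deg\hat Z_t^{(1)})\ge|\hat Z_t^{(1)}|$ a direct substitution of the definitions gives
\[
t\psi_t(\hat Z_t^{(1)})-t\overline\psi_t(\hat Z_t^{(1)})=-t\deg\hat Z_t^{(1)}-|\hat Z_t^{(1)}|\log\left(\frac{t(\xi(\hat Z_t^{(1)})-\deg\hat Z_t^{(1)})}{|\hat Z_t^{(1)}|}\right).
\]
On the events of the previous paragraph the first term is at most $t(\log t)^{B}=o(r(t)(\log t)^{\epsilon})$, since $r(t)/t$ grows polynomially in $t$; and, using $ta(t)/r(t)=\log t$ together with the two-sided bounds on $|\hat Z_t^{(1)}|$ and $\xi(\hat Z_t^{(1)})-\deg\hat Z_t^{(1)}$, the argument of the logarithm lies between $(\log t)^{1-2\epsilon'}$ and $(\log t)^{1+C'+\epsilon'}$, so that logarithm is $O(\log\log t)$ and the second term is $O(r(t)(\log t)^{\epsilon'}\log\log t)=o(r(t)(\log t)^{\epsilon})$ because $\epsilon'<\epsilon$. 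Hence $t\psi_t(\hat Z_t^{(1)})=t\overline\psi_t(\hat Z_t^{(1)})+o(r(t)(\log t)^{\epsilon})$ whp (the tiny discrepancy between the two sign/factor conventions for $\overline\psi_t$ is itself $O(|\hat Z_t^{(1)}|)$ and is absorbed here). Finally Proposition \ref{prop:total mass LB}(ii) gives $\log U(t)\ge t\psi_t(\hat Z_t^{(1)})-r(t)t^{-c}$ whp with $r(t)t^{-c}=o(r(t)(\log t)^{\epsilon})$, and substituting the previous display completes the proof.

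There is essentially no new analytic difficulty here beyond this bookkeeping; the one step that requires a little care is the verification that $\hat Z_t^{(1)}\in F_t$ with high probability, since the naive bound $\deg\hat Z_t^{(1)}\le D_{r(t)(\log t)^{\epsilon'}}$ from Lemma \ref{lem:supdeg} is only polynomial in $t$ and hence far too weak to make $t\deg\hat Z_t^{(1)}$ negligible when $q$ is small — it is precisely the $F_t$-membership that upgrades this to the polylogarithmic bound $(\log t)^{B}$.
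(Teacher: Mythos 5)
Your proof is correct and follows essentially the same route as the paper: reduce to the high-probability $\psi_t$ lower bound of Proposition \ref{prop:total mass LB}(ii), then show $t\deg\hat Z_t^{(1)} + |\hat Z_t^{(1)}|\log(t(\xi-\deg)/|\hat Z_t^{(1)}|) = o(r(t)(\log t)^\epsilon)$ whp, with the degree term handled via the whp membership $\hat Z_t^{(1)}\in F_t$ and Proposition \ref{prop:max degree Ft Et}(ii). You spell out the $F_t$-membership verification more explicitly than the paper (which simply cites Proposition \ref{prop:Zt bounds as}$(ii')$), but the substance is identical.
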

\begin{proof}
By Proposition \ref{prop:total mass LB}$(ii)$, it is enough to show that
\[
\overline{\psi}_t (\hat{Z}_t^{(1)}) - \psi_t (\hat{Z}_t^{(1)}) = \deg (\hat{Z}_t^{(1)}) + \frac{|\hat{Z}_t^{(1)}|}{t}\log \left( \frac{(\xi(\hat{Z}_t^{(1)}) - \deg(\hat{Z}_t^{(1)}))te}{|\hat{Z}_t^{(1)}|} \right)  = o\left(\frac{a(t)}{(\log t)^{1-\epsilon}} \right)
\]
whp as $t \to \infty$. This is immediate from Proposition \ref{prop:Zt bounds as}$(iii')$ and Lemma \ref{cor:whppotbo} since whp we have:
\begin{align*}
\frac{|\hat{Z}_t^{(1)}|}{t}\log \left( \frac{(\xi(\hat{Z}_t^{(1)}) - \deg(\hat{Z}_t^{(1)}))te}{|\hat{Z}_t^{(1)}|} \right) \leq \frac{r(t)(\log t)^{\epsilon/2}}{t} \log \left(\frac{a(t) t \log t}{r(t)} \right) &= \frac{a(t)}{(\log t)^{1-\epsilon/2}} \log ((\log t)^2).
\end{align*}
A similar result holds for $\deg (\hat{Z}_t^{(1)})$ by Proposition \ref{prop:max degree Ft Et}$(ii)$ and since $\hat{Z}_t^{(1)} \in F_{t}$ whp by Proposition \ref{prop:Zt bounds as}$(ii')$.
\end{proof}

Additionally, exactly the same proof as in Lemma \ref{lem:probgap} gives the following (note that Lemma \ref{lem:probgap} was written only under Assumption \ref{assn:whp}, and since $\overline{\psi}_t(v) \geq \psi_t(v)$ for all $v$ the same almost sure lower bounds hold for $\overline{\psi}_t(v)$).

\begin{lem} \label{lem:probgap whp}
Take any $D>0$. Under Assumption \ref{assn:whp}, for any $0<K<D$, we have for all $t$ large enough that
\begin{align}
    \p{\overline{\psi}_t(\hat Z_t^{(1)})-\sup_{v \neq \hat Z_t^{(1)}}\overline{\psi}_t(v)<a(t) (\log t)^{-D}} &\leq 4\alpha (\log t)^{-K}.
\end{align}
\end{lem}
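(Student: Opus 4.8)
The plan is to reproduce the proof of Lemma~\ref{lem:probgap} with $\psi_t$ replaced throughout by $\overline\psi_t$; since $\overline\psi_t(v)\ge\psi_t(v)$ for every $v$, the high‑probability lower bounds on the functional at the optimal vertices (Proposition~\ref{prop:Zt bounds as}(i$'$)--(iv$'$)) transfer directly, as the parenthetical remark before the statement indicates. The main simplification is that $\overline\psi_t(v)=\xi(v)-c_{v,t}$, with $c_{v,t}:=\frac{|v|}{t}\log\frac{|v|}{et}$, is \emph{affine} in the potential value, so the auxiliary function $f_{v,t}$ and Claims~1--2 of Lemma~\ref{lem:probgap} are replaced by the affine map $y\mapsto y-c_{v,t}$ with explicit inverse $w\mapsto w+c_{v,t}$; the technical conditions $|v|>et$ and $z>a(t)(\log t)^{-\epsilon}+1$ there serve here only to guarantee $c_{v,t}\ge 0$ and that the $\xi$‑thresholds appearing exceed $1$, so that $\mathcal P(\xi(v)>w)=w^{-\alpha}$ may be applied.

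Concretely, I would first record that $|\hat Z_t^{(1)}|>et$ and $\sup_{v\neq\hat Z_t^{(1)}}\overline\psi_t(v)\ge a(t)(\log t)^{-\epsilon}$ each hold with probability $1-o((\log t)^{-K})$, by Proposition~\ref{prop:Zt bounds as}(i$'$),(ii$'$) together with $\overline\psi_t\ge\psi_t$ and $r(t)/t\to\infty$. Then I would run the conditional decomposition of \eqref{eqn:psi gap prob decomp}: with $z=\sup_{v\neq\hat Z_t^{(1)}}\overline\psi_t(v)$, conditioning on $z\in[z_0-1,z_0]$ and on $|\hat Z_t^{(1)}|>et$, and passing to a deterministic tree via the $\sup_{\mathbb S_\infty\subset\mathbb T_\infty}$ reduction (using that $\xi(v)$ for $v\neq\hat Z_t^{(1)}$ is independent of $\xi(\hat Z_t^{(1)})$), the probability $P(\overline\psi_t(\hat Z_t^{(1)})-z>x_t)$, with $x_t=a(t)(\log t)^{-D}$, is bounded below, up to an additive $(\log t)^{-(D-\epsilon)}$, by
\[
\inf_{v}\;\inf_{z_0\ge a(t)(\log t)^{-\epsilon}+1}\frac{\mathcal P(\xi(v)>x_t+z_0+c_{v,t})}{\mathcal P(\xi(v)>z_0-1+c_{v,t})}\;=\;\inf_{v,z_0}\left(\frac{z_0-1+c_{v,t}}{x_t+z_0+c_{v,t}}\right)^{\alpha}.
\]
Since $c_{v,t}\ge 0$ and $z_0-1\ge a(t)(\log t)^{-\epsilon}$, this is at least $\bigl(1-(\log t)^{-(D-\epsilon)}\bigr)^{\alpha}$ for $t$ large, and using $1-(1-u)^{\alpha}\le 2\alpha u$ gives $P\bigl(\overline\psi_t(\hat Z_t^{(1)})-\sup_{v\neq\hat Z_t^{(1)}}\overline\psi_t(v)<a(t)(\log t)^{-D}\bigr)\le 4\alpha(\log t)^{-(D-\epsilon)}\le 4\alpha(\log t)^{-K}$ on choosing $\epsilon<D-K$.

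The one step requiring genuine care — the main obstacle — is that the decomposition just described leans, exactly as in Lemma~\ref{lem:probgap}, on $\hat Z_t^{(1)}$ being the maximiser of the functional under study, whereas $\hat Z_t^{(1)}$ is defined as the maximiser of $\psi_t$, not of $\overline\psi_t$. To close this gap one argues that $\hat Z_t^{(1)}$ is also the $\overline\psi_t$‑maximiser outside an event of probability $o((\log t)^{-K})$: on the high‑probability events of Propositions~\ref{prop:Zt bounds as}(i$'$)--(iv$'$) and~\ref{prop:max degree Ft Et}(ii) every near‑optimal vertex $v$ lies in $F_t$ and hence $0\le\overline\psi_t(v)-\psi_t(v)=\deg v+\frac{|v|}{t}\log\frac{(\xi(v)-\deg v)et}{|v|}$ is $o\bigl(a(t)(\log t)^{-(1-\epsilon)}\bigr)$ uniformly over such $v$ (this is the computation in Proposition~\ref{prop:total mass LB whp}), so the $\psi_t$‑ and $\overline\psi_t$‑orderings of the top vertices agree whenever the $\psi_t$‑gap exceeds this correction, which happens with the required probability by Lemma~\ref{lem:probgap}. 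Once this is in place the gap in the statement coincides with the $\overline\psi_t$‑spectral gap and the estimate above applies; all remaining details (the union‑bound error terms, the reduction to a deterministic tree, the elementary inequalities) are copied verbatim from the proof of Lemma~\ref{lem:probgap}.
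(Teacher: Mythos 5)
Your plan does mirror the paper's one‑line proof ("exactly the same proof as in Lemma~\ref{lem:probgap}"), and the observation that $\overline\psi_t(v)=\xi(v)-c_{v,t}$ is affine in $\xi(v)$ — so that the inverse‑function bookkeeping of Claims 1 and 2 collapses to an elementary ratio — is a genuine and useful simplification. You are also right to flag the "main obstacle" explicitly: $\hat Z_t^{(1)}$ is by definition the argmax of $\psi_t$, not of $\overline\psi_t$, and the conditional decomposition of \eqref{eqn:psi gap prob decomp} is only valid for the functional whose maximiser appears in the conditioning. Concretely, conditioning on $\hat Z_t^{(1)}=v$ constrains $\xi(v)$ only to $\{\psi_t(v)>\psi_t(\hat Z_t^{(2)})\}$, i.e. $\xi(v)>f_{v,t}^{-1}(W)$ with $W=\psi_t(\hat Z_t^{(2)})$, \emph{not} to $\{\xi(v)>z_0-1+c_{v,t}\}$. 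Since in general $W\le \overline W=\sup_{u\ne v}\overline\psi_t(u)$ but $W$ may lie below $z_0-1$ by an amount of order $\overline\psi_t(u^*)-\psi_t(u^*)$, the replacement of the true threshold $f_{v,t}^{-1}(W)$ by $z_0-1+c_{v,t}$ in your denominator is not automatic; it is precisely the issue you isolate.

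The gap is in your proposed fix, not in having spotted the issue. You argue that $\hat Z_t^{(1)}$ coincides with the $\overline\psi_t$‑maximiser outside an event of probability $o((\log t)^{-K})$, by comparing the $\psi_t$‑gap to the correction $\overline\psi_t-\psi_t$. But that correction is genuinely of order $a(t)(\log t)^{-1+\epsilon'}$ (it is $\deg v + \tfrac{|v|}{t}\log\tfrac{(\xi(v)-\deg v)te}{|v|}$, and $\tfrac{|v|}{t}\asymp \tfrac{a(t)}{\log t}$ up to $(\log t)^{\pm\epsilon}$), so requiring the $\psi_t$‑gap to exceed it forces an application of Lemma~\ref{lem:probgap} with $k$ just below $1$, giving a failure probability $\asymp (\log t)^{-(1-O(\epsilon))}$. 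That is not $o((\log t)^{-K})$ once $K\ge 1$, and since $\{\hat Z_t^{(1)}\ne \overline\psi_t\text{-max}\}$ is a \emph{subset} of the event you are trying to bound, you cannot improve on this from the outside either. So your argument establishes the lemma only for $K<1$ (equivalently only when $D$ is not too large), and the step you describe as closing the gap does not in fact reach the full range $0<K<D$ in the statement. To be fair, the paper's one‑sentence proof and its Lemma~\ref{lem:ZtequalZBR}$(ii)$ (which only proves $\hat Z_t^{(1)}$ is the $\overline\psi_t$‑maximiser "whp" without a rate) share exactly this limitation, and the lemma is only ever invoked in Propositions~\ref{prop:U2 whp 0} and the final proposition with $D\le 1/2$, so the restriction does not propagate. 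But if you want your write‑up to match the claimed generality you would need either a sharper comparison between the $\psi_t$‑ and $\overline\psi_t$‑maximisers, or an argument running the conditional‑ratio computation directly with the true threshold $f_{v,t}^{-1}(W)$ (still conditioned on the whp events, and absorbing the discrepancy $|f_{v,t}^{-1}(W)-(z_0-1+c_{v,t})|=O(a(t)(\log t)^{-1+\epsilon'})$ into the error); as it stands, that extra term is exactly what caps the achievable $K$ at $1$.
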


We will need the following result to complete the localisation proof for $\tilde{u}_1$.

\begin{lem}\label{lem:ZtequalZBR}
Under Assumption \ref{assn:whp},
\begin{enumerate}[(i)]
\item $\lim_{t\to \infty} P(\hat Z_t^{(1)}=Z_{B_{\overline{R}_t}}=\tilde{Z}_{B_{\overline{R}_t}})=1$.
\item $\p{\hat{Z}_t^{(1)} \text{ is also the maximiser of } \overline{\psi}_t} \to 1$.
\end{enumerate}
\end{lem}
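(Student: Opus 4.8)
The plan is to deduce (ii) almost immediately from Lemma \ref{lem:probgap whp}, and to prove (i) by comparing the $\psi_t$-maximiser $\hat{Z}_t^{(1)}$ with the $\xi$-maximiser $Z_{B_{\overline{R}_t}}$ on $B_{\overline{R}_t}$, using the potential-gap estimate of Lemma \ref{lem:gap to infinity} to rule out a discrepancy.

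\emph{Part (ii).} If $\hat{Z}_t^{(1)}$ is not the maximiser of $\overline{\psi}_t$ then there is some $v \neq \hat{Z}_t^{(1)}$ with $\overline{\psi}_t(v) \geq \overline{\psi}_t(\hat{Z}_t^{(1)})$, so $\overline{\psi}_t(\hat{Z}_t^{(1)}) - \sup_{v \neq \hat{Z}_t^{(1)}} \overline{\psi}_t(v) \leq 0 < a(t)(\log t)^{-D}$. By Lemma \ref{lem:probgap whp} this event has probability $O((\log t)^{-K}) \to 0$. (The supremum of $\overline{\psi}_t$ over $V(\Ti)$ is attained, since $\overline{\psi}_t(v) \to -\infty$ as $|v| \to \infty$ by the potential bound of Lemma \ref{lem:xi in Ar} together with $d/\alpha < 1$, and it is attained uniquely as $\xi$ is non-atomic.) Hence $\hat{Z}_t^{(1)}$ is the maximiser of $\overline{\psi}_t$ with high probability.

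\emph{Part (i).} Since $\overline{R}_t = |\hat{Z}_t^{(1)}|(1 + h_t^{-1}) > |\hat{Z}_t^{(1)}|$ we have $\hat{Z}_t^{(1)} \in B_{\overline{R}_t}$, and by Proposition \ref{prop:Zt bounds as}$(i'),(iii')$ we have $\overline{R}_t \in [r(t)(\log t)^{-\epsilon}, 2r(t)(\log t)^\epsilon]$ whp. Write $Z' = Z_{B_{\overline{R}_t}}$ and suppose $Z' \neq \hat{Z}_t^{(1)}$; then $\xi(Z') - \xi(\hat{Z}_t^{(1)}) \geq g_{B_{\overline{R}_t}}$. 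Since $\xi(Z') \geq \xi(\hat{Z}_t^{(1)}) \geq \psi_t(\hat{Z}_t^{(1)}) \geq a(t)(\log t)^{-\epsilon}$ whp by Proposition \ref{prop:Zt bounds as}$(ii')$, both $Z'$ and $\hat{Z}_t^{(1)}$ lie in $F_t$ whp, so $\deg Z' \vee \deg \hat{Z}_t^{(1)} \leq (\log t)^B$ whp by Proposition \ref{prop:max degree Ft Et}$(ii)$; in particular $\psi_t(Z')$ is given by the non-trivial branch of \eqref{eqn:psit def}. As $\hat{Z}_t^{(1)}$ maximises $\psi_t$, $\psi_t(\hat{Z}_t^{(1)}) \geq \psi_t(Z')$, and rearranging gives
\[
g_{B_{\overline{R}_t}} \leq (\log t)^B + \frac{|Z'|}{t}\log(\xi(Z') - \deg Z') - \frac{|\hat{Z}_t^{(1)}|}{t}\log(\xi(\hat{Z}_t^{(1)}) - \deg \hat{Z}_t^{(1)}).
\]
Both logarithms equal $q\log t + O(\log\log t)$ by Lemma \ref{cor:whppotbo}, while $|Z'| - |\hat{Z}_t^{(1)}| \leq |\hat{Z}_t^{(1)}| h_t^{-1} \leq r(t)(\log t)^{\epsilon - \frac12(1 + 1/\alpha)}$ whp; using $r(t)\log t = ta(t)$ and $\alpha > 1$ to absorb the lower-order terms, the right-hand side is $O(a(t)(\log t)^{\epsilon - \frac12(1 + 1/\alpha)})$. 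On the other hand $g_{B_{\overline{R}_t}} \geq \overline{R}_t^{d/\alpha}(\log \overline{R}_t)^{-\epsilon} \geq c\, a(t)(\log t)^{-\epsilon(1 + d/\alpha)}$ whp by Lemma \ref{lem:gap to infinity}. Since $\tfrac12(1 + 1/\alpha)$ is a fixed positive constant, choosing $\epsilon$ with $\epsilon(2 + d/\alpha) < \tfrac12(1 + 1/\alpha)$ makes the two bounds incompatible for large $t$, a contradiction; hence $\hat{Z}_t^{(1)} = Z_{B_{\overline{R}_t}}$ whp. Finally $Z_{B_{\overline{R}_t}} = \tilde{Z}_{B_{\overline{R}_t}}$ by Lemma \ref{lem:equalZonB} applied at radius $\overline{R}_t$ (which $\to \infty$ whp), completing (i).

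The main obstacle is the balancing in (i): one must verify that the gap $g_{B_{\overline{R}_t}}$ between the two largest potential values on $B_{\overline{R}_t}$ genuinely dominates the variation of the ``distance penalty'' $\tfrac{|v|}{t}\log(\xi(v) - \deg v)$ over near-maximal-potential vertices of $B_{\overline{R}_t}$. This is precisely what dictates the choice $h_t = (\log t)^{\frac12(1 + 1/\alpha)}$: the window $\overline{R}_t = |\hat{Z}_t^{(1)}|(1 + h_t^{-1})$ must be narrow enough that two such vertices cannot differ in their distance to the root by more than $o(h_t^{-1} r(t))$, so that their penalties differ by $o(g_{B_{\overline{R}_t}})$; with that in hand, everything else is a routine application of the potential, gap and degree bounds already established.
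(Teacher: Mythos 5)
Part (i) is essentially correct and follows the same route as the paper: compare $\psi_t(\hat Z_t^{(1)}) \geq \psi_t(Z_{B_{\overline R_t}})$, rearrange to bound $\xi(Z_{B_{\overline R_t}}) - \xi(\hat Z_t^{(1)})$ by the degree term plus the difference of the two distance-penalty logarithms, bound that difference by $O(a(t)(\log t)^{\epsilon - \frac12(1+1/\alpha)})$ using the narrowness of the window $\overline R_t - |\hat Z_t^{(1)}| = |\hat Z_t^{(1)}| h_t^{-1}$, and contradict the gap lower bound from Lemma~\ref{lem:gap to infinity}. Your algebra checks out, and you correctly flag that one must place $Z_{B_{\overline R_t}}$ and $\hat Z_t^{(1)}$ in $F_t$ so that Proposition~\ref{prop:max degree Ft Et}$(ii)$ gives degree control and $\psi_t(Z_{B_{\overline R_t}})$ is on its non-trivial branch.

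Part (ii) is where there is a genuine gap: your deduction is circular. Lemma~\ref{lem:probgap whp} is stated for $\hat Z_t^{(1)}$, the maximiser of $\psi_t$, but the paper only asserts it follows ``by exactly the same proof'' as Lemma~\ref{lem:probgap}. The proof of Lemma~\ref{lem:probgap} crucially uses that $\hat Z_t^{(1)}$ maximises the functional whose gap is being bounded (the argument conditions on the second-largest value lying in $[z-1,z]$ and deduces that the maximiser's value exceeds $z-1$). Transposing this verbatim to $\overline{\psi}_t$ gives a gap bound for the $\overline{\psi}_t$-maximiser $\overline Z_t^{(1)}$, not for $\hat Z_t^{(1)}$; to transfer it to $\hat Z_t^{(1)}$ one needs precisely the statement $\hat Z_t^{(1)} = \overline Z_t^{(1)}$ whp, which is part (ii). Indeed, the event $\{\overline{\psi}_t(\hat Z_t^{(1)}) - \sup_{v\neq\hat Z_t^{(1)}}\overline{\psi}_t(v) < a(t)(\log t)^{-D}\}$ contains the event $\{\hat Z_t^{(1)}\text{ not the }\overline{\psi}_t\text{-maximiser}\}$, so Lemma~\ref{lem:probgap whp} as written is essentially equivalent to part (ii) together with a quantitative gap; invoking it to prove part (ii) proves nothing. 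The paper avoids this by proving (ii) via a genuinely different route: it uses part (i), plus the fact that the $\overline\psi_t$-maximiser lies in $B_{r(t)(\log t)^{\epsilon}}\setminus B_{r(t)(\log t)^{-\epsilon}}$, and then for $v$ in this annulus writes $\overline{\psi}_t(v)-\overline{\psi}_t(\hat Z_t^{(1)}) \leq \psi_t(v)-\psi_t(\hat Z_t^{(1)}) + (\text{error})$, where $\psi_t(v)-\psi_t(\hat Z_t^{(1)}) \leq -a(t)(\log t)^{-\epsilon}$ by the \emph{original} Lemma~\ref{lem:probgap} and the error (controlled by degree and distance bounds) is $o(a(t)(\log t)^{-\epsilon})$. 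Your proof of (ii) should be replaced by an argument of this kind that bootstraps from the $\psi_t$-gap rather than from an already-overline statement.
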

\begin{proof} 
\begin{enumerate}[(i)]
\item By Lemma \ref{lem:equalZonB}, it is sufficient just to establish the first equality. Since $\psi_t(\hat{Z}_t^{(1)}) \geq \psi_t(Z_{B_{\overline{R}_t}})$ by definition, we can rearrange to deduce that 
\begin{align*}
\xi(Z_{B_{\overline{R}_t}}) - \xi (\hat{Z}_t^{(1)}) &\leq \deg (Z_{B_{\overline{R}_t}}) - \deg (\hat{Z}_t^{(1)}) + \frac{|Z_{B_{\overline{R}_t}}|}{t}\log (\xi(Z_{B_{\overline{R}_t}}) - \deg (Z_{B_{\overline{R}_t}})) - \frac{|\hat{Z}_t^{(1)}|}{t}\log (\xi(\hat{Z}_t^{(1)}) - \deg (\hat{Z}_t^{(1)})) \\
&\leq \deg (Z_{B_{\overline{R}_t}})  + \left(\frac{|Z_{B_{\overline{R}_t}}|}{t} - \frac{|\hat{Z}_t^{(1)}|}{t}\right)\log (\xi(Z_{B_{\overline{R}_t}}) - \deg (Z_{B_{\overline{R}_t}})) + \frac{|\hat{Z}_t^{(1)}|}{t}\log \left(\frac{\xi(Z_{B_{\overline{R}_t}}) - \deg (Z_{B_{\overline{R}_t}})}{\xi(\hat{Z}_t^{(1)}) - \deg (\hat{Z}_t^{(1)})}\right).
\end{align*}
Now take any $\epsilon \in \left(0, \frac{1}{9}(1+\frac{1}{\alpha})\right)$ and note that:
\begin{itemize}
\item By Lemma \ref{cor:whppotbo} and Lemma \ref{prop:max degree Ft Et}$(ii)$, $\deg (Z_{B_{\overline{R}_t}}) \leq (\log t)^B$ whp as $t \to \infty$.
\item By Proposition \ref{prop:Zt bounds as}$(iii')$, $\hat{Z}_t^{(1)} \leq r(t)(\log t)^{\epsilon} =  ta(t)(\log t)^{\epsilon-1}$ whp as $t \to \infty$.
\item By Lemma \ref{cor:whppotbo} $\xi(Z_{B_{\overline{R}_t}}) - \deg (Z_{B_{\overline{R}_t}}) \leq R(t)(\log t)^{\epsilon} = r(t)(\log t)^{p+\epsilon}$ whp as $t \to \infty$.
\item By Proposition \ref{prop:Zt bounds as}$(i')$ and $(ii')$, $\xi(\hat{Z}_t^{(1)}) - \deg (\hat{Z}_t^{(1)}) \geq a(t)(\log t)^{-\epsilon}$ whp as $t \to \infty$.
\end{itemize}
Substituting these bounds above, and using the fact that we chose $\epsilon < \frac{1}{9}(1+\frac{1}{\alpha})$, we deduce that, whp as $t \to \infty$,
\begin{align*}
\xi(Z_{B_{\overline{R}_t}}) - \xi (\hat{Z}_t^{(1)}) 
\leq (\log t)^B  + a(t)(\log t)^{\epsilon-\frac{1}{2}(1+\frac{1}{\alpha})} + (p+2\epsilon)a(t)(\log t)^{\epsilon - 1}\log \log t &\leq a(t)(\log t)^{\epsilon-\frac{1}{3}(1+\frac{1}{\alpha})} \\
&< a(t) (\log t)^{-2\epsilon}.
\end{align*}
By Lemma \ref{lem:gap to infinity} and Proposition \ref{prop:Zt bounds as}$(i')$, we also have that $g_{B_{\overline{R}_t}} \geq (\overline{R}_t)^{\frac{d}{\alpha}} (\log (\overline{R}_t))^{-\epsilon} \geq a(t) (\log t)^{-2\epsilon}$ whp as $t \to \infty$. 

Combining these two calculations, we deduce that $\xi(Z_{B_{\overline{R}_t}}) - \xi (\hat{Z}_t^{(1)}) < g_{B_{\overline{R}_t}}$ whp, which in turn implies that $B_{\overline{R}_t} = \hat{Z}_t^{(1)}$ whp.
\item By part $(i)$, it follows that $\overline{\psi}_t(\hat{Z}_t) \geq \overline{\psi}_t(v)$ for all $v \in B_{\overline{R}_t} \setminus B_{|\hat{Z}_t|}$ whp. Additionally, by Proposition \ref{prop:Zt bounds as}$(ii')$ and Lemma \ref{cor:whppotbo}, it follows that $\overline{\psi}_t(\hat{Z}_t) \geq \overline{\psi}_t(v)$ for all $v \in B_{r(t)(\log t)^{-\epsilon}}$. Finally, by the same proof used to prove the upper bound in Theorem \ref{thm:Zt asymp intro}, it follows that the maximiser of $\overline{\psi}_t$ is in $B_{r(t)(\log t)^{\epsilon}}$.

Now suppose that both $v$ and $\hat{Z}_t^{(1)}$ are in $B_{r(t)(\log t)^{\epsilon}} \setminus B_{r(t)(\log t)^{-\epsilon}}$ and $v \neq \hat{Z}_t$. Then, since in particular $|v| \leq |\hat{Z}_t^{(1)}|(\log t)^{2\epsilon}$, we have that
\begin{align*}
\overline{\psi}_t(v) - \overline{\psi}_t(\hat{Z}_t^{(1)}) &\leq {\psi}_t(v) - {\psi}_t(\hat{Z}_t^{(1)}) +\frac{|v|}{t}\log \left( \frac{t(\xi
(v) - \deg v)}{t}\right) - \frac{|\hat{Z}_t^{(1)}|}{t}\log \left( \frac{t(\xi
(\hat{Z}_t^{(1)}) - \deg (\hat{Z}_t^{(1)}))}{t}\right) \\
&\leq {\psi}_t(v) - {\psi}_t(\hat{Z}_t^{(1)}) +\frac{|\hat{Z}_t^{(1)}|(\log t)^{2\epsilon}}{t}\log \left( \frac{|\hat{Z}_t^{(1)}|(\xi(v) - \deg v)}{|v|(\xi(\hat{Z}_t^{(1)})) - \deg (\hat{Z}_t^{(1)})}\right).
\end{align*}
Take $\epsilon \in (0, \frac{1}{6})$. By Lemma \ref{cor:whppotbo} and Proposition \ref{prop:Zt bounds as}$(ii)$, we can assume that $\xi(\hat{Z}_t^{(1)}) - \deg (\hat{Z}_t^{(1)}) \geq (\xi(v) - \deg v)(\log t)^{3\epsilon}$, and since $v \in B_{r(t)(\log t)^{\epsilon}} \setminus B_{r(t)(\log t)^{-\epsilon}}$, we can also assume that $\frac{|\hat{Z}_t^{(1)}|}{|v|} \leq (\log t)^{2 \epsilon}$ and $|v| \leq r(t)(\log t)^{\epsilon}$ by Proposition \ref{prop:Zt bounds as}$(iii')$. Moreover, by Lemma \ref{lem:probgap}, we can assume that ${\psi}_t(v) - {\psi}_t(\hat{Z}_t^{(1)}) \leq -a(t)(\log t)^{-\epsilon}$. Assuming these, we therefore have that
\begin{align*}
\overline{\psi}_t(v) - \overline{\psi}_t(\hat{Z}_t^{(1)}) 
&\leq {\psi}_t(v) - {\psi}_t(\hat{Z}_t^{(1)}) +\frac{r(t)(\log t)^{3\epsilon}}{t}\log \log t \leq -a(t)(\log t)^{-\epsilon} + a(t)(\log t)^{4\epsilon-1} <0,
\end{align*}
as required.
\end{enumerate}
\end{proof}

\begin{prop}
Under Assumption \ref{assn:whp}, 
\begin{equation}\label{eqn:u1 whp}
\frac{\sum_{v\in \Ti\setminus\{\hat Z_t^{(1)}\}}\tilde u_1(t,v)}{U(t)}\to 0
\end{equation}
in $P$-probability as $t \to \infty$.
\end{prop}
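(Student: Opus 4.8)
The statement is the high-probability single-site localisation of $\tilde u_1$, i.e. that the solution of the truncated PAM on $B_{\overline R_t}$ with the extra constraint $\{H_{\hat Z_t^{(1)}}\le t\}$ concentrates on $\hat Z_t^{(1)}$ relative to the full total mass $U(t)$.

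\begin{proof}
By the definition of $\tilde u_1$, we see that $\tilde u_1(t,\cdot) = u_{\Omega, \Lambda}(t,\cdot)$ as defined in \eqref{eqn:FeynmanKac bounded Omega} with the choices $\Lambda = B_{\overline R_t}$ and $\Omega = \{\hat Z_t^{(1)}\}$. By Lemma \ref{lem:ZtequalZBR}$(i)$, we have $\hat Z_t^{(1)} = \tilde Z_{B_{\overline R_t}}$ with high probability, and by Lemma \ref{lem:equalZonB} and Lemma \ref{lem:gap to infinity} (applied at radius $\overline R_t$, using that $\overline R_t \in [r(t), r(t)(\log t)^2]$ with high probability by Proposition \ref{prop:Zt bounds as}$(i'),(iii')$) we have $\tilde g_{B_{\overline R_t}} > 0$ with high probability, so the gap condition of Lemma \ref{lem:efunction compare} holds. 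Applying Lemma \ref{lem:efunction compare}$(ii)$ with $\Lambda = B_{\overline R_t}$, $\Omega = \{\hat Z_t^{(1)}\}$ (so that $\Lambda \setminus \Omega \cup \{\hat Z_t^{(1)}\} = B_{\overline R_t}$), we obtain
\[
\frac{\sum_{v \in B_{\overline R_t}\setminus\{\hat Z_t^{(1)}\}} \tilde u_1(t,v)}{\sum_{v \in B_{\overline R_t}} \tilde u_1(t,v)} \leq \|\phi^{(1)}_{B_{\overline R_t}}\|_2^2 \sum_{v \in B_{\overline R_t}\setminus \{\tilde Z_{B_{\overline R_t}}\}} \phi^{(1)}_{B_{\overline R_t}}(v),
\]
which converges to $0$ in $P$-probability by Lemma \ref{lem:loceigenfunction whp}. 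Since $\sum_{v} \tilde u_1(t,v) \leq U(t)$ and both quantities are non-negative, and since $\tilde u_1(t,v) = 0$ for $v \notin B_{\overline R_t}$ by the indicator $\{\tau_{B_{\overline R_t}} > t\}$, it remains only to control the denominator: it suffices to show that $\sum_{v \in B_{\overline R_t}} \tilde u_1(t,v) \geq e^{-o(ta(t)(\log t)^{-1+\epsilon})} U(t)$ with high probability, or more simply that $\sum_{v \in B_{\overline R_t}} \tilde u_1(t,v)$ has the same leading-order logarithmic asymptotics as $U(t)$.

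To this end, note that $\sum_{v \in B_{\overline R_t}} \tilde u_1(t,v) \geq \tilde u_1(t, \hat Z_t^{(1)})$, and we can lower bound $\tilde u_1(t, \hat Z_t^{(1)})$ exactly as in the lower bound half of the proof of Theorem \ref{thm:log conv result intro} (Case 1) or Proposition \ref{prop:total mass LB}: restrict to the event that the random walk darts directly from $O$ to $\hat Z_t^{(1)}$ within time $t/\log t$, stays there until time $t$, i.e. consider $\{H_{\hat Z_t^{(1)}} \le t/\log t,\ X_s = \hat Z_t^{(1)}\ \forall s \in [H_{\hat Z_t^{(1)}}, t]\}$, which also keeps the walk inside $B_{\overline R_t}$ and guarantees $H_{\hat Z_t^{(1)}} \le t$. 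Using Proposition \ref{prop:hitting time prob bounds}$(ii)$ and Lemma \ref{lem:Br log sum bound} to bound the hitting probability and $\prod_{u \prec \hat Z_t^{(1)}} (\deg u)^{-1}$ term, together with Proposition \ref{prop:Zt bounds as}$(i'),(ii'),(iii')$ to control $|\hat Z_t^{(1)}|$, $\xi(\hat Z_t^{(1)})$ and $\deg \hat Z_t^{(1)}$ (via Proposition \ref{prop:max degree Ft Et}$(ii)$, using $\hat Z_t^{(1)} \in F_t$), one obtains
\[
\log \tilde u_1(t, \hat Z_t^{(1)}) \geq t \overline{\psi}_t(\hat Z_t^{(1)}) - o\!\left( \frac{ta(t)}{(\log t)^{1-\epsilon}} \right)
\]
with high probability. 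Comparing with Propositions \ref{prop:total mass LB whp} and \ref{prop:u1 whp}-type upper bounds (or directly with the upper bound $U(t) \leq e^{ta(t)\lambda}$ from Theorem \ref{thm:total mass asymp intro} combined with $t\overline{\psi}_t(\hat Z_t^{(1)}) \geq \tfrac12 t a(t) (\log t)^{-\epsilon}$ from Proposition \ref{prop:Zt bounds as}$(ii')$), we see that $\log \tilde u_1(t, \hat Z_t^{(1)}) \geq \log U(t) - o(ta(t))$, which is more than enough. Combining this with the eigenfunction bound above gives
\[
\frac{\sum_{v \in \Ti \setminus \{\hat Z_t^{(1)}\}} \tilde u_1(t,v)}{U(t)} = \frac{\sum_{v \in B_{\overline R_t} \setminus \{\hat Z_t^{(1)}\}} \tilde u_1(t,v)}{U(t)} \leq \frac{\sum_{v \in B_{\overline R_t} \setminus \{\hat Z_t^{(1)}\}} \tilde u_1(t,v)}{\sum_{v \in B_{\overline R_t}} \tilde u_1(t,v)} \to 0
\]
in $P$-probability as $t \to \infty$.

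The main obstacle here is not the eigenfunction localisation step — that is handed to us by Lemma \ref{lem:loceigenfunction whp} — but rather making rigorous the identification $\tilde u_1 = u_{\Omega, \Lambda}$ together with the verification that all the required high-probability events (namely $\hat Z_t^{(1)} = \tilde Z_{B_{\overline R_t}}$, a positive spectral gap $\tilde g_{B_{\overline R_t}} > 0$, and the matching lower bound on the denominator) hold simultaneously with probability tending to $1$. In particular, establishing $\hat Z_t^{(1)} = \tilde Z_{B_{\overline R_t}}$ with high probability requires the careful choice of the radius $\overline R_t = |\hat Z_t^{(1)}|(1 + h_t^{-1})$, which is designed precisely so that the potential gap $g_{B_{\overline R_t}}$ dominates the contribution $\xi(Z_{B_{\overline R_t}}) - \xi(\hat Z_t^{(1)})$ that could in principle arise from extra vertices in the annulus $B_{\overline R_t} \setminus B_{|\hat Z_t^{(1)}|}$; this is exactly the content of Lemma \ref{lem:ZtequalZBR}$(i)$, so once that lemma is in hand the remaining bookkeeping is routine.
\end{proof}
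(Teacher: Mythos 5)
Your proof is correct and follows the same route as the paper: identify $\tilde u_1$ with $u_{\Omega,\Lambda}$ for $\Omega=\{\hat Z_t^{(1)}\}$, $\Lambda=B_{\overline R_t}$, verify the gap condition via Lemma \ref{lem:ZtequalZBR} and Lemma \ref{lem:equalZonB}, apply Lemma \ref{lem:efunction compare}$(ii)$, and close with Lemma \ref{lem:loceigenfunction whp}. One remark: the middle paragraph in which you lower bound $\tilde u_1(t,\hat Z_t^{(1)})$ in terms of $\overline\psi_t(\hat Z_t^{(1)})$ and compare to $U(t)$ is unnecessary. You in fact need only the trivial direction $\sum_{v}\tilde u_1(t,v)\le U(t)$ (which you note at the start of that paragraph), since this already yields
\[
\frac{\sum_{v\in B_{\overline R_t}\setminus\{\hat Z_t^{(1)}\}}\tilde u_1(t,v)}{U(t)}\;\le\;\frac{\sum_{v\in B_{\overline R_t}\setminus\{\hat Z_t^{(1)}\}}\tilde u_1(t,v)}{\sum_{v\in B_{\overline R_t}}\tilde u_1(t,v)},
\]
and the right-hand side goes to $0$ by the eigenfunction bound. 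The statement "it suffices to show $\sum\tilde u_1\ge e^{-o(\cdot)}U(t)$" points in the wrong direction and is not used in your final display, so it should simply be deleted.
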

\begin{proof}
This follows from Lemmas \ref{lem:equalZonB}, \ref{lem:u efunction comp}, \ref{lem:loceigenfunction whp} and \ref{lem:ZtequalZBR}.
\end{proof}

\begin{prop}\label{prop:U2 whp 0}
Under Assumption \ref{assn:whp}, $\frac{\tilde{U}_2(t)}{U(t)} \to 0$ whp.
\end{prop}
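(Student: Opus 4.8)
The plan is to bound $\tilde{U}_2(t)$ by controlling the exit time from $B_{\overline{R}_t}$ together with the maximal potential seen up to exiting $B_{R(t)}$, and then to compare with the lower bound on $U(t)$ from Proposition~\ref{prop:total mass LB whp}. First I would decompose according to the exit radius: writing $\overline{R}_t = |\hat{Z}_t^{(1)}|(1 + h_t^{-1})$ and recalling $|\hat{Z}_t^{(1)}| \in [r(t)(\log t)^{-\epsilon}, r(t)(\log t)^{\epsilon}]$ whp by Proposition~\ref{prop:Zt bounds as}$(i'),(iii')$, we have on the event $\{\tau_{B_{\overline{R}_t}} \le t, \tau_{B_{R(t)}} > t\}$ that the walk reaches generation $\overline{R}_t$ but stays inside $B_{R(t)}$. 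Hence
\[
\tilde{U}_2(t) \le \sum_{r = \lceil \overline{R}_t \rceil}^{R(t)} \mathbb{E}_0\!\left[\exp\left\{\int_0^t \xi(X_s)\,ds\right\}\mathbbm{1}\{\sup_{s\le t}|X_s| = r\}\right] \le \sum_{r \ge \overline{R}_t} \exp\{t \sup_{v \in B_r}\xi(v)\}\, \mathbb{P}(\tau_{B_{r-1}} \le t).
\]
I would then apply Lemma~\ref{lem:xi in Ar} (so $\sup_{v\in B_r}\xi(v) \le r^{d/\alpha}(\log r)^{(d+\epsilon)/\alpha}$ eventually a.s.) and Lemma~\ref{lem:RW ball exit time restricted radii} with suitable $\tilde f, \tilde p$ (noting that all radii in the sum lie in $[r(t)(\log t)^{-\tilde f}, r(t)(\log t)^{\tilde p}]$ for appropriate constants, since $\overline{R}_t \ge r(t)(\log t)^{-\epsilon}$ and $R(t) = r(t)(\log t)^p$), giving $\mathbb{P}(\tau_{B_{r-1}} \le t) \le \exp\{-r\log(r/et) + o(r(t))\}$ whp.

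The next step is to extract the gain from the fact that $r \ge \overline{R}_t$, which is strictly larger than $|\hat{Z}_t^{(1)}|$ by a factor $1 + h_t^{-1}$. Substituting the bounds, the summand is at most
\[
\exp\left\{ t r^{d/\alpha}(\log r)^{(d+\epsilon)/\alpha} - r\log\left(\frac{r}{et}\right) + o(r(t))\right\}.
\]
Since $r \ge \overline{R}_t \ge r(t)(\log t)^{-\epsilon}$ and $t r^{d/\alpha}(\log r)^{(d+\epsilon)/\alpha}$ grows slower in $r$ than $r\log(r/t)$ (this is the standard PAM existence computation; for $r \gtrsim r(t)$ one has $r\log(r/et) \gg t r^{d/\alpha}(\log t)^{O(1)}$), the $r\log(r/et)$ term dominates and moreover, evaluating at $r = \overline{R}_t$,
\[
t \overline{R}_t^{d/\alpha}(\log t)^{O(1)} - \overline{R}_t \log\left(\frac{\overline{R}_t}{et}\right) \le t\overline{\psi}_t(\hat{Z}_t^{(1)}) - c\, \frac{r(t)\log t}{h_t}
\]
for some $c>0$; here I would compare directly: $\overline{\psi}_t(\hat Z_t^{(1)}) = t\xi(\hat Z_t^{(1)}) - |\hat Z_t^{(1)}|\log(|\hat Z_t^{(1)}|/et)$, and the extra $r - |\hat Z_t^{(1)}| \ge |\hat Z_t^{(1)}|/h_t$ vertices each contribute a factor of order $\log(r/et) \asymp \log(\log t)^{q+1} \asymp \log\log t$... actually one must be more careful and use that the $\log$ term is of size $\log t$ since $r/t \asymp (\log t)^{-q-1}$ wait, $r(t)/t = (t/\log t)^{q+1}/t$, so $\log(r/et) \asymp q\log t$, giving each excess vertex a penalty $\asymp \log t$, hence total excess penalty $\asymp (|\hat Z_t^{(1)}|/h_t)\log t \asymp r(t)\log t / h_t = ta(t)/h_t$. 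Then summing the geometric-type series over $r \ge \overline{R}_t$ costs only a polynomial factor, absorbed into the $o(r(t))$ error.

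Combining with Proposition~\ref{prop:total mass LB whp}, which gives $\log U(t) \ge t\overline{\psi}_t(\hat{Z}_t^{(1)}) + o(ta(t)/(\log t)^{1-\epsilon})$ whp, and noting $h_t = (\log t)^{\frac12(1+\frac1\alpha)}$ so that $ta(t)/h_t \gg ta(t)/(\log t)^{1-\epsilon}$ for $\epsilon$ small (since $\frac12(1+\frac1\alpha) < 1$), we conclude
\[
\log\left(\frac{\tilde{U}_2(t)}{U(t)}\right) \le -c\,\frac{ta(t)}{h_t} + o\!\left(\frac{ta(t)}{(\log t)^{1-\epsilon}}\right) + o(r(t)) \to -\infty
\]
whp as $t \to \infty$. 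The main obstacle I anticipate is verifying cleanly that the factor $1 + h_t^{-1}$ in $\overline{R}_t$ produces exactly an $ta(t)/h_t$ gain that beats the $o(ta(t)/(\log t)^{1-\epsilon})$ error from the total-mass lower bound — this requires pinning down the size of $\log(\overline{R}_t/et) \asymp \log t$ and tracking constants, plus checking that Lemma~\ref{lem:RW ball exit time restricted radii} can indeed be applied across the whole range $[\overline{R}_t, R(t)]$ (choosing $\tilde f = \epsilon$ and $\tilde p = p$ suffices), and that the potential bound from Lemma~\ref{lem:xi in Ar} holds uniformly over that range. The remaining estimates are routine.
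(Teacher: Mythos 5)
There is a genuine gap: the use of the almost-sure potential bound from Lemma~\ref{lem:xi in Ar}, namely $\sup_{v\in B_r}\xi(v)\le r^{d/\alpha}(\log r)^{(d+\epsilon)/\alpha}$, is too crude for the comparison you want to make. For $r\asymp r(t)$ this overestimates the true supremum by a factor of order $(\log t)^{(d+\epsilon)/\alpha}$: the true value $\xi(\hat Z_t^{(1)})$ is of order $a(t)(\log t)^{O(\epsilon)}$ whp, while the a.s.~bound is $\asymp a(t)(\log t)^{(d+\epsilon)/\alpha+O(\epsilon)}$. Multiplied by $t$, this produces an error of order $ta(t)(\log t)^{(d+\epsilon)/\alpha}$ in the exponent, which completely swamps the $ta(t)/h_t$ gain you are trying to extract from the enlarged radius $\overline{R}_t = |\hat Z_t^{(1)}|(1+h_t^{-1})$. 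Concretely, the displayed inequality
\[
t\,\overline{R}_t^{\,d/\alpha}(\log t)^{O(1)} - \overline{R}_t\log\bigl(\overline{R}_t/(et)\bigr) \le t\overline{\psi}_t(\hat Z_t^{(1)}) - c\,\frac{r(t)\log t}{h_t}
\]
is false, since the left-hand side is of order $ta(t)(\log t)^{(d+\epsilon)/\alpha}$ while the right-hand side is of order $ta(t)$. Your intermediate assertion that ``for $r\gtrsim r(t)$ one has $r\log(r/et)\gg tr^{d/\alpha}(\log t)^{O(1)}$'' is also incorrect for $r$ in the range $[\overline{R}_t,R(t)]$: a short computation shows that $r\log(r/et)$ only overtakes $tr^{d/\alpha}(\log r)^{(d+\epsilon)/\alpha}$ once $r\gtrsim r(t)(\log t)^{(d+\epsilon)/(\alpha-d)}$, which is precisely why the paper takes $p=q+2>(d+\epsilon)/(\alpha-d)$ and handles $r>R(t)$ separately in $\tilde U_3$. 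For $r\asymp\overline{R}_t$ the potential term dominates, and there your crude bound is being invoked exactly where it is most costly.

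The fix is the case distinction the paper uses: after writing $\log\tilde U_2(t)\le t\max_{r\in[\overline{R}_t,R(t)]}\{\sup_{v\in B_r}\xi(v)-\frac rt\log(r/et)\}+o(ta(t)/\log t)$, let $\hat r(t)$ be the maximising radius and $\hat v(t)$ the $\xi$-maximiser on $B_{\hat r(t)}$. If $\hat v(t)=\hat Z_t^{(1)}$, then $\sup_{v\in B_{\hat r(t)}}\xi(v)$ is \emph{exactly} $\xi(\hat Z_t^{(1)})$ — no loss from a crude bound — and the $-\overline{R}_t\log(\overline{R}_t/et)$ penalty gives the $ta(t)(\log t)^{-\epsilon}/h_t$ gain you computed, which beats the $o(ta(t)/(\log t)^{1-\epsilon})$ error from Proposition~\ref{prop:total mass LB whp} for small $\epsilon$. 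If $\hat v(t)\neq\hat Z_t^{(1)}$, then since $\hat r(t)\ge|\hat v(t)|$ and $r\mapsto r\log(r/et)$ is increasing, you get $t\xi(\hat v(t))-\hat r(t)\log(\hat r(t)/et)\le t\sup_{v\ne\hat Z_t^{(1)}}\overline\psi_t(v)$, and then the gap bound in Lemma~\ref{lem:probgap whp} ($\overline\psi_t(\hat Z_t^{(1)})-\sup_{v\ne\hat Z_t^{(1)}}\overline\psi_t(v)\ge a(t)(\log t)^{-1/2}$ whp) finishes the argument. Neither branch requires a uniform upper bound on $\sup_{B_r}\xi$.
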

\begin{proof}
We will show that
\begin{align}\label{eqn:u_1 log bound}
    \log \left(\tilde{U}_2(t) \right) &\leq \max \left\{ t\sup_{v \neq \hat Z_t^{(1)}}\overline{\psi}_t(v), t\xi(\hat{Z}_t^{(1)}) - \overline{R}_t \log \left( \frac{\overline{R}_t}{et}\right) \right\} + o\left(\frac{ta(t)}{\log t}\right)
\end{align}
whp as $t \to \infty$. To do this, first note that by the definition of $\tilde{U}_2$, Proposition \ref{prop:Zt bounds as}$(i')$ and Lemma \ref{lem:RW ball exit time restricted radii}, we have with high probability that
\begin{align*}
    \log \left(\tilde{U}_2(t) \right) &\leq \log \left( \sum_{r \in [\overline{R}_t, r(t) (\log t)^p]} \exp \left\{t \sup_{v \in B_r} \xi(v)\right\} \pr{\tau_{B_{r-1}} \leq t} \right) \\
    &\leq t\max_{r \in [\overline{R}_t, r(t) (\log t)^p]} \left\{ \sup_{v \in B_r} \xi (v) - \frac{r}{t} \log \left(\frac{r}{et}\right)\right\} + o\left(\frac{ta(t)}{\log t} \right).
\end{align*}
Let $\hat{r}(t)$ denote the $r$ for which the maximum is attained, and $\hat{v}(t) = \arg \max \{\sup \xi(v): v \in B_{\hat{r}(t)}\}$. If $\hat{v}(t) = \hat{Z}_t^{(1)}$, then
\begin{align*}
t\max_{r \in [\overline{R}_t, r(t) (\log t)^p]} \left\{ \sup_{v \in B_r} \xi (v) - \frac{r}{t} \log \left(\frac{r}{et}\right)\right\} &\leq t\xi(\hat{Z}_t^{(1)}) - \overline{R}_t \log \left(\frac{\overline{R}_t}{et}\right).
\end{align*}
If $\hat{v}(t) \neq \hat{Z}_t^{(1)}$, then
\begin{align*}
t\max_{r \in [\overline{R}_t, r(t) (\log t)^p]} \left\{ \sup_{v \in B_r} \xi (v) - \frac{r}{t} \log \left(\frac{r}{et}\right)\right\} = t\xi(\hat{v}(t)) - \hat{r}(t) \log \left(\frac{\hat{r}(t)}{et}\right) &\leq t\xi(\hat{v}(t)) - |\hat{v}(t)| \log \left(\frac{|\hat{v}(t)|}{et}\right) \\
&\leq t\sup_{v \neq \hat Z_t^{(1)}}\overline{\psi}_t(v).
\end{align*}
This establishes \eqref{eqn:u_1 log bound}. To complete the proof, we invoke Proposition \ref{prop:total mass LB whp} which implies that, for any $\epsilon > 0$, we have with high probability as $t\to\infty$,
\begin{align*}
&\frac{\tilde{U}_2(t)}{U(t)} \leq \exp\left\{\max \left\{ t\left(\sup_{v \neq \hat Z_t^{(1)}}\overline{\psi}_t(v)-\overline{\psi}_t(\hat{Z}_t^{(1)}) \right), |\hat{Z}_t^{(1)}|\log \left(\frac{|\hat{Z}_t^{(1)}|}{et}\right)- \overline{R}_t \log \left( \frac{\overline{R}_t}{et}\right) \right\} + o\left(\frac{ta(t)}{(\log t)^{1 - \epsilon}}\right) \right\}.
\end{align*}
\begin{itemize}
\item To deal with the first term in the maximum above, take $\epsilon = \frac{1}{4}$ and note from Lemma \ref{lem:probgap whp} that for $\eta_t  = (\log t)^{-\frac{1}{2}}$ we have that $P \left (\overline{\psi}_t(\hat Z_t^{(1)})-\sup_{v \neq \hat Z_t^{(1)}}\overline{\psi}_t(v)<\eta_t a(t)\right) \to 0$ as $t\to \infty$. Therefore,
\[
t\left( \sup_{v \neq \hat Z_t^{(1)}}\overline{\psi}_t(v) - \overline{\psi}_t(\hat{Z}_t^{(1)})\right) + o\left(\frac{ta(t)}{(\log t)^{3/4}}\right) \to -\infty
\]
in probability as $t \to \infty$.
\item To deal with the second term in the maximum above, recall that $\overline{R}_t = |\hat{Z}^{(1)}_t| (1+h_t)$ where $h_t = \frac{1}{(\log t)^{\frac{1}{2}(1 + \frac{1}{\alpha})}}$, and choose $\varepsilon$ small enough such that $h_t \geq (\log t)^{-(1-2\varepsilon)}$. We then have by Proposition \ref{prop:Zt bounds as}$(i')$ that, with high probability as $t \to \infty$
\begin{align*}
\overline{R}_t \log \left( \frac{\overline{R}_t}{et}\right) - |\hat{Z}^{(1)}_t| \log \left( \frac{|\hat{Z}^{(1)}_t|}{et}\right) + o\left(\frac{ta(t)}{(\log t)^{1-\epsilon}}\right) &\geq |\hat{Z}^{(1)}_t| h_t \log \left( \frac{|\hat{Z}^{(1)}_t|}{et}\right) + o\left(\frac{ta(t)}{(\log t)^{1-\epsilon}}\right) \\
&\geq C|\hat{Z}^{(1)}_t| h_t  \log t + o\left(\frac{ta(t)}{(\log t)^{1-\epsilon}}\right)\\
&\geq Cta(t) \frac{(\log t)^{-\epsilon}}{ (\log t)^{1-2\varepsilon}} + o\left(\frac{ta(t)}{(\log t)^{1-\epsilon}}\right),
\end{align*}
which diverges as $t \to \infty$.
\end{itemize}
This completes the proof. 

\end{proof}

\begin{prop}
Under Assumption \ref{assn:whp}, $\frac{\tilde{U}_3(t)}{U(t)} \to 0$ whp.
\end{prop}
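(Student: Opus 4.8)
The plan is to observe that $\tilde U_3(t)$ is literally the same object as the quantity $U_3(t)$ defined in \eqref{eqn:Ui defs}: both equal $\mathbb E_0\!\left[\exp\{\int_0^t\xi(X_s)\,ds\}\mathbbm 1\{\tau_{B_{R(t)}}\leq t\}\right]$ with the same radius $R(t)=r(t)(\log t)^{p}$, $p=q+2$. Consequently the statement is an immediate consequence of the estimates already assembled in the proof of Proposition \ref{prop:U3 0}, which was itself carried out under Assumption \ref{assn:whp}; the only point requiring a word of comment is that here we want the ratio to vanish \emph{with high probability} rather than almost surely.

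Concretely, I would first decompose according to the radius $r\geq R(t)$ at which the walk first exits $B_{R(t)}$, bounding $\exp\{\int_0^t\xi(X_s)\,ds\}$ on each such trajectory by $\exp\{t\sup_{v\in B_r}\xi(v)\}$ and the probability of reaching radius $r$ by time $t$ using Corollary \ref{cor:exitball}. Together with the almost sure bound $\sup_{v\in B_r}\xi(v)\leq r^{d/\alpha}(\log r)^{\frac{d+\epsilon}{\alpha}}$ from Lemma \ref{lem:xi in Ar}, this gives, almost surely for all large $t$,
\[
\tilde U_3(t)\ \leq\ \sum_{r\geq r(t)(\log t)^{p}}\exp\left\{t\,r^{\frac{d}{\alpha}}(\log r)^{\frac{d+\epsilon}{\alpha}}-\tfrac{r}{5}\log\!\left(\tfrac{r}{et}\right)\right\}.
\]
Since $p=q+2$ and $q=\frac{d}{\alpha-d}$, one may choose $\epsilon>0$ small enough that $p>(q+1)\frac{d+\epsilon}{\alpha}$, and then for every $r\geq r(t)(\log t)^{p}$ the potential term is at most $\tfrac{1}{10}r\log(r/(et))$ (this is exactly the inequality \eqref{eqn:smoui}); hence the sum is bounded by $\sum_{r\geq r(t)(\log t)^{p}}\exp\{-\tfrac{1}{10}r\log(r/(et))\}$, which tends to $0$ almost surely as $t\to\infty$.

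For the denominator it then suffices to use $U(t)\geq 1$ with high probability, which is exactly Proposition \ref{prop:total mass LB}$(ii)$ (equivalently Proposition \ref{prop:total mass LB whp}); dividing the two displays gives $\tilde U_3(t)/U(t)\leq \tilde U_3(t)\to 0$ with high probability. There is no real obstacle here: every ingredient --- Corollary \ref{cor:exitball}, Lemma \ref{lem:xi in Ar}, and the lower bound on $U(t)$ --- is available under Assumption \ref{assn:whp}, and the only thing to verify is the elementary inequality \eqref{eqn:smoui} fixing the exponent $p$, which is identical to the computation already performed in the proof of Proposition \ref{prop:U3 0}.
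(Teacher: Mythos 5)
Your proposal is correct and matches the paper's reasoning: the paper's own proof of this proposition is simply a one-line citation of Proposition~\ref{prop:U3 0}, noting that $\tilde U_3(t)=U_3(t)$ by definition and that Proposition~\ref{prop:U3 0} was already proved under Assumption~\ref{assn:whp} with the same choice $p=q+2>q+1$. You have unpacked that citation by reproducing the estimate chain (Corollary~\ref{cor:exitball}, Lemma~\ref{lem:xi in Ar}, inequality~\eqref{eqn:smoui}, then $U(t)\geq 1$), which is exactly the content being invoked.
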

\begin{proof}
This holds by Proposition \ref{prop:U3 0} (note that we proved it under the weaker assumption), since we chose the same value of $p$ when defining $R(t)$, and in particular $p>q+1$.
\end{proof}

\begin{prop}
Under Assumption \ref{assn:whp}, $\frac{\tilde{U}_4(t)}{U(t)} \to 0$ whp.
\end{prop}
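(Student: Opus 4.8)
The plan is to bound $\tilde{U}_4(t)$ --- the contribution to the total mass from trajectories that stay inside $B_{\overline{R}_t}$ but never visit $\hat{Z}_t^{(1)}$ --- crudely in terms of $\overline{\psi}_t$ evaluated at the \emph{second-best} site, and then to play this off against the lower bound $\log U(t) \geq t\overline{\psi}_t(\hat{Z}_t^{(1)}) + o(ta(t)(\log t)^{-(1-\epsilon)})$ of Proposition \ref{prop:total mass LB whp} using the gap estimate of Lemma \ref{lem:probgap whp}. The key point --- and the reason this argument goes through under Assumption \ref{assn:whp}, in contrast to the upper bounds in Propositions \ref{prop:U2 zero} and \ref{prop:U4 zero} --- is that here a bound in terms of the second largest value of the coarse functional $\overline{\psi}_t$ already suffices, so the fast growth of the number of paths (Lemma \ref{lem:no of paths}) plays no role.

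First I would establish the coarse upper bound. Let $Y^{*}$ denote the site at which $s \mapsto \xi(X_s)$ attains its maximum on $[0,t]$ (taking the first such time). On the event defining $\tilde{U}_4(t)$ one has $Y^{*} \in B_{\overline{R}_t} \setminus \{\hat{Z}_t^{(1)}\}$, $\int_0^t \xi(X_s)\,ds \leq t\,\xi(Y^{*})$ and $H_{Y^{*}} \leq t$; decomposing over the value of $Y^{*}$ and using $\sum_v \mathbbm{1}\{X_t = v\} = 1$,
\[
\tilde{U}_4(t) \leq \sum_{y \in B_{\overline{R}_t} \setminus \{\hat{Z}_t^{(1)}\}} e^{t\xi(y)}\,\mathbb{P}(H_y \leq t) \leq \#(B_{\overline{R}_t})\,\sup_{y \neq \hat{Z}_t^{(1)}} e^{t\xi(y)}\,\mathbb{P}(H_y \leq t).
\]
By Proposition \ref{prop:hitting time prob bounds}$(i)$ (and trivially when $|y| \leq 2$), $\mathbb{P}(H_y \leq t) \leq \exp\{-|y|([\log|y| - \log t - 1] \vee 0)\}$, and since $[\log|y| - \log t - 1] \vee 0 \geq \log(|y|/(et))$ this gives $e^{t\xi(y)}\,\mathbb{P}(H_y \leq t) \leq \exp\{t\xi(y) - |y|\log(|y|/(et))\} = e^{t\overline{\psi}_t(y)}$ for every $y$; hence $\tilde{U}_4(t) \leq \#(B_{\overline{R}_t})\,e^{t\sup_{v \neq \hat{Z}_t^{(1)}} \overline{\psi}_t(v)}$. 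Since $\overline{R}_t \leq 2r(t)(\log t)^{\epsilon}$ with high probability by Proposition \ref{prop:Zt bounds as}$(iii')$, Proposition \ref{prop:AR vol growth and exit time}$(i)$ shows $\log \#(B_{\overline{R}_t}) = O(\log t)$ with high probability.

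Then I would conclude. Fix $\epsilon \in (0, \frac{1}{2})$ and set $D = \frac{1}{2} \in (0, 1-\epsilon)$. Proposition \ref{prop:total mass LB whp} gives $\log U(t) \geq t\overline{\psi}_t(\hat{Z}_t^{(1)}) + o(ta(t)(\log t)^{-(1-\epsilon)})$ with high probability, and Lemma \ref{lem:probgap whp} (applied with this $D$ and any $0 < K < D$) gives $\overline{\psi}_t(\hat{Z}_t^{(1)}) - \sup_{v \neq \hat{Z}_t^{(1)}} \overline{\psi}_t(v) \geq a(t)(\log t)^{-D}$ with high probability. Combining these with the coarse bound,
\[
\log \frac{\tilde{U}_4(t)}{U(t)} \leq -\,ta(t)(\log t)^{-D} + O(\log t) + o\left( ta(t)(\log t)^{-(1-\epsilon)} \right)
\]
with high probability. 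Since $ta(t) = r(t)\log t$, the first term equals $-r(t)(\log t)^{1/2}$, which dominates both $O(\log t)$ and $o(r(t)(\log t)^{\epsilon})$ (using $r(t) \gg t\log t$ and $\epsilon < \frac{1}{2}$), so the right-hand side tends to $-\infty$, proving the claim. The main obstacle is not conceptual but lies precisely in this error-term bookkeeping: one must choose the exponent $D$ in the gap estimate and the parameter $\epsilon$ in Proposition \ref{prop:total mass LB whp} compatibly, so that the gap term $ta(t)(\log t)^{-D}$ strictly beats both the $o(ta(t)(\log t)^{-(1-\epsilon)})$ correction from the total-mass lower bound and the $O(\log t)$ from the (with high probability polynomial) volume of $B_{\overline{R}_t}$.
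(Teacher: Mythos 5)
Your proof is correct, but it uses a genuinely different decomposition than the paper's. The paper decomposes $\tilde{U}_4(t)$ according to the maximum distance $r=\sup_{s\leq t}|X_s|$ reached by the walk, splits the range of $r$ into a low part ($r\leq r(t)(\log t)^{-1}$, handled by the crude potential bound of Lemma~\ref{cor:whppotbo}) and a high part, and for the latter invokes the refined uniform-in-$r$ ball-exit estimate Lemma~\ref{lem:RW ball exit time restricted radii}, arriving at $t\sup_{v\neq\hat Z_t^{(1)}}\overline\psi_t(v)+o(r(t))$. You instead decompose over the site $Y^{*}$ at which $\xi$ is maximised along the trajectory (the same device used in the upper bound of Theorem~\ref{thm:log conv result intro}), and need only the elementary one-sided hitting-time bound of Proposition~\ref{prop:hitting time prob bounds}$(i)$ plus a high-probability volume bound on $B_{\overline R_t}$. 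This avoids both the two-range split and Lemma~\ref{lem:RW ball exit time restricted radii} entirely, and produces a smaller error term ($O(\log t)$ instead of $o(r(t))$), which is more than enough once the gap estimate of Lemma~\ref{lem:probgap whp} is brought in. Your bookkeeping (choosing $D=\tfrac12$ in the gap lemma and $\epsilon<\tfrac12$ in the total-mass lower bound so that $(\log t)^{-D}$ dominates $(\log t)^{-(1-\epsilon)}$) is sound, and indeed this is where the only care is needed. One cosmetic remark: the paper additionally cites Lemma~\ref{lem:ZtequalZBR}$(ii)$ at this point to justify treating $\hat Z_t^{(1)}$ as the maximiser of $\overline\psi_t$, whereas you rely only on Lemma~\ref{lem:probgap whp}; this is fine, since if $\hat Z_t^{(1)}$ fails to maximise $\overline\psi_t$ then the gap is negative and the bad event in Lemma~\ref{lem:probgap whp} already occurs, so that lemma implicitly handles the identification.
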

\begin{proof}
We split the sum into two parts by writing,
\begin{align}\label{eqn:sumu2}
\tilde{U}_4(t) &\leq\mathbb E\left[\exp\left\{\int_0^t\xi(X_s)d s\right\}\mathbbm 1\{H_{\hat Z_t^{(1)}}>t\}\mathbbm 1\{\tau_{B_{\overline{R}_t}}>t\}\right]\nonumber\\
    &\leq \sum_{r\leq \overline{R}_t}\mathbb E\left[\exp\left\{\int_0^t\xi(X_s)d s\right\}\mathbbm 1\{H_{\hat Z_t^{(1)}}>t\}\mathbbm 1\{\sup_{s\leq t}|X_s|=r\}\right]\nonumber\\
    &=\sum_{r\leq r(t)(\log t)^{-1}}\mathbb E\left[\exp\left\{\int_0^t\xi(X_s)d s\right\}\mathbbm 1\{H_{\hat Z_t^{(1)}}>t\}\mathbbm 1\{\sup_{s\leq t}|X_s|=r\}\right]\nonumber\\
    &\qquad +\sum_{r(t)(\log t)^{-1}\leq r\leq \overline{R}_t}\mathbb E\left[\exp\left\{\int_0^t\xi(X_s)d s\right\}\mathbbm 1\{H_{\hat Z_t^{(1)}}>t\}\mathbbm 1\{\sup_{s\leq t}|X_s|=r\}\right].
\end{align}
We now bound the two sums appearing here.
\begin{itemize}
\item By Lemma \ref{cor:whppotbo} it holds with high $P$-probability as $t\rightarrow \infty$ that the logarithm of the first sum in \eqref{eqn:sumu2} is upper bounded by
\begin{align*}
    t\sup_{v\in B_{r(t)(\log t)^{-1}}}\xi(v)+\log(r(t)(\log \log t)^{-1}) =o\left(\frac{ta(t)}{(\log t)^{d/2\alpha}}\right).
\end{align*}
\item We know by Proposition \ref{prop:Zt bounds as} that for any $\epsilon > 0$, it holds with high $P$-probability as $t\to \infty$ that
$$\overline{R}_t \leq r(t)(\log t)^{\epsilon}.$$
Thus by Lemma \ref{lem:RW ball exit time restricted radii}, it holds whp that the logarithm of the second sum in \eqref{eqn:sumu2} is upper bounded by
\begin{align*}
    &\max_{\substack{r\in [r(t)(\log t)^{-1},\\r(t)(\log t)^p]}}\left\{t\sup_{v\in B_{r}\setminus \{\hat Z_t^{(1)}\}}\xi(v)+\log(\mathbb P(\tau_{B_{r-1}}\leq t))\right\}+\log(r(t)(\log(t))^p)\\
        &\leq \max_{\substack{r\in [r(t)(\log t)^{-1},\\r(t)(\log t)^p]}}\left\{t\sup_{v\in B_{r}\setminus \{\hat Z_t^{(1)}\}}\xi(v)-r\log\left(\frac{r}{et}\right)+o(r(t))\right\}+\log(r(t)(\log(t))^p) \leq t\sup_{v \neq \hat{Z}_t^{(1)}}\overline{\psi}_t(v) + o(r(t)).
    \end{align*}
\end{itemize}
Combining these two calculations and applying Proposition \ref{prop:total mass LB whp} (with $\epsilon=1/2$) and then Lemmas \ref{lem:probgap whp} and \ref{lem:ZtequalZBR}$(ii)$ shows that whp as $t\rightarrow \infty$,
\begin{align*}\label{eqn:logsumu2}
    \log\left(\frac{\tilde{U}_2(t)}{U(t)}\right)\leq t\sup_{v \neq \hat{Z}_t^{(1)}}\{\overline{\psi}_t(v) - \overline{\psi}_t(\hat{Z}_t^{(1)})\}+o\left(\frac{ta(t)}{(\log t)^{d/2\alpha}} + r(t) + \frac{ta(t)}{(\log t)^{1/2}}\right) &\leq -\frac{ta(t)}{(\log t)^{d/4\alpha}} + o\left(\frac{ta(t)}{(\log t)^{d/2\alpha}}\right) \\
    &\to -\infty,
\end{align*}
as required.
\end{proof}

\bibliographystyle{alpha}
\bibliography{biblio}

\end{document}